\newtheorem{theo}{Theorem}[section]
\newtheorem{defin}[theo]{Definition}
\newtheorem{lem}[theo]{Lemma}
\newtheorem{coro}[theo]{Corollary}
\newtheorem{claim}[theo]{Claim}
\newtheorem{propo}[theo]{Proposition}
{%
\theoremstyle{definition}
\newtheorem{rem}[theo]{Remark}
\newtheorem*{notat}{Notation}
\newtheorem{ex}[theo]{Example}
}
\Crefname{theo}{Theorem}{Theorems}
\crefname{rem}{Remark}{Remarks}
\Crefname{lem}{Lemma}{Lemmas}
\Crefname{coro}{Corollary}{Corollaries}
\Crefname{propo}{Proposition}{Propositions}
\Crefname{claim}{Claim}{Claims}
\Crefname{ex}{Example}{Examples}
\definecolor{lg}{gray}{0.9}
\newcommand{\II}{\textnormal{II}}
\newcommand{\T}{\mathcal{T}}
\newcommand{\Z}{\mathbb{Z}}
\DeclareMathOperator{\id}{id}
\newcommand{\NS}{\textnormal{NS}}
\newcommand\blfootnote[1]{%
  \begingroup
  \renewcommand\thefootnote{}\footnote{#1}%
  \addtocounter{footnote}{-1}%
  \endgroup
}
\address{Stevell Muller,
   Fakult\"at f\"ur Mathematik und Informatik, Universit\"at des Saarlandes, Campus E2.4, 66123 Saarbr\"ucken, Germany}
\email{muller@math.uni-sb.de}
\title{Algebraically trivial automorphisms of irreducible holomorphic symplectic manifolds}
\author{Stevell Muller}
\begin{document}
\fontsize{10}{13}
\begin{abstract}
We extend the lattice-theoretic approach of Brandhorst--Cattaneo to classify algebraically trivial actions on the known IHS manifolds, up to deformation and birational conjugacy.
In particular, we classify even order algebraically trivial nonsymplectic automorphisms, with or without trivial discriminant action.
In the case of nontrivial discriminant actions, we show that such automorphisms exist only for finitely many known deformation types and orders. 
\end{abstract}
\maketitle

\section*{Introduction}
\blfootnote{Gefördert durch die Deutsche Forschungsgemeinschaft (DFG) – Projektnummer 286237555 – TRR 195.\\}
Let $X$ be an \emph{irreducible holomorphic symplectic (IHS)} manifold, i.e. $X$ is a simply connected compact K\"ahler manifold admitting a unique, up to scaling, holomorphic 2-form which is nowhere degenerate. Let $\rho_X\colon\textnormal{Bir}(X)\to O(H^2(X, \mathbb{Z}))$ be the natural orthogonal representation and let $G\leq \text{Bir}(X)$ be finite.
We assume $G$ to be \emph{algebraically trivial}, i.e. $\rho_X(G)$ fixes pointwise the \emph{N\'eron-Severi lattice} $\text{NS}(X) = H^2(X, \mathbb{Z})\cap H^{1,1}(X)$ of $X$.
Since any symplectic symmetry of $X$ acts trivially on $(\text{NS}(X))^\perp_{H^2(X, \mathbb{Z})}$ \cite[\S 4, page 13]{bea83b}, the normal subgroup $G_s\trianglelefteq G$ consisting of algebraically trivial symplectic symmetries lies in $\ker \rho_X$. 
Suppose now that $G_s\neq G$, and let $g\in G\setminus G_s$ be nonsymplectic. In this situation, since $g$ is nontrivial, we have that $X$ is projective \cite[\S 4, Proposition 6]{bea83b}.
Moreover, the coset $gG_s$ generates the finite cyclic factor group $G/G_s$, of order $m:=[G:G_s]$ \cite[\S 4, Proposition 7]{bea83b}. 
Since $G$ is algebraically trivial, so is $g$ and if we denote $h := \rho_X(g)$, one has in particular that $\text{NS}(X) = \ker(h-\id)$ is the invariant sublattice of $h$.
According to the proof of \cite[\S 4, Proposition 6]{bea83b}, we have that the minimal polynomial of $h_{\mid \textnormal{T}(X)}$ is cyclotomic, where $\textnormal{T}(X) := (\text{NS}(X))^\perp_{H^2(X, \mathbb{Z})}$ is the \emph{transcendental lattice} of $X$. Consequently, the characteristic polynomial of $h$ is of the form $\Phi_1^\rho(X)\Phi_m^k(X)$ where $\rho := \text{rank}_\mathbb{Z}(\text{NS}(X))$ is the Picard rank of $X$, and $k$ is a positive integer. 
Note that $G$ being algebraically trivial consists of regular automorphisms of $X$ (see for instance \cite[Proposition 4.1]{deb22}). Throughout of the paper, we will therefore always assume $X$ to be projective, and $G\leq\text{Aut}(X)$.\smallskip

In order to understand nonsymplectic algebraically trivial automorphisms of IHS manifolds, it is therefore necessary to study isometries of the known BBF forms with minimal polynomial $\Phi_1\Phi_m$ for some $m\geq 2$. 
Such a work has already be done in the case $m=2$: see \cite{nik83} for K3 surfaces, \cite{bea11,jou16,ccc20} for the deformation types $\textnormal{K3}^{[n]}$, \cite{tar15,mtw18} for the deformation types $\textnormal{Kum}_n$, and \cite{gro22,bg24} for the sporadic deformation types OG6 and OG10 respectively. 

For odd prime order automorphisms, the case of K3 surfaces has been completely covered by \cite{ast11} with further developments by \cite{mo98,bra19,bh23,bay24a}. In higher dimension, an important part of the work has been carried out by \cite{bcms16,bcs16,cc19} for the $\textnormal{K3}^{[n]}$-types ($n\geq 2$), by \cite{bnws11,tar15,mtw18} for the $\textnormal{Kum}_n$-types ($n\geq 2$) and \cite{gro22} who covers the case of OG6-type IHS manifolds.
All of the latter have been recovered by \cite{bc22} who also treated the case of OG10-type IHS manifolds (see also the recent paper \cite{bg24} for this last case).

Few is known yet for general order, except in the case of K3 surfaces \cite{kon92,tak12,bay24b}. 
The restriction to studying algebraically trivial automorphisms follows from the fact that for automorphisms of nonprime finite order, the minimal polynomial of the action on cohomology may have more than two irreducible factors. 
There have been nonetheless some works towards a classification of nonsymplectic automorphisms of K3 surfaces, beyond the algebraically trivial case: see \cite{sch10,as15,ast16,as18,ags21,acv22} for 2-power orders, \cite{tak10,acv20} for 3-power orders, \cite{dillies} for order 6, \cite{bcl24} for orders divisible by 7, and \cite{bra19,acv22} for orders $n$ with $\varphi(n)\geq 12$ and $\varphi(n)=8, 10$ respectively.\footnote{This list of references is not exhaustive, but it shows that the problem of classifying birational automorphisms on IHS manifolds is still an active field of research, already rich in standard literature.}\bigskip

In \cite{bc22}, Brandhorst and  Cattaneo describe an approach to study nonsymplectic automorphisms of odd prime order in a uniform way for each known deformation type, by the mean of classifying isometries of even unimodular $\Z$-lattices.
The authors suspected that their methods could be applied to classifying some automorphisms of nonprime order.
The purpose of this paper is to show that their results extend to algebraically trivial nonsymplectic automorphisms of finite order $m>2$. 
In particular, we show the following finiteness result.

\begin{theo}\label{mainth2}
    Let $X$ be a projective IHS manifold of known deformation type $\T$, and let $\ker\rho_X\lneq G\leq\textnormal{Aut}(X)$ be algebraically trivial and nonsymplectic. Let us denote $\Lambda:= H^2(X, \Z)$ and suppose that the cyclic group $H := \rho_X(G)\leq \textnormal{Mon}^2(\Lambda)$ acts nontrivially on the discriminant group $D_\Lambda$. We denote by $m$ the order of $H$. Then, up to deformation and birational conjugacy, the pair $(X, G)$ is uniquely determined by $(\T, m, \Lambda^H, \Lambda_H)$, except in the case $(\T, m) = (\textnormal{K3}^{[24]}, 46)$ where there are 3 such pairs. The corresponding tuples $(\T, m, \Lambda^H, \Lambda_H)$ are given in \Cref{disc non triv}.
\end{theo}

We refer to \Cref{mod class part} for the definitions of equivalence up to \textbf{deformation} and \textbf{birational conjugacy}. We put the emphasis on such a result since we know very few examples of geometric constructions of IHS manifolds equipped with a nonsymplectic (birational) automorphisms whose action on cohomology is nontrivial on the associated discriminant groups. Isometries with trivial action on the discriminant groups of the known BBF forms can be realized, most of the time, by induced actions along known constructions (see \Cref{subsec induced descr,triv disc sec}). For nontrivial discriminant actions, most of the known geometric examples are given by involutions. We present 4 explicit examples of larger order in \Cref{subsec examples}. The three first examples are obtained by combining known results from external literature, and the last example is taken from a joint project of the author with Billi and Wawak \cite{bmw24}.

\subsection*{Plan of the paper}
In \Cref{sec:prelim}, we start by recalling some basic definitions and results about IHS manifolds, integer lattices and hermitian lattices. 
Then we introduce the lattice approach of \cite{bc22} and we fix some notation for the rest of the paper. 
In particular, in \Cref{sec: lattice approach}, we define a framework for the study of isometries of some even unimodular $\Z$-lattices and we explain what we need to achieve for our classification of algebraically trivial nonsymplectic automorphisms of IHS manifolds. 
\Cref{sec: existence isometries} deals with the first part of our lattice problem. 
Namely, we give necessary conditions for the existence of some isometries of unimodular $\Z$-lattices with specified local invariants. 
We also recall how the transfer construction allows one to construct isometries of integer lattices with cyclotomic minimal polynomial, using the theory of hermitian lattices.
The work of the previous sections culminates in \Cref{sec: corr th} where we give necessary and sufficient conditions for the existence of isometries of unimodular latttices with given local invariants.
We moreover, as in \cite{bc22}, give constructive proofs so that one can determine the type of such isometries.
We also give a way of classifying them, and how to count the number of conjugacy classes of such.
Finally, in \Cref{sec: geom} we apply the lattice theory of the paper to get classifications of conjugacy classes of isometries of the unimodular $\Z$-lattices relevant to (the known types of) IHS manifolds.
We offer in the appendices some further results related to cyclotomic fields which we use to prove many statements in the paper. The last appendix contains table of results for some of the lattice-theoretic results proved in \Cref{sec: geom}.

\section*{Acknowledgements}
The author is very grateful to Simon Brandhorst for introducing him to this project, for his insights regarding some of the proofs, and for carefully reviewing early versions of this work. The author would like to thank Chiara Camere for the exchanges on moduli spaces of symmetric IHS manifolds, Annalisa Grossi for telling him about MRS models and discussing \Cref{ex: anna} together, and Markus Kirschmer for the explanation about his algorithms on quadratic defects.

\section{Preliminaries}\label{sec:prelim}
Let us start by introducing all preliminary notion and results we need for the rest of the paper. Most of the content of this section is recollected from cited literature. We also prove crucial statements which are not known by the author to be anywhere else available.

\subsection{Integer lattices}

\subsubsection{Definitions and notation}
Throughout the paper, we denote by $(L, b)$ any $\mathbb{Z}$-lattice where $b$ is the associated nondegenerate $\mathbb{Z}$-bilinear form. 
If there is no confusion possible, we sometimes drop $b$ from the notation and for all $x\in L$, we denote $x^2 := b(x,x)$.
For $a_1,\ldots, a_k\in\mathbb{Q}$, we define $\langle a_1, \ldots, a_k\rangle$ to be the $\mathbb{Z}$-lattice whose Gram matrix in a given basis is diagonal with entries $a_1, \ldots, a_k$. 
In this paper, ADE root lattices are supposed to be negative definite, and we often denote a $\mathbb{Z}$-lattice by the Gram matrix associated to one of its bases. 

\begin{ex}
    We denote by $U := \scriptscriptstyle{\begin{pmatrix}
    0&1\\1&0
\end{pmatrix}}$ the integer \textbf{hyperbolic plane lattice}.
\end{ex}

For a $\mathbb{Z}$-lattice $(L, b)$, we define the \textbf{scale} $s(L,b)$ and the \textbf{norm} $n(L,b)$ of $(L,b)$ to be respectively the following $\mathbb{Z}$-ideals
\[s(L,b) := b(L,L) \quad \text{and}\quad n(L, b) := \sum_{x\in L}b(x,x)\mathbb{Z} \subseteq s(L,b).\]
The $\mathbb{Z}$-lattice $(L,b)$ is called \textbf{integral} if $s(L, b)\subseteq \mathbb{Z}$, and \textbf{even} if furthermore $n(L, b)\subseteq 2\mathbb{Z}$. 
For an integral $\mathbb{Z}$-lattice $(L, b)$, we denote by $(L^\vee, b)$ the \textbf{dual lattice} of $(L, b)$, and $D_L:= L^\vee/L$ its discriminant group. It is equipped with a \textbf{torsion bilinear form}
\[b_L:D_L\times D_L\to \mathbb{Q}/\mathbb{Z},\;(x+L, y+L)\mapsto b(x,y)+\mathbb{Z}.\]
When $(L, b)$ is even, one also defines a \textbf{torsion quadratic form} $q_L$ on $D_L$ given by 
\[q_L:D_L\to \mathbb{Q}/2\mathbb{Z},\;x+L\mapsto b(x,x)+2\mathbb{Z}.\]
The pair $(D_L, q_L)$ is called the \textbf{discriminant form} of $(L, b)$: for simplicity, in this paper, we omit $q_L$ from the notation whenever possible. 

\begin{rem}
    As for $\mathbb{Z}$-lattices, we often denote $(D_L, q_L)$ by a matrix $M$.
    For a given set of generators $(x_1+L, \ldots, x_r+L)$ for $D_L$, the diagonal entries of $M$ refer to representatives of $q_L(x_i+L)\in \mathbb{Q}/2\mathbb{Z}$, and the off-diagonal entries of $M$ give representatives of $b_L(x_i+L, x_j+L)\in\mathbb{Q}/\mathbb{Z}$ for all $i\neq j$.
\end{rem}
For any $a\in \mathbb{Z}_{\neq 0}$, we define $(L, b)(a) := (L, a\cdot b)$ to be the associated rescaled lattice. Similarly, if $(L, b)$ is even, for any submodule $H\leq (D_L, q_L)$ we let $H(-1)$ to be the same as $H$ as finite abelian group but equipped with the opposite form $(-q_L)_{\mid H}$.\smallskip

For an integer $n\geq 1$, we say that $L$ is {\bf $n$-elementary} if $nD_L = \{0\}$, or equivalently, $nL^\vee\subseteq L$. 
In the case where $L = L^\vee$, we refer to $L$ as \textbf{unimodular}. 
We denote $O(L, b)$ the \textbf{isometry group} of $(L, b)$. 
If $(L, b)$ is even, any isometry $f\in O(L, b)$ induces a group isomorphism $D_f:D_L\to D_L$ which is an isometry with respect to the form $q_L$. We call $f$ \textbf{stable} if $D_f = \id_{D_L}$ and \textbf{nonstable} otherwise.

For any $f\in O(L, b)$, we define $O(L, b, f)$ to be the centralizer of $f$ in $O(L, b)$, and similarly we let $O(D_L, q_L, D_f)$ to be the centralizer of $D_f$ in the group of isometries of $(D_L, q_L)$.

\begin{notat}
    In this paper, direct sums of lattices and torsion quadratic forms are supposed to be \emph{orthogonal direct sums} --- we denote them using the symbol $\oplus$.
\end{notat}

Finally, we recall that given a $\mathbb{Z}$-lattice $(L, b)$ and a vector $v\in L$, we define the \textbf{divisibility} $\textnormal{div}(v, L)$ of $v$ in $L$ to be the positive generator of the ideal $b(v, L)$. If $L$ is integral, then $d:=\textnormal{div}(v, L)$ is the largest positive integer such that $v/d\in L^\vee$.

\subsubsection{Genera of integer lattices}\label{subsec: genera zlat}In this paper we mainly work with even $\Z$-lattices. Most of the notions we introduce in the next paragraphs have their equivalent for general $\mathbb{Z}$-lattices. 
We refer to \cite[Chapter 15]{splg} for more details about genera.

\begin{notat}
    For a prime number $p$, and a $\mathbb{Z}$-lattice $(L, b)$, we denote by $(L_p, b_p) := (L, b)\otimes_\mathbb{Z} \mathbb{Z}_p$ the corresponding $\mathbb{Z}_p$-lattice.
\end{notat} 
Note that the definitions of the previous section extend naturally for $\mathbb{Z}_p$-lattices.

\begin{defin}
    Two $\mathbb{Z}$-lattices $S$ and $T$ are said to be in the same \textbf{genus} if $S_p\simeq T_p$ for all $p$ prime and $S_\mathbb{R}\simeq T_\mathbb{R}$. 
\end{defin}

Let $L$ be an even $\mathbb{Z}$-lattice.
For any prime number $p$, the isometry class of $L_p$ is determined by a finite list of invariants (and in a unique way for $p$ odd). Moreover, the isometry class of $L_\mathbb{R}$ is determined by its signatures $(l_+, l_-)$.
Hence, one can describe a genus of $\mathbb{Z}$-lattices by a pair of real signatures and a finite set of numerical invariants determining isometry classes of non-unimodular $\Z_p$-lattice, for some prime number $p$ (see for instance \cite[Chapter VI]{mm09}).
One can effectively decide whether there exists a $\mathbb{Z}$-lattice in such a genus \cite[Theorem 1.10.1]{nik79b}.

\begin{rem}
    Any (nonempty) genus of $\mathbb{Z}$-lattices can be represented by a symbol summarizing these invariants \cite[Chapter 15]{splg}.
\end{rem}
For a prime number $p$, an isometry class of even $\mathbb{Z}_p$-lattices also determines an isometry class of torsion quadratic forms $q_p:A\to \mathbb{Q}_p/2\mathbb{Z}_p$, where $A$ is an abelian $p$-group. 

\begin{propo}[{{{\cite[Chapter IV]{mm09}, \cite[\S1, Chapter 8]{nik79b}}}}]
    Any $p$-adic torsion quadratic form admits a normal form, and the elementary constituents for such a normal form, for each prime number $p$, have been classified.
\end{propo}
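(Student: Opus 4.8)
The plan is to pass to the associated $p$-adic lattices and exploit their Jordan decomposition. Every $p$-adic torsion quadratic form $q_p\colon A\to\mathbb{Q}_p/2\mathbb{Z}_p$ on a finite abelian $p$-group $A$ can be realised as the discriminant form of an even $\mathbb{Z}_p$-lattice $(L_p, b_p)$, and two such forms are isometric exactly when the corresponding lattices agree after cancelling unimodular summands. First I would invoke the Jordan splitting $L_p\simeq\bigoplus_{k\geq 0}p^k J_k$, with each $J_k$ unimodular over $\mathbb{Z}_p$. Being orthogonal, this descends to an orthogonal splitting $q_p\simeq\bigoplus_{k\geq1}\mathfrak{q}_k$ of the torsion form into homogeneous pieces, where $\mathfrak{q}_k$ is supported on the $p^k$-torsion and takes values in $\tfrac{1}{p^k}\mathbb{Z}/2\mathbb{Z}$. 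This reduces the existence of a normal form to classifying these homogeneous constituents.

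For odd primes the argument is clean: over $\mathbb{Z}_p$ with $p$ odd every unimodular lattice diagonalises, so each $\mathfrak{q}_k$ is an orthogonal sum of rank-one forms $x\mapsto\tfrac{a}{p^k}x^2$ with $a\in\mathbb{Z}_p^\times$. As $\mathbb{Z}_p^\times/(\mathbb{Z}_p^\times)^2$ has order two, the class of such a form depends only on whether $a$ is a square, yielding exactly two elementary constituents per scale $p^k$, distinguished by a sign $\varepsilon=\pm1$. For $p=2$ one cannot diagonalise in general: even unimodular $\mathbb{Z}_2$-lattices are orthogonal sums of copies of $U$ and of $V=\left(\begin{smallmatrix}2&1\\1&2\end{smallmatrix}\right)$, whereas the odd type forces in addition rank-one diagonal blocks $\langle a\rangle$ with $a\in\mathbb{Z}_2^\times$. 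Rescaling by $2^k$ and recording $a$ modulo squares (represented by $\{1,3,5,7\}\subseteq(\mathbb{Z}/8\mathbb{Z})^\times$) produces the rank-one constituents $\mathfrak{w}_k^\varepsilon$, while the two indecomposable rank-two blocks give the even constituents $\mathfrak{u}_k$ and $\mathfrak{v}_k$. Enumerating these exhausts the list.

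The delicate point, and the one I expect to be the main obstacle, is \emph{uniqueness}: distinct orthogonal decompositions into constituents may yield isometric forms, so a genuine normal form demands a complete list of the relations among constituents. For $p$ odd only the total rank and the product of the signs at each scale survive, so the relations are mild. For $p=2$, however, one must establish the full set of identities — the interchange relations among the rank-two blocks $\mathfrak{u}_k,\mathfrak{v}_k$, the relations trading a rank-two block against rank-one blocks, and the sign/oddity bookkeeping among the $\mathfrak{w}_k^\varepsilon$ — and then check that they suffice to move any decomposition to a single distinguished representative. Carrying out this combinatorial reduction, as in the cited references, is what upgrades a mere constituent decomposition into a true normal form and simultaneously certifies that the listed elementary constituents are both complete and nonredundant.
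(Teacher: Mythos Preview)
The paper does not supply its own proof of this proposition: it is stated as a known classification result, with the proof deferred entirely to the cited references \cite[Chapter IV]{mm09} and \cite[\S1, Chapter 8]{nik79b}. Your sketch is a faithful outline of precisely the argument found in those sources --- lift to an even $\mathbb{Z}_p$-lattice, take a Jordan splitting, classify the unimodular Jordan constituents (diagonal for $p$ odd; the blocks $U$, $V$, and $\langle a\rangle$ for $p=2$), and then work out the relations needed for uniqueness of the normal form. There is nothing to correct and nothing to compare: you have reconstructed the standard proof that the paper is content merely to cite.
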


\begin{propo}[{{{\cite[Corollary 1.16.3]{nik79b}}}}]
    Any genus of even $\mathbb{Z}$-lattices is uniquely determined by a pair of real signatures $(l_+, l_-)$ and a torsion quadratic form $A\to\mathbb{Q}/2\mathbb{Z}$.
\end{propo}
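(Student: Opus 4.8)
The plan is to establish the two implications that together amount to the asserted bijection between genera of even $\mathbb{Z}$-lattices and pairs $\big((l_+, l_-),\, q\big)$ consisting of a signature and a torsion quadratic form $q\colon A\to\mathbb{Q}/2\mathbb{Z}$. First I would verify that this pair is a genus invariant. The signature is one by definition, as lattices in the same genus have isometric real completions. For the discriminant form I would use that $D_L$ splits orthogonally as $\bigoplus_p D_{L,p}$ over its $p$-primary components, and that $(D_{L,p}, (q_L)_p)$ is canonically the discriminant form of the $\mathbb{Z}_p$-lattice $L_p$. Since two lattices $L, L'$ in the same genus satisfy $L_p\simeq L'_p$ for every prime $p$, their $p$-local discriminant forms agree, and summing over $p$ yields $q_L\simeq q_{L'}$.

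The substantive direction is the converse: if $L$ and $L'$ are even, share the same signature, and satisfy $q_L\simeq q_{L'}$, then $L_p\simeq L'_p$ for all $p$, so they lie in the same genus. The first invariants to extract are the rank $l_+ + l_-$ and the determinant. Here $|\det L| = |D_L|$ is read off from the discriminant form, while $\mathrm{sgn}(\det L) = (-1)^{l_-}$ is read off from the signature; hence $\det L = (-1)^{l_-}|D_L|$ is a fixed rational, and in particular $\det L_p$ is determined in $\mathbb{Q}_p^*/(\mathbb{Q}_p^*)^2$ for every $p$.

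For odd $p$ I would argue through the Jordan decomposition $L_p\simeq\bigoplus_{k\ge 0} p^k U_k$ with each $U_k$ unimodular over $\mathbb{Z}_p$. The constituents of positive scale ($k\ge 1$) are precisely recorded by the $p$-primary form $(q_L)_p$: each block contributes a summand on $(\mathbb{Z}/p^k)^{\mathrm{rk}\,U_k}$ whose isometry class pins down both $\mathrm{rk}\,U_k$ and $\det U_k$ modulo squares. It then remains to reconstruct the unimodular constituent $U_0$, which for odd $p$ is determined by its rank and determinant alone: its rank is the total rank minus the ranks already accounted for, and its determinant is forced by the known value of $\det L_p$ divided by the determinants of the higher constituents. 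Thus $L_p\simeq L'_p$.

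The main obstacle is the prime $p=2$, where even $\mathbb{Z}_2$-lattices are not classified by rank, determinant, and discriminant form alone: one must also track the parity (even/odd type) of each Jordan constituent together with an oddity invariant, and the discriminant form leaves a residual ambiguity in the even unimodular part. The resolution, which I would invoke from the $2$-adic classification (\cite[\S 1.8]{nik79b}), is Milgram's Gauss-sum formula relating the total oddity of $q_L$ to the signature modulo $8$; since the signature is part of the data, it supplies exactly the missing congruence and pins down $L_2$. With $L_p\simeq L'_p$ at every prime and equal real signatures, $L$ and $L'$ lie in the same genus, completing the argument.
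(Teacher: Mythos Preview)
The paper does not supply its own proof of this proposition; it is simply quoted as \cite[Corollary 1.16.3]{nik79b}. Your outline follows the natural strategy---show that signature and discriminant form are genus invariants, then reconstruct each $L_p$ from them---and the argument you give at odd primes is correct. The gap is at $p=2$.

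Your assertion that ``even $\mathbb{Z}_2$-lattices are not classified by rank, determinant, and discriminant form alone'' is in fact false, and the remedy you propose via Milgram's formula cannot work in any case. Milgram's identity says that the Gauss sum of $q_L$ equals $\sqrt{|D_L|}\,e^{2\pi i(l_+-l_-)/8}$, so $l_+-l_-\bmod 8$ is already encoded in $q_L$; the signature therefore contributes no congruence information beyond what the discriminant form already carries, and cannot ``supply the missing congruence'' as you claim. What actually happens is that rank, determinant in $\mathbb{Q}_2^\times/(\mathbb{Q}_2^\times)^2$, and discriminant form \emph{do} determine an even $\mathbb{Z}_2$-lattice, but seeing this requires the $2$-adic structure theory rather than a Gauss-sum argument. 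Concretely: the discriminant form pins down every Jordan constituent at scale $2^i$ with $i\ge 2$, and the scale-$2$ constituent up to a simultaneous shift of oddity by $4$ and of determinant by a factor of $5$ (when that constituent is of odd type). If the even unimodular constituent $N_0$ is nonzero, the two resulting candidates for $L_2$ are related by the sign-walking relation of \cite[Chapter~15, \S7]{splg} and are therefore isometric; if $N_0=0$, the two candidates have determinants differing by the non-square unit $5$, so the global determinant---which you correctly extracted from the signature and $|D_L|$---selects exactly one. Either way $L_2$ is determined, but by the determinant constraint together with the $2$-adic symbol equivalences (or equivalently Nikulin's analysis in \cite[\S\S1.8--1.11]{nik79b}), not by Milgram.
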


Given an even $\mathbb{Z}$-lattice $L$, with discriminant form $A\to \mathbb{Q}/2\mathbb{Z}$, the associated \textbf{$p$-primary part} $A_p\to \mathbb{Q}_p/2\mathbb{Z}_p$, where $A_p$ is the $p$-Sylow subgroup of $A$, together with the real signatures and the determinant of $L$ determine the isometry class of $L_p$ for all prime number $p$.
In particular knowing the genus symbol of a given even $\mathbb{Z}$-lattice $L$, allows one to completely describe the torsion quadratic form on $D_L$ from the associated $p$-adic torsion quadratic forms, and hence from the genus symbol.

\begin{propo}\label{propo: existence 24 elem}
    Let $l_+, l_-\geq 0$, let $n\in\mathbb{N}$, let $\delta\in\{1,3,5,7\}$ and let $\epsilon_1,\epsilon_2\in\{\pm 1\}$.
    The genus $\II_{(l_+, l_-)}2_\II^{\epsilon_1n}4^{\epsilon_2}_\delta$ is nonempty if and only if the following hold:
    \begin{enumerate}
        \item $n$ is even and $\epsilon_2 = \left(\frac{\delta}{2}\right)$;
        \item $l_++l_-\geq n+1$ with equality only if $\epsilon_1 = \epsilon_2$;
        \item $l_+-l_-\equiv \delta+2-2\epsilon_1\mod 8$;
        \item if $n=0$, then $\epsilon_1 = +1$.
    \end{enumerate}
\end{propo}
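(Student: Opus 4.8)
The plan is to translate the genus symbol into the datum of a real signature together with a discriminant form, and then to apply Nikulin's existence criterion. By \cite[Corollary 1.16.3]{nik79b} the genus $\II_{(l_+,l_-)}2_\II^{\epsilon_1 n}4^{\epsilon_2}_\delta$ is determined by the signature $(l_+,l_-)$ and by a torsion quadratic form $q$ on $A:=(\mathbb{Z}/2\mathbb{Z})^n\oplus\mathbb{Z}/4\mathbb{Z}$, the orthogonal sum of the discriminant form of the even scale-$2$ constituent $2_\II^{\epsilon_1 n}$ and of the rank-one odd scale-$4$ constituent $4^{\epsilon_2}_\delta$. In particular the minimal number of generators of $A$ is $\ell(A)=n+1$. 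Existence of an even lattice with these invariants is decided by \cite[Theorem 1.10.1]{nik79b}, and the whole proof consists in matching its hypotheses to the four displayed conditions, exactly as one recovers \Cref{exist 2-elem} in the single-constituent case.

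Three of the conditions are read off directly. The parity of $n$ and the relation $\epsilon_2=\left(\frac{\delta}{2}\right)$ are local constraints at $2$: an even (type II) Jordan constituent has even rank, while a rank-one odd constituent $\langle 4u\rangle$ has sign $\left(\frac{u}{2}\right)=\left(\frac{\delta}{2}\right)$; moreover an empty scale-$2$ block ($n=0$) carries sign $+1$, giving the last condition. The third condition is Milgram's formula, namely the signature constraint of \cite[Theorem 1.10.1]{nik79b}: the even scale-$2$ part contributes $2-2\epsilon_1$ and the scale-$4$ part contributes $\delta$ to $\mathrm{sign}(q)\in\mathbb{Z}/8\mathbb{Z}$, so that $l_+-l_-\equiv\delta+2-2\epsilon_1\pmod 8$. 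The inequality $l_++l_-\geq n+1=\ell(A)$ is then the rank bound of \cite[Theorem 1.10.1]{nik79b}.

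The delicate point, which I expect to be the main obstacle, is the equality clause $l_++l_-=n+1\Rightarrow\epsilon_1=\epsilon_2$. Note first that the third condition forces $l_+-l_-$, hence $l_++l_-$, to be odd, so the rank $r:=l_++l_--n-1$ of the implicit even unimodular scale-$1$ part is always even; thus $l_++l_-=n+1$ is exactly the boundary case $r=0$, $l_++l_-=\ell(A)$, at which \cite[Theorem 1.10.1]{nik79b} imposes an extra condition at $2$. I will identify this condition with $\epsilon_1=\epsilon_2$ via the determinant. Writing $\theta_0,\theta_1$ for the $2$-adic unit parts of the scale-$1$ and scale-$2$ constituents, the global determinant $(-1)^{l_-}2^{n+2}$ forces $\theta_0\theta_1\delta$ to lie in the square class of $\pm1$; the character $\left(\frac{\cdot}{2}\right)$ sends $\theta_0,\theta_1,\delta$ to the signs $\epsilon_0,\epsilon_1,\epsilon_2$, whence $\epsilon_0\epsilon_1\epsilon_2=+1$. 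When $r\geq 2$ one can choose the scale-$1$ sign $\epsilon_0\in\{\pm1\}$, so no constraint survives; when $r=0$ one has $\epsilon_0=+1$ and the relation becomes $\epsilon_1=\epsilon_2$.

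For sufficiency, since \cite[Theorem 1.10.1]{nik79b} is an equivalence, it remains to check that the four conditions imply all of Nikulin's hypotheses. Away from the boundary this is immediate once $\epsilon_0$ is fixed by $\epsilon_0=\epsilon_1\epsilon_2$, the finer determinant and Hasse data being taken care of by Milgram's formula; in the indefinite range existence then follows from the local--global theory underlying \cite[Theorem 1.10.1]{nik79b}. In the boundary case $r=0$ with $\epsilon_1=\epsilon_2$ I would confirm the claim by exhibiting explicit realisations of each signature allowed by the third condition: for $\epsilon_1=\epsilon_2=+1$ one may take scaled even lattices such as $E_8(2)^{\oplus k}\oplus U(2)^{\oplus j}\oplus\langle\pm 4\rangle$, and the $\epsilon_1=\epsilon_2=-1$ case together with the finitely many small definite shapes is handled similarly, the remaining indefinite boundary cases again following from Eichler's theorem. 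Reconciling these finitely many definite boundary shapes with the signatures permitted by Milgram's formula is where the bookkeeping is heaviest, but it is a finite and routine check.
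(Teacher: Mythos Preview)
Your approach is correct but takes a different route from the paper. The paper's proof is a single sentence: ``This is a direct application of \cite[Chapter 15, Theorem 11]{splg}.'' That theorem of Conway--Sloane gives necessary and sufficient conditions for a genus symbol to be realised, and the four bullet points in the proposition are read off directly from those conditions applied to the two-block $2$-adic symbol $2_\II^{\epsilon_1 n}4_\delta^{\epsilon_2}$.

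You instead translate the symbol into a discriminant form and invoke Nikulin's \cite[Theorem 1.10.1]{nik79b}. This is an equivalent but genuinely different framework: Conway--Sloane works directly with local genus symbols, while Nikulin works with the global pair (signature, discriminant form). Your identification of the first three conditions (parity of $n$, the constraint $\epsilon_2=\left(\frac{\delta}{2}\right)$, Milgram's formula, and the rank bound) is clean and correct. Your determinant argument for the boundary clause $\epsilon_1=\epsilon_2$ via $\epsilon_0\epsilon_1\epsilon_2=+1$ is also correct and is essentially what Conway--Sloane's ``determinant condition'' encodes.

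The one weakness is your final paragraph. Since Nikulin's Theorem 1.10.1 is already an if-and-only-if statement, once you have verified that the four conditions of the proposition are equivalent to Nikulin's hypotheses, sufficiency is automatic. There is no need to exhibit explicit lattices, invoke Eichler's theorem, or perform a ``finite and routine check'' of definite boundary cases --- this suggests you do not fully trust your own citation. If you want to keep the Nikulin route, simply state clearly which clause of Nikulin's theorem corresponds to the boundary condition (it is the condition on $|A_q|$ and the determinant at $p=2$ when $\ell(A_{q_2})=t_++t_-$) and stop there.
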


\begin{proof}
    This is a direct application of \cite[Chapter 15, Theorem 11]{splg}.
\end{proof}

\subsubsection{Lattices with isometry
}
For $i \geq 1$, we denote by $\Phi_i(X)\in\mathbb{Z}[X]$ the $i$th cyclotomic polynomial and $\varphi$ is the Euler totient function.

\smallskip
For a given $\mathbb{Z}$-lattice $L$ and an isometry $f\in O(L)$, we call the pair $(L, f)$ a {\bf lattice with isometry}.
Two such lattices with isometry $(L_1, f_1)$ and $(L_2, f_2)$ are called \textbf{isomorphic} if there exists an isometry $\psi\colon L_1\to L_2$ such that $f_2\psi = \psi f_1$. 
Let $\mu(X)\in\mathbb{Q}[X]$ be a monic polynomial. 
We call \textbf{$\mu$-lattice} any lattice with isometry $(L, f)$ such that $\mu(f) = 0$, as $\mathbb{Z}$-module endomorphisms. 
We moreover say that $(L, f)$ is a \textbf{$\mu^\ast$-lattice} if $\mu$ is the minimal polynomial of $f$.\smallskip

Given a lattice with isometry $(L, f)$, and given a polynomial $\mu(X)\in \mathbb{Q}[X]$, we call the $\mathbb{Z}$-lattice $L^{\mu(f)} := \ker\left(\mu(f)\right)$ the \textbf{$\mu$-kernel sublattice} of $(L, f)$ .
Now, let $L$ be a $\mathbb{Z}$-lattice and let $f \in O(L)$ be of finite order $m$. 
We denote by
\[\chi_f(X) := \prod_{n\mid m}\Phi_{n}^{d_n(f)}(X)\in\mathbb{Z}[X]\quad \text{ and }\quad m_f(X) := \prod_{n\mid m,\; d_n(f)\neq 0}\Phi_{n}(X)\in\mathbb{Z}[X]\]
the characteristic and minimal polynomials of $f$ respectively, where $d_n(f)\in\mathbb{N}_0$. 

\begin{rem}
    For all $n\,|\, m $,
the corresponding kernel sublattice $L^{\Phi_n(f)} =\ker\left(\Phi_{n}(f)\right)$ is of $\mathbb{Z}$-rank $d_n(f)\varphi(n)$. Moreover, if $d_n(f)\neq 0$, the $\Z$-lattice $L^{\Phi_n(f)}$ equipped with the restriction $f_n$ of $f$ to $L^{\Phi_n(f)}$ defines a $\Phi_{n}^\ast$-lattice.
\end{rem}

If $n = 1$, we usually denote by $L^f := L^{f-\id}$ the so-called \textbf{invariant sublattice of $(L, f)$}. 
The orthogonal complement $L_f := (L^f)^\perp_L$ of $L^f$ in $L$ is called the \textbf{coinvariant sublattice of $(L, f)$}.
In a similar vain, if $m$ is even and $n = 2$, then $L^{f+\id} = L^{-f}$ is referred to as the \textbf{$(-1)$-sublattice of $(L, f)$}. 
For any $n\geq 3$, the minimal polynomial of $f_n$ is $\Phi_n$ so we can see $L^{\Phi_n(f)}$ as a $\mathbb{Z}[\zeta_n]$-module where $\zeta_n$ is a primitive $n$th root of unity: the multiplication by $\zeta_n$ is given by the action of $f_n$. 
In this way, we can define a structure of {\bf hermitian $\mathbb{Z}[\zeta_n]$-lattice} on $L^{\Phi_n(f)}$ (see \Cref{subsec trace}). 
These kernel sublattices for finite order isometries allow us to define an invariant of a lattice with isometry of finite order called the \textbf{type}.

\begin{defin}[{{{\cite[Definition 4.18]{bh23}}}}]
    Let $(L, f)$ be a lattice with isometry of finite order $m\geq 1$. 
    For a positive divisor $n$ of $m$, let $H_n$ be the hermitian structure of $(L^{\Phi_n(f)}, f_n)$ (see \Cref{subsec trace}) and let $A_n$ be the $(X^n-1)$-kernel sublattice of $(L, f)$. 
    The finite collection of pairs of genera $\{(g(H_n), g(A_n))\}_{n\mid m}$ is called the \textbf{type} of $(L, f)$.
\end{defin}

\begin{rem}
    Two isomorphic lattices with isometry share the same type, but a type might be represented by several distinct isomorphism classes of lattices with isometry. Note moreover that the notion of type is not a generalization of the notion of genus of $\Z$-lattices. In particular, having $(L, f)$ and $(M, g)$ two lattices with isometry of the same type does not imply that there exists an isometry $\psi\colon L_p\to M_p$ such that $\psi\circ f_p = g_p\circ\psi$ for all prime number $p$.
\end{rem}

\subsection{Equivariant primitive extensions
}\label{eq glue}
We recall some definitions and results about equivariant primitive extensions. 
The reader is expected to know about the general theory of primitive embeddings of even lattices following the work of Nikulin \cite{nik79b}.
\smallskip

Let $(S, s)$ and $(T, t)$ be two even lattices with isometry where $s\in O(S)$ and $t\in O(T)$. 
We call any abelian group isomorphism $D_S\geq H_S \xrightarrow{\gamma} H_T\leq D_T$ between respective subgroups of the discriminant groups of $S$ and $T$ a \textbf{glue map} if  $q_T(\gamma(x)+T) = -q_S(x+S)$ for all $x+S\in H_S$. 
In such a case, we call $H_S$ and $H_T$ the \textbf{glue domains} of $\gamma$. 
Such a glue map $\gamma$ is called \textbf{$(s, t)$-equivariant} if $H_S$ and $H_T$ are respectively $D_s$-stable and $D_t$-stable, and if it satisfies the {\em equivariant gluing condition} 
\begin{equation}\label{eq:egc}\tag{EGC}
    \gamma\circ (D_{s})_{\mid H_S} = (D_{t})_{\mid H_T}\circ \gamma.
\end{equation}
Note that $\gamma$ is $(s, t)$-equivariant if and only if $s\oplus t$ extends along the primitive extension $S\oplus T\leq L_{\gamma}$ to an isometry $f_\gamma\in O(L_{\gamma})$, where $L_\gamma$ is such that $L_\gamma/(S\oplus T)$ is the graph of $\gamma$ in $D_S\oplus D_T$ \cite[Proposition 1.4.1]{nik79b}.
We  call $(L_\gamma, f_\gamma)$ an \textbf{equivariant primitive extension of $(S, s)$ and $(T, t)$}. 

\begin{defin}\label{defin: isom equiv primext}
    Let $(S_1, s_1)\oplus (T_1, t_1)\leq (L_1, f_1)$ and $(S_2, s_2)\oplus (T_2, t_2)\leq (L_2, f_2)$ be two equivariant primitive extensions. They are said to be \textbf{isomorphic} if there exists an isomorphism $\psi\colon (L_1, f_1)\to (L_2, f_2)$ which restricts to isomorphisms $\psi_S\colon (S_1,s_2)\to (S_2, s_2)$ and $\psi_T\colon (T_1, t_1)\to (T_2, t_2)$.
\end{defin}

\begin{rem}\label{rem: existence equiv gluing}
    Let $(S_1, s_1)\oplus (T_1, t_1)\leq (L_1, f_1)$ and $(S_2, s_2)\oplus (T_2, t_2)\leq (L_2, f_2)$ be two equivariant primitive extensions as in \Cref{defin: isom equiv primext}. Let moreover $D_{S_1}\geq H_{S_1} \xrightarrow{\gamma_1} H_{T_1}\leq D_{T_1}$  and $D_{S_2}\geq H_{S_2} \xrightarrow{\gamma_2} H_{T_2}\leq D_{T_2}$  be the associated equivariant glue maps. 
    By \cite[Corollary 1.5.2]{nik79b}, the two equivariant primitive extensions are isomorphic if and only if there exist two isomorphisms $\psi_S\colon (S_1, s_1)\to (S_2, s_2)$ and $\psi_T\colon (T_1, t_1)\to (T_2, t_2)$ such that $\psi_S$ induces an isometry $\overline{\psi_S}\colon H_{S_1} \to H_{S_2}$, $\psi_T$ induces an isometry $\overline{\psi_T}\colon H_{T_1} \to H_{T_2}$ and the following square commutes
\[\begin{tikzcd}
    H_{S_1}\arrow[r, "\gamma_1"]\arrow[d, "\overline{\psi_S}"']&H_{T_1}\arrow[d, "\overline{\psi_T}"]\\
    H_{S_2}\arrow[r, "\gamma_2"']&H_{T_2}
\end{tikzcd}.\]
\end{rem} 

Following the ideas of \Cref{rem: existence equiv gluing}, it is also possible then to derive a way of classifying equivariant primitive extensions of two given lattices with isometry:

\begin{propo}[{{{\cite[Proposition 2.2]{bh23}}}}]\label{nikulin classification equivariant}
    Let $(S, s)$ and $(T, t)$ be two even lattices with isometry.
    Then the double cosets
    \[ \overline{O(T, t)}\backslash \{\gamma\colon H_S\to H_T:\; \gamma\text{ is an $(s, t)$-equivariant glue map}\}\slash  \overline{O(S, s)}\]
    are in bijection with the isomorphism classes of equivariant primitive extensions $(S, s)\oplus (T, t)\to (L_{\gamma}, f)$ such that $f_{\mid S} = s$ and $f_{\mid T} = t$. 
    Here by $\overline{O(T, t)}$ we mean the image of the representation $O(T, t)\to O(D_T, D_t)$ (and similarly for $\overline{O(S, s)}$).
\end{propo}

\begin{ex}\label{first ex}
A simple example of an equivariant primitive extension is the following. 
Let $L$ be an even $\mathbb{Z}$-lattice, and let $f\in O(L)$. Suppose that there exists a factorisation $m_f(X)  = p_1(x)p_2(X)$ of the minimal polynomial of $f$, with $p_1(X), p_2(X)\in \mathbb{Q}[X]$ coprime. 
The primitive sublattices $L^{p_1(f)}$ and $L^{p_2(f)}$ are orthogonal in $L$. 
If we denote $f_1$ and $f_2$ the respective restrictions of $f$ to $L^{p_1(f)}$ and $L^{p_2(f)}$, then \((L^{p_1(f)}, f_1) \oplus (L^{p_2(f)}, f_2)\leq(L, f)\)
is an equivariant primitive extension. 
\end{ex}

If we let $\gamma$ be the associated $(f_1, f_2)$-equivariant glue map from \Cref{first ex}, then from $p_1$ and $p_2$ one gets information on the primes dividing the order of the glue domains of $\gamma$.
In fact, let us demonstrate this in what follows.

\begin{defin}
    Let $p(X), q(X)\in \Z[X]$ be two coprime polynomials. Then we define the \emph{reduced resultant} of $p$ and $q$, denoted $\textnormal{rres}(p, q)$, to be the positive generator $d$ of the $\Z$-ideal
    \[(p(X)\Z[X]+q(X)\Z[X])\cap \Z.\]
\end{defin}

From its definition, one sees that the reduced resultant of two coprime polynomials $p(X),q(X)\in \Z[X]$ divides their \emph{resultant}
\[\text{res}(p,q) := \prod_{\substack{(u,v)\in \mathbb{C}^2\\p(u)=q(v)=0}}(u-v)\in \Z.\]
Moreover, $\text{res}(p,q)$ and $\text{rres}(p,q)$ share the same set of prime divisors.

\begin{rem}
    If $p(X),q(X)\in \Z[X]$ are not coprime, then we can define the resultant and reduced resultant similarly, and we obtain that $\text{rres}(p,q) = \text{res}(p,q) = 0$.
\end{rem}

\begin{propo}\label{reduce res}
    Let $(S, s)\oplus (T,t)\leq (L,f)$ be an equivariant primitive extension, and let $d := \textnormal{rres}(m_s(X), m_t(X))$ be the reduced resultant of the minimal polynomials of $s$ and $t$ respectively. Then
    \[ dL\leq S\oplus T.\]
\end{propo}

\begin{proof}
    If $m_s$ and $m_t$ are not coprime, then $d=0$ and the result obviously follows. Let us assume now that $m_s$ and $m_t$ are coprime. In particular, we have that $m_f = m_sm_t$ and we can see $S$ and $T$ as the respective kernel sublattices $\ker m_s(f)$ and $\ker m_t(f)$. Therefore, since $S$ and $T$ are both primitive in $L$, we observe that 
    \begin{align*}
        &m_s(f)(L)=T
        \text{ and } m_t(f)(L) = S.
    \end{align*}
     By definition of $d$, there exists $u(X), v(X)\in \Z[X]$ such that 
    \[d = u(X)m_s(X) + v(X)m_t(X).\]
    Since $d$ is constant, seen as an element of $\Z[X]$, we have that 
    \[d\cdot\id_L = u(f)m_s(f) + v(f)m_t(f)\]
    meaning that
    \[ dL = (u(f)m_s(f)+v(f)m_t(f))(L)\leq u(f)m_s(f)(L) + v(f)m_t(f)(L)\leq S\oplus T.\]
    Hence the result follows.
\end{proof}

\begin{propo}[{{{\cite[Lemma 2]{fil02}}}}]\label{resultant cyclo}
    Let $1<n<m$ be positive integers. 
    Then
    \[\textnormal{rres}(\Phi_n, \Phi_m) = \textnormal{rres}(\Phi_m, \Phi_n) = \left\{\begin{array}{ll}p&\text{if } \frac{m}{n} \text{ is a power of the prime number $p$}\\
    1 &\text{else}\end{array}\right..\]
\end{propo}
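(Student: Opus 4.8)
The plan is to reduce everything to evaluating cyclotomic polynomials at roots of unity and then to extract the answer by Möbius inversion. Since $\Phi_n$ and $\Phi_m$ are monic, I would start from the standard formula $\textnormal{Res}(\Phi_n, \Phi_m) = \prod_{\zeta}\Phi_m(\zeta)$, where $\zeta$ runs over the primitive $n$-th roots of unity. Rather than attack this product directly (which is awkward, since the Möbius factorisation of $\Phi_m$ introduces vanishing factors $\zeta^c-1$), I would first compute the auxiliary resultant $\textnormal{Res}(X^n-1, \Phi_m)$. Using $X^n - 1 = \prod_{d\mid n}\Phi_d(X)$ and multiplicativity of the resultant in its first argument, this factors as $\prod_{d\mid n}\textnormal{Res}(\Phi_d, \Phi_m)$, which is exactly the quantity I want to invert.

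On the other hand, evaluating $X^n-1$ at the roots of the second argument gives, up to sign, $\textnormal{Res}(X^n-1, \Phi_m) = \pm\prod_{\xi}(\xi^n - 1)$, the product ranging over the primitive $m$-th roots $\xi$. As $\xi$ varies, $\xi^n$ runs over the primitive $m_1$-th roots of unity, where $m_1 := m/\gcd(m,n)$, each value being attained $\varphi(m)/\varphi(m_1)$ times; hence in absolute value this product equals $|\Phi_{m_1}(1)|^{\varphi(m)/\varphi(m_1)}$, and note $m_1\geq 2$ because $n<m$. I would then record the classical evaluation of $\Phi_k(1)$ for $k\geq 2$: dividing $X^k - 1 = \prod_{d\mid k}\Phi_d(X)$ by $X-1$ and setting $X=1$ gives $k = \prod_{d\mid k,\, d>1}\Phi_d(1)$, whence $\log \Phi_k(1) = \sum_{d\mid k}\mu(k/d)\log d$ is the von Mangoldt function; thus $\Phi_k(1) = p$ if $k = p^j$ is a prime power and $\Phi_k(1) = 1$ otherwise.

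Writing $F(\ell) := \prod_{d\mid \ell}\left|\textnormal{Res}(\Phi_d, \Phi_m)\right|$, the first two steps give $F(\ell) = |\Phi_{m_\ell}(1)|^{\varphi(m)/\varphi(m_\ell)}$ with $m_\ell := m/\gcd(m,\ell)$, and Möbius inversion over the divisors of $n$ isolates the target as $\log\left|\textnormal{Res}(\Phi_n, \Phi_m)\right| = \sum_{d\mid n}\mu(n/d)\frac{\varphi(m)}{\varphi(m_d)}\log\Phi_{m_d}(1)$. The final step is a case analysis on $d\mid n$ according to whether $m_d$ is a prime power. When $m/n = p^k$ is a power of a prime $p$, writing $n = p^a n'$ with $p\nmid n'$, only the divisors $d = p^b n'$ contribute (for these $m_d = p^{a+k-b}$), and since $\mu(n/d) = \mu(p^{a-b})$ kills all but $b\in\{a, a-1\}$, the two surviving terms telescope via multiplicativity of $\varphi$ to $\varphi(n)\log p$, giving $\left|\textnormal{Res}(\Phi_n, \Phi_m)\right| = p^{\varphi(n)}$. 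I expect the main obstacle to be the complementary case — $m/n$ not a prime power, including the case $n\nmid m$ where the ratio is not even an integer — in which one must show that the weighted alternating sum vanishes identically. This is the part demanding the most careful bookkeeping: one has to track precisely which $m_d$ are prime powers as $d$ ranges over the divisors of $n$, and verify that the Möbius weights $\mu(n/d)$ together with the factors $\varphi(m)/\varphi(m_d)$ cancel completely.
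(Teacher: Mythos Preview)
The paper does not supply its own proof of this proposition; it is quoted verbatim from Apostol \cite{apo70}. Your plan is precisely Apostol's argument: pass to the auxiliary resultant $\textnormal{Res}(X^n-1,\Phi_m)$, identify it with a power of $\Phi_{m_1}(1)$, use the von~Mangoldt evaluation of $\Phi_k(1)$, and invert multiplicatively over the divisors of $n$. So there is nothing to compare---your route \emph{is} the cited one.

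On the point you flag as the main obstacle (the case where $m/n$ is not a prime power): it is in fact no harder than the prime-power case. Observe that your weight $G(d) := \dfrac{\varphi(m)}{\varphi(m_d)}\log\Phi_{m_d}(1)$ depends on $d$ only through $\gcd(m,d)$. If $n\mid m$ and $m/n$ has two distinct prime divisors, then for every $d\mid n$ the quotient $m/d$ is divisible by $m/n$, hence never a prime power, so every term vanishes. If $n\nmid m$, pick a prime $q$ with $v_q(n)>v_q(m)$; writing $d=q^id'$ with $q\nmid d'$, the Möbius factor $\mu(n/d)$ forces $i\in\{v_q(n),\,v_q(n)-1\}$, and for both of these values $v_q(d)\geq v_q(m)$, so $G(q^{v_q(n)}d')=G(q^{v_q(n)-1}d')$ and the two surviving terms cancel. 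Thus the bookkeeping you anticipated is a two-line telescoping, not a genuine obstacle.
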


We conclude with the following.

\begin{coro}\label{good result pelem}
    Let $p$ be a prime number and let $n$ be a positive integer. Let $(L, f)$ be a lattice with isometry such that $f$ has order $p$ and $L$ is $n$-elementary. Let us denote by $F := L^f$ and $C:= L_f$ the associated invariant and coinvariant sublattices of $(L,f)$. Then $F$ and $C$ are $pn$-elementary.
\end{coro}

\begin{proof}
    According to \Cref{reduce res,resultant cyclo}, we have that $pL\leq F\oplus C$. In particular, since $nL^\vee\leq L$, we obtain
    \[pnL^\vee\leq pL\leq F\oplus C.\]
    But now, using the fact that $F\leq L$ is a primitive sublattice, we have that the morphism $\pi\colon L^\vee\to F^\vee$ is surjective. Therefore, by applying $\pi$ to the inclusion $pnL^\vee\leq F\oplus C$, we obtain that 
    \[pnF^\vee\leq F\]
    and $F$ is $pn$-elementary (see \cite[Proposition 4.10]{bh23}). Similar arguments apply to $C$.
\end{proof}

\subsection{Hermitian lattices}\label{prelim hermlat}
We now introduce hermitian lattices which play a major role in this work. For a more general picture on the subject, we refer to  the habilitation's thesis of M. Kirschmer \cite{kir16}.

\subsubsection{Definitions}
Let $K$ be a number field and let $E$ be a degree 2 extension of $K$.
We have that $\text{Gal}(E/K)$ has order 2 generated by an involution $\iota$.
For any place $\mathfrak{q}$ of $K$, we define $E_\mathfrak{q} := E\otimes_KK_\mathfrak{q}$ the respective $\mathfrak{q}$-adic completion. 
A finite place $\mathfrak{q}$ of $K$ is said to be {\bf good} if $\mathfrak{q}$ does not ramify in $E/K$; otherwise we call it {\bf bad}.
Let us denote by $\mathcal{O}_E$ and $\mathcal{O}_K$ respective maximal orders of $E$ and $K$.

A \textbf{hermitian $\mathcal{O}_E$-lattice} $(L, h)$ consists of a finitely generated projective $\mathcal{O}_E$-module $L$ which we equip with a nondegenerate form 
\[h\colon (L\otimes_{\mathcal{O}_E} E)\times (L\otimes_{\mathcal{O}_E} E)\to E\]
which is $\iota$-sesquilinear, i.e. $h$ is $E$-linear on the first variable and $h(x,y) = \iota(h(y, x))$, for all $x,y\in L\otimes_{\mathcal{O}_E} E$. 
Many of the notions defined for $\mathbb{Z}$-lattices exist for hermitian lattices (see \cite[\S 2]{kir16}), with their equivalent in terms of fractional ideals of $\mathcal{O}_E$ and $\mathcal{O}_K$. 
For instance:
\begin{enumerate}
    \item $\mathfrak{s}(L, h) := h(L, L)$ is a fractional $\mathcal{O}_E$-ideal called the \textbf{scale} of $(L, h)$;
    \item $\mathfrak{n}(L, h)$ is the fractional $\mathcal{O}_K$-ideal generated by the elements of the form $h(x,x)$ for $x\in L$. 
    It is called the \textbf{norm} of $(L, h)$ and it satisfies $\mathfrak{n}(L, h)\mathcal{O}_E\subseteq \mathfrak{s}(L, h)$;
    \item the \textbf{dual hermitian lattice} of $(L, h)$ is the hermitian $\mathcal{O}_E$-lattice with underlying module
    \[ L^{\#} := \left\{x\in L\otimes_{\mathcal{O}_E} E :\; h(x, L)\subseteq \mathcal{O}_E\right\}\]
    equipped with the form $h$;
    \item if there exists a fractional $\mathcal{O}_E$-ideal $\mathfrak{A}$ such that $\mathfrak{A}L^{\#} = L$
    then $L$ is said to be \textbf{$\mathfrak{A}$-modular}, and \textbf{unimodular} if $\mathfrak{A} = \mathcal{O}_E$;
    \item if $a_1, \ldots, a_k\in K$, we denote again $\langle a_1,\ldots, a_k\rangle$ the free hermitian $\mathcal{O}_E$-lattice whose Gram matrix in a given basis is diagonal with entries $a_1, \ldots, a_k$.
\end{enumerate}
If there is no ambiguity, we drop $h$ and $\mathcal{O}_E$ from the notation. 

The previous definitions of hermitian lattices and items (1)-(5) above apply if we work over the completions $E_\mathfrak{q}/K_{\mathfrak{q}}$ for any finite place $\mathfrak{q}$ in $K$. 

\begin{rem}
     For any finite place $\mathfrak{q}$, the maximal order $\mathcal{O}_{E_\mathfrak{q}}$ is a principal ideal domain: hence, as in the case of $\mathbb{Z}$-lattices, we can define bases for hermitian $\mathcal{O}_{E_\mathfrak{q}}$-lattices and thus associate Gram matrices to them. In particular, it makes sense to define the \emph{determinant} $\det(L)\in K_\mathfrak{p}^\times/N^{E_\mathfrak{q}}_{K_\mathfrak{q}}(\mathcal{O}_{E_\mathfrak{q}}^\times)$ of a hermitian $\mathcal{O}_{E_\mathfrak{q}}$-lattice $L$.
\end{rem}

\begin{notat}
    For any element $e\in E_\mathfrak{q}^\times$ in a completion of $E$, we denote by $H(e)$ the hermitian $\mathcal{O}_{E_\mathfrak{q}}$-lattice with Gram matrix
\(\scriptscriptstyle{\begin{pmatrix}0&e\\\iota(e)&0\end{pmatrix}.}\) We moreover denote by $H(0)$ the hermitian form on $E_\mathfrak{q}$ with Gram matrix \(\scriptscriptstyle{\begin{pmatrix}0&1\\1&0\end{pmatrix}}.\) 
\end{notat}

\begin{defin}
    A hermitian $\mathcal{O}_{E_\mathfrak{q}}$-lattice $L$ is called \textbf{hyperbolic} if $L\otimes_{\mathcal{O}_{E_\mathfrak{q}}} E_\mathfrak{q}$ is isometric to $H(0)^{\oplus r}$ for some $r\geq 1$.
\end{defin}

According to \cite[Proposition 3.3.5]{kir16}, if a finite place $\mathfrak{q}$ of $K$ is good, then up to isomorphism there is a unique unimodular $\mathcal{O}_{E_\mathfrak{q}}$-lattice for any given rank.\smallskip

For any hermitian $\mathcal{O}_E$-lattice $(L, h)$ and for any finite place $\mathfrak{q}$ of $K$, the hermitian $\mathcal{O}_{E_{\mathfrak{q}}}$-lattice $(L_{\mathfrak{q}}, h_\mathfrak{q}) := (L, h)\otimes_{\mathcal{O}_K} \mathcal{O}_{K_{\mathfrak{q}}}$ admits an orthogonal Jordan decomposition whose direct summands are modular \cite[Theorem 3.3.3]{kir16}. Such Jordan decomposition might not be unique and the local isometry class of a hermitian $\mathcal{O}_E$-lattice $L$ at a finite place $\mathfrak{q}$ of $K$ is determined by an equivalence class of Jordan decompositions of $L_\mathfrak{q}$.

\begin{defin}
We say that two hermitian $\mathcal{O}_E$-lattices $(L, h)$ and $(L', h')$ are in the same \textbf{genus} if  
\((L_\mathfrak{q}, h_\mathfrak{q}) \simeq (L'_\mathfrak{q}, h'_\mathfrak{q})\) for all places $\mathfrak{q}$ of $K$.    
\end{defin}

Note again that two isometric hermitian $\mathcal{O}_E$-lattices are in the same genus, but the converse does not always hold. 
Let us denote by $\Omega_\infty(K)$ the set of real infinite places of $K$. For a real place $\mathfrak{q}\in \Omega_\infty(K)$, we denote by $n(\mathfrak{q})$ the respective number of negative entries on the diagonal of the Gram matrix of $(L, h)\otimes_{\mathcal{O}_K} \mathcal{O}_{K_{\mathfrak{q}}}$ associated to any orthogonal basis. 
The collection $\{n(\mathfrak{q})\}_{\mathfrak{q}\in \Omega_\infty(K)}$ defines the \textbf{signatures } of $(L, h)$ and, together with the rank of $L$, they uniquely determine the local isometry class of $(L, h)$ at each real infinite place of $K$.

\subsubsection{The cyclotomic trace equivalence}\label{subsec trace}
Let $m \geq 3$ be an integer.
We let $E := \mathbb{Q}(\zeta_m)$ be the $m$th cyclotomic field with $\zeta_m\in\mathbb{C}$ a primitive $m$th root of unity. 
$E$ is equipped with a $\mathbb{Q}$-linear involution $\iota\colon E\to E$ whose fixed field $K$ is generated by $\zeta_m+\zeta_m^{-1}$ over $\mathbb{Q}$. 
Note that the extension $E/K$ is CM. 
We have moreover that $\mathcal{O}_E := \mathbb{Z}[\zeta_m]$ and $\mathcal{O}_K := \mathbb{Z}[\zeta_m+\zeta_m^{-1}]$ are respective maximal orders in $E$ and $K$ \cite{was97}. \smallskip

Let $(L, b, f)$ be a $\Phi_m$-lattice and let us denote 
\(S_m := \left\{1\leq i\leq \lfloor m/2\rfloor:\; \textnormal{gcd}(m,i) = 1\right\}.\)
The latter has order $\#S_m = \varphi(m)/2 = s$ where $s$ is the number of infinite places of $K$: in particular, there is a bijection between the set $S_m$ and $\Omega_\infty(K)$, the set of (real) infinite places of $K$. 
For $i\in S_m$, we denote by $\mathfrak{q}_i\in \Omega_\infty(K)$ the infinite place of $K$ whose associated $\mathbb{Q}$-embedding in $\mathbb{R}$ sends $\zeta_m+\zeta_m^{-1}$ to $\zeta_m^i+\zeta_m^{-i}$.\smallskip 

The isometry $f$ has minimal polynomial $\Phi_m$ so there is an action of $\mathcal{O}_E$ on $(L, b)$ given by
\( \zeta_m \cdot x := f(x)\), for all $x\in L$.
Therefore, we can define the form \vspace*{-1ex}
\begin{equation}\label{hermitian structure eq}
h\colon (L\otimes_{\mathcal{O}_E}E)\times (L\otimes_{\mathcal{O}_E}E)\to E,\;\; (x,y)\mapsto \frac{1}{m}\sum_{0\leq i\leq m-1}b(x, f^i(y))\zeta_m^i\vspace*{-1ex}
\end{equation}
which is $\iota$-sesquilinear and nondegenerate.
The $\mathcal{O}_E$-module $L$ equipped with $h$ defines a hermitian $\mathcal{O}_E$-lattice, which we call the {\bf hermitian structure} of $(L, b, f)$.  

\begin{rem}
    When $f = \pm \text{id}_L$, we let $E := \mathbb{Q}$ and $\iota = \id_E$. Together with \Cref{hermitian structure eq}, we have that $h = b$ and the hermitian structure of $(L, b, f)$ is $(L, b)$ itself.
\end{rem}

Conversely, given a hermitian $\mathcal{O}_E$-lattice $(L, h)$, we define the form
\[ b\colon (L\otimes_\mathbb{Z}\mathbb{Q})\times (L\otimes_\mathbb{Z}\mathbb{Q})\to \mathbb{Q},\;\; (x,y)\mapsto \textnormal{Tr}^E_\mathbb{Q}(h(x,y))\]
which is symmetric, bilinear and nondegenerate. 
This turns $(L, b)$ into a $\mathbb{Z}$-lattice.
The multiplication by $\zeta_m$ given by the $\mathcal{O}_E$-module structure on $L$ defines an isometry $f$ of $(L, b)$ of minimal polynomial $\Phi_m$.
Therefore, $(L,b, f)$ is a $\Phi_m$-lattice which we call the \textbf{trace lattice} of $(L, h)$.
The two previous constructions are inverse to each others: we talk about \textbf{trace equivalence} or \textbf{transfer}. We have that 
\begin{equation}\label{ranks trace constr}
    \textnormal{rank}_{\mathbb{Z}}(L) = \varphi(m)\cdot\textnormal{rank}_{\mathcal{O}_E}(L)
\end{equation}
and two hermitian $\mathcal{O}_E$-lattices are isometric if and only if their respective trace lattices are isomorphic, as lattices with isometry. Note moreover that, through the trace equivalence, the centralizer $O(L, b, f)$ coincides with the \textbf{unitary group} $U(L, h)$ consisting of $\mathcal{O}_E$-module isomorphisms of $L$ respecting the form $h$.
For a hermitian $\mathcal{O}_E$-lattice $(L, h)$ with trace lattice $(L, b, f)$, we have that
\begin{equation}\label{dual trace constr}
L^{\vee} = \mathfrak{D}_{E/\mathbb{Q}}^{-1}L^{\#}
\end{equation}
and $(L, b)$ is integral if and only if $\mathfrak{s}(L, h)\subseteq \mathfrak{D}_{E/\mathbb{Q}}^{-1}$, where $\mathfrak{D}_{E/\mathbb{Q}}$ is the \textbf{different ideal} of the finite extension $E/\mathbb{Q}$. 
Moreover, if $(L, b)$ is integral, then $(L, b)$ is even if and only if $\mathfrak{n}(L, h)\subseteq \mathfrak{D}_{K/\mathbb{Q}}^{-1}$ (see \cite[Lemma 6.6]{bh23}). 
More generally, we prove the following lemma.

\begin{lem}\label{trace of norm}
    Let $(L, b, f)$ be the trace lattice of a hermitian $\mathcal{O}_E$-lattice $(L,h)$. 
    Then, 
    \[s(L, b) = \textnormal{Tr}^E_\mathbb{Q}(\mathfrak{s}(L, h))\quad \text{ and }\quad n(L, b) = 2\textnormal{Tr}^K_\mathbb{Q}(\mathfrak{n}(L, h)).\]
\end{lem}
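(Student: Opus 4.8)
The plan is to prove the two identities by the same device: in each case the $\mathbb{Z}$-module spanned by the relevant values of $h$ is already stable under multiplication by the appropriate order, so that applying the trace map turns it directly into the asserted $\mathbb{Z}$-ideal. For the scale I would start from the defining relation $b(x,y)=\textnormal{Tr}^E_\mathbb{Q}(h(x,y))$ of the trace construction. Writing $P$ for the $\mathbb{Z}$-span of $\{h(x,y)\mid x,y\in L\}$, the $\mathbb{Z}$-linearity of the trace gives $s(L,b)=b(L,L)=\textnormal{Tr}^E_\mathbb{Q}(P)$, so it remains to see $P=\mathfrak{s}(L,h)$. Since $\mathfrak{s}(L,h)=h(L,L)$ is the $\mathcal{O}_E$-module generated by these values, and sesquilinearity gives $\alpha\,h(x,y)=h(\alpha x,y)$ with $\alpha x\in L$ for every $\alpha\in\mathcal{O}_E$, the module $P$ is already $\mathcal{O}_E$-stable; being a $\mathbb{Z}$-module containing all generators of $\mathfrak{s}(L,h)$, it must equal $\mathfrak{s}(L,h)$. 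Hence $s(L,b)=\textnormal{Tr}^E_\mathbb{Q}(\mathfrak{s}(L,h))$.

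For the norm, the first observation is that $h(x,x)=\iota(h(x,x))$ lies in $K$, so the tower formula for traces together with $[E:K]=2$ yields $b(x,x)=\textnormal{Tr}^E_\mathbb{Q}(h(x,x))=2\,\textnormal{Tr}^K_\mathbb{Q}(h(x,x))$. Thus, writing $N$ for the $\mathbb{Z}$-span of the diagonal values $\{h(x,x)\mid x\in L\}$, one gets $n(L,b)=2\,\textnormal{Tr}^K_\mathbb{Q}(N)$, and it suffices to prove $N=\mathfrak{n}(L,h)$. The inclusion $N\subseteq\mathfrak{n}(L,h)$ is clear, and the reverse amounts to showing that $N$ is an $\mathcal{O}_K$-module. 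This is the only genuinely nontrivial point of the argument: multiplication by an arbitrary $c\in\mathcal{O}_K$ is not directly produced by a diagonal value, since only the norms appear, via $h(\alpha x,\alpha x)=\alpha\iota(\alpha)\,h(x,x)$.

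I would overcome this obstacle using $\mathcal{O}_K=\mathbb{Z}[\theta]$ with $\theta=\zeta_m+\zeta_m^{-1}$ together with the computation $(1+\zeta_m)\iota(1+\zeta_m)=(1+\zeta_m)(1+\zeta_m^{-1})=2+\theta$, which gives
\[\theta\,h(x,x)=h\big((1+\zeta_m)x,\,(1+\zeta_m)x\big)-2\,h(x,x)\in N.\]
Hence $N$ is stable under multiplication by $\theta$, and therefore under all of $\mathbb{Z}[\theta]=\mathcal{O}_K$; being an $\mathcal{O}_K$-module that contains every $h(x,x)$, it coincides with $\mathfrak{n}(L,h)$. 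Applying $2\,\textnormal{Tr}^K_\mathbb{Q}$ then produces $n(L,b)=2\,\textnormal{Tr}^K_\mathbb{Q}(N)=2\,\textnormal{Tr}^K_\mathbb{Q}(\mathfrak{n}(L,h))$, as required. The whole proof is thus elementary once the module-stability of $N$ is established; the scale identity is immediate from sesquilinearity, while the norm identity hinges entirely on the explicit generator relation above that promotes the $\mathbb{Z}$-span of diagonal values to an $\mathcal{O}_K$-module.
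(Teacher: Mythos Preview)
Your proof is correct and takes a more direct route than the paper's. The paper treats the scale identically (calling it ``trivial''), but for the norm it establishes the harder inclusion $2\,\textnormal{Tr}^K_\mathbb{Q}(\mathfrak{n}(L,h))\subseteq n(L,b)$ by passing to completions and invoking \cite[Lemma~6.5]{bh23}: a $\mathbb{Z}$-module $B\subseteq K_\mathfrak{p}$ that contains $1$, is stable under multiplication by $N^{E_\mathfrak{p}}_{K_\mathfrak{p}}(\mathcal{O}_{E_\mathfrak{p}})$, and contains $\textnormal{Tr}^{E_\mathfrak{p}}_{K_\mathfrak{p}}(\mathcal{O}_{E_\mathfrak{p}})$ must contain all of $\mathcal{O}_{K_\mathfrak{p}}$. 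The verification of the trace condition there uses the polarization identity $h((\lambda+1)x,(\lambda+1)x)-h(\lambda x,\lambda x)-h(x,x)=(\lambda+\iota(\lambda))\,h(x,x)$, which for $\lambda=\zeta_m$ is exactly your identity $(2+\theta)\,h(x,x)=h((1+\zeta_m)x,(1+\zeta_m)x)$. Your argument exploits the cyclotomic-specific fact $\mathcal{O}_K=\mathbb{Z}[\theta]$ to bypass both the localization and the external lemma: once $\theta N\subseteq N$ is shown by that single identity, monogenicity finishes the job. This is cleaner and entirely self-contained in the cyclotomic setting; the paper's detour through \cite[Lemma~6.5]{bh23} has the compensating advantage of applying verbatim to an arbitrary CM extension $E/K$, where $\mathcal{O}_K$ need not be generated over $\mathbb{Z}$ by a single trace.
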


\begin{proof}
    The result for the scale is trivial since by definition $b = \text{Tr}^E_\mathbb{Q}\circ h$.
    Now, we remark that for all $x\in L$, we have
    $b(x,x) = \text{Tr}^E_\mathbb{Q}(h(x,x)) = 2\text{Tr}^K_\mathbb{Q}(h(x,x))\in 2\text{Tr}^K_\mathbb{Q}(\mathfrak{n}(L, h)).$
    For the other inclusion, we follow a similar strategy as in \cite[Corollary 6.7]{bh23}. 
    Note that since we have that
    \begin{equation}\label{padictracerep}
        (L_p, b_p) \simeq \bigoplus_{\mathfrak{p} \mid  p\mathcal{O}_K} (L_\mathfrak{p}, \text{Tr}^{E_\mathfrak{p}}_{\mathbb{Q}_p}\circ h_\mathfrak{p})
    \end{equation}
    as $\mathbb{Z}_p$-lattices, it suffices to show that for all prime numbers $p$ and every prime ideal $\mathfrak{p}$ of $\mathcal{O}_K$ dividing $p\mathcal{O}_K$, we have the inclusion
    \[2\text{Tr}^{K_\mathfrak{p}}_{\mathbb{Q}_p}(\mathfrak{n}(L_\mathfrak{p}, h_\mathfrak{p}))\subseteq n(L_p, b_p).\]
    The latter would hold if one can prove that for all $a\in \mathcal{O}_{K_\mathfrak{p}}$ and for all $x\in L_\mathfrak{p}$,
    \[2\text{Tr}^{K_\mathfrak{p}}_{\mathbb{Q}_p}(ah_\mathfrak{p}(x,x)) = \text{Tr}^{E_\mathfrak{p}}_{\mathbb{Q}_p}(ah_\mathfrak{p}(x,x))\in n(L_p, b_p).\]
    Let $B$ be the $\mathbb{Z}$-module consisting in elements $w\in K_\mathfrak{p}$ such that $\text{Tr}^{E_\mathfrak{p}}_{\mathbb{Q}_p}(wh_\mathfrak{p}(x,x))\in n(L_p, b_p)$ for all $x\in L_\mathfrak{p}$. We want to show that $B$ contains $\mathcal{O}_{K_\mathfrak{p}}$. In order to do so we apply \cite[Lemma 6.5]{bh23} which tells us that if $B$ contains 1, $\text{Tr}^{E_\mathfrak{p}}_{K_{\mathfrak{p}}}(\mathcal{O}_{E_\mathfrak{p}})$ and $N^{E_\mathfrak{p}}_{K_\mathfrak{p}}(\mathcal{O}_{E_\mathfrak{p}})B$, then $B$ contains $\mathcal{O}_{K_\mathfrak{p}}$.  
    By the decomposition in \Cref{padictracerep}, one can already show that $B$ contains $1$, and $B$ is nonempty. Now let $w\in B$ and let $\lambda\in\mathcal{O}_{E_\mathfrak{p}}$. One computes, for all $x\in L_\mathfrak{p}$,
    \[\text{Tr}^{E_\mathfrak{p}}_{\mathbb{Q}_p}(\lambda\iota(\lambda)wh_\mathfrak{p}(x,x)) = \text{Tr}^{E_\mathfrak{p}}_{\mathbb{Q}_p}(wh_\mathfrak{p}(\lambda x,\lambda x)) \in n(L_p, b_p).\]
    Hence, $N^{E_\mathfrak{p}}_{K_\mathfrak{p}}(\mathcal{O}_{E_\mathfrak{p}})B\subseteq B$. Finally, for all $\lambda\in \mathcal{O}_{E_\mathfrak{p}}$ and for all $x\in L_\mathfrak{p}$, we observe that 
    \[\text{Tr}^{E_\mathfrak{p}}_{\mathbb{Q}_p}((\lambda+\iota(\lambda))h_\mathfrak{p}(x,x)) = \text{Tr}^{E_\mathfrak{p}}_{\mathbb{Q}_p}(h_\mathfrak{p}((\lambda+1) x,(\lambda+1) x)) - \text{Tr}^{E_\mathfrak{p}}_{\mathbb{Q}_p}(h_\mathfrak{p}(\lambda x,\lambda x)) - \text{Tr}^{E_\mathfrak{p}}_{\mathbb{Q}_p}(h_\mathfrak{p}(x,x))\in n(L_p, b_p).\] This implies that $\text{Tr}^{E_\mathfrak{p}}_{K_{\mathfrak{p}}}(\mathcal{O}_{E_\mathfrak{p}})\subseteq B$ and we can conclude.
\end{proof}

For all $i\in S_m$, we denote by $(k_i^+, k_i^-)\in \mathbb{N}^2$ the signatures of the real quadratic space
\((K_i, b_{\mathbb{R}}) := \ker(f_{\mathbb{R}}+f_{\mathbb{R}}^{-1}-\zeta_m^i-\zeta_m^{-i}).\) 
If $(l_+, l_-)$ denotes the signatures of $(L, b)$, one has that $l_{\pm} = \sum_{i\in S_m} k_i^{\pm}.$
Moreover, for all $i\in S_m$, we have that $(K_i, b_{\mathbb{R}}) = \text{Tr}^{E_{\mathfrak{q}_i}}_{K_{\mathfrak{q}_i}}(L_{\mathfrak{q}_i})$ and $k_i^- = 2n(\mathfrak{q}_i)$. 
Hence for all $i\in S_m$, both signatures of $K_i$ are even and
\begin{equation}\label{sign trace constr}
    l_- = 2\sum_{i\in S_m}n(\mathfrak{q}_i).
\end{equation}

\subsection{Irreducible holomorphic symplectic manifolds}
An \textbf{irreducible holomorphic symplectic} (IHS) manifold is a simply-connected, compact, K\"ahler manifold $X$ such that $H^0(X, \Omega^2_X)$ is $1$-dimensional generated by a nowhere degenerate holomorphic 2-form $\sigma_X$. \smallskip

IHS manifolds have even complex dimension and for a given dimension, the known examples of IHS manifolds are classified up to deformation. 
The 2-dimensional IHS manifolds are K3 surfaces and for each even dimension $2n > 2$, we know at least 2 deformation types: the $\textnormal{K3}^{[n]}$-type \cite[\S 6]{bea83a} and the $\textnormal{Kum}_n$-type \cite[\S 7]{bea83a}. 
Moreover, thanks to the work of O'Grady, we know two further deformation types in dimension 6 and 10, respectively denoted $\textnormal{OG6}$ \cite{og03} and $\textnormal{OG10}$ \cite{og99}. 
At the time this paper is written, no other deformation types have been discovered apart from the one previously cited: we refer to them as {\em known deformation types}. 

\begin{notat}
For an IHS manifold $X$, and a known deformation type $\mathcal{T}$, we write $X \sim \mathcal{T}$ if $X$ is of deformation type $\mathcal{T}$. 
An example for each known deformation type can be obtained by constructing some moduli spaces of semistable sheaves on some projective K3 or abelian surfaces.
\end{notat}

For a birational automorphism $f$ of an IHS manifold $X$, the action of $f$ on cohomology respects the Hodge structure on $H^2(X, \mathbb{C})$. 
Those automorphisms which restrict to the identity on $H^{2, 0}(X)$ are called {\bf symplectic}, and otherwise {\bf nonsymplectic}.
A main feature for any IHS manifold $X$ is that its integral second cohomology group comes equipped with a nondegenerate quadratic form $q_X$ of real signatures $(3, b_2(X)-3)$, called the Beauville--Bogomolov--Fujiki (BBF) quadratic form. 
Moreover, any $f\in\text{Bir}(X)$ induces an isometry of $(H^2(X, \mathbb{Z}), q_X)$  which is in particular a \emph{monodromy operator} \cite[Theorem 1.3]{mar11}. 
These operators generate the \textbf{monodromy group} $\text{Mon}^2(X)$ and the ones whose extension to $\mathbb{C}$ respects the Hodge structure on $H^2(X, \mathbb{C})$ are the so-called {\bf Hodge monodromies}. 
It follows that the image of the orthogonal representation 
\[\rho_X\colon \text{Bir}(X) \to O(H^2(X,\mathbb{Z}), q_X)\]
consists of Hodge monodromies. 
The homomorphism $\rho_X$ has finite kernel \cite[Proposition 9.1]{huy99}.
Moreover, by a consequence of {\em Markman--Verbitsky's Strong Torelli-type theorems} (see \cite[Theorem 1.3]{mar11} and \cite[Theorem 1.17]{ver13}), we know that the image of $\rho_X$ consists of Hodge monodromies preserving the \textbf{fundamental exceptional chamber} $\mathcal{FE}_X$ of $X$. It lies in the connected component of  $\{v\in H^{1,1}(X, \mathbb{R})\;:\; q_X(v,v) > 0\}$ containing a K\"ahler class, called the \textbf{positive cone} $C_X$ of $X$ Such a cone admits two \emph{wall-and-chamber decompositions}: the walls of these decompositions are described by two sets of vectors $\mathcal{W}^{pex}(X)\subseteq \mathcal{W}(X)$ respectively called \textbf{stably prime exceptional divisors} and {\bf MBM-classes} \cite[Definition 1.13]{av15}. The fundamental exceptional chamber of $X$ is a chamber of the first of such decompositions, containing a K\"ahler class, and similarly, the \textbf{K\"ahler cone} $\mathcal{K}_X$ of $X$ is the chamber of the second decomposition containing a K\"ahler class. Note that $\mathcal{K}_X\subseteq \mathcal{FE}_X$.
\smallskip

For an IHS manifold $X$, the monodromy group, the kernel of $\rho_X$ and the BBF form $q_X$ are deformation invariants and they have been explicitly determined for each known deformation type (see \cite[Lemma 4.2]{mar10}, \cite[Theorem 2.3]{mon16}, \cite[Theorem 1.4]{mr21}, \cite[Theorem 5.4]{ono22} for the monodromy groups, \cite[Propositions 9 and 10]{bea83b}, \cite[Corollary 5]{bnws11}, \cite[Theorems 2.1 and 42.]{mw17} for the kernel of $\rho_X$, and \cite[Propositions 6 and 8]{bea83a}, \cite[Corollary 3.5.13]{rap07}, \cite[Theorem 4.3]{rap08} for the computation of the BBF forms).
This information allows one to study symmetries for a given deformation type by working with the isometries of the associated BBF form --- such isometries are called {\bf effective} if they can be realized as symmetries of an IHS manifold, and \textbf{nonsymplectic} if they can be realized by a nonsymplectic symmetry.\smallskip 

The common strategy to classify such effective isometries is to bring first all the previous definitions to a single even $\Z$-lattice by the mean of markings. 
A {\bf marked pair} $(X, \eta)$ consists of an IHS manifold $X$ of a given deformation type $\mathcal{T}$, and a {\bf marking} $\eta\colon H^2(X, \mathbb{Z})\to \Lambda_\mathcal{T}$ which is an isometry to a reference $\Z$-lattice attached to the deformation type.
These pairs are classified up to equivalence in a coarse moduli space $\mathcal{M}_\mathcal{T}$.
Two pairs lie in the same connected component of $\mathcal{M}_\mathcal{T}$ if their markings differ by a parallel transport operator, and they define inseparable points if and only if they represent birational IHS manifolds \cite[Theorem 4.6]{huy99}.
The associated period map
\[\mathcal{P}_\T\colon\mathcal{M}_\mathcal{T} \to \Omega_\mathcal{T} := \left\{\mathbb{C}\omega\in\mathbb{P}(\Lambda_\mathcal{T}\otimes_\mathbb{Z} \mathbb{C}):\;(\omega, \omega) = 0,\; (\omega, \bar{\omega}) > 0\right\}, \; (X, \eta)\mapsto \mathbb{C}\eta(\sigma_X) \]
is a local homeomorphism. 
It is surjective onto the period domain $\Omega_\T$ when restricted to any connected component of $\mathcal{M}_\T$ \cite[Theorem 8.1]{huy99}. 
The overall strategy to classify automorphisms of IHS manifolds consists in using the monodromy group, the MBM-classes and the BBF form of a given deformation type to classify the groups of effective Hodge monodromies. 
The surjectivity of the period map and the Strong Torelli theorem allow us then to conclude on the existence of an IHS manifold of that deformation type with the given classified symmetries, up to the kernel of the associated orthogonal representation.

\begin{rem}
    MBM-classes for the known deformation types of IHS manifolds have been numerically determined. In particular, given a known deformation type $\T$, there exist two sets of vectors
    \[\mathcal{W}^{pex}(\Lambda_\T)\subseteq \mathcal{W}(\Lambda_\T)\subseteq \Lambda_\T\]
    respectively called sets of \textbf{numerical stably prime exceptional divisors} and \textbf{numerical MBM-classes}. They are so that, for any $X\sim\T$ and any marking $\eta\colon H^2(X, \Z)\to \Lambda_\T$, one has
    \[\mathcal{W}^{pex}(X) = \eta^{-1}(\mathcal{W}^{pex}(\Lambda_\T))\cap H^{1,1}(X, \mathbb{R})\text{ and }\mathcal{W}(X) = \eta^{-1}(\mathcal{W}(\Lambda_\T))\cap H^{1,1}(X, \mathbb{R}).\]
\end{rem}

\subsection{A moduli classification}\label{mod class part}
In \cite{bc22}, the authors classify pairs $(X, G)$ consisting of a (projective) IHS manifold $X$ of known deformation type, and of a finite subgroup $G\leq \textnormal{Aut}(X)$ of automorphisms whose action on $H^2(X, \mathbb{Z})$ is cyclic of odd prime order, generated by the image of a nonsymplectic automorphism. For the geometric part of \cite{bc22}, most of the proofs rely on the fact that for odd prime order isometries, the minimal polynomial always have at most two factors, and the corresponding character $\chi$ is not real. In the general situation, this still holds as long as $m\geq 3$ and the minimal polynomial is of the form $\Phi_1\Phi_m$. In what follows, we recall the result of Brandhorst--Cattaneo and we show that it extends to the case where $G$ is algebraically trivial and not symplectic.\bigskip

For the rest of this section, let us fix $\T$ a deformation type of IHS manifolds. 

\begin{defin}\label{def eq and bir conj}
    Let $X, X'\sim \T$ be two projective IHS manifolds of the same deformation type, and let $G\leq\textnormal{Aut}(X)$ and $G'\leq\textnormal{Aut}(X')$ be finite.  
    We call the pairs $(X, G)$ and $(X', G')$ \textbf{birational conjugate} is there exists a birational isomorphism $f\colon X\dashrightarrow X'$ such that $fGf^{-1} = G'$.
\end{defin}

Let $X$ be an IHS manifold, and let $\mathcal{X}\to \text{Def}(X)$ be the universal deformation family of $X$. In \cite[\S3.1]{bc22}, the authors show that for any finite subgroup $G\leq \textnormal{Aut}(X)$, there exists a universal deformation family of the pair $(X, G)$ whose base is $\text{Def}(X)^G$. This motivates the following definition.

\begin{defin}
    Let $X, X'\sim \T$ be two projective IHS manifolds of the same deformation type, and let $G\leq\textnormal{Aut}(X)$ and $G'\leq\textnormal{Aut}(X')$ be finite. We call the pairs $(X, G)$ and $(X', G')$ \textbf{deformation equivalent} if there exists a connected family $\pi:\mathcal{X}\to B$ of IHS manifolds, a group $\mathcal{G}\leq \textnormal{Aut}(\mathcal{X}/B)$ and two points $b,b'\in B$ such that $(\mathcal{X}, \mathcal{G})_b = (X, G)$ and $(\mathcal{X}, \mathcal{G})_{b'} = (X', G')$. 
\end{defin}
Here, we say  $\pi:\mathcal{X}\to B$ is a connected family of IHS manifolds if the base $B$ is connected, $\pi$ is a smooth proper holomorphic map and the fiber over every closed point in $B$ is an IHS manifold.\smallskip

Let $\mathcal{M}^\circ$ be a connected component of the moduli space $\mathcal{M}$ of marked IHS manifolds of deformation type $\T$. Let $\Lambda$ be the abstract $\Z$-lattice associated to the deformation type $\T$ and let $\text{Mon}^2(\Lambda) := \eta\text{Mon}^2(X)\eta^{-1}$ for some $[(X, \eta)]\in\mathcal{M}^\circ$. We denote again by $\mathcal{W}^{pex}(\Lambda)\subseteq \mathcal{W}(\Lambda)$ the sets of numerical stably prime exceptional divisors and MBM-classes of $\Lambda$, and we define $C_\Lambda\subseteq \Lambda\otimes_\Z\mathbb{R}$ to be such that for all $[(X, \eta)]\in \mathcal{M}^\circ$, the positive cone $C_X = \eta^{-1}(C_\Lambda)\cap H^{1,1}(X,\mathbb{R})$. 

\begin{rem}
    Note that the definitions of $\text{Mon}^2(\Lambda),\, \mathcal{W}^{pex}(\Lambda),\, \mathcal{W}(\Lambda)$ and $C_\Lambda$ do not depend on a choice of $[(X, \eta)]\in \mathcal{M}^\circ$. Hence, these objects are well-defined as long as we fix a connected component of $\mathcal{M}$.
\end{rem}

Let now $H\leq \text{Mon}^2(\Lambda)$ be cyclic of finite order $m\geq 3$ generated by $h$ with minimal polynomial $\Phi_1\Phi_m$. Suppose that $h$ is nonsymplectic: in particular, the coinvariant sublattice $\Lambda_h = \Lambda^{\Phi_m(h)}$ has signatures $(2, \ast)$ \cite[Proposition 6]{bea83b}. We fix a primitive $m$th root of unity $\zeta_m$ such that 
\[\ker(h+h^{-1}-\zeta_m-\zeta_m^{-1})\]
has real signatures $(2, \ast)$ (\Cref{subsec trace}), and we define the corresponding character $\chi\colon H\to \mathbb{C}^\times, \; \chi(h) = \zeta_m$. The choice of $\chi$ gives rise to a moduli space $\mathcal{M}^\chi_H$ parametrizing $H$-marked IHS manifolds $(X, \eta, G)$ of deformation type $\T$, such that for all $g\in G$
\[\chi(\eta\rho_X(g)\eta^{-1})\sigma_X = (g^\ast)^{-1}\sigma_X\]
where $\sigma_X$ generates $H^{2,0}(X)$. We say that a triple $(X, \eta, G)$ is \textbf{$H$-marked} if  $X\sim \T$, $\ker\rho_X\leq G\leq \textnormal{Aut}(X)$ and $\eta\rho_X(G)\eta^{-1}=H$. According to \cite[Proposition 3.8]{bc22}, the forgetful map
\[\phi\colon \mathcal{M}^\chi_H\to \mathcal{M},\; (X, \eta, G)\mapsto (X, \eta)\]
is a closed embedding. In particular, the period map
\[\mathcal{P}^\chi\colon \mathcal{M}^\chi_H\to \Omega^\chi := \{[\omega]\in \mathbb{P}(\Lambda\otimes_\mathbb{Z}\mathbb{C})\;:\; \omega^2 = 0,\; \omega.\overline{\omega} > 0\;\text{and}\; h(\omega) = \chi(h)\omega,\, \forall h\in H\}\]
is a local isomorphism. In what follows, we denote $\mathcal{M}^{\chi, \circ}_H := \phi^{-1}(\mathcal{M}^\circ\cap \phi(\mathcal{M}^\chi_H))$.\smallskip 

We denote by $N := \Lambda^H$ the corresponding invariant sublattice, and $M = N^\perp_{\Lambda}$ its orthogonal complement. For later purpose, we let $C_N := C_\Lambda\cap (N\otimes_\Z\mathbb{R})$. We call \textbf{K\"ahler-type chamber} in $C_N$ any connected component of $C_N\setminus \bigcup_{v\in \mathcal{W}(\Lambda)}v^\perp$.

\begin{defin}[{{{\cite[Definition 3.10]{bc22}}}}]
    Let $(X, \eta, G)$ be an $H$-marked IHS manifold of deformation type $\T$.
    \begin{enumerate}
        \item We say $(X, \eta, G)$ is \textbf{$(H, N)$-polarized} if $C_N\cap \eta(\mathcal{K}_X)\neq \emptyset$.
    \end{enumerate}
    Suppose now $(X, \eta, G)$ is $(H, N)$-polarized.
    \begin{enumerate}\setcounter{enumi}{1}
        \item For a K\"ahler-type chamber $K(N)$ in $C_N$ preserved by $H$, we say $(X, \eta, G)$ is \textbf{$K(N)$-general} if $\eta(\mathcal{K}_X)\cap (N\otimes_\Z\mathbb{R}) = K(N)$.
        \item We say $(X, \eta, G)$ is \textbf{$H$-general} if it is $K(N)$-general for some K\"ahler-type chamber $K(N)$ in $C_N$ preserved by $H$.
    \end{enumerate}
\end{defin}

For an $H$-marked IHS manifold $(X, \eta, G)$ of deformation type $\T$, we denote $C_X^G := C_X\cap H^2(X, \mathbb{R})^{\rho_X(G)}$ and $\mathcal{FE}_X^G := \mathcal{FE}_X\cap H^2(X,\mathbb{R})^{\rho_X(G)}$. Recall that the fundamental exceptional chamber $\mathcal{FE}_X$ of $X$ is a chamber of 
\[C_X\setminus\bigcup_{v\in\mathcal{W}^{pex}(X)}v^\perp\]
where $\mathcal{W}^{pex}(X)\subseteq \NS(X)$ is the set of stably prime exceptional divisors of $X$. The following is a straightforward generalization of \cite[Lemma 3.12]{bc22}.

\begin{lem}\label{lem fund chamb}
    If an $(H, N)$-polarized IHS manifold $(X, G, \eta)$ of deformation type $\T$ is $H$-general, then $\mathcal{FE}_X^G$ is a chamber of 
    \[C_X^G\setminus\bigcup_{v\in\mathcal{W}^{pex}(X)}v^\perp.\]
\end{lem}
\begin{proof}
   By the generality assumption,  we observe that
    \[C_X^G\setminus\bigcup_{v\in\mathcal{W}^{pex}(X)}v^\perp =C_X^G\setminus\bigcup_{v\in\mathcal{W}^{pex}(X)\cap H^2(X, \Z)^{\rho_X(G)}}v^\perp.\]
    Hence, the stably prime exceptional divisors of $X$ are fixed under the action of $G$, so we can conclude (see also \cite[Lemma 5.2]{bcs19}).
\end{proof}

Let us fix now a K\"ahler-type chamber $K(N)\subseteq C_N$. We define
\begin{align*}
    \Delta &:= \bigcup_{v\in \mathcal{W}(M)}\mathbb{P}(v^\perp)\subseteq \mathbb{P}(\Lambda\otimes_\Z\mathbb{C})\\
    \Delta'&:= \bigcup_{v\in \mathcal{W}'}\mathbb{P}(v^\perp)\subseteq \mathbb{P}(\Lambda\otimes_\Z\mathbb{C})
\end{align*}
where $\mathcal{W}(M) := \mathcal{W}(\Lambda)\cap M$ and 
\[\mathcal{W}' := \{v\in \mathcal{W}(\Lambda)\;:\; \exists(v_N, v_M)\in N^\vee\times M^\vee,\; v = v_N+v_M\;\text{and}\; v_N^2<0,\; v_M^2<0\}.\]
In \cite[Proposition 3.11, 1.]{bc22} the authors show that the restriction of the period map $\mathcal{P}^\chi$ to $\mathcal{M}^{\chi, \circ}_H$ surjects onto $\Omega^\chi\setminus \Delta$. Moreover, if one denotes by $\mathcal{M}^{\chi, \circ}_{K(N)}$ the subset of $\mathcal{M}^{\chi, \circ}_H$ consisting of $K(N)$-general $(H, N)$-polarized IHS manifolds of deformation type $\T$, then $\mathcal{P}^\chi$ induces a bijection
\[\mathcal{P}^\chi_{K(N)}\colon \mathcal{M}^{\chi, \circ}_{K(N)}\to \Omega^\chi\setminus (\Delta\cup \Delta')\]
(\cite[Proposition 3.11, 2.]{bc22}, \cite[Theorem 5.6]{bcs19}). Moreover, the following holds.

\begin{propo}
    Let $w\in\Omega^\chi\setminus(\Delta\cup \Delta')$ and let $(X_1, \eta_1, G_1),\,(X_2, \eta_2, G_2)$ be two $K(N)$-general elements in the fiber of $\mathcal{P}^\chi$ over the period point $w$. Then $(X_1, G_1)$ and $(X_2, G_2)$ are birational conjugate.
\end{propo}
\begin{proof}
    The proof is similar to the one of \cite[Proposition 3.11, 3.]{bc22}.\qedhere
    
\end{proof}

We can now state the following theorem, which is a generalization of 
\cite[Theorem 3.13]{bc22}. It relates a classification of pairs $(X, G)$ as before, up to deformation and birational conjugacy, with a classification of conjugacy classes of finite cyclic subgroups $H\leq \text{Mon}^2(\Lambda)$.

\begin{theo}\label{main thhhh}
    Fix a known deformation type $\mathcal{T}$ of IHS manifolds and a connected component $\mathcal{M}^\circ$ of the moduli of marked IHS manifolds of deformation type $\T$. Let $h_1,\ldots,h_n$ be a complete set of representatives for the conjugacy classes of elements in $\textnormal{Mon}^2(\Lambda)$ of order $m\geq 3$ whose minimal polynomial equals $\Phi_1\Phi_m$. Suppose that the $h_i$'s are chosen such that the real quadratic space $\ker(h_i+h_i^{-1}-\zeta_m-\zeta_m^{-1})\leq \Lambda\otimes_\mathbb{Z}\mathbb{R}$ has signatures $(2, \ast)$ for all $i$. Let $H_i = \langle h_i \rangle \leq \textnormal{Mon}^2(\Lambda)$ and $\chi_i\colon H_i\to \mathbb{C}^{\times}$ the character defined by $\chi_i(h_i) := \zeta_m$. For each $H_i$, choose a point $(X_i, G_i, \eta_i)\in \mathcal{M}^{\chi_i, \circ}_{H_i}$. Then, the pairs $(X_1, G_1), \ldots, (X_n, G_n)$ form a complete set of representatives of pairs $(X, G)$ up to deformation and birational conjugacy where $X\sim\mathcal{T}$, the group $G \leq \textnormal{Aut}(X)$ is nonsymplectic with $G_s = \ker (\rho_X)$, and $\rho_X(G)$ is cyclic of order $m$ generated by an isometry $h$ with minimal polynomial $\Phi_1\Phi_m$.
\end{theo}

\begin{proof}
    The monodromy conjugacy class of $H_i = \langle h_i \rangle$ is invariant under deformation and birational conjugacy, therefore the minimal polynomial of a generator stays unchanged too. Finally, since $\chi_i$ is not real, the period domains are connected and the proof follows similarly as in \cite[Theorem 3.13]{bc22}. We omit the details.
\end{proof}

\begin{rem}\label{rem perm}
    For $H_i = \langle h_i\rangle$ as in the theorem, all the generators $h_i'$ of $H_i$ have order $m$ and minimal polynomial $\Phi_1\Phi_m$. However the signatures of the real quadratic space $\ker(h_i+h_i^{-1}-\zeta_m-\zeta_m^{-1})$ are $(2, \ast)$, while the ones of $\ker(h_i'+h_i'^{-1}-\zeta_m-\zeta_m^{-1})$ are $(0, \ast)$ for any other generator $h_i'\neq h_i,\ h_i^{-1}$.
    Since we aim to classify cyclic groups of nonsymplectic isometries, this extra condition avoids duplication as we could have two representatives of different conjugacy classes generating the same cyclic subgroup of $O(\Lambda)$. This actually boils down to considering different genera of hermitian $\mathbb{Z}[\zeta_m]$-lattices, up to an action of the Galois group $\textnormal{Gal}(\mathbb{Q}(\zeta_m+\zeta_m^{-1})/\mathbb{Q})$ on the set of places of $\mathbb{Q}(\zeta_m+\zeta_m^{-1})$.
\end{rem}

By \cite[Lemma 3.7, Theorem 3.9]{cam16}, the component $\mathcal{M}^{\chi, \circ}_{H}$ contains a Hausdorff subspace consisting of those polarized IHS manifolds such that $\eta(\textnormal{NS}(X)) = N$, i.e. for which the corresponding group $G$ is algebraically trivial. According to \cite[Theorem 3.9]{cam16}, the image of this subspace under the period map is dense and connected in the associated period domain $\Omega^\chi$.
In particular, for any effective finite subgroup $H\leq \text{Mon}^2(\Lambda)$ which is cyclic of order $m\geq 3$ generated by $h$ with minimal polynomial $\Phi_1\Phi_m$, and for any $H$-marked IHS manifold $(X, \eta, G)$, the deformation family of the pair $(X, G)$ contains a pair $(X', G')$ with $G'$ algebraically trivial.
Therefore, for a classification of pairs up to birational conjugacy and deformation as in \Cref{main thhhh}, we can always assume the group $G$ to be algebraically trivial. The aim for the rest of the paper is to provide the tools to determine the isometries $h_i$'s from \Cref{main thhhh}, and give a proof of
\Cref{mainth2}.

\section{Lattice-theoretic approach}\label{sec: lattice approach}
In order to make \Cref{main thhhh} explicit on the $\Z$-lattice side, we need to study the existence of isometries of the known BBF forms with minimal polynomial $\Phi_1\Phi_m$, and classify them up to conjugacy in the respective monodromy groups. For each known deformation type $\T$ of IHS manifolds, with reference $\Z$-lattice $\Lambda_\T$, there exists an even unimodular $\Z$-lattice $M_\T$ in which $\Lambda_\T$ embeds primitively with positive definite complement. In \cite{bc22}, Brandhorst and Cattaneo use this fact to transport the $\Z$-lattice classification of isometries for each $\T$ to a study of isometries with given minimal polynomial in certain unimodular $\Z$-lattices. The important point here is that this allows one to unify the study into a given common problem, mainly performing a classification at the level of unimodular $\Z$-lattices.

\subsection{From monodromies to isometries of unimodular \texorpdfstring{$\Z$}{Z}-lattices}\label{monodrom to unimod}
In this subsection we review the strategy presented in \cite[\S 2 \& \S4.2]{bc22} to classify odd prime order monodromies, and we fix notation. 
But before that, we make a small remark on nonstable Hodge monodromies for the known BBF forms. 
Indeed, we recall the following:
\begin{lem}\label{lem:disc action}
    Let $\T$ be a known deformation type of IHS manifolds. Then, the orthogonal representation of $\textnormal{Mon}^2(\Lambda_\T)$ on $D_{\Lambda_\T}$ has order 2, except for $\T = \textnormal{K3}, \textnormal{K3}^{[2]}$ where it has order 1.
\end{lem}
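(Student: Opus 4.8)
The plan is to compute, deformation type by deformation type, the image of the natural homomorphism $r_\T\colon\textnormal{Mon}^2(\Lambda_\T)\to O(D_{\Lambda_\T})$, using the explicit form of $\Lambda_\T$ (hence of its discriminant form) together with the explicit monodromy groups recalled just before the statement. Here $\Lambda_{\textnormal{K3}}=U^{\oplus3}\oplus E_8^{\oplus2}$, $\Lambda_{\textnormal{K3}^{[n]}}=U^{\oplus3}\oplus E_8^{\oplus2}\oplus\langle-2(n-1)\rangle$, $\Lambda_{\textnormal{Kum}_n}=U^{\oplus3}\oplus\langle-2(n+1)\rangle$, $\Lambda_{\textnormal{OG6}}=U^{\oplus3}\oplus\langle-2\rangle^{\oplus2}$ and $\Lambda_{\textnormal{OG10}}=U^{\oplus3}\oplus E_8^{\oplus2}\oplus A_2$. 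First I would dispose of the two exceptional cases: for $\T=\textnormal{K3}$ the lattice is unimodular, so $D_\Lambda=0$ and $r_\T$ is trivial; for $\T=\textnormal{K3}^{[2]}$ one has $D_\Lambda\cong\mathbb{Z}/2\mathbb{Z}$, whose only group automorphism is the identity, so $O(D_\Lambda)$ is itself trivial and $r_\T$ again has order $1$. In all remaining cases I will show the image has order exactly $2$, by combining an upper bound with a lower bound.

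For the upper bound I distinguish two situations. When $\T=\textnormal{OG6}$ or $\T=\textnormal{OG10}$, the target $O(D_\Lambda)$ already has order $2$: for OG10, $D_\Lambda\cong\mathbb{Z}/3\mathbb{Z}$ and the only automorphisms $\pm\textnormal{id}$ both preserve $q_\Lambda$; for OG6 a direct computation on $D_\Lambda\cong(\mathbb{Z}/2\mathbb{Z})^2$ shows the three nonzero classes carry $q_\Lambda$-values $3/2,3/2,1$, so any isometry fixes the class of square $1$ and transposes the other two, whence $O(D_\Lambda)\cong\mathbb{Z}/2\mathbb{Z}$. When $\T=\textnormal{K3}^{[n]}$ or $\T=\textnormal{Kum}_n$, where $O(D_\Lambda)$ may be larger, I instead invoke the cited monodromy computations (\cite[Lemma 4.2]{mar10} and \cite[Theorem 2.3]{mon16}): $\textnormal{Mon}^2(\Lambda)$ consists of orientation-preserving isometries whose induced action on the cyclic group $D_\Lambda$ is $\pm\textnormal{id}$, so $r_\T$ lands in $\{\pm\textnormal{id}\}$ and has order at most $2$. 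Here the elementary case distinction matters: on $D_\Lambda\cong\mathbb{Z}/(2n-2)\mathbb{Z}$ (resp.\ $\mathbb{Z}/(2n+2)\mathbb{Z}$) one has $-\textnormal{id}=\textnormal{id}$ only when the order is at most $2$, i.e.\ precisely at $n=2$ for $\textnormal{K3}^{[n]}$ (already excluded) and never for $\textnormal{Kum}_n$ with $n\geq2$.

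For the lower bound I exhibit in each remaining case a monodromy operator acting nontrivially on $D_\Lambda$. For $\T=\textnormal{K3}^{[n]}$ ($n\geq3$), $\textnormal{Kum}_n$ and $\textnormal{OG10}$, I take the isometry acting by $-\textnormal{id}$ on the negative definite summand $\langle-2(n-1)\rangle$, $\langle-2(n+1)\rangle$, resp.\ $A_2$, and by $\textnormal{id}$ on its orthogonal complement; it fixes the positive definite part coming from $U^{\oplus3}$ pointwise, hence lies in $O^+(\Lambda)$, it acts as $-\textnormal{id}\neq\textnormal{id}$ on $D_\Lambda$, and it belongs to $\textnormal{Mon}^2(\Lambda)$ by the cited descriptions. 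For OG6, where $-\textnormal{id}$ acts trivially on the $2$-torsion group $(\mathbb{Z}/2\mathbb{Z})^2$, I use instead the isometry swapping the two copies of $\langle-2\rangle$: it is again orientation-preserving and induces the nontrivial transposition in $O(D_\Lambda)$, and it lies in $\textnormal{Mon}^2(\Lambda)$ by \cite[Theorem 1.4]{mr21}. Together with the upper bound this forces $|\,\textnormal{im}\,r_\T\,|=2$ in all non-exceptional cases.

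The step I expect to be most delicate is confirming that the explicitly chosen isometries really lie in $\textnormal{Mon}^2(\Lambda)$: for $\textnormal{Kum}_n$ and $\textnormal{OG6}$ the monodromy group is a proper subgroup of $O^+(\Lambda)$, so this membership has to be extracted carefully from the (somewhat technical) descriptions of \cite{mon16} and \cite{mr21} rather than being automatic. By contrast, for $\textnormal{K3}^{[n]}$ the chosen isometry is manifestly of the form permitted by Markman's $\widehat{O}^+$-description, and for OG10 one has $\textnormal{Mon}^2(\Lambda)=O^+(\Lambda)$ by \cite[Theorem 5.4]{ono22}, so there the verification is immediate. The remaining ingredients — the discriminant-form computations and the analysis of when $\pm\textnormal{id}$ coincide on a cyclic group — are routine.
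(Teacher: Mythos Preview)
Your proposal is correct and follows essentially the same approach as the paper, which simply says the statement ``follows from the monodromy computations'' in the four cited references; you have just unpacked what those references actually give, case by case. Your identification of the delicate step (verifying membership in $\textnormal{Mon}^2(\Lambda)$ for $\textnormal{Kum}_n$ and $\textnormal{OG6}$, where the monodromy group is a proper subgroup of $O^+(\Lambda)$) is exactly the content one has to extract from \cite{mon16} and \cite{mr21}.
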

\begin{proof}
    Follows from the monodromy computations of \cite[Lemma 4.2]{mar10}, \cite[Theorem 2.3]{mon16}, \cite[Theorem 1.4]{mr21} and \cite[Theorem 5.4]{ono22}.  
\end{proof}
We see that in general, if one goes beyond the case of odd order, one has to split the work into two parts: the case where the action on $D_{\Lambda_\T}$ is trivial, and the nontrivial case. 
We show in \Cref{subsec: type study} that for the known types of IHS manifolds, nonstable Hodge monodromies with minimal polynomial $\Phi_1\Phi_m$ only exist for finitely many deformation types and orders.\smallskip

We introduce some notation of \cite[\S4]{bc22} which we use throughout the paper.
Let $\T$ be a known deformation type of IHS manifolds, let $\Lambda := \Lambda_\T$ and let $M := M_\T$ be the corresponding \textbf{(extended) Mukai lattice} (\Cref{tab:extdata}).
We denote $V := \Lambda^\perp_{M}$. 
For $\T = \textnormal{K3}^{[n]}, \textnormal{Kum}_n$, we let $h_V := -\text{id}_{V}$. For $\T = \textnormal{OG6}, \textnormal{OG10}$, we let $h_V$ be represented by the matrix $\scriptscriptstyle{\begin{pmatrix}0&1\\1&0\end{pmatrix}}$ on a basis of $V$ such that
\begin{align*}
    &V = \begin{pmatrix}
        2&0\\0&2
    \end{pmatrix}\text{ if $\T=\textnormal{OG6}$} \text{ and } V = \begin{pmatrix}
        2&1\\1&2
    \end{pmatrix}\text{ if $\T=\textnormal{OG10}$}.
\end{align*}
Note that in all cases, $h_V$ is nonstable, except for $\T =  \textnormal{K3},\,\textnormal{K3}^{[2]}$ where the discriminant group has no nontrivial automorphisms. 
 For $\T = \textnormal{K3}^{[n]}, \textnormal{Kum}_n$, we let $v:= 0$ be the zero vector, and for $\T = \textnormal{OG6}, \textnormal{OG10}$ we let $v$ be the sum the basis vectors of $V$, for the given bases. In all cases, note that $v$ generates the invariant sublattice of $(V, h_V)$. Finally, we let $ O(M, V, v)$ be the joint stabilizer. The following lemma is known from \cite{bc22}; we give a proof for completeness.

\begin{lem}\label{lem: well defined extension}
    There exists a well-defined restriction map $O(M, V, v)\to O(\Lambda)$ which admits a section 
    \[ \gamma\colon \textnormal{Mon}^2(\Lambda)\to O(M, V, v), \; h\mapsto \widehat{\chi}(h)\oplus h\]
    where $\widehat{\chi}(h) = \id_V$ if $h$ is stable, and $\widehat{\chi}(h) = h_V$ otherwise.
\end{lem}

\begin{proof}
    By definition of $O(M, V, v)$, since $\Lambda$ is the orthogonal complement of $V$ in $M$, we have that 
    \[ r\colon O(M, V, v)\to O(\Lambda),\;  g\mapsto g_{\mid \Lambda}\]
    is well-defined. Moreover, by \Cref{lem:disc action}, we know that any $h\in \textnormal{Mon}^2(\Lambda)$ acts with order at most 2 on $D_\Lambda$. In fact, for $\T=\textnormal{K3}^{[n]},\,\textnormal{Kum}_n,\, \textnormal{OG10}$, we have that $D_h = \pm \id_{D_\Lambda}$, and if $\T = \textnormal{OG6}$, then $D_h$ is either the identity or the map exchanging two generators. It follows from the definition of $\widehat{\chi}(h)$ that in both cases $D_h$ trivial or not, the isometries $D_h\in O(D_\Lambda)$ and $D_{\widehat{\chi}(h)}\in O(D_V)$ agree along the glue map $D_\Lambda\to D_V$: hence $\widehat{\chi}(h)\oplus h$ extends along the primitive extension $V\oplus \Lambda \leq M$ to an integral isometry preserving $V$ and fixing $v$.
\end{proof}

\begin{notat}
For a sublattice $N\leq M$ we define $S(N) := O(N)$ if $\T = \textnormal{K3}^{[n]}, \textnormal{OG6}, \textnormal{OG10}$ and $S(N) := SO(N)$ if $\T = \textnormal{Kum}_n$. Moreover, we let $S(M, V, v) := S(M)\cap O(M, V, v)$.
\end{notat}

According to \cite[Lemma 4.6]{bc22} the image of $\gamma$ is given by the kernel $G$ of
\[ \vartheta\cdot \chi_V\colon S(M, V, v)\to \{\pm 1\}\]
where $\vartheta$ is the character induced by the spinor norm morphism $\sigma_{\mathbb{R}}$ on $O(M\otimes_\Z\mathbb{R})$, and $\chi_V$ is the natural character induced by the composite morphism $S(M, V, v) \to O(V) \to O(D_V)\cong \mathbb{Z}/2\mathbb{Z}$. 

\begin{rem}
    Analogously to \cite{bc22}, we choose as convention that for a nonisotropic vector $v\in (L, b)\otimes_\mathbb{Z}\mathbb{R}$, the reflection $\tau_v$ defined by $v$ has real spinor norm 
    \[\textnormal{spin}_\mathbb{R}(\tau_v) := -b_\mathbb{R}(v,v)(\mathbb{R}^\times)^2.\]
    In particular, if $(L, b)$ has real signatures $(l_+, l_-)$, then $\sigma(-\textnormal{id}_L) = (-1)^{l_+}(\mathbb{R}^\times)^2$.
\end{rem}

\begin{rem}\label{rem: K compl}
    For a known deformation type $\T\neq \textnormal{K3}$, the $\Z$-lattice $K := v^\perp_V$ has rank 1. 
    If $h\in\text{Mon}^2(\Lambda)$ is stable, then the image of $K$ in $M$ is contained in $M^{g}$ where $g:=\gamma(h)$. 
    Otherwise, since $K$ is the $(-1)$-sublattice of $(V, h_V)$, we have that $K$ is contained in $M^{-g}$. In particular, if $m_h(1) \neq 0$ where $m_h$ is the minimal polynomial of $h$, then $M^{-g}$ has rank 1 and it is equal to $K$.
    The isometry class of $K$ can be read from \Cref{tab:extdata}.
\end{rem}

\begin{lem}\label{lem: character for K}
    Let $g\in G$. Then $\chi_V(g) = +1$ if and only if $K\leq M^{g}$, and $\chi_V(g) = -1$ if and only if $K\leq M^{-g}$.
\end{lem}

\begin{proof}
   By definition of $G$ and $\chi_V$, we have that $\chi_V(g) = +1$ if and only if $g = \textnormal{id}_V\oplus h$ for some $h\in \textnormal{Mon}^2(\Lambda)$ with $D_h$ trivial. Otherwise, since $K$ is the $(-1)$-kernel sublattice of $(V, h_V)$, we have that $\chi_V(g) = -1$ if and only if $g = h_V\oplus h$ for some $h\in\textnormal{Mon}^2(\Lambda)$ with $D_h$ nontrivial. We conclude with \Cref{rem: K compl}.
\end{proof}

{
\renewcommand\arraystretch{1.5}
    \begin{table}[!ht]

    \caption{Known deformation types and unimodular data}\label{tab:extdata}
    \centering
    \begin{tabular}{ccccccc}
        $\T$&$\Lambda$&$\textnormal{Mon}^2(\Lambda)$&$M$&$V$&$v^2$&$K$ \\
         \hline
         \rowcolor{lightgray!40!white}$\textnormal{Kun}_n$,\; $n\geq 2$&$U^{\oplus 3}\oplus A_1(n+1)$&$\mathcal{N}^+(\Lambda)$&$U^{\oplus 4}$&$\langle 2n+2\rangle$&$0$&$\langle 2n+2\rangle$\\
        OG6&$U^{\oplus 3}\oplus A_1^{\oplus 2}$&$O^+(\Lambda)$&$U^{\oplus 5}$&$\langle 2\rangle^{\oplus 2}$&$4$&$\langle 4\rangle$\\
        \rowcolor{lightgray!40!white}K3&$U^{\oplus 3}\oplus E_8^{\oplus 2}$&$O^+(\Lambda)$&$U^{\oplus 3}\oplus E_8^{\oplus 2}$&$\{0\}$&0&$\{0\}$\\
        $\textnormal{K3}^{[n]}$,\; $n\geq 2$&$U^{\oplus 3}\oplus E_8^{\oplus 2}\oplus A_1(n-1)$&$\mathcal{W}^+(\Lambda)$&$U^{\oplus 4}\oplus E_8^{\oplus 2}$&$\langle 2n-2\rangle$&$0$&$\langle 2n-2\rangle$\\
        \rowcolor{lightgray!40!white}OG10&$U^{\oplus 3}\oplus E_8^{\oplus 2}\oplus A_2$&$O^+(\Lambda)$&$U^{\oplus 5}\oplus E_8^{\oplus 2}$&$A_2(-1)$&$2$&$\langle 6\rangle$\\
         \hline
    \end{tabular}
\end{table}}

\begin{notat}
Following \cite[\S 4]{ms17}, given an even $\Z$-lattice $L$ we denote
\begin{align*}
    \mathcal{W}^+(L) &:= \{f\in O^+(L)\;:\; D_f = \pm\id_{D_L}\}, \text{and} \\
    \mathcal{N}^+(L) &:= \{f\in O^+(L)\;:\;\text{det}(f)D_f = \id_{D_L}\}.
\end{align*}
\end{notat}

\subsection{Type study}\label{subsec: type study}
According to \Cref{rem: K compl}, for a fixed known deformation type $\T$ of IHS manifolds, and for $h\in\text{Mon}^2(\Lambda_\T)$ with minimal polynomial $\Phi_1\Phi_m$ ($m\geq 3$), then either $D_h$ is trivial and $m_{\gamma(h)} = \Phi_1\Phi_m$ too, or $D_h$ has order two and by definition, $m_{\gamma(h)} = \Phi_1\Phi_2\Phi_m$ where the $(-1)$-sublattice of $(M_\T, \gamma(h))$ has rank 1. 
In this subsection, we study the types of such unimodular lattices with isometry in order to have more information on the genera of their kernel sublattices.\smallskip

The first case to consider is a generalization of the odd prime order case as studied in \cite[\S2]{bc22}.

\begin{propo}\label{propo: case 1 minpoly}
Let $(M, g)$ be a unimodular $\Phi_1\Phi_m^\ast$-lattice where $m\geq 2$.
The following hold:
\begin{enumerate}
    \item $D_{g_1}$ and $D_{g_m}$ are trivial;
    \item if $m$ is composite, then both $M^g$ and $M_g$ are unimodular;
    \item if $m = p^k$ is a prime power, then both $M^g$ and $M_g$ are $p$-elementary.
\end{enumerate}
\end{propo}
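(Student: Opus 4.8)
The plan is to exploit the equivariant primitive extension attached to the factorisation $\Phi_1\Phi_m$ of the minimal polynomial. Write $M^g=\ker(g-\mathrm{id})$ and $M_g=M^{\Phi_m(g)}$ for the two kernel sublattices, with restrictions $g_1:=g_{\mid M^g}$ and $g_m:=g_{\mid M_g}$. As $\Phi_1$ and $\Phi_m$ are coprime, these are orthogonal primitive sublattices with $M_g=(M^g)^\perp_M$, and $(M^g,g_1)\oplus(M_g,g_m)\subseteq(M,g)$ is an equivariant primitive extension with a $(g_1,g_m)$-equivariant glue map $\gamma\colon H_{M^g}\to H_{M_g}$. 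Since $M$ is unimodular, $M^g$ and $M_g$ are mutually orthogonal primitive complements inside a unimodular lattice, so by Nikulin's theory \cite{nik79b} the glue domains are the full discriminant groups and $\gamma\colon D_{M^g}\xrightarrow{\sim}D_{M_g}$ is an anti-isometry.

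For item (1), note that $M^g=\ker(g-\mathrm{id})$ forces $g_1=\mathrm{id}_{M^g}$, hence the induced discriminant map is $D_{g_1}=\mathrm{id}_{D_{M^g}}$. The equivariant gluing condition \eqref{eq:egc} reads $\gamma\circ D_{g_1}=D_{g_m}\circ\gamma$; substituting $D_{g_1}=\mathrm{id}$ and cancelling the isomorphism $\gamma$ gives $D_{g_m}=\mathrm{id}_{D_{M_g}}$. Thus both $D_{g_1}$ and $D_{g_m}$ are trivial.

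Items (2) and (3) then follow by turning this triviality into an annihilation statement. Since $\Phi_m(g_m)=0$ on $M_g$, the same relation holds on the dual and descends to $\Phi_m(D_{g_m})=0$ on $D_{M_g}$; combined with $D_{g_m}=\mathrm{id}$ from (1) this reads $\Phi_m(1)\cdot\mathrm{id}_{D_{M_g}}=0$, so the finite group $D_{M_g}$, and via $\gamma$ also $D_{M^g}$, is annihilated by the integer $\Phi_m(1)=\textnormal{Res}(\Phi_1,\Phi_m)$ (a sharpening of the McMullen-type control of \Cref{mcmullen condition}). A direct evaluation, matching \Cref{resultant cyclo} at $n=1$, gives $\Phi_m(1)=1$ when $m$ has at least two distinct prime divisors, whence $D_{M_g}=D_{M^g}=0$ and both lattices are unimodular; and $\Phi_m(1)=p$ when $m=p^k$, whence $p\cdot D_{M_g}=p\cdot D_{M^g}=0$ and both lattices are $p$-elementary.

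The step carrying the weight is this passage from (1) to the annihilation: the content of (2)--(3) is exactly that $\Phi_m(\mathrm{id})$ acts as the scalar $\Phi_m(1)$ on the discriminant group, which is precisely the resultant governing the glue. The only genuinely delicate input is that $\gamma$ is a full anti-isometry of discriminant groups, so that equivariance can be inverted in (1); this is where unimodularity of $M$ is indispensable, and the remaining evaluations of $\Phi_m(1)$ are routine.
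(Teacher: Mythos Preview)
Your proof is correct and complete. The paper itself does not give a proof but only refers to \cite[Proposition 4.8]{bay24b}; your argument is the natural self-contained one, using precisely the framework already set up in the paper (equivariant gluing from \S\ref{eq glue} together with unimodularity of $M$ to obtain the full anti-isometry $\gamma\colon D_{M^g}\xrightarrow{\sim}D_{M_g}$), and the direct evaluation $\Phi_m(1)=\textnormal{Res}(\Phi_1,\Phi_m)$ which sharpens \Cref{mcmullen condition} from a divisibility statement on primes to an annihilation statement on the discriminant group.
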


\begin{proof}
See for instance \cite[Proposition 4.8]{bay24b}.
\end{proof}

Similarly to \Cref{propo: case 1 minpoly}, we want to obtain information about the kernel sublattices of a unimodular $\Phi_1\Phi_2\Phi_m^\ast$-lattice for $m\geq 3$ even.  

\begin{propo}\label{propo: case 2 minpoly}
   Let $(M, g)$ be a unimodular $\Phi_1\Phi_2\Phi_m^\ast$-lattice where $m\geq 3$ is even. 
   The following hold:
    \begin{enumerate}
        \item if $m = 2^k$ is a power of $2$, then $M^{\Phi_m(g)}$ is $2$-elementary, and $M^g$ and $M^{-g}$ are both $4$-elementary;
        \item if $m = 2p^k$ is twice an odd prime power, then $M^{\Phi_m(g)}$ is $p$-elementary, $M^{-g}$ is $2p$-elementary and $M^g$ is $2$-elementary;
        \item otherwise, $M^{\Phi_m(g)}$ is unimodular, and $M^g$ and $M^{-g}$ are both $2$-elementary.
    \end{enumerate}
\end{propo}

\begin{proof}
    For what follows, we denote $C := M^{\Phi_m(g)}$ and $F := (C)^\perp_M = M^{g^2-1}$. Note that $(M, g^2)$ is a $\Phi_1\Phi_{m/2}^\ast$-lattice, and $F$ and $C$ are the respective invariant and coinvariant sublattices --- we can therefore obtain information on the last two by applying \Cref{propo: case 1 minpoly} to $g^2$.
    \begin{enumerate}
        \item Since $m=2^k \geq 4$, \Cref{propo: case 1 minpoly} (1) gives that $F$ and $C$ are both $2$-elementary.
        Moreover $g_{\mid F}$ has order 2, so \Cref{good result pelem} tells us that both $M^g$ and $M^{-g}$ are $4$-elementary.
        \item We have now that $g^2$ has odd order $p^k$ so \Cref{propo: case 1 minpoly} tells us that $F$ and $C$ are both $p$-elementary. 
        Thus, according to \Cref{good result pelem} we have that $M^g$ and $M^{-g}$ are both $2p$-elementary. Note moreover that since $g$ has minimal polynomial $\Phi_1\Phi_2\Phi_m$, we have that $M^g = M^{g^{p^k}}$: since $g^{p^k}$ has order 2, we deduce that $M^g$ is actually 2-elementary (\Cref{propo: case 1 minpoly} (2)).

        \item In that case, the order of $g^2$ is not a prime power and by \Cref{propo: case 1 minpoly} (3) we have that both $F$ and $C$ are unimodular. By \Cref{propo: case 1 minpoly} (2), we conclude that $M^g$ and $M^{-g}$ are $2$-elementary.\qedhere
    \end{enumerate}
\end{proof}

By \Cref{propo: case 2 minpoly}, there are strong restrictions on the local invariants of the $\Phi_2$-kernel sublattice of any $\Phi_1\Phi_2\Phi_m^\ast$-lattice $(M, g)$. 
In particular, in the case where $M\simeq M_\T$ for a known deformation type $\T$ of IHS manifolds, and $g$ is induced by an isometry of $\Lambda_\T$ with nontrivial discriminant action, we have seen in \Cref{rem: K compl} that the kernel sublattice $M^{-g}$ has rank 1 and it is uniquely determined.
We can therefore already conclude on the possible orders such an isometry could have.

\begin{propo}\label{nontriv: all}
    Let $X$ be a projective IHS manifold of known deformation type $\T$. Let $f\in\textnormal{Bir}(X)$ be such that the minimal polynomial of $h := \rho_X(f)$ is $\Phi_1\Phi_m$ for some positive integer $m\geq 3$ and suppose that $D_h$ is nontrivial.
    Then the pair $(\T, m)$ appears in \Cref{tab:nontrivact}.
\end{propo}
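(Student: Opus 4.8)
The plan is to transport the problem to the unimodular Mukai lattice by passing from $h$ to the induced isometry $g=\gamma(h)=h_V\oplus h\in O(M)$, and to exploit that its $\Phi_2$-kernel sublattice is completely rigid. First I would record two reductions. Since $D_h$ lies in the image of the discriminant representation of $\mathrm{Mon}^2(\Lambda_\T)$, \Cref{lem:disc action} forces $\mathrm{ord}(D_h)=2$ and excludes $\T\in\{\mathrm{K3},\mathrm{K3}^{[2]}\}$; moreover $\mathrm{ord}(D_h)=2$ divides $\mathrm{ord}(h)=m$, so $m$ is even with $m\geq 4$ and $g$ is a unimodular $\Phi_1\Phi_2\Phi_m^\ast$-lattice. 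By \Cref{rem: K compl} and \Cref{tab:extdata}, the $(-1)$-sublattice $M^{-g}$ equals $K$, a rank-$1$ positive definite lattice with $D_K\cong\mathbb{Z}/|c|\mathbb{Z}$, where $|c|\in\{2n-2,\,6,\,2n+2,\,4\}$ according to $\T$. Finally, since the $\Phi_m$-part of $g$ comes only from $h$, the coinvariant lattice $M_m=\Lambda_h=T_X$ carries the period and hence has signature $(2,\ast)$.

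Next I would run the trichotomy of \Cref{propo: case 2 minpoly} against this rigid $M^{-g}=K$. As $D_K$ must be $4$-elementary (case $m=2^k$), $2p$-elementary (case $m=2p^k$), or $2$-elementary (all other even $m$), comparison with $D_K\cong\mathbb{Z}/|c|\mathbb{Z}$ kills the last case outright and, in the remaining ones, restricts both $\T$ and the admissible orders: for $m=2^k$ only $\T=\mathrm{OG6}$ and $\T=\mathrm{K3}^{[3]}$ survive (both with $K=\langle 4\rangle$), while $m=2p^k$ forces $p=3$ for $\mathrm{OG10}$, $n=p+1$ for $\mathrm{K3}^{[n]}$ and $n=p-1$ for $\mathrm{Kum}_n$ (so that $K=\langle 2p\rangle$), and excludes $\mathrm{OG6}$. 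The signature $(2,\ast)$ of $M_m$, together with $\mathrm{rank}(M_m)=\mathrm{rank}(M)-\mathrm{rank}(M^g)-1$ and $\mathrm{rank}(M^g)\geq 1$, then gives $\varphi(m)\leq d_m\varphi(m)=\mathrm{rank}(M_m)\leq \mathrm{rank}(M)-2$, which already confines each series to finitely many orders.

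The decisive step, and the one I expect to be the most delicate, is to sharpen this finite list to exactly \Cref{tab:nontrivact} through a discriminant-form computation. From the equivariant gluing $D_{M^{-g}}\cong D_{M_m}(-1)\oplus D_{M^g}(-1)$ of the proof of \Cref{propo: case 2 minpoly}, the $p$-part of $D_{M_m}$ is anti-isometric to the $p$-part of $D_K$; computing the latter for $K=\langle 2p\rangle$ shows it is the form $2/p$, so the $p$-elementary lattice $M_m$ has a cyclic $p$-part of order $p$ with genus sign $\epsilon=\left(\frac{-2}{p}\right)$. Feeding this into \Cref{exist 2-elem} for the genus $\II_{(2,\,d_m\varphi(m)-2)}p^{\epsilon 1}$ (the secondary condition there being vacuous since $\mathrm{rank}(M_m)\geq p-1>1$) yields the congruence $2-s\equiv 2\epsilon-2-(p-1)\bmod 8$ with $s=d_m\varphi(m)-2$, which must admit a solution $d_m$ within the rank bound. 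A short case check shows this fails precisely for the rank-allowed but spurious orders, such as $p=11,17,19$ with $k=1$ and $m=2\cdot 27$ for $\mathrm{K3}^{[n]}$, and holds exactly for the entries of \Cref{tab:nontrivact}. For $\T=\mathrm{OG6},\mathrm{K3}^{[3]}$ (powers of $2$) I would argue identically, replacing the odd-prime part of \Cref{exist 2-elem} by its $2$-adic part for the $2$-elementary $M_m$ and invoking \Cref{propo: existence 24 elem} to control the oddity of the $4$-elementary constituents $M^{\pm g}$, which caps $k$ at the listed values. The main obstacle is thus bookkeeping the discriminant forms correctly through the equivariant gluing — signs $\epsilon$ at odd primes and oddities at $p=2$ — since it is only this form-level datum, not the group-level structure alone, that separates the realisable orders from the merely rank-compatible ones.
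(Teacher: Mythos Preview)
Your first three steps are exactly the paper's proof: pass to the Mukai lattice, apply the trichotomy of \Cref{propo: case 2 minpoly} to the rigid rank-one lattice $K=M^{-g}$ read off from \Cref{tab:extdata}, and bound $\varphi(m)$ by rank. One small slip: for $\T\in\{\textnormal{OG6},\textnormal{OG10}\}$ the invariant sublattice $M^g$ contains both $\mathbb{Z}v$ (here $v\neq 0$, cf.\ \Cref{tab:extdata}) and $\Lambda^h$, so $\textnormal{rank}(M^g)\geq 2$ rather than $\geq 1$. With your stated bound you would allow $m=16$ for $\textnormal{OG6}$, which is not in \Cref{tab:nontrivact}. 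The paper avoids this by arguing directly on $\Lambda$ rather than on $M$.

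Your fourth step, however, rests on a misreading of \Cref{tab:nontrivact} and is not needed. The orders you call ``rank-allowed but spurious'' --- $m=22,34,38$ for $\textnormal{K3}^{[12]},\textnormal{K3}^{[18]},\textnormal{K3}^{[20]}$ and $m=54$ for $\textnormal{K3}^{[4]}$ --- \emph{are} in the table: they fall under the entries ``$\textnormal{K3}^{[p+1]}$, $7\leq p\leq 23$, $m=2p$'' and ``$\textnormal{K3}^{[4]}$, $m=6,18,54$''. The proposition only asserts that $(\T,m)$ must appear in \Cref{tab:nontrivact}; it does not claim every entry is realised. Indeed \Cref{rem order og10} flags precisely this, and the signature-based elimination you outline via \Cref{exist 2-elem} is the content of the later \Cref{propo: discard nontriv act}. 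Once you correct the rank bound in step 3, the elementarity constraint plus the rank bound already yield exactly \Cref{tab:nontrivact}, and the proof is complete without any discriminant-form computation.
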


{\small\setlength{\tabcolsep}{2pt}
\renewcommand\arraystretch{2}
\begin{table}[H]

    \caption{Deformation types and orders for nonstable $\Phi_1\Phi_m$-isometries}\label{tab:nontrivact}\vspace*{0.3cm}
    \centering
    \begin{tabular}{c|c|c|c|c|c|c|c|c}
        $\T$&$\textnormal{OG10}$&$\textnormal{OG6}$&$\textnormal{K3}^{[3]}$&$\textnormal{K3}^{[4]}$&$\textnormal{K3}^{[6]}$& $\textnormal{K3}^{[p+1]}$, \ $7\leq p\leq 23 $& $\textnormal{Kum}_{2}$& $\textnormal{Kum}_{p-1}$,\ $5\leq p\leq 7 $\\
         \hline
         $m$&$6,\,18,\,54$&$4,\,8$&$4,\,8,\,16,\,32$&$6,\,18,\,54$&$10,\,50$&$2p$&$6,\,18$&$2p$\\
    \end{tabular}
    
\end{table}}

\begin{proof}
    We follow the notation of \Cref{monodrom to unimod}.
    \begin{enumerate}
        \item First suppose that $\T = \textnormal{OG10}$. Then $g := \gamma(h)\in G$ has minimal polynomial $\Phi_1\Phi_2\Phi_m$ (\Cref{rem: K compl}), and we have that $M^{-g}\simeq \langle 6\rangle$ (\Cref{tab:extdata}). 
        By \Cref{propo: case 2 minpoly}, $M^{-g}$ is $6$-elementary if and only if $m = 2\cdot 3^k$ for some $k\geq 1$. 
        Note that in this case $3\varphi(m) = m$. Since $H^2(X, \mathbb{Z})$ has rank 24 and $H^2(X, \mathbb{Z})^h$ has rank at least 1, we have $2\leq \varphi(m)\leq 22$ and therefore $6\leq m\leq 66$. 
        \item Now let $\T = \textnormal{K3}^{[n]}$ for some $n\geq 2$ --- the following can be adapted to prove the statement for $\T = \textnormal{Kum}_n$.  
        According to \Cref{rem: K compl} and \Cref{tab:extdata}, we have that $g := \gamma(h)$ has minimal polynomial $\Phi_1\Phi_2\Phi_m$, and that $M^{-g} = \langle 2n-2\rangle$ is either unimodular, $4$-elementary or its discriminant group is of the form $(\mathbb{Z}/2\mathbb{Z})^{\oplus\alpha}\oplus (\mathbb{Z}/p\mathbb{Z})^{\oplus\beta}$ for some $\alpha,\beta \geq 0$. 
        The unimodular case is clearly not possible.
        The $4$-elementary case occurs only when $m$ is a power of 2, and in that situation we know that $M^{g}$ and $M^{-g}$ are both $4$-elementary. 
        In particular, $n-1 = 2$ and $4\leq m\leq 32$ since $\Lambda_{\textnormal{K3}^{[n]}}$ has rank 23.
        For the remaining case, since the discriminant group of $M^{-g}$ is $\mathbb{Z}/(2n-2)\mathbb{Z}$ with $n\geq 3$, then necessarily $\alpha = \beta = 1$. 
        In particular $n-1 = p$ for an odd prime number $p$ and $m\in \{2p^k\;:\; k\geq 1\}$.
        \item Finally let us assume that $\T = \textnormal{OG6}$. In a same way as before, $g := \gamma(h)$ has minimal polynomial $\Phi_1\Phi_2\Phi_m$, but this time $M^{-g}\simeq \langle 4\rangle$ is 4-elementary. According to \Cref{propo: case 2 minpoly}, the latter is possible only if $m$ is a power of 2. 
        Since $H^2(X, \mathbb{Z})^h$ has rank at least 1 and $\Lambda_{\textnormal{OG6}}$ has rank 8, we see that $4\leq m \leq 8$ as $\varphi(m) = m/2 < 8$.\qedhere \end{enumerate}
\end{proof}

\begin{rem}\label{rem order og10}
    We actually show later that some of the cases in \Cref{tab:nontrivact} cannot occur if we moreover use the assumption that $\rho_X(f)$ is a nonsymplectic Hodge monodromy (see \Cref{propo: discard nontriv act}). 
\end{rem}

\section{Constructing isometries using hermitian lattices}\label{sec: existence isometries} In the next two sections we review existence conditions for even unimodular $\Z$-lattices equipped with an isometry of given minimal polynomial. 
We focus in particular on the case where the minimal polynomial is $\Phi_1\Phi_m$, or $\Phi_1\Phi_2\Phi_m$ and the $\Phi_2$-kernel sublattice is isometric to the $\mathbb{Z}$-lattice $\langle2p\rangle$ for some prime number $p$ (see \Cref{subsec: type study}). 
Once this is done, we explain how one can classify such isometries up to conjugacy, and give a way of estimating the number of such conjugacy classes.

\begin{rem}
    Necessary and sufficient conditions for the existence of a unimodular $\Phi_1\Phi_m$-lattice $(M, g)$ of given signatures are given in \cite[Lemma 4.2, Theorem 4.5]{bay24b}. In this section, we complement these existence conditions in order to classify these lattices with isometry up to isomorphism.
\end{rem}

\subsection{Local information} Let $p$ be a prime number, let $m = p^k\geq 3$ be a power of $p$ and let $\zeta := \zeta_m$ be a primitive $m$th root of unity. 
Let us denote moreover $E := \mathbb{Q}(\zeta)$, $K := \mathbb{Q}(\zeta + \zeta^{-1})$, $\pi := 1-\zeta$, $\mathfrak{P} := \pi\mathcal{O}_E$ and $\mathfrak{p} := \mathfrak{P}\cap\mathcal{O}_K$. 
According to \Cref{lem basic cyclo}, $\mathfrak{P}$ (resp. $\mathfrak{p}$) is the unique prime ideal of $\mathcal{O}_E$ (resp. $\mathcal{O}_K$) which divides $p\mathcal{O}_E$ (resp. $p\mathcal{O}_K$).
Let $(L, b, f)$ be an even $\Phi_m$-lattice and suppose that $D_f$ has order $p^l\leq p^{k-1}$.
Similarly to \Cref{propo: case 1 minpoly}, one can show that the latter condition implies that $(L, b)$ is $p$-elementary.
We denote by $(L, h)$ the hermitian structure of $(L, b, f)$ (see \Cref{subsec trace}). 
We recall that, by transfer, the dual lattice $L^{\vee} = \mathfrak{D}_{E/\mathbb{Q}}^{-1}L^{\#}$. 
Since by assumption $(L, b)$ is even, we moreover have that $ \mathfrak{n}(L)\subseteq \mathfrak{D}_{K/{\mathbb{Q}}}^{-1}.$
Following the trace equivalence, the $\mathcal{O}_E$-module structure on $L$ is given by $f$: hence, since $\text{ord}(D_f) = p^l$, we have that
\[
    (1-\zeta^{p^l})L^{\vee} \leq L\leq L^{\vee}.
\]
Now, if we replace $L^\vee$ by $\mathfrak{D}_{E/\mathbb{Q}}^{-1}L^{\#}$, we can use the fact that $\mathfrak{D}_{E/\mathbb{Q}}$ is principal generated by $\pi^{\alpha}$ where  $\alpha := p^{k-1}(pk-k-1)$ (\Cref{lem basic cyclo}) and that $(1-\zeta^{p^l})\mathcal{O}_E = \pi^{p^{l}}\mathcal{O}_E$ (\Cref{ramif tower}) to translate the previous equation into
\begin{equation}\label{eq: decomp}
    \pi^{p^{l}-\alpha}L^{\#}\leq L\leq \pi^{-\alpha}L^{\#}.
\end{equation}

\begin{propo}\label{propo: decomp block val}
For all finite places $ \mathfrak{q}\neq \mathfrak{p}$ of $K$, the hermitian  $\mathcal{O}_{E_{\mathfrak{q}}}$-lattice $L_{\mathfrak{q}}$ is unimodular and uniquely determined by its rank.
Moreover, any Jordan decomposition of $L_{\mathfrak{p}}$ consists of at most $p^{l}+1$ orthogonal Jordan constituents which are respectively $\mathfrak{P}^{j-\alpha}$-modular for $j=0,\ldots, p^{l}$.
\end{propo}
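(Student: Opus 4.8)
The plan is to split the statement according to whether a finite place $\mathfrak{q}$ of $K$ equals $\mathfrak{p}$, exploiting that $p$ is the only rational prime ramifying in $E/\mathbb{Q}$ (\Cref{lem basic cyclo}). First I would treat the places $\mathfrak{q}\neq\mathfrak{p}$. Since $\mathfrak{p}$ is the unique prime of $\mathcal{O}_K$ above $p$, such a $\mathfrak{q}$ lies over a rational prime $q\neq p$ and is therefore good, so $E_\mathfrak{q}/K_\mathfrak{q}$ is unramified. As $(L,b)$ is $p$-elementary, the quotient $L^\vee/L$ is a $p$-group, whence $(L_q)^\vee=L_q$ after localising at $q$. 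On the other hand $q$ is unramified in $E/\mathbb{Q}$, so $\mathfrak{D}_{E/\mathbb{Q}}$ is a unit at $q$ and the transfer identity \eqref{dual trace constr} localises to $(L_q)^\vee=(L_q)^\#$. Combining the two yields $(L_q)^\#=L_q$, i.e.\ $L_q$ is a unimodular hermitian lattice; decomposing over the places of $K$ above $q$ shows that each $L_\mathfrak{q}$ is unimodular. Uniqueness up to isometry then follows from \cite[Proposition 3.3.5]{kir16}, which provides a single unimodular class in each rank at a good place.

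For the place $\mathfrak{p}$, I would localise the double inclusion \eqref{eq: decomp} to obtain
\[\mathfrak{P}^{p^l-\alpha}(L_\mathfrak{p})^\#\subseteq L_\mathfrak{p}\subseteq\mathfrak{P}^{-\alpha}(L_\mathfrak{p})^\#,\]
using that $E_\mathfrak{p}/K_\mathfrak{p}$ is totally ramified with uniformiser $\pi$ and unique prime $\mathfrak{P}$ above $\mathfrak{p}$. By \cite[Theorem 3.3.3]{kir16}, $L_\mathfrak{p}$ admits a Jordan decomposition $L_\mathfrak{p}=\bigoplus_j L_j$ into orthogonal $\mathfrak{P}^{s_j}$-modular constituents with distinct scales $s_j$. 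Since the dual of a $\mathfrak{P}^{s}$-modular lattice is $\mathfrak{P}^{-s}$-modular, one has $\mathfrak{P}^{c}(L_\mathfrak{p})^\#=\bigoplus_j\mathfrak{P}^{c-s_j}L_j$ for every $c$, so the two inclusions can be tested summand by summand and translate into the scale bounds $-\alpha\leq s_j\leq p^l-\alpha$. Writing $s_j=j-\alpha$, the admissible exponents are exactly $j\in\{0,\ldots,p^l\}$; this caps the number of Jordan constituents at $p^l+1$ and forces each of them to be $\mathfrak{P}^{j-\alpha}$-modular, as claimed.

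The only delicate point is the passage from \eqref{eq: decomp} to the scale bounds: one must justify that the containments may be checked constituent by constituent, which rests on the orthogonality of the Jordan decomposition together with the compatibility of scaling by $\mathfrak{P}^{c}$ with the decomposition of the dual. Everything else is bookkeeping with the ramification data of \Cref{lem basic cyclo} and \Cref{ramif tower} and with the identity \eqref{dual trace constr}; the conceptual content is simply that $p$-elementarity forces unimodularity away from $\mathfrak{p}$ and confines all modularity jumps at $\mathfrak{p}$ to the window $[-\alpha,\,p^l-\alpha]$.
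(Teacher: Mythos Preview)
Your proposal is correct and follows essentially the same approach as the paper: both derive the statement from \Cref{eq: decomp} together with \Cref{lem basic cyclo} and \cite[Proposition 3.3.5]{kir16}. The only minor difference is that for places $\mathfrak{q}\neq\mathfrak{p}$ the paper reads off unimodularity directly from \Cref{eq: decomp} (since $\pi$ is a unit at $\mathfrak{q}$, the inclusions collapse to $L_\mathfrak{q}^\#=L_\mathfrak{q}$), whereas you route through $p$-elementarity of $(L,b)$ and the transfer identity \eqref{dual trace constr}; these are equivalent one-line arguments.
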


\begin{proof}
    We have that $\pi\mathcal{O}_E = \mathfrak{P}$ by definition, and $\mathfrak{p} = \mathfrak{P}\cap\mathcal{O}_K$ represents the only bad finite place of $K$ (see \Cref{lem basic cyclo}).
    From \Cref{eq: decomp}, we therefore have that for all (good) finite place $\mathfrak{q}\neq \mathfrak{p}$ of $K$, the hermitian lattice $L_{\mathfrak{q}}$ is unimodular and uniquely determined by its rank \cite[Proposition 3.3.5]{kir16}. The Jordan decomposition at $\mathfrak{p}$ follows from the same equation.
\end{proof}

Therefore, in order to study the genus of $(L, h)$, we need to understand the Jordan constituents of $L_\mathfrak{p}$ (\Cref{propo: decomp block val}). Recall that $\mathfrak{p}$ is the only bad place of $K$.

\begin{propo}\label{propo: orth decomp}
Let $L$ be a $\mathfrak{P}^i$-modular hermitian $\mathcal{O}_{E_{\mathfrak{p}}}$-lattice of rank $r$, for some $i\in\mathbb{Z}$. 
Let us denote $\pi := 1-\zeta_m$ and $\beta:= \pi\iota(\pi)$ where $\iota\in\textnormal{Gal}(E_\mathfrak{p}/K_\mathfrak{p})$ is a generator.
\begin{enumerate}
    \item if $p$ is odd, then:
    \begin{enumerate}
        \item either $i$ is even and $L\simeq \langle\beta^{i/2},\ldots, u\beta^{i/2}\rangle$ where $uN^{E_\mathfrak{q}}_{K_\mathfrak{q}}(\mathcal{O}_{\mathfrak{q}}^\times) = \det(L)$;
        \item or $i$ is odd, $r$ is even and $L\simeq H(\pi^i)^{\oplus r/2}$;
    \end{enumerate}
    \item if $p=2$, then:
    \begin{enumerate}
        \item either $r$ is odd, $i$ is even, and
        \[L\simeq \langle a\beta^{i/2}\rangle\oplus H(\pi^i)^{\oplus (r-1)/2}\]
        with $aN^{E_\mathfrak{p}}_{K_\mathfrak{p}}(E_\mathfrak{p}^\times) = (-1)^{(r-1)/2}\textnormal{det}(L)$ and $\mathfrak{n}(L)\mathcal{O}_{E_\mathfrak{p}} = \mathfrak{s}(L)$;
        \item or $r$ is even, $\mathfrak{n}(L) = \mathfrak{p}^k$ with 
        \[\mathfrak{P}^{i+2}\subseteq \mathfrak{p}^k\mathcal{O}_{E_{\mathfrak{p}}}\subseteq \mathfrak{P}^i\]
        and $\mathfrak{p}^k\mathcal{O}_{E_{\mathfrak{p}}} = \mathfrak{P}^{i+2}$ if and only if  $L\simeq H(\pi^i)^{\oplus r/2}$. 
        Moreover,
        \[u^\epsilon N^{E_\mathfrak{p}}_{K_\mathfrak{p}}(E_\mathfrak{p}^\times) = (-1)^{r/2}\textnormal{det}(L) \]
        where $u\notin N^{E_\mathfrak{p}}_{K_\mathfrak{p}}(E_\mathfrak{p}^\times)$, and $\epsilon\in\{0,1\}$ with $\epsilon=0$ if and only if $L$ is hyperbolic.
    \end{enumerate}
\end{enumerate}
\end{propo}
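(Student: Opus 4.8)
The plan is to read off the explicit shapes of modular lattices from the general local classification of hermitian lattices over the ramified quadratic extension $E_\mathfrak{p}/K_\mathfrak{p}$, due to Jacobowitz and summarised in \cite[\S3]{kir16}. First I would record, using \Cref{lem basic cyclo} and \Cref{ramif tower}, that $\mathfrak{p}$ is totally ramified in $E$, so that $E_\mathfrak{p}/K_\mathfrak{p}$ is a ramified quadratic extension of local fields of residue characteristic $p$; that $\pi$ is a uniformiser of $E_\mathfrak{p}$ with $\mathfrak{P}^2 = \beta\mathcal{O}_{E_\mathfrak{p}}$; and that $\beta = \pi\iota(\pi) = N^{E_\mathfrak{p}}_{K_\mathfrak{p}}(\pi)$ is a uniformiser of $K_\mathfrak{p}$. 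Multiplying the form by $\beta^{-i/2}$ when $i$ is even, or by $\iota(\pi)^{-1}$ when $i$ is odd, then reduces the $\mathfrak{P}^i$-modular case to the unimodular or $\mathfrak{P}$-modular case, so it suffices to treat $i\in\{0,1\}$ and transport the result back by the factors $\beta^{i/2}$ and $\pi^i$ appearing in the statement.

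For $p$ odd the extension is non-dyadic, and here the norm and scale ideals of a modular lattice determine one another, so the isometry class of a $\mathfrak{P}^i$-modular lattice is fixed by its rank and determinant class in $K_\mathfrak{p}^\times/N^{E_\mathfrak{p}}_{K_\mathfrak{p}}(E_\mathfrak{p}^\times)$ \cite[\S3.3]{kir16}. When $i$ is even, rescaling to the unimodular case produces an orthogonal basis with unit entries, which after multiplying back by $\beta^{i/2}$ yields the diagonal form $\langle\beta^{i/2},\ldots,\det(L)\beta^{i/2}\rangle$ of (1)(a). When $i$ is odd, every norm $h(x,x)\in K_\mathfrak{p}$ has even $\mathfrak{P}$-valuation, so no norm can generate the scale $\mathfrak{P}^i$; hence $L$ contains no vector of maximal scale and must decompose into hyperbolic planes $H(\pi^i)$, forcing $r$ even and giving (1)(b).

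The genuinely delicate part is $p=2$, where $E_\mathfrak{p}/K_\mathfrak{p}$ is dyadic and the norm ideal $\mathfrak{n}(L)$ becomes an independent invariant, no longer recoverable from the scale. Here I would invoke Jacobowitz's dyadic classification as formulated in \cite[\S3.3, \S3.4]{kir16}, by which a modular hermitian lattice is determined by its rank, its determinant, and its norm ideal. For odd rank a norm generator splits off as an orthogonal summand $\langle a\beta^{i/2}\rangle$, which is exactly the norm-maximal situation $\mathfrak{n}(L)\mathcal{O}_{E_\mathfrak{p}} = \mathfrak{s}(L)$; the complementary even-rank piece is then a sum of hyperbolic planes, and comparing determinants yields the congruence $\det(L)\equiv a(-1)^{(r-1)/2}$ of (2)(a). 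For even rank, $\mathfrak{P}^i$-modularity forces $\mathfrak{P}^{i+2}\subseteq\mathfrak{n}(L)\mathcal{O}_{E_\mathfrak{p}}\subseteq\mathfrak{P}^i$; the minimal value $\mathfrak{n}(L)\mathcal{O}_{E_\mathfrak{p}}=\mathfrak{P}^{i+2}$ is attained precisely by the hyperbolic lattice $H(\pi^i)^{\oplus r/2}$, whose norm ideal is generated by the traces $\textnormal{Tr}^{E_\mathfrak{p}}_{K_\mathfrak{p}}(\pi^i)$ and sits one $\mathfrak{p}$-power deeper than the scale, and the remaining determinant invariant $u^\epsilon(-1)^{r/2}$ then separates the hyperbolic class $\epsilon=0$ from the non-hyperbolic one, giving (2)(b).

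The main obstacle is the dyadic bookkeeping of case (2): one must track the interaction of the norm ideal with scale and determinant, and verify that the hyperbolicity criterion is read off correctly from the minimal norm, matching Jacobowitz's abstract invariants to the concrete forms $\langle a\beta^{i/2}\rangle$ and $H(\pi^i)$ via the explicit uniformisers $\pi$ and $\beta$. The non-dyadic case is essentially formal once the reduction to $i\in\{0,1\}$ is made, whereas in the dyadic case scale-modularity alone no longer pins down the isometry class, and it is the control of the norm ideal that carries the real weight of the argument.
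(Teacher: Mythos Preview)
Your approach is correct and essentially the same as the paper's: the paper's own proof is a one-line citation of \cite[Proposition 3.3.5 \& Corollary 3.3.20]{kir16} together with the identification $\mathfrak{D}_{E_\mathfrak{p}/K_\mathfrak{p}}=\pi^{\gcd(2,p)}\mathcal{O}_{E_\mathfrak{p}}$ from \Cref{lem basic cyclo}, and you have simply unpacked more of that translation. One minor slip: rescaling a hermitian form by $\iota(\pi)^{-1}\notin K_\mathfrak{p}$ does not yield a hermitian form, so the reduction in the odd-$i$ case should instead scale by $\beta^{-(i-1)/2}\in K_\mathfrak{p}$ to reach the $\mathfrak{P}$-modular case, but this does not affect the argument.
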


\begin{proof}
    This a translation of \cite[Proposition 3.3.5 \& Corollary 3.3.20]{kir16} to the prime power cyclotomic case, together with \Cref{lem basic cyclo} which tells us that the different ideal $\mathfrak{D}_{E_\mathfrak{p}/K_\mathfrak{p}}$ is generated by $\pi^{\gcd(2,p)}$.
\end{proof}

\begin{rem}\label{rem: cyclo modular bad}
    It is good to note that for the cases (2)(a) and (2)(b) in \Cref{propo: orth decomp} we have a fine description of $\mathfrak{n}(L) = \mathfrak{p}^k$. In fact, suppose that $p=2$ and let $L$ be $\mathfrak{P}^i$-modular of rank $r\geq 1$, for some $i\geq 1$. If $r$ is odd, then $i$ is even, and since $\mathfrak{p}\mathcal{O}_{E_\mathfrak{p}} = \mathfrak{P}^2$, we have that $\mathfrak{n}(L) = \mathfrak{p}^\frac{i}{2}$. Now, if $r$ is even, there are two cases. Either $i$ is odd, in which case so is $i+2$ and thus we must have $\mathfrak{n}(L) = \mathfrak{p}^\frac{i+1}{2}$. Otherwise, $i$ is even and $\mathfrak{n}(L)\in\{\mathfrak{p}^\frac{i+2}{2}, \mathfrak{p}^\frac{i}{2}\}$ --- the two cases are distinguished by $L$ being isometric to $H(\pi^i)^{\oplus r/2}$ or not.
\end{rem}

\begin{rem}\label{even rank jordan block}
    Note that since $\alpha=p^{k-1}(pk-k-1)$ and $p$ have the same parity, for all $j=0,\ldots, p^{l}$ such that $j+p$ is odd, the rank of $\mathfrak{P}^{j-\alpha}$-modular hermitian $\mathcal{O}_{E_{\mathfrak{p}}}$-lattice is even according to \Cref{propo: orth decomp}.
\end{rem}

From this description of modular hermitian $\mathcal{O}_{E_{\mathfrak{p}}}$-lattices, at the unique bad place $\mathfrak{p}$ of $K$, we can now state and prove a couple of results about the trace lattice of a hermitian $\mathcal{O}_E$-lattice.

\begin{lem}\label{lem even trace}
    Let $(L, h)$ be a hermitian $\mathcal{O}_E$-lattice with trace lattice $(L, b, f)$. We assume that $\mathfrak{s}(L, h)\subseteq \mathfrak{D}_{E/\mathbb{Q}}^{-1}$ and that $D_f$ has order $p^l\leq p^{k-1}$. 
    Then $(L, b)$ is even if and only if $p$ is odd, or the $\mathfrak{P}^{-\alpha}$-modular Jordan constituent of $L_\mathfrak{p}$ is isometric to $H(\pi^{-\alpha})^{\oplus r}$ for some $r\in\mathbb{Z}_{\geq 0}$. 
\end{lem}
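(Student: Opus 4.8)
The plan is to translate the evenness of $(L,b)$ into a purely local condition on $\mathfrak{n}(L,h)$ at the unique bad place $\mathfrak{p}$, and then to read off the answer from the Jordan decomposition supplied by \Cref{propo: decomp block val}. The starting point is \Cref{trace of norm}, which gives $n(L, b) = 2\,\mathrm{Tr}^K_{\mathbb{Q}}(\mathfrak{n}(L, h))$; hence $(L, b)$ is even precisely when $\mathrm{Tr}^K_{\mathbb{Q}}(\mathfrak{n}(L, h))\subseteq\mathbb{Z}$, which by trace duality is equivalent to the ideal-theoretic condition $\mathfrak{n}(L, h)\subseteq\mathfrak{D}_{K/\mathbb{Q}}^{-1}$. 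Since both sides are determined locally, I would check this place by place. At every good finite place $\mathfrak{q}\neq\mathfrak{p}$ the lattice $L_\mathfrak{q}$ is unimodular (\Cref{propo: decomp block val}) and $\mathfrak{q}$ is unramified over $\mathbb{Q}$, so $\mathfrak{n}(L_\mathfrak{q})\subseteq\mathcal{O}_{K_\mathfrak{q}}=(\mathfrak{D}_{K/\mathbb{Q}}^{-1})_\mathfrak{q}$ holds automatically; the entire content of the lemma is therefore concentrated at $\mathfrak{p}$.

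At $\mathfrak{p}$, I would write $L_\mathfrak{p} = \bigoplus_{j=0}^{p^l} L_j$ with $L_j$ the $\mathfrak{P}^{j-\alpha}$-modular Jordan constituent (\Cref{propo: decomp block val}), so that $\mathfrak{n}(L_\mathfrak{p})=\sum_j\mathfrak{n}(L_j)$ and the condition $\mathfrak{n}(L_\mathfrak{p})\subseteq(\mathfrak{D}_{K/\mathbb{Q}}^{-1})_\mathfrak{p}$ amounts to a valuation inequality for every $j$. Using the tower relation $v_\mathfrak{P}(\mathfrak{D}_{E/\mathbb{Q}})=v_\mathfrak{P}(\mathfrak{D}_{E/K})+e(E_\mathfrak{p}/K_\mathfrak{p})\,v_\mathfrak{p}(\mathfrak{D}_{K/\mathbb{Q}})$ together with $v_\mathfrak{P}(\mathfrak{D}_{E/K})=\gcd(2,p)$ and $e(E_\mathfrak{p}/K_\mathfrak{p})=2$ (\Cref{lem basic cyclo}), I would compute $(\mathfrak{D}_{K/\mathbb{Q}}^{-1})_\mathfrak{p}=\mathfrak{p}^{-\delta}$ where $2\delta=\alpha-\gcd(2,p)$. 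It then remains to compare $v_\mathfrak{p}(\mathfrak{n}(L_j))$ with $-\delta$ for each $j$, verifying that all terms $j\geq 1$ meet the bound unconditionally (their $\mathfrak{p}$-valuation only increases with $j$), so that the evenness of $(L,b)$ is governed entirely by the $\mathfrak{P}^{-\alpha}$-modular constituent $L_0$.

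For $L_0$ the two parities split exactly as in the statement. When $p$ is odd, $\alpha$ is odd, so $i=-\alpha$ is odd and \Cref{even rank jordan block} forces $L_0$ to have even rank; by \Cref{propo: orth decomp}(1)(b) it is automatically hyperbolic, and the direct computation $\mathfrak{n}(L_0)=\mathrm{Tr}_{E/K}(\mathfrak{P}^{-\alpha})=\mathfrak{p}^{\lfloor(-\alpha+\gcd(2,p))/2\rfloor}=\mathfrak{p}^{-\delta}$ shows the evenness bound is met with equality, so $(L,b)$ is always even. When $p=2$, both $\alpha$ and $i=-\alpha$ are even, and by \Cref{rem: cyclo modular bad} one has $\mathfrak{n}(L_0)=\mathfrak{p}^{-\alpha/2}$ unless $L_0\simeq H(\pi^{-\alpha})^{\oplus r/2}$, in which case $\mathfrak{n}(L_0)=\mathfrak{p}^{1-\alpha/2}=\mathfrak{p}^{-\delta}$. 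Since $\mathfrak{p}^{-\alpha/2}\not\subseteq\mathfrak{p}^{-\delta}$ while $\mathfrak{p}^{1-\alpha/2}\subseteq\mathfrak{p}^{-\delta}$, evenness holds if and only if $L_0$ is hyperbolic, which is the asserted equivalence.

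I expect the main obstacle to be the case $p=2$, where the two candidate norms $\mathfrak{p}^{-\alpha/2}$ and $\mathfrak{p}^{1-\alpha/2}$ straddle the evenness threshold $\mathfrak{p}^{-\delta}$: the argument succeeds only because \Cref{rem: cyclo modular bad} (resting on \Cref{propo: orth decomp}(2)(b)) pins down which of the two occurs in terms of the hyperbolic normal form, making hyperbolicity of $L_0$ the decisive invariant. A secondary point requiring care is the bookkeeping needed to confirm that none of the higher constituents $j\geq 1$ can obstruct evenness, together with the clean identification of $(\mathfrak{D}_{K/\mathbb{Q}}^{-1})_\mathfrak{p}$ via the different of the tower $E/K/\mathbb{Q}$.
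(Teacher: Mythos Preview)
Your proposal is correct and follows essentially the same route as the paper: reduce evenness to $\mathfrak{n}(L,h)\subseteq\mathfrak{D}_{K/\mathbb{Q}}^{-1}$, localize to $\mathfrak{p}$, use the Jordan decomposition from \Cref{propo: decomp block val}, and decide the question via the $\mathfrak{P}^{-\alpha}$-modular block using \Cref{propo: orth decomp}/\Cref{rem: cyclo modular bad}. The only cosmetic differences are that the paper works with $\mathfrak{P}$-valuations (writing the threshold as $\pi^{2-\alpha}\mathcal{O}_E$) rather than your $\mathfrak{p}^{-\delta}$, treats $j=1$ and $j\geq 2$ separately rather than in one stroke, and dispatches the odd-$p$ case by citing \cite[Lemma~2.4]{bc22} instead of your direct argument via \Cref{even rank jordan block}.
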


\begin{proof}
    In the case where $p$ is odd, this is a direct generalization of \cite[Lemma 2.4]{bc22}. 
    Now suppose that $p=2$: in this case, we have that $\mathfrak{D}_{K/\mathbb{Q}}^{-1}\mathcal{O}_E = \pi^{2-\alpha}\mathcal{O}_E$ (see \Cref{lem basic cyclo}). 
    By \cite[Proposition 6.6]{bh23}, we know that $(L, b)$ is even if and only if $\mathfrak{n}(L, h)\mathcal{O}_E\subseteq \mathfrak{D}_{K/\mathbb{Q}}^{-1}\mathcal{O}_E$. 
    Since for all prime ideals $\mathfrak{q}$ of $K$ not dividing $2\mathcal{O}_K$ we have that $\mathfrak{D}_{E_\mathfrak{q}/K_{\mathfrak{q}}} = \mathcal{O}_{E_\mathfrak{q}}$, we already know that locally at $\mathfrak{q}$
    \[ \mathfrak{n}(L_\mathfrak{q})\mathcal{O}_{E_\mathfrak{q}} \subseteq \mathfrak{s}(L_\mathfrak{q})\subseteq \mathfrak{D}_{E_\mathfrak{q}/\mathbb{Q}_{q}}^{-1} = \mathfrak{D}_{K_\mathfrak{q}/\mathbb{Q}_q}^{-1}\mathcal{O}_{E_\mathfrak{q}}\]
    where $q\mathbb{Z} := \mathfrak{q}\cap\mathbb{Z}\neq 2\mathbb{Z}$.
    According to \Cref{eq: decomp}, $L_{\mathfrak{p}}$ admits a Jordan decomposition of the form 
    $ L_\mathfrak{p} = \bigoplus_{i=0}^{p^l}N_i$
    where $N_i$ is $\mathfrak{P}^{i-\alpha}$-modular of rank $n_i\geq 0$. By definition, for all $i\geq 2$,
    \[\mathfrak{n}(N_i)\mathcal{O}_{E_\mathfrak{p}}\subseteq \mathfrak{s}(N_i) = \mathfrak{P}^{i-\alpha}\subseteq \mathfrak{P}^{2-\alpha} = \pi^{2-\alpha}\mathcal{O}_{E_\mathfrak{p}}.\]
    Moreover, if $n_1\neq 0$, since $1-\alpha$ is odd, \Cref{rem: cyclo modular bad} gives us that $\mathfrak{n}(N_1)\mathcal{O}_{E_\mathfrak{p}} = \pi^{2-\alpha}\mathcal{O}_{E_\mathfrak{p}}.$
    So the only obstruction for $(L, b)$ to be even should come from $N_0$. 
    According to \Cref{propo: orth decomp} either $n_0$ is odd and $\mathfrak{n}(N_0)\mathcal{O}_{E_{\mathfrak{p}}} = \mathfrak{s}(N_0) = \pi^{-\alpha}\mathcal{O}_{E_{\mathfrak{p}}}$, or $n_0$ is even and \(\pi^{2-\alpha}\mathcal{O}_{E_{\mathfrak{p}}}\subseteq \mathfrak{n}(N_0)\mathcal{O}_{E_{\mathfrak{p}}}\subseteq \pi^{-\alpha}\mathcal{O}_{E_{\mathfrak{p}}}.\)
    We conclude by remarking that since $\pi$ is not invertible in $\mathcal{O}_{E_{\mathfrak{p}}}$, then $\mathfrak{n}(N_0)\subseteq \pi^{2-\alpha}\mathcal{O}_{E_\mathfrak{p}}$ if and only if $\mathfrak{n}(N_0)= \pi^{2-\alpha}\mathcal{O}_{E_\mathfrak{p}}$ if and only if $n_0$ is even and $N_0\simeq H(\pi^{-\alpha})^{\oplus n_0/2}$.
\end{proof}

We are now equipped to prove the following result, generalizing the necessary conditions from \cite[Proposition 2.15]{bc22}.

\begin{propo}\label{propo: info hermitian structure}
    Let $p$ be a prime number, let $l_+,l_-\geq 0$ be such that $l_++l_-\geq 1$ and let $k\geq \gcd(2,p)$ be positive. 
    If there exists an even $\Phi_{p^k}$-lattice $(L, b, f)$ of signatures $(l_+, l_-)$ such that $D_f$ has order $p^l<p^k$, then there exist integers $n_0,\ldots, n_{p^l}$, with $n_0$ even and $n_i$ even if $i+p$ is odd, such that
    \begin{enumerate}
        \item $(L, b)$ is $p$-elementary of absolute determinant $p^{\sum_{i=0}^{p^l}in_i}$;
        \item $l_++l_- = \varphi(p^k)\sum_{i=0}^{p^l}n_i$;
        \item $l_+, l_-\in 2\mathbb{Z}$;
        \item if $l > 0$, one among $n_{p^{l-1}+1}, \ldots, n_{p^l}$ is nonzero.
    \end{enumerate}
\end{propo}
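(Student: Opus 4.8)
The plan is to pass to the hermitian structure $(L, h)$ of $(L, b, f)$ over $\mathcal{O}_E = \mathbb{Z}[\zeta_{p^k}]$ and to read off all four invariants from the local structure at the unique bad place $\mathfrak{p}$ of $K$. First I would invoke the discussion preceding \Cref{propo: decomp block val}: the hypothesis that $D_f$ has order $p^l \le p^{k-1}$ forces $(L, b)$ to be $p$-elementary and yields \eqref{eq: decomp}, so by \Cref{propo: decomp block val} the completion $L_\mathfrak{p}$ admits a Jordan decomposition $L_\mathfrak{p} = \bigoplus_{j=0}^{p^l} N_j$ with $N_j$ being $\mathfrak{P}^{j-\alpha}$-modular of $\mathcal{O}_{E_\mathfrak{p}}$-rank $n_j \ge 0$, while every other finite place contributes a unimodular constituent. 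I take the $n_j$ to be these ranks. The parity conditions are then immediate: when $i+p$ is odd, $n_i$ is even by \Cref{even rank jordan block}; for $n_0$ this already settles the case $p$ odd, and when $p=2$ the evenness of $n_0$ follows from \Cref{lem even trace}, since $(L,b)$ being even forces $N_0 \simeq H(\pi^{-\alpha})^{\oplus n_0/2}$.

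Claims (2) and (3) are formal consequences of the transfer dictionary: statement (2) is exactly the rank identity \eqref{ranks trace constr}, namely $l_+ + l_- = \varphi(p^k)\sum_{i=0}^{p^l} n_i$, and statement (3) follows from \eqref{sign trace constr} together with the fact that each $K_i$ has even signatures, so that both $l_+$ and $l_-$ are even.

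For the determinant in (1) I would compute the discriminant group place by place. Using $L^\vee = \mathfrak{D}_{E/\mathbb{Q}}^{-1} L^\#$ and $v_\mathfrak{P}(\mathfrak{D}_{E/\mathbb{Q}}) = \alpha$ (\Cref{lem basic cyclo}), a $\mathfrak{P}^{j-\alpha}$-modular constituent $N_j$ satisfies $N_j^\# = \mathfrak{P}^{\alpha-j}N_j$, hence $N_j^\vee = \mathfrak{P}^{-j}N_j$, so that $N_j^\vee/N_j \cong (\mathcal{O}_{E_\mathfrak{p}}/\mathfrak{P}^j)^{n_j}$, an abelian group of order $p^{j n_j}$ since $E/\mathbb{Q}$ is totally ramified at $p$ with residue field $\mathbb{F}_p$. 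As the good places contribute nothing and $D_L$ is a $p$-group supported at $\mathfrak{p}$, summing over $j$ gives $|D_L| = p^{\sum_{i=0}^{p^l} i n_i}$, which is the asserted absolute determinant.

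The hard part is (4), which pins down the order of $D_f$ in terms of the Jordan type. Here I would use that $D_f$ acts on $D_L = \bigoplus_{j}(\mathcal{O}_{E_\mathfrak{p}}/\mathfrak{P}^j)^{n_j}$ as multiplication by $\zeta$, and that $(1-\zeta^{p^a})\mathcal{O}_E = \mathfrak{P}^{p^a}$ by \Cref{ramif tower}. Thus $\zeta^{p^a}$ acts trivially on the constituent of index $j$ exactly when $\mathfrak{P}^{p^a}\subseteq \mathfrak{P}^j$, i.e. when $p^a \ge j$, so the order of $D_f$ equals the least power of $p$ that is at least $j_{\max} := \max\{j : n_j \ne 0\}$. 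The hypothesis $\textnormal{ord}(D_f) = p^l$ with $l>0$ therefore translates into $p^{l-1} < j_{\max} \le p^l$, which is precisely the statement that one of $n_{p^{l-1}+1}, \dots, n_{p^l}$ is non-zero. The only genuine care needed is to verify that the order of $D_f$ is governed solely by $j_{\max}$ and not by the multiplicities, which reduces to the elementary computation of the $\zeta$-order on each $\mathcal{O}_{E_\mathfrak{p}}/\mathfrak{P}^j$ above.
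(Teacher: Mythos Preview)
Your proof is correct and follows essentially the same route as the paper: pass to the hermitian structure, invoke \Cref{propo: decomp block val} for the Jordan decomposition at $\mathfrak{p}$, read off the parity of the $n_i$ from \Cref{even rank jordan block} and \Cref{lem even trace}, and obtain (2), (3) and the determinant formula from the transfer identities \eqref{ranks trace constr}, \eqref{sign trace constr}, \eqref{dual trace constr}. For (4) the paper argues by contradiction via \eqref{eq: decomp}, whereas you compute the order of $D_f$ directly using \Cref{ramif tower}; the two arguments are equivalent.
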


\begin{proof}
 Let $\zeta$ be a primitive $p^k$th root of unity, and let us denote again $E:= \mathbb{Q}(\zeta)$ and $K := \mathbb{Q}(\zeta+\zeta^{-1})$. Recall that $\mathfrak{P}$ and $\mathfrak{p}$ are the unique prime ideals of $\mathcal{O}_E$ and $\mathcal{O}_K$ respectively lying above $p\mathbb{Z}$.
    
    Let $(L, b, f)$ be an even $\Phi_{p^k}$-lattice of signatures $(l_+, l_-)$ and such that $D_f$ has order $p^l$ for some $0\leq l < k$. 
    Let moreover $(L, h)$ be its hermitian structure. 
    By \Cref{ranks trace constr}, we know that the rank of $L$ as $\mathbb{Z}$-module is even divisible by $\varphi(p^k)$, and $l_+$ and $l_-$ are both even according to \Cref{sign trace constr}. 
    Moreover, since $l<k$ we know that $(L, b)$ is $p$-elementary.
    According to \Cref{propo: decomp block val}, the local Jordan decomposition of $(L,h)$ at $\mathfrak{p}$ consists of at most $p^l+1$ Jordan constituents $N_0,\ldots, N_{p^l}$, and
    \[L_{\mathfrak{p}}=\bigoplus_{0\leq i \leq p^l}N_i.\]
    The $\mathfrak{P}$-valuations of their respective scale are $-\alpha, 1-\alpha,\ldots, p^l-\alpha$, where $\alpha := p^{k-1}(pk-k-1)$.
    For all $0\leq i\leq p^l$, let us  denote $n_i := \text{rank}_{\mathcal{O}_{E_\mathfrak{p}}}(N_i)$. 
    By \Cref{propo: orth decomp} and \Cref{lem even trace} we have that $N_0$ is hyperbolic, and thus of even rank.
    The fact that $n_i$ is even whenever $i+p$ is odd follows from \Cref{even rank jordan block}.
    Now, since the rank of $L$, as $\mathcal{O}_E$-module, is equal to the rank of $L_{\mathfrak{p}}$, as $\mathcal{O}_{E_\mathfrak{p}}$-module, we have that  
    \[ \textnormal{rank}_\mathbb{Z}(L) = l_++l_- = \varphi(p^k)\sum_{0\leq i \leq p^l}n_i.\]
    Finally, one observes that
    \(L^{\vee}/L = \mathfrak{D}_{E/\mathbb{Q}}^{-1}L^{\#}/L\simeq \bigoplus_{0\leq i \leq p^l}\mathfrak{D}_{E_\mathfrak{p}/\mathbb{Q}_p}^{-1}N_i^{\#}/N_i\)
    since $(L, h)$ is locally unimodular at all finite places of $K$ outside of $\mathfrak{p}$. 
    If we denote $\pi := 1-\zeta$, since for all $0\leq i\leq p^l$ the hermitian lattice $N_i$ is $\pi^i\mathfrak{D}_{E_\mathfrak{p}/\mathbb{Q}_p}^{-1}$-modular, 
    one has that for all $0\leq i\leq p^l$, as abelian groups,
    \[\mathfrak{D}_{E_\mathfrak{p}/\mathbb{Q}_p}^{-1}N_i^{\#}/N_i = \pi^{-i}N_i/N_i\cong (\mathcal{O}_E/\mathfrak{P}^i)^{\oplus n_i}\]
    where the latter is an $\mathcal{O}_E/\mathfrak{P}$-vector space of dimension $in_i$, with $\mathcal{O}_E/\mathfrak{P}\cong \mathbb{F}_p$ (see \Cref{lem basic cyclo}).
    Therefore, $\textnormal{val}_p(\textnormal{det}(L, b)) = \sum_{i=0}^{p^l}in_i$. 
    Finally, if $l\geq 1$ and if $n_j = 0$ for all $p^{l-1}+1\leq j\leq p^l$, we would have that $\pi^{p^{l-1}-\alpha}L_\mathfrak{p}^\#\subseteq L_\mathfrak{p}$, contradicting the fact that $D_f$ has order $p^l$. 
\end{proof}
Before finishing our section on local information of hermitian lattices arising as hermitian structure of some $\Phi_m$-lattices, we prove the following lemma. 
Recall that for a $2$-elementary integral $\Z$-lattice $(L, b)$, one defines
\[ \delta_L := \left\{\begin{array}{ccc}0&\text{if}&n(L^{\vee})\subseteq \mathbb{Z}\\1&\text{else}&\end{array}\right..\]

\begin{lem}\label{lem delta 2-elem}
     Let $p = 2$, and let $m = 2^k$ for some $k\geq 2$.
     Let $(L, h)$ be a hermitian $\mathcal{O}_E$-lattice with trace lattice $(L, b, f)$.
     We assume that $\mathfrak{s}(L, h)\subseteq \mathfrak{D}_{E/\mathbb{Q}}^{-1}$ and that $D_f$ has order $2^l\leq 2^{k-1}$. 
     Then $\delta_L = 0$  if and only if the $\mathfrak{P}^{2^{k-1}-\alpha}$-modular Jordan constituent of $L_\mathfrak{p}$ is isometric to $H(\pi^{2^{k-1}-\alpha})^{\oplus r}$ for some $r\in\mathbb{Z}_{\geq 0}$. 
\end{lem}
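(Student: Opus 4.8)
The plan is to mimic the proof of \Cref{lem even trace}, but to detect the invariant $\delta_L$ through the norm of the \emph{dual} lattice rather than through evenness of $L$ itself. Since $\delta_L$ is determined by the $2$-adic part of the discriminant form, and since $L$ is locally even and unimodular at every finite place of $K$ other than $\mathfrak{p}$ (by \Cref{propo: decomp block val} together with the hypothesis $\mathfrak{s}(L,h)\subseteq\mathfrak{D}_{E/\mathbb{Q}}^{-1}$), only the bad place $\mathfrak{p}$ will contribute. First I would rewrite the condition $\delta_L=0$, that is $n(L^\vee)\subseteq\mathbb{Z}$, on the hermitian side. Using $L^\vee=\mathfrak{D}_{E/\mathbb{Q}}^{-1}L^{\#}$ (\Cref{dual trace constr}), the $\mathbb{Z}$-lattice $(L^\vee,b)$ is the trace lattice of the hermitian $\mathcal{O}_E$-lattice $M:=\mathfrak{D}_{E/\mathbb{Q}}^{-1}L^{\#}$, so \Cref{trace of norm} gives $n(L^\vee,b)=2\,\mathrm{Tr}^K_\mathbb{Q}(\mathfrak{n}(M))$. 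As $\mathfrak{n}(M)$ is an $\mathcal{O}_K$-ideal, the codifferent characterisation of the trace yields
\[ \delta_L=0 \iff 2\mathfrak{n}(M)\subseteq\mathfrak{D}_{K/\mathbb{Q}}^{-1}, \]
and, $L$ being locally unimodular (hence self-dual with integral norm) at every finite place of $K$ other than $\mathfrak{p}$, this amounts to the single local condition $\mathfrak{n}(M)_\mathfrak{p}\subseteq\tfrac{1}{2}\mathfrak{D}_{K_\mathfrak{p}/\mathbb{Q}_2}^{-1}$.

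Next I would compute both sides locally. Writing $\pi:=1-\zeta$, $\mathfrak{P}:=\pi\mathcal{O}_E$, $\beta:=\pi\iota(\pi)$ and $\alpha:=2^{k-1}(k-1)$, \Cref{eq: decomp} and \Cref{propo: decomp block val} give a Jordan splitting $L_\mathfrak{p}=\bigoplus_{i=0}^{2^l}N_i$ with $N_i$ being $\mathfrak{P}^{i-\alpha}$-modular of rank $n_i$, whence $M_\mathfrak{p}=\bigoplus_i\pi^{-\alpha}N_i^{\#}$ and, using $\beta\mathcal{O}_E=\mathfrak{P}^2$,
\[ \mathfrak{n}(M)_\mathfrak{p}\mathcal{O}_{E_\mathfrak{p}}=\mathfrak{P}^{-2\alpha}\,\gcd_{i}\mathfrak{n}(N_i^{\#})\mathcal{O}_{E_\mathfrak{p}}. \]
Each $N_i^{\#}$ is $\mathfrak{P}^{\alpha-i}$-modular, so \Cref{rem: cyclo modular bad} lets me write $\mathfrak{n}(N_i^{\#})\mathcal{O}_{E_\mathfrak{p}}=\mathfrak{P}^{\alpha-i+\epsilon_i}$ with $\epsilon_i\in\{0,1,2\}$, where $\epsilon_i=2$ exactly when $\alpha-i$ is even, $n_i$ is even, and $N_i$ (equivalently $N_i^{\#}$) is hyperbolic. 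On the target side, the tower formula together with $\mathfrak{D}_{E_\mathfrak{p}/K_\mathfrak{p}}=\mathfrak{P}^2$ and $\mathfrak{D}_{E_\mathfrak{p}/\mathbb{Q}_2}=\mathfrak{P}^\alpha$ (\Cref{lem basic cyclo}), and the identity $2\mathcal{O}_E=\mathfrak{P}^{2^{k-1}}$, identify $\tfrac{1}{2}\mathfrak{D}_{K_\mathfrak{p}/\mathbb{Q}_2}^{-1}\mathcal{O}_{E_\mathfrak{p}}=\mathfrak{P}^{2-\alpha-2^{k-1}}$.

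Combining the two computations, $\delta_L=0$ becomes equivalent to the system $\epsilon_i\geq i+2-2^{k-1}$ for all $0\leq i\leq 2^l$, and the heart of the argument is to show that the top index $i=2^{k-1}$ is the only constraining one. For $i\leq 2^{k-1}-2$ the bound reads $\epsilon_i\geq 0$ and is vacuous. For $i=2^{k-1}-1$, which is odd since $k\geq 2$, the parity constraint of \Cref{even rank jordan block} forces $n_i$ to be even while $\alpha-i$ is odd, so $\epsilon_i=1$, exactly meeting the required bound; this term is therefore automatically satisfied. For $i=2^{k-1}$ the bound becomes $\epsilon_{2^{k-1}}\geq 2$, i.e. $N_{2^{k-1}}\cong H(\pi^{2^{k-1}-\alpha})^{\oplus r}$ for some $r$, and when $l<k-1$ this constituent has rank $0$, so the condition holds vacuously (the case $r=0$). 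This delivers precisely the asserted equivalence.

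I expect the main obstacle to be the valuation bookkeeping rather than any conceptual subtlety: one must carefully track the effect of the rescaling by $\mathfrak{D}_{E/\mathbb{Q}}^{-1}$ on the scales and norms of the dual Jordan constituents, pin down the $\tfrac{1}{2}$-twisted inverse different $\tfrac{1}{2}\mathfrak{D}_{K_\mathfrak{p}/\mathbb{Q}_2}^{-1}$ exactly, and — most delicately — invoke \Cref{even rank jordan block} to discard the $i=2^{k-1}-1$ constituent, which is what singles out $N_{2^{k-1}}$ as the sole obstruction and makes the statement match the model provided by \Cref{lem even trace} at the opposite end of the Jordan chain.
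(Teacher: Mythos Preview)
Your proposal is correct and follows essentially the same approach as the paper. The paper proceeds slightly more concisely by rescaling the dual lattice $\mathfrak{D}_{E/\mathbb{Q}}^{-1}L^\#$ by $\pi^{2^{k-2}}$ so as to literally reduce to the computation of \Cref{lem even trace} (after noting that $\mathfrak{s}(\pi^{-j+2^{k-2}}N_j)=\mathfrak{P}^{2^{k-1}-j-\alpha}$ hits $\mathfrak{P}^{-\alpha}$ only for $j=2^{k-1}$), whereas you keep the dual lattice as is and do the full valuation bookkeeping directly; both routes isolate the constituent $N_{2^{k-1}}$ via the same parity and norm considerations from \Cref{rem: cyclo modular bad} and \Cref{even rank jordan block}.
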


\begin{proof}
    According to \Cref{trace of norm}, $n(L^\vee)\subseteq \mathbb{Z}$ if and only if $2\mathfrak{n}(\mathfrak{D}_{E/\mathbb{Q}}^{-1}L^\#)\mathcal{O}_E\subseteq \mathfrak{P}^{2-\alpha}$.
    Now $2\mathcal{O}_E = \mathfrak{P}^{2^{k-1}}$ and therefore
    \[ 2\mathfrak{n}(\mathfrak{D}_{E/\mathbb{Q}}^{-1}L^\#)\mathcal{O}_E = \mathfrak{n}(\pi^{2^{k-2}}\mathfrak{D}_{E/\mathbb{Q}}^{-1}L^\#)\mathcal{O}_E.\]
    But, since the order of $D_f$ is at most $2^{k-1}$, we know that $(L, b)$ is $2$-elementary and $(L, h)$ is locally unimodular at all finite places $\mathfrak{q}\neq \mathfrak{p}$ of $K$. 
    In particular, $\mathfrak{n}(\pi^{2^{k-2}}\mathfrak{D}_{E/\mathbb{Q}}^{-1}L^\#) = \mathfrak{n}(\pi^{2^{k-2}}\mathfrak{D}_{E_\mathfrak{p}/\mathbb{Q}_2}^{-1}L_\mathfrak{p}^\#)$. 
    By \Cref{propo: decomp block val}, we have that
    \[\pi^{2^{k-2}}\mathfrak{D}_{E_\mathfrak{p}/\mathbb{Q}_2}^{-1}L_\mathfrak{p}^\#\simeq \bigoplus_{j=0}^{2^{k-1}}\pi^{-j+2^{k-2}}N_j \]
    where $N_j$ is $\mathfrak{P}^{j-\alpha}$-modular. The rest of the proof follows similarly as for the proof of \Cref{lem even trace} after remarking that for all $0\leq j\leq 2^{k-1}$, since $\alpha = (k-1)2^{k-1}$,
    \[ \mathfrak{s}(\pi^{-j+2^{k-2}}N_j) = \pi^{-2j+2^{k-1}}\mathfrak{s}(N_j) = \mathfrak{P}^{2^{k-1}-j-\alpha}\subseteq \mathfrak{P}^{-\alpha}\]
    with equality if and only if $j = 2^{k-1}$.
\end{proof}

In particular, we can prove the following:

\begin{coro}\label{coro taki}
Let $(L, b, f)$ be an even $\Phi_{2^k}$-lattice for some $k\geq 2$ and suppose that $D_f$ has order at most $2^{k-2}$. 
Then $\delta_L = 0$. 
\end{coro}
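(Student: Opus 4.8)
The plan is to deduce this directly from \Cref{lem delta 2-elem}, so the first task is to check that its hypotheses are met. Let $(L, h)$ be the hermitian structure of $(L, b, f)$ over $E = \mathbb{Q}(\zeta_{2^k})$. Since $(L, b)$ is even it is in particular integral, and by the transfer construction the identity $L^\vee = \mathfrak{D}_{E/\mathbb{Q}}^{-1}L^\#$ of \eqref{dual trace constr} shows that integrality is equivalent to $\mathfrak{s}(L, h)\subseteq \mathfrak{D}_{E/\mathbb{Q}}^{-1}$. Writing $2^l$ for the order of $D_f$, the hypothesis gives $l\leq k-2\leq k-1$, so all the assumptions of \Cref{lem delta 2-elem} hold.

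By \Cref{lem delta 2-elem} it therefore suffices to show that the $\mathfrak{P}^{2^{k-1}-\alpha}$-modular Jordan constituent of $L_\mathfrak{p}$ is isometric to $H(\pi^{2^{k-1}-\alpha})^{\oplus r}$ for some $r\geq 0$. I would prove this by observing that the constituent in question is simply the zero lattice. Indeed, \Cref{propo: decomp block val} tells us that any Jordan decomposition of $L_\mathfrak{p}$ consists of constituents which are $\mathfrak{P}^{j-\alpha}$-modular only for $j = 0, \ldots, 2^l$. Since $2^l\leq 2^{k-2} < 2^{k-1}$, the index $j = 2^{k-1}$ lies strictly outside this range, so the $\mathfrak{P}^{2^{k-1}-\alpha}$-modular constituent has rank $0$. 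Taking $r = 0$ gives the required (vacuous) isometry, and hence $\delta_L = 0$.

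Since the substance of the argument was already carried out in the proof of \Cref{lem delta 2-elem}, there is no real obstacle here; the only point to watch is that the bound on the order of $D_f$ is $2^{k-2}$ rather than $2^{k-1}$, which is precisely what forces the relevant modular index $2^{k-1}$ out of the admissible range $\{0, \ldots, 2^l\}$ and thereby kills the potentially obstructing constituent. For the weaker bound $2^{k-1}$ one could have $l = k-1$, in which case $j = 2^{k-1}$ would be admissible and the conclusion could genuinely fail; this explains why the sharper hypothesis is imposed.
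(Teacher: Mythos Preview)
Your proof is correct and follows essentially the same approach as the paper: both invoke \Cref{lem delta 2-elem} and observe, via \Cref{propo: decomp block val}, that the bound $2^l\leq 2^{k-2}$ forces the $\mathfrak{P}^{2^{k-1}-\alpha}$-modular Jordan constituent of $L_\mathfrak{p}$ to vanish. Your write-up is more detailed than the paper's terse two-line argument, and your closing remark on why the hypothesis cannot be weakened to $2^{k-1}$ is a helpful addition.
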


\begin{proof}
    In that case, the Jordan decomposition of the hermitian structure of $(L, b, f)$ at the prime ideal $\mathfrak{p}$ has constituents whose scales have $\mathfrak{P}$-valuation at most $2^{k-2}-\alpha$. 
    Hence by \Cref{lem delta 2-elem}, we have that $n(L^\vee)\subseteq \mathbb{Z}$ and $\delta_L = 0$.
\end{proof}

\begin{rem}
    This generalizes the result of Taki \cite[Proposition 2.4]{tak12} for the case $k=2$.
\end{rem}

In the next subsection, we prove that \Cref{propo: info hermitian structure} admits a converse if one makes further assumption on the genus of $(L, b)$.

\subsection{Existence of \texorpdfstring{$\Phi_m$}{Phi\textunderscore{m}}-lattices with given invariants}\label{subsec: exist phim lat}
Let $(L, b, f)$ be an even $\Phi_m$-lattice with $m\geq 3$ arbitrary, and suppose that $D_f$ has order at most 2. 
In regard to \Cref{propo: case 1 minpoly,propo: case 2 minpoly}, we have to consider the following cases:
\begin{enumerate}
    \item $m$ is composite and $D_f$ is trivial;
    \item $m$ is a prime power and $D_f$ is trivial;
    \item $m$ is twice an odd prime power and $D_f = -\text{id}_{D_L}$;
    \item $m$ is a power of 2 and $D_f$ has order 2.
\end{enumerate}
In the first case, $(L, b)$ is unimodular and the existence conditions are already well known \cite{bay84, mc15,bt20,bay24b}. 
For (2), (3) and (4), we know that $(L, b)$ is $p$-elementary for some prime number $p$ and we can use the results of the previous section.

\begin{rem}\label{rem: do not do twice prime}
    For an odd prime number $p$ and some integer $k\geq 1$, if $(L, b, f)$ is a non-unimodular $\Phi_{p^k}$-lattice with $D_f$ trivial, then $(L, -f)$ is a $\Phi_{2p^k}$-lattice where $D_{-f} = -\text{id}_{D_L}$ has order $2$. 
    And the converse also holds.
    Hence the existence of a $\Phi_{2p^k}$-lattice $(L, f)$ with $D_f = -\text{id}_{D_L}$ is equivalent to the existence of a $\Phi_{p^k}$-lattice whose underlying isometry is stable. 
    Therefore the case of twice an odd prime power in (3) is already covered by (2).
\end{rem}

For (1), the existence conditions can be reduced to the following proposition:

\begin{propo}\label{existence unimod}
Let $m\geq 3$ be a composite integer and let $l_+, l_-\geq 0$ with $l_++l_- \geq 1$. 
There exists an even unimodular $\Phi_m$-lattice $(L, b, f)$ of signatures $(l_+, l_-)$ if and only if there exists a positive integer $d>0$ such that:
\begin{enumerate}
    \item $l_+\equiv l_-\mod 8$, and $l_++l_- = d\varphi(m)$;
    \item $l_+,l_-\equiv 0\mod 2$;
    \item if $m = 2p^k$ for some odd prime number $p$ and positive integer $k>0$, then $d\equiv 0\mod 2$.
\end{enumerate}
Moreover, up to fixing signatures , the genus of the hermitian structure of any such $(L, b, f)$ is uniquely determined by $(l_-, m, d)$.
\end{propo}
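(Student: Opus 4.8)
The plan is to translate everything to the hermitian structure $(L,h)$ over $\mathcal{O}_E$, where $E=\mathbb{Q}(\zeta_m)$, and to rephrase ``even unimodular of signature $(l_+,l_-)$'' as genus-theoretic conditions on $(L,h)$. The key reduction is \Cref{dual trace constr}: the trace lattice $(L,b)$ is unimodular exactly when $L=\mathfrak{D}_{E/\mathbb{Q}}^{-1}L^{\#}$, i.e.\ when $(L,h)$ is $\mathfrak{D}_{E/\mathbb{Q}}^{-1}$-modular. Thus both implications reduce to the study of $\mathfrak{D}_{E/\mathbb{Q}}^{-1}$-modular hermitian $\mathcal{O}_E$-lattices of a fixed rank $d:=\mathrm{rank}_{\mathcal{O}_E}(L)$ with prescribed signatures $\{n(\mathfrak{q}_i)\}_{i\in S_m}$.

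For the necessity of (1)--(3) I would argue as follows. The identity $l_++l_-=d\varphi(m)$ is \Cref{ranks trace constr}; the congruence $l_+\equiv l_-\bmod 8$ is forced by even unimodularity through \Cref{exist unimod even}; and $l_-\in 2\mathbb{Z}$ (hence $l_+\in 2\mathbb{Z}$) is \Cref{sign trace constr}. For (3), when $m=2p^k$ one has $\mathbb{Q}(\zeta_{2p^k})=\mathbb{Q}(\zeta_{p^k})$ and, since $\Phi_{2p^k}(X)=\Phi_{p^k}(-X)$, the triple $(L,b,-f)$ is an even unimodular $\Phi_{p^k}$-lattice. Its hermitian structure is $\mathfrak{D}_{E/\mathbb{Q}}^{-1}$-modular, hence $\mathfrak{P}^{-\alpha}$-modular at the unique bad place $\mathfrak{p}$, with $\alpha=p^{k-1}(pk-k-1)$ odd for $p$ odd (\Cref{lem basic cyclo}); \Cref{propo: orth decomp}(1)(b) then forces $d$ to be even. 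Underlying this is the dichotomy that, among composite (non prime power) orders $m$, the extension $E/K$ ramifies at a finite place exactly when $m=2p^k$ (equivalently, complex conjugation lies in some inertia group) — this is precisely why no parity constraint is needed for the remaining composite orders.

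For sufficiency I would assume (1)--(3) and build $(L,h)$ place by place. At the good finite places the $\mathfrak{D}_{E/\mathbb{Q}}^{-1}$-modular lattice of rank $d$ is unique (\cite[Proposition 3.3.5]{kir16}; cf.\ \Cref{propo: decomp block val}); at the bad place (present only for $m=2p^k$) \Cref{propo: orth decomp}(1)(b) supplies $H(\pi^{-\alpha})^{\oplus d/2}$, which exists precisely because (3) makes $d$ even; at the real places I choose the $n(\mathfrak{q}_i)$ with $2\sum_i n(\mathfrak{q}_i)=l_-$, possible since $l_-\le d\varphi(m)$. It then remains to check that this local data glues to a global genus, i.e.\ that the Hasse/determinant product relation holds; this is where (1) and (2) re-enter on the hermitian side. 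Granting that the genus is nonempty, \Cref{lem even trace} yields evenness of the trace (automatic for odd $p$, and ensured at $p=2$ by the hyperbolic constituent chosen above), modularity yields unimodularity, and \Cref{sign trace constr} yields the prescribed signature. For the uniqueness ``up to fixing signatures'', every local constituent is forced by $(m,d)$, and the remaining determinant class at the bad place is pinned by the global product relation once the signatures — whose negative total is $l_-$ — are fixed; hence the genus depends only on $(l_-,m,d)$ and the chosen signatures.

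The main obstacle will be the global assembly step in the sufficiency direction: verifying the hermitian local--global (Hasse) compatibility so that the prescribed genus is nonempty and unique, together with the $p=2$ bookkeeping of \Cref{propo: orth decomp}(2) — the norm ideal, the hyperbolicity, and the determinant class in $K^\times/N^{E}_{K}(E^\times)$. By comparison, the necessity computations are formal.
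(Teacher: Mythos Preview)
Your approach is essentially the same as the paper's: both reduce to the hermitian structure, use $L^\vee=\mathfrak{D}_{E/\mathbb{Q}}^{-1}L^\#$ to identify unimodularity with $\mathfrak{D}_{E/\mathbb{Q}}^{-1}$-modularity, and analyse the unique bad place when $m=2p^k$ via \Cref{propo: orth decomp}(1)(b). The paper dispatches the existence direction by citing \cite[Theorem 4.5]{bay24b} together with \Cref{exist unimod even} and \Cref{sign trace constr}, whereas you outline the direct hermitian local--global assembly; both routes are valid, and your identification of the Hasse product check as the nontrivial step is accurate.

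Two small remarks. First, your anticipated ``$p=2$ bookkeeping'' never arises: for composite (i.e.\ non--prime-power) $m$, the only case with a bad place is $m=2p^k$ with $p$ odd, and since $\mathbb{Q}(\zeta_{2p^k})=\mathbb{Q}(\zeta_{p^k})$ the bad place lies over the odd prime $p$, so only part (1) of \Cref{propo: orth decomp} is relevant and the local constituent $H(\pi^{-\alpha})^{\oplus d/2}$ is already unique with no determinant ambiguity to resolve. Second, your appeal to \Cref{lem even trace} for evenness is not literally applicable outside the prime-power setting; for composite $m\neq 2p^k$ one instead uses that $\mathfrak{D}_{E/K}=\mathcal{O}_E$ (\Cref{lem ramif}), whence $\mathfrak{n}(L,h)\mathcal{O}_E\subseteq\mathfrak{s}(L,h)=\mathfrak{D}_{E/\mathbb{Q}}^{-1}=\mathfrak{D}_{K/\mathbb{Q}}^{-1}\mathcal{O}_E$ gives evenness directly, while for $m=2p^k$ your $-f$ reduction to the odd-prime-power case makes \Cref{lem even trace} apply with $p$ odd.
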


\begin{proof}
The existence part follows from \cite[Chapitre V, Section 1.5, \S 2, Th\'eor\`eme 2, Corollaire 1]{ser70}, \Cref{sign trace constr} and \cite[Theorem 4.5]{bay24b}. 
Now, let $(L, b, f)$ be an even $\Phi_m$-lattice and let $(L, h)$ be its hermitian structure over the field $E := \mathbb{Q}(\zeta)$ where $\zeta := \zeta_m$ is a primitive $m$th root of unity.
If $m$ is not twice a prime power, then \Cref{lem ramif} tells us that the different ideal $\mathfrak{D}_{E/\mathbb{Q}} = \mathcal{O}_E$ is trivial. 
In particular, since $(L, b)$ is unimodular, $L^\# = L$ and the rank of $L$ uniquely determines the isometry class of $L_\mathfrak{p}$ for all finite places $\mathfrak{p}$ of $K := \mathbb{Q}(\zeta+\zeta^{-1})$. 
Otherwise, if $m = 2p^k$ is twice an odd prime power, the fact that $(L, b)$ is unimodular is equivalent to
\begin{equation}\label{eq: local modular composite}
    \mathfrak{P}^{-\alpha}L^\# = L
\end{equation}
where $\mathfrak{P}$ is the unique prime $\mathcal{O}_E$-ideal above $p\mathbb{Z}$, and $\alpha := \text{val}_{\mathfrak{P}}(\mathfrak{D}_{E/\mathbb{Q}}) = p^{k-1}(pk-k-1)$. 
In that case, the isometry class of $L_\mathfrak{q}$ for all finite places $\mathfrak{q}\neq\mathfrak{p} := N^E_K(\mathfrak{P})$ of $K$ is uniquely determined by the rank of $L$. 
Furthermore, according to \Cref{eq: local modular composite}, we have that $\mathfrak{P}^{-\alpha}L^\#_\mathfrak{p} = L_\mathfrak{p}$ meaning that $L_\mathfrak{p}$ is $\mathfrak{P}^{-\alpha}$-modular by definition. Since $\alpha$ is odd, \Cref{propo: orth decomp} and \Cref{lem even trace} tell us that $d = \text{rank}_{\mathcal{O}_E}(L)$ is indeed even and $L_\mathfrak{p}\simeq H(\pi^{-\alpha})^{\oplus d/2}$ where $\pi := 1-\zeta$.
Hence, the isometry class of $L_\mathfrak{p}$ is uniquely determined too. Replacing $f$ by another generator of $\langle f\rangle \leq O(L, b)$ does not change the isometry class of $L_\mathfrak{p}$ but only the signatures of $(L, h)$. Hence, for fixed signatures, we have that the genus of $(L, h)$ is uniquely determined by the isometry class of $L_\mathfrak{p}$, which itself is determined by the conditions of the statement.
\end{proof}

We now need to settle the prime power cases. 
For this, we adapt and extend the proof of \cite[Proposition 2.15]{bc22}. 
We recall that if $(L, b, f)$ is an even $\Phi_m$-lattice for some $m\geq 3$, then by transfer, there exists a hermitian $\mathbb{Z}[\zeta_m]$-lattice $(L, h)$ whose trace lattice is $(L, b, f)$, and we have determined a list of local invariants for such a hermitian lattice.
We now find sufficient conditions to prove the converse. 
We treat separately the case where $D_f$ is trivial, and the case where $D_f$ has order 2.

\begin{propo}\label{theo: prime order}
Let $p$ be a prime number, let $l_+, l_-\geq0$ be such that $l_++l_-\geq 1$ and let $k\geq \gcd(2,p)$ be positive. 
Suppose that there exist two nonnegative integers $n_0\in2\mathbb{Z}$ and $n_1\in\mathbb{Z}$, with $n_1$ even if $p=2$, such that:
\begin{enumerate}
    \item $l_++l_- = \varphi(p^k)(n_0+n_1)$;
    \item $l_- \in 2\mathbb{Z}$;
    \item $l_+\equiv l_-\mod 8$ if $n_1 = 0$.
\end{enumerate}
Then there exists an even $p$-elementary $\Phi_{p^k}$-lattice $(L, b, f)$ of signatures $(l_+, l_-)$, absolute determinant $p^{n_1}$ and $\delta_L = 0$ if $p=2$, such that $D_f$ is trivial. 
Moreover, up to fixing signatures , the genus of the hermitian structure of any such $(L, b, f)$ is uniquely determined by $(l_-, p, k, n_0, n_1)$.
\end{propo}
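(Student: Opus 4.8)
The plan is to realise $(L,b,f)$ as the trace lattice of a suitable hermitian $\mathcal{O}_E$-lattice $(L,h)$, where $E := \mathbb{Q}(\zeta)$ and $K := \mathbb{Q}(\zeta+\zeta^{-1})$ for $\zeta := \zeta_{p^k}$, so that multiplication by $\zeta$ furnishes an isometry $f$ of minimal polynomial $\Phi_{p^k}$. By the transfer construction of \Cref{subsec trace} it then suffices to build a hermitian lattice with the correct local invariants at every place of $K$ and to read off the asserted properties of its trace lattice via \Cref{lem even trace}, \Cref{coro taki}, the signature formula \Cref{sign trace constr} and the dual description $L^\vee = \mathfrak{D}_{E/\mathbb{Q}}^{-1}L^\#$. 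Since $D_f$ is to be trivial (the case $p^l=1$ of \Cref{propo: info hermitian structure}), the inclusions \Cref{eq: decomp} force any Jordan decomposition at the unique bad place $\mathfrak{p}$ of $K$ to consist of at most two constituents $N_0$ and $N_1$, of modularities $\mathfrak{P}^{-\alpha}$ and $\mathfrak{P}^{1-\alpha}$ and ranks $n_0,n_1$, with $\alpha := \textnormal{val}_\mathfrak{P}(\mathfrak{D}_{E/\mathbb{Q}})$. I would adapt the proof of \cite[Proposition 2.15]{bc22} to this prime-power setting.

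First I would prescribe the local data. At every good finite place $\mathfrak{q}\neq\mathfrak{p}$ I take $L_\mathfrak{q}$ to be the unique unimodular hermitian lattice of rank $n_0+n_1$ (\Cref{propo: decomp block val}). At the real infinite places $\mathfrak{q}_i$, $i\in S_{p^k}$, I choose negative indices $n(\mathfrak{q}_i)\in\{0,\dots,n_0+n_1\}$ with $\sum_{i}n(\mathfrak{q}_i) = l_-/2$; this is possible because $l_-$ is even by hypothesis (2) and $0\le l_-/2\le (n_0+n_1)\varphi(p^k)/2$ by hypothesis (1), and by \Cref{sign trace constr} it yields a trace lattice of signature $(l_+,l_-)$. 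At $\mathfrak{p}$ I prescribe $N_0\oplus N_1$ as above; when $p=2$ I take $N_0\simeq H(\pi^{-\alpha})^{\oplus n_0/2}$ (legitimate since $n_0$ is even), so that \Cref{lem even trace} guarantees evenness, while for $p$ odd evenness is automatic and $N_0$ is forced to be hyperbolic anyway because $-\alpha$ is odd (\Cref{propo: orth decomp}). The determinant of $N_1$ is left as the only remaining degree of freedom.

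The crux, and the main obstacle, is to glue these local lattices to a global hermitian $\mathcal{O}_E$-lattice. By the local--global principle for hermitian lattices over the CM extension $E/K$ (\cite[\S 3.3]{kir16}), the prescribed local lattices are the localisations of a single $(L,h)$ precisely when their determinants are the localisations of one class in $K^\times/N_{E/K}(E^\times)$. Since $E/K$ is quadratic, the Hasse norm theorem reduces this to the product formula: the number of places at which the local determinant is a non-norm must be even. The good places contribute nothing, each infinite place $\mathfrak{q}_i$ contributes a non-norm exactly when $n(\mathfrak{q}_i)$ is odd, and $\mathfrak{p}$ contributes according to $\det N_0\cdot\det N_1$. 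When $n_1>0$ the determinant of $N_1$ may be toggled to absorb any parity, so the formula can always be met; when $n_1=0$ the lattice is unimodular at $\mathfrak{p}$ and its contribution is rigid, so the parity must be balanced by the infinite places, which is precisely what the congruence $l_+\equiv l_-\bmod 8$ of hypothesis (3) secures (reflecting \Cref{exist unimod even}). The resulting trace lattice $(L,b,f)$ is then even of signature $(l_+,l_-)$; it is $p$-elementary of absolute determinant $p^{n_1}$, since $L^\vee=\mathfrak{D}_{E/\mathbb{Q}}^{-1}L^\#$ localises at $\mathfrak{p}$ to $N_0\oplus\pi^{-1}N_1$, giving $D_L\cong(\mathcal{O}_E/\mathfrak{P})^{n_1}\cong\mathbb{F}_p^{n_1}$; the action of $f$ on $D_L$ is trivial because $\pi L^\vee_\mathfrak{p}=\pi N_0\oplus N_1\subseteq L_\mathfrak{p}$; and $\delta_L=0$ when $p=2$ by \Cref{coro taki}, as $D_f$ has order $1\le 2^{k-2}$.

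Finally, for the uniqueness of the genus up to fixing signatures, I would show that all local isometry classes of $(L,h)$ are determined by $(l_-,p,k,n_0,n_1)$: the good places by the rank $n_0+n_1$ (\Cref{propo: decomp block val}), the infinite places by the fixed indices $n(\mathfrak{q}_i)$, and at $\mathfrak{p}$ the modularities and ranks of $N_0,N_1$ by $p$-elementarity, evenness and the determinant $p^{n_1}$. Within this Jordan type, \Cref{propo: orth decomp} leaves freedom only in the determinant of the diagonal constituent, which is then pinned down by the same product-formula constraint used in the existence step. Hence, once the signatures are fixed, the genus of the hermitian structure is unique, completing the proof.
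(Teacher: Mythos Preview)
Your approach is essentially the paper's own: prescribe local hermitian data (unimodular at good places, $N_0\oplus N_1$ at $\mathfrak{p}$ with $N_0$ hyperbolic), invoke the parity-of-non-norm-places criterion from \cite[\S 3.5]{kir16} for global existence, toggle $\det N_1$ when $n_1>0$, and read off the trace-lattice invariants and the uniqueness of the hermitian genus from the Jordan data.

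The one place you hand-wave where the paper computes is the case $n_1=0$. Saying the congruence $l_+\equiv l_-\bmod 8$ ``reflects \Cref{exist unimod even}'' is not a proof: you must show that the hermitian product formula itself reduces to this congruence. The paper does so by observing that $\det N_0\equiv(-1)^{n_0/2}$ modulo norms and then invoking \Cref{th -1 loc norm} to decide when $-1\in N^{E_\mathfrak{p}}_{K_\mathfrak{p}}(E_\mathfrak{p}^\times)$, which yields $\det N_0$ is a local norm iff $n_0\varphi(p^k)\equiv 0\bmod 8$; combining with $\sum n(\mathfrak{q}_i)=l_-/2$ and $l_++l_-=n_0\varphi(p^k)$ gives exactly $l_+\equiv l_-\bmod 8$. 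Without this step your existence argument for $n_1=0$ is incomplete. (A minor wording issue: in that same sentence, $L_\mathfrak{p}=N_0$ is $\mathfrak{P}^{-\alpha}$-modular, not unimodular; it is the trace lattice that is unimodular.)
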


\begin{proof}
    Let $\zeta$ be a primitive $p^k$th root of unity, and let us denote again $E:= \mathbb{Q}(\zeta)$ and $K := \mathbb{Q}(\zeta+\zeta^{-1})$. 
    Recall that $\mathfrak{P}$ and $\mathfrak{p}$ are the unique respective prime ideals of $\mathcal{O}_E$ and $\mathcal{O}_K$ lying above $p\mathbb{Z}$. 
    We moreover denote $\Omega_\infty(K)\subseteq \Omega(K)$ the set of (real) infinite places, respectively the set of all places, of $K$. 
    In \Cref{propo: info hermitian structure}, we have seen that if $(L, b, f)$ is an even $p$-elementary $\Phi_{p^k}$-lattice satisfying the conditions in the statement, then it is the trace lattice of a hermitian $\mathcal{O}_E$-lattice which is locally unimodular at all finite places of $K$ different from $\mathfrak{p}$. 
    Moreover, we know the invariants of its local isometry class at $\mathfrak{p}$. 
    Therefore, we need to prove that conditions (1)-(3) are sufficient for the existence of such a hermitian $\mathcal{O}_E$-lattice, and that up to fixing the signatures at the real places of $K$, the genus of this lattice is uniquely determined. 
    
    For $i=0,1$, let $N_i$ be a $\mathfrak{P}^{i-\alpha}$-modular hermitian $\mathcal{O}_{E_\mathfrak{p}}$-lattice of rank $n_i$ where $\alpha := p^{k-1}(pk-k-1)$. 
    Moreover, with respect to \Cref{lem even trace}, we suppose that $N_0\simeq H(\pi^{-\alpha})^{\oplus n_0/2}$ where $\pi := 1-\zeta$. 
    Now, the existence of a hermitian $\mathcal{O}_E$-lattice $(L, h)$ with given local structures $\{L_\mathfrak{q}\}_{\mathfrak{q}\in\Omega(K)}$
    is equivalent to the finite set
    \[ S := \left\{\mathfrak{q}\in \Omega(K) :\; \text{det}(L_\mathfrak{q})\neq N^{E_\mathfrak{q}}_{K_\mathfrak{q}}(E_{\mathfrak{q}}^{\times})\right\}\]
    being of even cardinality \cite[Remark 3.4.2 (3) and Algorithm 3.5.6]{kir16}. 
    Note that an infinite place $\mathfrak{q}\in \Omega_\infty(K)$ lies in $S$ if and only if $n(\mathfrak{q})$ is odd. 
    Hence, for fixed signatures $\{n(\mathfrak{q})\}_{\mathfrak{q}\in \Omega_\infty(K)}$, there exists a hermitian $\mathcal{O}_E$-lattice $(L, h)$ such that $L_\mathfrak{q}$ is unimodular for all finite places $\mathfrak{q}$ of $K$ different from $\mathfrak{p}$ and such that
    $L_\mathfrak{p} \simeq N_0\oplus N_1$
    if and only if the following holds
    \begin{equation}\label{eq: exist condition proof} \sum_{\mathfrak{q}\in\Omega_\infty(K)}n(\mathfrak{q}) \equiv \left\{\begin{array}{ccc}0&\text{if}&\text{det}(N_0)\text{det}(N_1)= N^{E_\mathfrak{p}}_{K_\mathfrak{p}}(E_{\mathfrak{p}}^{\times})\\1&\text{else}&\end{array}\right.\;\mod 2.
    \end{equation}
    By condition (2) $l_-$ is even and according to \Cref{sign trace constr}, the lefthand side $\sum_{\mathfrak{q}\in\Omega_\infty(K)}n(\mathfrak{q})$ must be equal to $l_-/2$. 
    If $n_1$ is nonzero, then \Cref{eq: exist condition proof} and the congruence class $(n_0\mod 4)$ uniquely determines $\det(N_1)\in K^\times/N^{E_\mathfrak{p}}_{K_\mathfrak{p}}(E_{\mathfrak{p}}^{\times})$.
    In particular, there exists a hermitian lattice $(L, h)$ with the local structures as previously fixed. 
    Now, if $n_1 = 0$, we have that $L_\mathfrak{p}\simeq N_0$ is a direct sum of $\frac{n_0}{2}$ copies of $H(\pi^{-\alpha})$ whose determinant is $-N^{E_\mathfrak{p}}_{K_\mathfrak{p}}(E^\times_\mathfrak{p})$ (\Cref{propo: orth decomp}). 
    Hence $\text{det}(L_\mathfrak{p})=N^{E_\mathfrak{p}}_{K_\mathfrak{p}}(E_{\mathfrak{p}}^{\times})$ if and only if $n_0$ is divisibile by 4 or $-1$ is a local norm at $\mathfrak{p}$.
    By \Cref{th -1 loc norm}, this is equivalent to saying that $\text{det}(L_\mathfrak{p})= N^{E_\mathfrak{p}}_{K_\mathfrak{p}}(E^\times_\mathfrak{p})$ if and only if $n_0\varphi(p^k)$ is divisible by $8$. 
    Thus \Cref{eq: exist condition proof} is equivalent to
    \[2l_- \equiv \varphi(p^k)n_0\;\mod 8.\]
    But condition (1) enforces that $l_++l_- = \varphi(p^k)n_0$. 
    Thus, in this situation, such a $(L, h)$ exists if and only if
    \[ l_+-
    l_- \equiv 0\;\mod 8.\]
    Therefore, condition (3) ensures that also in the case where $n_1=0$ there exists a hermitian lattice $(L, h)$ with the local structures as previously fixed. 
    In both of the previous cases, up to fixing the signatures $\{n(\mathfrak{q}))\}_{\mathfrak{q}\in \Omega_\infty(K)}$, the genus of $(L, h)$ is uniquely determined by conditions (1)-(3). 
\end{proof}

By using the same kind of arguments, we can also state a similar theorem for nontrivial discriminant actions in the case $p=2$.

\begin{propo}\label{existence even power}
   Let $l_+, l_-\geq0$ be such that $l_++l_-\geq 1$ and let $k\geq 2$ be positive. 
   Suppose that there exist four nonnegative integers $n_0, n_1, n_2$ and $\delta$, with $n_0,n_1\in 2\mathbb{Z}$, $n_2\neq 0$ and $\delta\in \{0,1\}$, such that
\begin{enumerate}
    \item $l_++l_- = \varphi(2^k)(n_0+n_1+n_2)$;
    \item $l_- \in 2\mathbb{Z}$;
    \item if $k=2$ and $n_2$ is odd, then $\delta = 1$ and $l_+-l_-\equiv \pm 2\mod 8$;
    \item if $k = 2$ and $n_2$ is even, then  $l_+-l_-\equiv 0,4\mod 8$ with $l_+-l_-\equiv 0\mod 8$ if $\delta = n_1 = 0$;
    \item if $k\geq 3$, then $\delta = 0$ and $l_+-l_-\equiv 0,4\mod 8$.
\end{enumerate}
Then there exists an even $2$-elementary $\Phi_{2^k}$-lattice $(L, b, f)$ of signatures $(l_+, l_-)$,  absolute determinant $p^{n_1+2n_2}$ and $\delta_L = \delta$, such that $D_f$ has order 2. 
Moreover, up to fixing signatures , the genus of the hermitian structure of any such $(L, b, f)$ is uniquely determined by $(l_-, k, n_0, n_1, n_2, \delta)$, except in the case where $k\geq 3$, $n_2$ is even, and either $n_1\neq 0$ or $l_+-l_-\equiv 0\mod 8$, where there are two possibilities.
\end{propo}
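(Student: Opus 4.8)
The plan is to mirror the proof of \Cref{theo: prime order}, now carrying a third Jordan constituent at the bad place and keeping track of the evenness and of $\delta_L$. Write $E := \mathbb{Q}(\zeta)$ with $\zeta := \zeta_{2^k}$, $K := \mathbb{Q}(\zeta+\zeta^{-1})$, $\pi := 1-\zeta$, and let $\mathfrak{P}$, $\mathfrak{p}$ be the unique primes above $2$ in $\mathcal{O}_E$, $\mathcal{O}_K$, with $\alpha := 2^{k-1}(k-1) = \mathrm{val}_{\mathfrak{P}}(\mathfrak{D}_{E/\mathbb{Q}})$. By the transfer construction and the necessary conditions of \Cref{propo: info hermitian structure}, producing the desired $\Phi_{2^k}$-lattice with $D_f$ of order $2$ amounts to producing a hermitian $\mathcal{O}_E$-lattice $(L,h)$ that is unimodular at every finite place of $K$ other than $\mathfrak{p}$ and whose Jordan decomposition at $\mathfrak{p}$ reads $L_\mathfrak{p} \simeq N_0 \oplus N_1 \oplus N_2$, with $N_j$ being $\mathfrak{P}^{j-\alpha}$-modular of rank $n_j$ (so that $\mathrm{val}_2(\det)=n_1+2n_2$ as required). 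Evenness of the trace lattice forces $N_0 \simeq H(\pi^{-\alpha})^{\oplus n_0/2}$ by \Cref{lem even trace}, while \Cref{lem delta 2-elem} identifies $\delta_L$ with the hyperbolicity of the $\mathfrak{P}^{2^{k-1}-\alpha}$-modular constituent: for $k \geq 3$ this block is trivial, giving $\delta_L = 0$ automatically (hypothesis (5)), and for $k = 2$ it is exactly $N_2$, so $\delta = 0$ precisely when $N_2$ is hyperbolic.

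First I would settle existence. Fixing the real signatures $\{n(\mathfrak{q})\}_{\mathfrak{q}\in\Omega_\infty(K)}$, whose sum equals $l_-/2$ by \eqref{sign trace constr}, the local--global principle for hermitian lattices (\cite[Remark 3.4.2(3), Algorithm 3.5.6]{kir16}) asserts that a global $(L,h)$ with the prescribed completions exists if and only if the set of places at which the determinant is a non-norm has even cardinality. Using \Cref{propo: orth decomp} to read off the norm class of each $\det(N_j)$, this parity becomes a congruence linking $l_-$ to $\det(L_\mathfrak{p}) = \det(N_0)\det(N_1)\det(N_2)$, exactly as in the proof of \Cref{theo: prime order}. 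The three regimes of the statement are the three behaviours of $N_2$: when $k=2$ and $n_2$ is odd, $N_2$ is an odd-rank $\mathfrak{P}^{2-\alpha}$-modular block, forcing $\delta=1$ and, via \Cref{th -1 loc norm}, the congruence $l_+-l_- \equiv \pm 2 \bmod 8$ of (3); when $k=2$ with $n_2$ even, or $k\geq 3$, $N_2$ has even rank and the determinant computation yields $l_+-l_- \equiv 0,4 \bmod 8$, sharpened to $\equiv 0 \bmod 8$ in the degenerate subcase $\delta=n_1=0$ of (4) where every surviving block is hyperbolic. Thus conditions (1)--(5) are precisely what make the cardinality even, and the hermitian lattice---hence the sought $\Phi_{2^k}$-lattice---exists.

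It remains to count genera. With signatures fixed, the completions away from $\mathfrak{p}$ are rank-determined and $N_0$ is pinned as hyperbolic, so the only latitude lies in the even-rank blocks. The block $N_1$ is $\mathfrak{P}^{1-\alpha}$-modular with $1-\alpha$ odd, so \Cref{propo: orth decomp}(2)(b) forces its norm and determinant class: it has a single isometry class. The block $N_2$ is $\mathfrak{P}^{2-\alpha}$-modular of even parity exponent, and when $n_2$ is even it may be hyperbolic or non-hyperbolic; for $k\geq 3$ this choice is unconstrained by $\delta_L$ (governed by the trivial top constituent). Both realisations give genuine hermitian genera, and a reciprocity computation through \Cref{th -1 loc norm} shows that both extend to global lattices of the right trace signature exactly when the determinant discrepancy can be absorbed---namely when an odd-modularity block is present ($n_1 \neq 0$) or when $-1$ is already a global norm, i.e. $l_+-l_- \equiv 0 \bmod 8$---producing the two possibilities in precisely the stated case, and a unique genus otherwise. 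The main obstacle throughout is the $2$-adic bookkeeping: correctly reading the norm classes of the even-rank modular hermitian blocks from \Cref{propo: orth decomp}, and disentangling which of their hyperbolic/non-hyperbolic choices are forced by the global even-cardinality condition and which remain genuinely free.
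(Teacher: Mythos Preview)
Your existence outline is close to the paper's own argument and would likely go through once the congruences are written out carefully. The genus count, however, has a genuine gap.

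You assert that $N_1$, being $\mathfrak{P}^{1-\alpha}$-modular with odd exponent, has a single isometry class because its norm and determinant class are forced. The norm ideal is indeed determined (\Cref{rem: cyclo modular bad}), but \Cref{propo: orth decomp}(2)(b) still allows two determinant classes $u^{\epsilon_1}(-1)^{n_1/2}$ with $\epsilon_1\in\{0,1\}$ whenever $n_1>0$: the odd exponent only fixes $\mathfrak{n}(N_1)$, not hyperbolicity. Likewise $N_2$ carries a free parameter $\epsilon_2$. The existence condition pins down only the parity of $\epsilon_1+\epsilon_2$, so naively you have two local Jordan decompositions for each fixed parity. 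What you are missing is the step that collapses these: the paper invokes Jacobowitz's criterion \cite[Theorem 11.4]{jac62}, computing $\mathfrak{n}(N_1)\mathfrak{n}(N_2)\mathfrak{s}(N_1)^{-2}=\mathfrak{p}$ and using $u\equiv 1\bmod\mathfrak{p}$ to conclude that the two decompositions with the same $\epsilon_1+\epsilon_2$ parity are in fact isometric. Without this, your count is off.

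Your explanation of the two-genera case for $k\geq 3$ is also incorrect. The dichotomy is not ``hyperbolic vs.\ non-hyperbolic $N_2$'' with the obstruction controlled by whether ``$-1$ is a global norm, i.e.\ $l_+-l_-\equiv 0\bmod 8$''; that conflates local and global norms (for $k\geq 3$, $-1$ is always a local norm at $\mathfrak{p}$ by \Cref{th -1 loc norm}). The actual source of the two genera is the two possible values of the norm ideal $\mathfrak{n}(N_2)\in\{\mathfrak{p}^{1-\alpha/2},\mathfrak{p}^{2-\alpha/2}\}$ when $n_2$ is even, and the case analysis of whether both values survive the existence constraint is what produces the stated exception. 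You need to track $\mathfrak{n}(N_2)$ separately from the determinant class $\epsilon_2$, and then apply Jacobowitz's theorem to merge the residual $\epsilon_1,\epsilon_2$ ambiguity.
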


\begin{proof}
    We fix the same notations as in the proof of \Cref{theo: prime order}.
    Moreover, we let $u\in K_\mathfrak{p}^\times\setminus N^{E_\mathfrak{p}}_{K_\mathfrak{p}}(E_{\mathfrak{p}}^{\times})$, whose existence is ensured by \cite[Proposition 6.1]{joh68}. 
    This scalar $u$ satisfies that $u\in 1+\mathfrak{p}$ and $\mathcal{O}_{K_\mathfrak{p}}^\times = N^{E_\mathfrak{p}}_{K_\mathfrak{p}}(\mathcal{O}_{E_\mathfrak{p}}^\times)\sqcup uN^{E_\mathfrak{p}}_{K_\mathfrak{p}}(\mathcal{O}_{E_\mathfrak{p}}^\times)$ (see for instance \cite[Corollary 3.3.17]{kir16}).
    
    Since we want $D_f$ to have order $2$, we let $N_i$ be $\mathfrak{P}^{i-\alpha}$-modular of rank $n_i$ for $i=0,1,2$, with $n_2\neq 0$. 
    This time, the existence condition \Cref{eq: exist condition proof} can be rewritten as
    \begin{equation}\label{eq: second existence condition proof}2l_- \equiv \left\{\begin{array}{ccc}0&\text{if}&\text{det}(N_0)\text{det}(N_1)\text{det}(N_2)= N^{E_\mathfrak{p}}_{K_\mathfrak{p}}(E_{\mathfrak{p}}^{\times})\\4&\text{else}&\end{array}\right.\;\mod 8.
    \end{equation}
  
    \begin{enumerate}
        \item  We first consider the case where $k=2$, and $u = -1$ is not a local norm at $\mathfrak{p}$ (\Cref{th -1 loc norm}). 
    Hence, $\text{det}(N_0)= (-1)^{n_0/2}N^{E_\mathfrak{p}}_{K_\mathfrak{p}}(E^\times_\mathfrak{p})$ and there exists $\epsilon_1\in \{0,1\}$ such that $\text{det}(N_1) = (-1)^{n_1/2+\epsilon_1}N^{E_\mathfrak{p}}_{K_\mathfrak{p}}(E^\times_\mathfrak{p})$, with $\epsilon_1 = 0$ for $n_1 = 0$ (\Cref{propo: orth decomp}).
    
    \begin{enumerate}
        \item If $n_2$ is odd, then there exists $\epsilon_2\in \{0,1\}$ such that $\text{det}(N_2)= (-1)^{(n_2-1)/2+\epsilon_2}N^{E_\mathfrak{p}}_{K_\mathfrak{p}}(E^\times_\mathfrak{p})$. The existence condition \Cref{eq: second existence condition proof} can be formulated as
    \[2l_-\equiv 2(n_0+n_1+n_2-1) + 4(\epsilon_1+\epsilon_2)\mod 8.\]
    Since condition (1) imposes that $l_++l_- = 2(n_0+n_1+n_2)$, the latter is equivalent to
    \[l_+-l_-\equiv 2-4(\epsilon_1+\epsilon_2)\equiv \pm 2\mod 8.\]
    Hence condition (3) ensures the existence of $(L, h)$.
    
    Fixing $(l_+-l_-\mod 8)$ actually fixes the parity $\epsilon_1+\epsilon_2$: this gives rise in particular to two possible Jordan decompositions for $L_\mathfrak{p}=N_0\oplus N_1\oplus N_2$. 
    These Jordan decompositions have the same invariants, except for the determinants of $N_1$ and $N_2$ which are both simultaneously changed (so that $\det(L_\mathfrak{p})$ is fixed). 
    According to \cite[Theorem 11.4]{jac62}, these two decompositions define isometric $\mathcal{O}_{E_\mathfrak{p}}$-hermitian lattices if and only if  
    \[u\mathcal{O}_{K_\mathfrak{p}}\subseteq 1+ \mathfrak{n}(N_1)\mathfrak{n}(N_2)\mathfrak{s}(N_1)^{-2}.\]
    By \Cref{propo: orth decomp} and \Cref{rem: cyclo modular bad}, one can check that $\mathfrak{n}(N_1) = \mathfrak{n}(N_2) = \mathfrak{p}^{1-\alpha/2}$, while by definition of $N_1$, we know that $\mathfrak{s}(N_1)^{-2} = \mathfrak{p}^{\alpha-1}$. 
    Hence
    \[\mathfrak{n}(N_1)\mathfrak{n}(N_2)\mathfrak{s}(N_1)^{-2} = \mathfrak{p}^{1-\alpha/2}\mathfrak{p}^{1-\alpha/2}\mathfrak{p}^{\alpha-1} = \mathfrak{p} \supseteq (u-1)\mathcal{O}_{K_\mathfrak{p}}\]
    and thus the two previous Jordan decompositions define isometric $\mathcal{O}_{E_\mathfrak{p}}$-lattices.
    Therefore, the genus of $(L, h)$ is uniquely determined, up to fixing $\{n(\mathfrak{q}))\}_{\mathfrak{q}\in \Omega_\infty(K)}$. 
    \item If $n_2$ is even, according to \Cref{lem delta 2-elem} we have that either $\delta = 0$ and $\text{det}(L_2)= (-1)^{n_2/2}N^{E_\mathfrak{p}}_{K_\mathfrak{p}}(E^\times_\mathfrak{p})$, or there exists $\epsilon_2\in \{0,1\}$ such that $\text{det}(L_2) = (-1)^{n_2/2+\epsilon_2}N^{E_\mathfrak{p}}_{K_\mathfrak{p}}(E^\times_\mathfrak{p})$.
    In this case, \Cref{eq: second existence condition proof} is equivalent to
    \[l_+-l_-\equiv 4(\epsilon_1+\delta\epsilon_2)\mod 8.\]
    Thus, condition (4) ensures the existence of $(L, h)$. 
    
    Now, if $\delta = 0$, then $\epsilon_1$ is fixed by $(l_+-l_-\mod 8)$ and in particular, the genus of $(L, h)$ is uniquely determined.
    Otherwise, the parity of $\epsilon_1+\epsilon_2$ is completely determined by $(l_+-l_-\mod 8)$: as in the case $n_2$ odd, since $\mathfrak{n}(N_2) = \mathfrak{p}^{1-\alpha/2}$ for $\delta = 1$, we have that the two possible Jordan decompositions for a fixed parity of $\epsilon_1+\epsilon_2$ give rise to isometric hermitian lattices.
    \end{enumerate}
      
    \item Now, in the case where $k \geq 3$, we have that $-1$ is a local norm at $\mathfrak{p}$ and therefore, $\text{det}(N_0) = N^{E_\mathfrak{p}}_{K_\mathfrak{p}}(E^\times_\mathfrak{p})$ and there exists $\epsilon_1\in \{0,1\}$ such that $\text{det}(N_1)=u^{\epsilon_1}N^{E_\mathfrak{p}}_{K_\mathfrak{p}}(E^\times_\mathfrak{p})$, with $\epsilon_1 = 0$ for $n_1 = 0$. Moreover, there exists $\epsilon_2\in \{0,1\}$ such that $\text{det}(N_2)= u^{\epsilon_2}N^{E_\mathfrak{p}}_{K_\mathfrak{p}}(E^\times_\mathfrak{p})$, with $\epsilon_2=0$ if $\mathfrak{n}(N_2) = \mathfrak{p}^{2-\alpha/2}$ (recall that $n_2\neq 0$). In this setting, the existence condition \Cref{eq: second existence condition proof} is equivalent to
    \[l_+-l_-\equiv 2^{k-2}(n_0+n_1+n_2)-4(\epsilon_1+\epsilon_2)\equiv 0,4\mod 8.\]
    Hence condition (5) ensures the existence of $(L, h)$. 
    As before, fixing $(l_+-l_-\mod 8)$ and the parity of $n_2$ fixes the parity of $\epsilon_1+\epsilon_2$. 
    Furthermore:
    \begin{enumerate}
     \item for $n_1=0$ and $n_2$ odd, the congruence class $(l_+- l_-\mod 8)$ uniquely determines $\epsilon_2$ and there is a unique possible genus for $(L, h)$.
     \item for $n_1 = 0$ and $n_2$ even, we have that $l_+-l_-\equiv 4\epsilon_2\mod 8$. 
    If the latter congruence class is $(4\mod8)$ then the genus of $(L, h)$ is again uniquely determined, and if it is $(0\mod8)$, then $\epsilon_2=0$ and there are two nonisometric choices for $N_2$ depending on $\mathfrak{n}(N_2)\in \{\mathfrak{p}^{1-\alpha/2}, \mathfrak{p}^{2-\alpha/2}\}$ (\Cref{rem: cyclo modular bad}), giving rise to two possible genera for $(L, h)$. 
    \item for $n_1\neq 0$, then either $\mathfrak{n}(N_2) = \mathfrak{p}^{2-\alpha/2}$ in which case $\epsilon_2= 0$, $n_2$ is even and the congruence class $(l_+-l_-\mod 8)$ uniquely determines $\epsilon_1$, or the congruence class $(l_+-l_-\mod 8)$ and the parity of $n_2$ determines the parity of $\epsilon_1+\epsilon_2$. 
    Since we suppose now that $\mathfrak{n}(N_2) \neq \mathfrak{p}^{2-\alpha/2}$, in both the cases $n_2$ even or odd, we have that
    \[\mathfrak{n}(N_1)\mathfrak{n}(N_2)\mathfrak{s}(N_1)^{-2} = \mathfrak{p}.\]
    Hence, by applying again \cite[Theorem 11.4]{jac62}, we know that the parity of $\epsilon_1+\epsilon_2$ determines two possible Jordan decompositions of $L_\mathfrak{p}$ which are isometric.
    \end{enumerate}
    To summarize, when $n_1\neq 0$ and $n_2$ is odd, there is only one possible genus for $(L, h)$, but if $n_2$ is even, the two choices for $\mathfrak{n}(N_2)\in\{\mathfrak{p}^{1-\alpha/2}, \mathfrak{p}^{2-\alpha/2}\}$ give rise to two possible genera for $(L, h)$.\qedhere
    \end{enumerate}
\end{proof}

\begin{rem}\label{rem induced herm action}
    Let $(L, b, f)$ be a $\Phi_{2^k}$-lattice satisfying the assumptions of \Cref{existence even power}, with hermitian structure $(L, h)$. According to the proof of the last proposition, since $f$ corresponds to multiplication by $\zeta$ on the $\mathcal{O}_E$-module $L$, we know that $f$ acts trivially on $\mathfrak{D}^{-1}_{E_\mathfrak{p}/\mathbb{Q}_2}N_1^\#/N_1$. 
    Now, if $n_2 = 1$, we have that as $\mathcal{O}_{E_\mathfrak{p}}$-modules, 
    \[\mathfrak{D}^{-1}_{E_\mathfrak{p}/\mathbb{Q}_2}N_2^\#/N_2\cong (\mathcal{O}_{E_\mathfrak{p}}\oplus \zeta\mathcal{O}_{E_\mathfrak{p}})/(1-\zeta^2)\mathcal{O}_{E_\mathfrak{p}}.\]
    The latter is isomorphic to $\mathbb{Z}/2\mathbb{Z}\oplus \mathbb{Z}/2\mathbb{Z}$ as abelian group, where an isomorphism is given by $[1]\mapsto (1,0)$ and $[\zeta]\mapsto (0,1)$.
    In particular, since for all $x\in L$ we have that $h(x,x) = h(\zeta x, \zeta x)$, the torsion quadratic form on the copy $\mathbb{Z}/2\mathbb{Z}\oplus \mathbb{Z}/2\mathbb{Z}$ in $D_L$ corresponding to $\mathfrak{D}^{-1}_{E_\mathfrak{p}/\mathbb{Q}_2}N_2^\#/N_2$ takes the same value for $(1, 0)$ and $(0, 1)$. 
    Moreover, multiplication by $\zeta$ corresponds to exchanging those two generators, meaning that $D_f$ which has order 2, is represented by $\scriptscriptstyle{\begin{pmatrix}
        0&1\\1&0
    \end{pmatrix}}$ on this summand of $D_L$.
    The previous argument can be generalized for every direct summand of $\mathfrak{D}^{-1}_{E_\mathfrak{p}/\mathbb{Q}_2}N_2^\#/N_2$ when $n_2>1$.
\end{rem}

\section{Conjugacy classes of isometries of even unimodular \texorpdfstring{$\Z$}{Z}-lattices}\label{sec: corr th}
In the previous sections, we have set up a framework for studying certain isometries of even unimodular $\Z$-lattices.
Given an isometry $g$ of an even unimodular $\Z$-lattice $M$ with minimal polynomial $\Phi_1\Phi_m$, or $\Phi_1\Phi_2\Phi_m$, for some integer $m$, we have determined necessary and sufficient conditions for the existence of the kernel sublattices of $(M, g)$ (see \Cref{sec: existence isometries}). 
We would like now to find sufficient conditions for the existence of such isometries, and classify them up to conjugacy. We start by determining finer information about how such kernel sublattices glue, in order to derive sufficient conditions on the existence of such even unimodular lattice with isometry $(M, g)$.

\subsection{Existence of isometries of given type}
Let $m\geq 3$ and let $l_+,l_-\geq 0$ be such that $l_++l_-\geq 1$. 
We first cover the existence of even unimodular $\Phi_1\Phi_m^\ast$-lattices of signatures $(l_+,l_-)$.
Note that this is actually already understood \cite[Theorem 4.5]{bay24b}.
Nonetheless, we provide alternative statements which include local invariants of the associated invariant and coinvariant sublattices.

In the case where $m$ is composite,  we have already seen in \Cref{propo: case 1 minpoly} that the associated kernel sublattices are unimodular.

\begin{theo}\label{necsuf composite}
    Let $m\geq 3$ be composite. 
    Let $l_+,l_-,s_+, s_-\geq 0$ be such that $l_++l_-> s_++s_-\geq 1$. 
    There exists an even unimodular $\Phi_1\Phi_m^\ast$-lattice $(M, g)$ where $ M\in\II_{(l_+, l_-)}$ such that $M_g\in\II_{(s_+, s_-)}$ if and only if there exists an integer $d\geq 1$ such that
    \begin{enumerate}
        \item $l_\pm\geq s_\pm$;
        \item $l_+-l_-\equiv 0\mod 8$;
        \item $s_\pm\equiv 0\mod 2$;
        \item $s_++s_- = d\varphi(m)$;
        \item $s_+-s_-\equiv 0\mod 8$;
        \item if $m=2p^k$ is twice an odd prime power, then $d\equiv 0\mod 2$.
    \end{enumerate}
\end{theo}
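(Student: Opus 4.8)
The plan is to reduce the statement to the \emph{independent} existence of the two kernel sublattices, which decouple completely here. The structural input is that $m$ is composite in the sense of not being a prime power, so that $\textnormal{Res}(\Phi_1,\Phi_m)=\Phi_m(1)=1$. Applying \Cref{mcmullen condition} to the $(g_1,g_m)$-equivariant glue map attached to the primitive extension $M^g\oplus M_g\subseteq M$, no prime can divide the order of the glue group; hence the extension is trivial and $M=M^g\oplus M_g$ is an orthogonal direct sum. By \Cref{propo: case 1 minpoly}(2) both summands are then even unimodular, $g$ restricts to $\mathrm{id}$ on $M^g$ and to an isometry $g_m$ of minimal polynomial $\Phi_m$ on the coinvariant part $M_g=(M^g)^\perp_M$. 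Since signatures add across the orthogonal sum, $M^g\in\II_{(l_+-s_+,\,l_--s_-)}$ and $M_g\in\II_{(s_+,s_-)}$, and $(M,g)$ is recovered as $(M^g\oplus M_g,\ \mathrm{id}\oplus g_m)$. This is the decomposition on which both directions rest.

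For necessity, suppose such an $(M,g)$ exists. Condition (2) is forced by \Cref{exist unimod even} applied to the even unimodular lattice $M$. Since $M_g$ is an even unimodular $\Phi_m$-lattice of signature $(s_+,s_-)$, the necessary direction of \Cref{existence unimod} produces an integer $d\geq 1$ realising conditions (3), (4), (5) and (6). Finally, because $M^g$ and $M_g$ sit orthogonally in $M$ with non-negative partial signatures $(l_+-s_+,l_--s_-)$ and $(s_+,s_-)$, we obtain $l_\pm\geq s_\pm$, which is condition (1).

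For sufficiency, I would build the two pieces separately and glue trivially. Conditions (3)--(6) are precisely the hypotheses of \Cref{existence unimod} for the signature $(s_+,s_-)$, so there is an even unimodular $\Phi_m$-lattice $(M_g,g_m)$ of that signature; note that $s_++s_-=d\varphi(m)\geq\varphi(m)\geq 2$, so $M_g\neq 0$ and $g_m$ genuinely has minimal polynomial $\Phi_m$. For the invariant part, conditions (2) and (5) give $(l_+-s_+)-(l_--s_-)=(l_+-l_-)-(s_+-s_-)\equiv 0\bmod 8$, while condition (1) gives $l_+-s_+,\ l_--s_-\geq 0$; hence by \Cref{exist unimod even} there is an even unimodular lattice $M^g$ of signature $(l_+-s_+,l_--s_-)$, which I equip with $g_1=\mathrm{id}$. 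Setting $M:=M^g\oplus M_g$ and $g:=g_1\oplus g_m$ produces an even unimodular lattice of signature $(l_+,l_-)$. Since $\Phi_m(1)\neq 0$, the map $g_m-\mathrm{id}$ is invertible, so $\ker(g-\mathrm{id})=M^g$ is exactly the invariant sublattice and $M_g=(M^g)^\perp_M$ the coinvariant one; as $l_++l_->s_++s_-$ forces $M^g\neq 0$, the minimal polynomial of $g$ is $\mathrm{lcm}(\Phi_1,\Phi_m)=\Phi_1\Phi_m$. Thus $(M,g)$ is a $\Phi_1\Phi_m^\ast$-lattice with $M_g\in\II_{(s_+,s_-)}$, as required.

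I do not expect a genuine obstacle: once the orthogonal splitting $M=M^g\oplus M_g$ is secured, the argument is bookkeeping on top of \Cref{existence unimod} and \Cref{exist unimod even}. The two points that must be handled with care are the derivation of the mod-$8$ congruence for the signature of $M^g$ from conditions (2) and (5), and the use of the convention (in force since \Cref{propo: case 1 minpoly}) that \emph{composite} means \emph{not a prime power}, which is exactly what makes $\Phi_m(1)=1$ and forces the glue to vanish. The uniqueness of the hermitian genus recorded in \Cref{existence unimod} is not needed for existence, but it is what will later permit the counting of conjugacy classes.
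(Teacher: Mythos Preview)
Your proof is correct and follows essentially the same approach as the paper: decompose $(M,g)$ as the orthogonal sum of its unimodular invariant and coinvariant parts (the paper relegates the vanishing of the glue to the discussion after \Cref{resultant cyclo} and \Cref{propo: case 1 minpoly}), then invoke \Cref{exist unimod even} and \Cref{existence unimod} for the two pieces. The only cosmetic difference is that the paper outsources the necessity direction to \cite[Theorem~3.5]{bay24b} rather than reading it off the ``only if'' half of \Cref{existence unimod}, and you are more explicit about checking that the resulting $g$ has minimal polynomial exactly $\Phi_1\Phi_m$.
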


\begin{proof}
    Follows from \cite[Theorem 3.5]{bay24b}: in particular condition (6) ensures that $\Phi_m^d(-1)$ is a square and, conditions (4) and (5) together give that $d\varphi(m) = \text{deg}(\Phi_m^d)\equiv 2s_+\mod 8$. 
    Note that if conditions (1)-(6) hold, we have that the genera $\II_{(l_+, l_-)}$, $\II_{(s_+, s_-)}$ and $\II_{(l_+-s_+, l_--s_-)}$ are nonempty, and there exists a $\Phi_m$-lattice $(L, f)$ in $\II_{(s_+, s_-)}$ by \Cref{existence unimod}.
    Moreover, for any representative $F$ of an isometry class in $\II_{(l_+-s_+, l_--s_-)}$, we have that $(F, \text{id}_F)\oplus (L, f)$ is an even unimodular $\Phi_1\Phi_m^\ast$-lattice in the genus $\II_{(l_+, l_-)}$ whose coinvariant sublattice is $(L, f)$.
\end{proof}

For the case where $m = p^k$ is a prime power, we follow the same proof as \cite[Theorem 1.1]{bc22}.
We recall again that according to \Cref{propo: case 1 minpoly}, the kernel sublattices of $(M, g)$ are $p$-elementary. 

\begin{theo}\label{necsuf prime power}
    Let $m = p^k\geq 3$ for some prime number $p$ and some positive integer $k\geq \gcd(2,p)$.
    Let $l_+, l_-, s_+, s_-, n_1\geq 0$ be such that $l_++l_-\geq s_++s_-+n_1\geq n_1+1$. 
    There exists an even unimodular $\Phi_1\Phi_m^\ast$-lattice $(M, g)$ where $ M\in\II_{(l_+, l_-)}$ such that $M_g$ is $p$-elementary of absolute determinant $p^{n_1}$ and signatures $(s_+, s_-)$ if and only if there exists an even integer $n_0\geq 0$ such that
    \begin{enumerate}
        \item $l_\pm \geq s_\pm$;
        \item $l_+-l_-\equiv 0\mod 8$;
        \item $s_\pm\equiv 0\mod 2$;
        \item $s_++s_- = \varphi(p^k)(n_0+n_1)$;
        \item $s_+-s_-\equiv 0\mod 8$ if $n_1 = 0$ or $n_1 = (l_+-s_+)+(l_--s_-)$;
        \item $n_1$ is even if $p=2$.
    \end{enumerate}
\end{theo}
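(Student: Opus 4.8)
The plan is to follow the prime order argument of \cite[Theorem 1.1]{bc22}, realising $(M,g)$ as an equivariant primitive extension of its two kernel sublattices, exactly as in the composite case \Cref{necsuf composite}. Write $L := M_g$ for the coinvariant sublattice and $F := M^g$ for the invariant one. By \Cref{propo: case 1 minpoly}, both are $p$-elementary, $L$ is a $\Phi_m^\ast$-lattice, $F$ carries the identity isometry, and the induced discriminant isometries $D_{g_1}, D_{g_m}$ are trivial. Since $M$ is even unimodular, Nikulin's theory identifies $q_F$ with $-q_L$; in particular $F$ and $L$ have the same absolute determinant $p^{n_1}$ and $D_F\cong D_L\cong(\mathbb{Z}/p\mathbb{Z})^{n_1}$. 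The statement then reduces to: (a) existence of the $\Phi_m^\ast$-lattice $L$ with the prescribed $p$-elementary invariants; (b) existence of the even $p$-elementary lattice $F$ of signature $(l_+-s_+,l_--s_-)$ with $q_F\cong -q_L$; and (c) gluing them back into an even unimodular $(M,g)\in\II_{(l_+,l_-)}$.

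For necessity, suppose $(M,g)$ exists. Applying \Cref{propo: info hermitian structure} to $L=M_g$ (with trivial discriminant action, so $l=0$ there) produces the even integer $n_0$ and the integer $n_1$ with $s_++s_-=\varphi(p^k)(n_0+n_1)$ and $s_\pm$ even, giving conditions (3) and (4); the same proposition forces $n_1$ to be even when $p=2$, which is condition (6) (and \Cref{coro taki} guarantees $\delta_L=0$ there). Evenness and unimodularity of $M$ give $l_+-l_-\equiv 0\bmod 8$ by \Cref{exist unimod even}, i.e.\ condition (2), while non-negativity of the signature of $F$ yields $l_\pm\geq s_\pm$, condition (1). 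Finally, condition (5) records the two boundary cases: if $n_1=0$ then $L$ is an even unimodular $\Phi_m$-lattice, whence $s_+-s_-\equiv 0\bmod 8$; if $n_1=(l_+-s_+)+(l_--s_-)$ then $F$ is a $p$-elementary lattice whose determinant exponent equals its rank, so the constraint for the case $l_++l_-=n$ in \Cref{exist 2-elem} applies and, via $q_F\cong -q_L$ together with $l_+-l_-\equiv 0\bmod 8$, translates into $s_+-s_-\equiv 0\bmod 8$.

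For sufficiency, fix an even $n_0\geq 0$ as in (4). Conditions (3), (4), the $n_1=0$ clause of (5), and (6) are precisely the hypotheses of \Cref{theo: prime order}, which yields an even $p$-elementary $\Phi_m^\ast$-lattice $(L,f)$ of signature $(s_+,s_-)$, absolute determinant $p^{n_1}$, with $D_f$ trivial and $\delta_L=0$ when $p=2$. I then produce $F$ as an even $p$-elementary lattice of signature $(l_+-s_+,l_--s_-)$ with $q_F\cong -q_L$: its genus is $\II_{(l_+-s_+,l_--s_-)}p^{\epsilon n_1}$ for the sign $\epsilon$ (and, when $p=2$, the invariant $\delta$) dictated by $-q_L$, and I check it is non-empty via \Cref{exist 2-elem} (resp.\ \Cref{propo: existence 24 elem}); here conditions (1), (2) and (5) guarantee the signature congruence and, in the extremal case $n_1=\operatorname{rank}(F)$, the required sign. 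Because $D_f$ and $\mathrm{id}_{D_F}$ are both trivial, any glue map $D_F\to D_L$ realising the anti-isometry is automatically $(\mathrm{id}_F,f)$-equivariant, since condition \eqref{eq:egc} is vacuous; so by the theory of \Cref{eq glue} the isometry $\mathrm{id}_F\oplus f$ extends across the primitive extension $F\oplus L\subseteq M$ to $g\in O(M)$. Total gluing along anti-isometric discriminant forms makes $M$ even unimodular of signature $(l_+,l_-)$, with $M^g=F$, $M_g=L$ and minimal polynomial $\Phi_1\Phi_m$.

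The main obstacle is step (b): I must exhibit $F$ not merely in some $p$-elementary genus but with discriminant form exactly $-q_L$, and simultaneously verify non-emptiness of that genus. For odd $p$ the torsion form on $(\mathbb{Z}/p\mathbb{Z})^{n_1}$ is pinned down by $n_1$ and a single sign, so the task is the $\bmod\,8$ and Kronecker-symbol bookkeeping of \Cref{exist 2-elem}, with condition (5) covering exactly the degenerate determinant cases $n_1=0$ and $n_1=\operatorname{rank}(F)$. For $p=2$ the difficulty is sharper: one must match the $2$-adic oddity and the invariant $\delta$, which is why \Cref{theo: prime order} was stated with $\delta_L=0$ and why condition (6) enforces even $n_1$; the verification then runs through \Cref{exist 2-elem} and \Cref{propo: existence 24 elem}.
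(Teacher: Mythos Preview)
Your approach is essentially the paper's: split $(M,g)$ into $F=M^g$ and $L=M_g$, invoke \Cref{theo: prime order} to produce the $\Phi_m^\ast$-lattice $(L,f)$ with $D_f$ trivial, find a lattice $F$ with $q_F\cong -q_L$, and glue equivariantly. The only substantive difference is in step~(b): the paper appeals directly to \cite[Theorem 1.10.1]{nik79b}, which, given the discriminant form $D_L(-1)$ and the target signature $(l_+-s_+,l_--s_-)$, tells you in one stroke that the genus of $F$ is non-empty precisely under conditions (1) and (5). You instead propose to verify non-emptiness through the explicit genus criteria \Cref{exist 2-elem}; this works, but it forces you to track the sign $\epsilon$ of $-q_L$ and check the $\bmod\,8$ congruences by hand, bookkeeping that Nikulin's theorem absorbs.

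One small correction: your reference to \Cref{propo: existence 24 elem} for the case $p=2$ is misplaced. Since \Cref{theo: prime order} produces $L$ with $\delta_L=0$, the form $-q_L$ is of type $2^{\epsilon n_1}_{\II}$, so $F$ lies in a genus $\II_{(l_+-s_+,\,l_--s_-)}2^{\epsilon n_1}_{\II}$ with no $4$-adic constituent; this is handled entirely by the second block of \Cref{exist 2-elem}, not by \Cref{propo: existence 24 elem}.
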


\begin{proof}
    By \cite[Chapitre V, Section 1.5, \S 2, Th\'eor\`eme 2, Corollaire 1]{ser70}, \cite[Chapter 15, Theorem 13]{splg} and \Cref{theo: prime order} we have that if such a pair $(M, g)$ exists, then conditions (1)-(6) hold. 
    Conversely, suppose that conditions (1)-(6) hold. 
    In particular, the genus $\II_{(l_+, l_-)}$ is nonempty and there exists an even $\Phi_{p^k}$-lattice $(L, f)$ of absolute determinant $p^{n_1}$, signatures $(s_+, s_-)$ and $\delta_L=0$ if $p=2$, such that $D_f$ is trivial.
    Moreover, by conditions (1) and (5), and \cite[Theorem 1.10.1]{nik79b}, the genus of even $\mathbb{Z}$-lattices determined by $D_L(-1)$ and $(l_+-s_+, l_--s_-)$ is nonempty.
    Hence for any representative $F$ of an isometry class in such a genus, according to \Cref{eq:egc}, any glue map $\gamma:\; D_F\to D_L$ is $(\text{id}_F, f)$-equivariant and it gives rise to an equivariant primitive extension $(F, \text{id}_F)\oplus (L, f)\leq (M_\gamma, g)$. 
    By construction, the lattice with isometry $(M_\gamma, g)$ satisfies the first part of the statement.
\end{proof}

\begin{rem}
    Note that as for \Cref{necsuf composite}, parts of the conditions in \Cref{necsuf prime power} can be recovered from \cite[Theorem 3.5]{bay24b}. 
    In particular, since for any power $m$ of 2 we have that $\Phi_m(-1) = 2$, condition (6) of the previous statement is equivalent to $\Phi_m^{n_0+n_1}(-1) = 2^{n_0+n_1}$ being a square. 
\end{rem}

We now aim to prove similar results in the case where $m = 2p^k$ is twice a nontrivial prime power for some prime number $p$, the isometry $g\in O(M)$ has minimal polynomial $\Phi_1\Phi_2\Phi_m$ and the kernel sublattice $M^{-g}$ is isometric to $\langle2p\rangle$ (see \Cref{nontriv: all}). 
For this, since there are 3 potential kernel sublattices, we cannot invoke directly Nikulin's results about equivariant primitive extensions. 
However we show, by investigating further how these  $\Z$-lattices can glue, that there are few possibilities for the local invariants of $M^g$ and $M^{\Phi_m(g)}$, and thus one can apply Nikulin's methods iteratively.\smallskip

Let us first assume that $p$ is odd. We prove the following lemma.

\begin{lem}\label{lem: 3 poly odd p}
    Let $p$ be an odd prime number and let $m = 2p^k$ for some $k\geq 1$. 
    Let $(M,g)$ be an even unimodular $\Phi_1\Phi_2\Phi_m^\ast$-lattice such that $K:=M^{-g}\simeq \langle2p\rangle$. 
    Then there exists 3 positive even integers $s_+, s_-, n_0$ satisfying 
    \begin{enumerate}
        \item $s_++s_-\leq \textnormal{rank}_\mathbb{Z}(M)-2$;
        \item $s_++s_- = \varphi(p^k)(n_0+1)$;
        \item $s_+-s_-\equiv 2\left(\frac{-2}{p}\right)-1-p\mod 8$;
    \end{enumerate}
    and such that the $\Phi_m$-kernel sublattice $(L, f)$ of $(M, g)$ lies in the genus $\II_{(s_+, s_-)}p^{\left(\frac{-2}{p}\right)}$ with $D_f = -\textnormal{id}_{D_L}$.
\end{lem}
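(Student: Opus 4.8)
The plan is to work with the three kernel sublattices of $(M,g)$ and reconstruct the genus of the $\Phi_m$-part from the unimodularity of $M$. Write $L := M_m = M^{\Phi_m(g)}$ with $f := g_{\mid L}$, and $K := M^{-g}\simeq\langle2p\rangle$; since the minimal polynomial of $g$ is $\Phi_1\Phi_2\Phi_m$, the invariant lattice $M^g$ is nonzero and $M^g\oplus K\oplus L\subseteq M$ is a finite-index sublattice. By \Cref{propo: case 2 minpoly}(2), $L$ is $p$-elementary and $M^g$ is $2$-elementary, while $K$ has discriminant group $\mathbb{Z}/2p\mathbb{Z}$. First I would record the gluing primes via \Cref{mcmullen condition,resultant cyclo}: one has $\mathrm{Res}(\Phi_1,\Phi_2)=2$, $\mathrm{Res}(\Phi_2,\Phi_{2p^k})=p$ and $\Phi_{2p^k}(1)=1$, so $M^g$ glues to $K$ only along a $2$-group, $K$ glues to $L$ only along a $p$-group, and $M^g$ and $L$ do not glue.

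The heart of the argument is the $p$-part. Since $D_{M^g}$ is $2$-elementary and carries no $p$-torsion, and there is no $M^g$--$L$ gluing, the $p$-primary part of the glue group $H=M/(M^g\oplus K\oplus L)$ lives entirely inside $(D_K)_p\oplus D_L\cong \mathbb{Z}/p\mathbb{Z}\oplus D_L$. As $M$ is unimodular, $H$ is isotropic and self-dual; projecting $H_p$ to each factor (both of which carry nondegenerate forms, so $H_p$ meets each summand trivially) I would deduce $|H_p|\le p$ and $|H_p|\le|D_L|$, which combined with $|H_p|^2=p\,|D_L|$ force $|D_L|=p$ and make $H_p$ the graph of an anti-isometry $(D_K)_p\xrightarrow{\sim}D_L$. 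Thus $L$ is $p$-elementary with $|D_L|=p$, i.e. $L\in\II_{(s_+,s_-)}p^{\epsilon}$, and $q_L\cong -q_K$ on the $p$-parts. The $p$-part of $D_K$ is generated by $e/p$ (with $e^2=2p$), of value $q_K(e/p)\equiv 2/p\bmod 2\mathbb{Z}$, so $q_L$ takes the value $-2/p$ on a generator and the genus symbol sign is $\epsilon=\left(\frac{-2}{p}\right)$. Finally, the equivariant gluing condition \eqref{eq:egc} applied to $g$, together with $g_{\mid K}=-\mathrm{id}$ and the surjectivity of the glue map onto $D_L$, forces $D_f=-\mathrm{id}_{D_L}$.

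It remains to extract the three numerical conditions. Condition (1) is a rank count: $\mathrm{rank}_\mathbb{Z}(M)=\mathrm{rank}(M^g)+1+(s_++s_-)$ with $\mathrm{rank}(M^g)\ge 1$. For (2) and the evenness of $s_\pm$ and $n_0$ I would invoke \Cref{rem: do not do twice prime} to view $(L,-f)$ as an even $\Phi_{p^k}$-lattice with trivial discriminant action, and then apply \Cref{propo: info hermitian structure} (with $l=0$); since $|D_L|=p$ we must have $n_1=1$, giving $s_++s_-=\varphi(p^k)(n_0+1)$ with $n_0$ even and $s_+,s_-\in2\mathbb{Z}$. Condition (3) is then exactly the non-emptiness criterion of \Cref{exist 2-elem} for $\II_{(s_+,s_-)}p^{\left(\frac{-2}{p}\right)}$ with $n=1$ (the second bullet not applying since $s_++s_-\ge\varphi(p^k)\ge 2>1$), which reads $s_+-s_-\equiv 2\left(\frac{-2}{p}\right)-1-p\bmod 8$.

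The step I expect to be the main obstacle is the sign determination: one must translate the torsion quadratic form value $-2/p$ on the cyclic $p$-group $D_L$ into the correct genus-symbol sign $\left(\frac{-2}{p}\right)$, and check this is consistent with \Cref{exist 2-elem}. The surrounding gluing bookkeeping---isolating the $p$-part and verifying that $H_p$ is a graph---needs care but becomes routine once the resultant computations pin down the relevant primes.
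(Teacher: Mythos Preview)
Your proposal is correct and follows essentially the same route as the paper. The only difference is presentational: the paper cites the proof of \Cref{propo: case 2 minpoly}(2), where the identity $D_{M^{-g}}\cong D_L(-1)\oplus D_{M^g}(-1)$ was already established, and then reads off $|D_L|=p$, the discriminant form $\scriptstyle\begin{pmatrix}-2/p\end{pmatrix}$ on $D_L$, and $D_f=-\mathrm{id}_{D_L}$ directly; you instead unpack this step by computing the three resultants and analysing the $p$-part of the glue group $H$ by hand (the inequalities $|H_p|\le p$, $|H_p|\le|D_L|$ together with $|H_p|^2=p\,|D_L|$ are exactly what forces $|D_L|=p$). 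The derivation of the sign $\left(\frac{-2}{p}\right)$, of condition (3) via \Cref{exist 2-elem}, and of condition (2) via \Cref{rem: do not do twice prime} and \Cref{propo: info hermitian structure}, is the same in both.
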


\begin{proof}
    As abelian groups, $D_K\cong \mathbb{Z}/2\mathbb{Z}\oplus \mathbb{Z}/p\mathbb{Z}$ and the torsion quadratic form on $D_K$ is $\scriptscriptstyle{\begin{pmatrix} p/2&0\\0&2/p \end{pmatrix}}$.
    Note that $-\text{id}_K$ induces the identity on the $2$-primary part of $D_K$, and $-\text{id}$ on the $p$-primary part.
    Moreover, following \cite[Chapter 15]{splg}, we have that $K$ lies in the genus $\II_{(1,0)}2^{\epsilon}_\delta p^{\epsilon}$ where $\delta \in\{1,3,5,7\}$ is congruent to $p$ modulo 8, and $\epsilon = \left(\frac{2}{p}\right)$.
    Let $(L, f) := (M^{\Phi_m(g)}, g_m)$ be the $\Phi_m$-kernel sublattice of $(M, g)$.
    By the proof of \Cref{propo: case 2 minpoly} and \Cref{eq:egc}, we have that $L$ has absolute determinant $p$, the discriminant group $D_L$ of $L$ is equipped with the discriminant form $\scriptscriptstyle{\begin{pmatrix}-2/p\end{pmatrix}}$ and $D_f = -\text{id}_{D_L}$.
    Let us denote by $(s_+, s_-)$ the signatures of $L$: note that since $g$ admits at least one fixed vector in $M$ and $M^{-g}$ has rank 1, we must have $s_++s_-\leq \text{rank}_{\mathbb{Z}}(M)-2$. 
    Moreover, we have that $L$ lies in the genus $\II_{(s_+, s_-)}p^{\left(\frac{-1}{p}\right)\epsilon}$. 
    Finally, since $s_++s_-\geq \varphi(p^k)>1$, the former genus is nonempty if and only if $s_+-s_-\equiv 2\left(\frac{-2}{p}\right)-1-p\mod 8$ (\cite[Chapter 15, Theorem 13]{splg}). 
    Note that (2) follows from \Cref{propo: info hermitian structure}.
\end{proof}

We keep the notations of \Cref{lem: 3 poly odd p} and its proof, and let us denote further $F := M^g$. 
If we denote by $(l_+, l_-)$ the signatures of $M$, we have that $i_+ := l_+-s_+-1$ and $i_- := l_--s_-$ define the signatures of $F$. 
By similar arguments as in the proof of \Cref{lem: 3 poly odd p}, we have that $D_F$ is equipped with the torsion quadratic form $\scriptscriptstyle{\begin{pmatrix}-p/2 \end{pmatrix}}$. We then deduce that $F$ lies in the genus $\II_{(i_+, i_-)}2^\epsilon_{-\delta}$, where we recall that $\delta\in\{1,3,5,7\}$ is congruent to $p$ modulo 8, and $\epsilon = \left(\frac{2}{p}\right)$. 
Now if $i_++i_->1$, then the genus $\II_{(i_+, i_-)}2^\epsilon_{-\delta}$ is nonempty if and only if $s_--s_+-1\equiv i_+-i_-\equiv  -p+2-2\left(\frac{2}{p}\right)\mod 8$ (\cite[Chapter 15, Theorem 13]{splg}).
Put together, we have that the genera of $F$ and $L$ are simultaneously nonempty if and only if
\[ 2\left(\frac{-2}{p}\right)-1-p\equiv 2\left(\frac{2}{p}\right)-3+p\mod 8\]
and one can easily check that the latter is always true (it follows from the fact that for any odd prime number $p$, $\left(\frac{-1}{p}\right)\equiv p\mod 4$). 
In the case where $F$ has rank 1, either $F\in \II_{(1,0)}2^{+1}_{1} = \II_{(1,0)}2^{-1}_5$ or $F\in \II_{(0,1)}2^{+1}_7 = \II_{(0,1)}2^{-1}_3$.
In the former case, we must have that $p\equiv 3,7\mod 8$, and $p\equiv 1,5\mod 8$ in the latter case. 
By straightforward computations, one can show that the existence conditions for $F$ and $(L, f)$ are still equivalent in both of the previous cases. 
Therefore, if $p$ has the appropriate residue class modulo 8 when $l_+-s_+ + l_--s_- = 2$, we have that the existence of $(L, f)$ ensures the existence of $F$.
\begin{theo}\label{gen stat 3 odd}
    Let $m = 2p^k\geq 3$ for some odd prime number $p$ and some nonnegative integer $k\geq 1$. 
    Let $l_+, l_-, s_+, s_-\geq 0$ be such that $l_+\geq 1$ and $l_++l_-\geq s_++s_- +2 \geq 3$. 
    There exists an even unimodular $\Phi_1\Phi_2\Phi_m^\ast$-lattice $(M, g)$ where $ M\in\II_{(l_+, l_-)}$ such that $M^{-g}\simeq \langle 2p\rangle$ and $M^{\Phi_m(g)}$ is $p$-elementary of absolute determinant $p$ and signatures $(s_+, s_-)$ if and only if there exists an even integer $n_0\geq 0$ such that
    \begin{enumerate}
        \item $l_\pm \geq s_\pm$;
        \item $l_+-l_-\equiv 0\mod 8$;
        \item $s_\pm\equiv 0\mod 2$;
        \item $s_++s_- = \varphi(p^k)(n_0+1)$;
        \item $s_+-s_-\equiv 2\left(\frac{-2}{p}\right)-1-p\mod 8$;
        \item $p\equiv 1\mod 4$ if $(l_+-s_+,\; l_--s_-) = (1, 1)$ and $p\equiv 3\mod 4$ if $(l_+-s_+,\; l_--s_-) = (2, 0)$.
    \end{enumerate}
\end{theo}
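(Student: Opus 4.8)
The plan is to prove both implications, noting that most of the analytic work has already been carried out in \Cref{lem: 3 poly odd p} and in the discussion preceding the statement; what remains is to assemble the pieces and, for sufficiency, to perform an explicit equivariant gluing.

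For necessity, suppose such a pair $(M, g)$ exists. Since $M$ is even unimodular, \Cref{exist unimod even} gives condition (2). The coinvariant data is governed by \Cref{lem: 3 poly odd p}, which produces an even integer $n_0$ together with the even signature $(s_+, s_-)$ of $L := M^{\Phi_m(g)}$, yielding conditions (3), (4), (5) and placing $L$ in the genus $\II_{(s_+,s_-)}p^{(\frac{-2}{p})}$ with $D_f = -\mathrm{id}_{D_L}$. Writing $F := M^g$, the orthogonal splitting $F\perp K\perp L$ in $M\otimes\mathbb{Q}$ gives $l_+ = (l_+-s_+-1) + 1 + s_+$ and $l_- = (l_--s_-) + s_-$, forcing $l_+\geq s_++1$ and $l_-\geq s_-$, hence condition (1). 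Finally, condition (6) is exactly the constraint extracted in the paragraph before the theorem: when $(l_+-s_+, l_--s_-)$ equals $(1,1)$ or $(2,0)$, the invariant sublattice $F$ has rank one and the only admissible $2$-elementary genus of that signature and determinant forces the stated residue of $p$ modulo $4$.

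For sufficiency, given an even $n_0\geq 0$ satisfying (1)--(6), I would build the three kernel sublattices separately and then glue. By \Cref{rem: do not do twice prime} together with \Cref{theo: prime order} applied with $n_1=1$ (whose hypotheses are conditions (3) and (4)), there is an even $\Phi_{p^k}$-lattice with trivial discriminant action, absolute determinant $p$, and signature $(s_+, s_-)$; negating its isometry yields a $\Phi_m$-lattice $(L, f)$ with $D_f = -\mathrm{id}_{D_L}$, and condition (5) places it in $\II_{(s_+, s_-)}p^{(\frac{-2}{p})}$ by \Cref{exist 2-elem}. I take $K := \langle 2p\rangle$ with the isometry $-\mathrm{id}_K$. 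For the invariant part I need a lattice $F$ in $\II_{(l_+-s_+-1,\, l_--s_-)}2^{(\frac{2}{p})}_{-\delta}$ (with $\delta\equiv p \bmod 8$), equipped with $\mathrm{id}_F$; its genus is non-empty precisely because, as shown before the statement, the existence conditions for $F$ and for $L$ are equivalent (the relevant congruence reduces to the identity $(\frac{-1}{p})\equiv p \bmod 4$), the rank-one case being secured by condition (6).

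The final and most delicate step is the gluing, which I would perform in two stages rather than all at once, since three summands prevent a direct appeal to Nikulin. The discriminant forms are arranged so that the $p$-part of $D_K$ cancels $D_L$ while the $2$-part of $D_K$ cancels $D_F$. First I glue $(K, -\mathrm{id}_K)$ to $(L, f)$ along the $p$-primary parts; the glue map is equivariant because both isometries act by $-\mathrm{id}$ on the $p$-torsion, producing via \cite[Proposition 1.4.1]{nik79b} in its equivariant form (cf. \Cref{rem: existence equiv gluing}) a $2$-elementary lattice $(N, h)$ whose discriminant form is $(p/2)$ on $\mathbb{Z}/2\mathbb{Z}$ and on which $h$ acts trivially. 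Then I glue $(N, h)$ to $(F, \mathrm{id}_F)$ along the $2$-part, equivariance being automatic since both restrictions to the $2$-torsion are the identity. A discriminant count, $|D_F||D_K||D_L| = 2\cdot 2p\cdot p$ with a total glue of order $2p = \sqrt{4p^2}$, shows the result $(M, g)$ is even and unimodular of signature $(l_+, l_-)$; and since $\Phi_1, \Phi_2, \Phi_m$ are pairwise coprime with nonzero values at the eigenvalues of each summand, the kernel sublattices of $(M, g)$ are exactly $F$, $K$ and $L$, so $M^{-g}\simeq\langle 2p\rangle$ and $M^{\Phi_m(g)} = L$ has the prescribed invariants. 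I expect the main obstacle to be checking that this two-step gluing genuinely produces a \emph{unimodular} lattice carrying an isometry with minimal polynomial exactly $\Phi_1\Phi_2\Phi_m$ (no smaller) and with precisely these kernel sublattices; this is where the exact matching of the three discriminant forms recorded in \Cref{lem: 3 poly odd p} is indispensable.
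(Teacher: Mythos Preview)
Your proposal is correct and follows the same overall architecture as the paper: necessity via \Cref{exist unimod even}, \Cref{lem: 3 poly odd p} and the pre-theorem discussion; sufficiency by constructing $F$, $K=\langle 2p\rangle$, $(L,f)$ separately and then performing an equivariant gluing.

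The one substantive organizational difference is in the gluing. You glue in two stages (first $K$ to $L$ along the $p$-part, then the result to $F$ along the $2$-part), and you flag as ``the main obstacle'' the verification that the outcome is unimodular with the correct kernel sublattices. The paper sidesteps this entirely by gluing in a single step: it observes that $D_{F\oplus L}=D_F\oplus D_L\cong D_K(-1)$ (the forms $(-p/2)$ and $(-2/p)$ on the two cyclic factors match the negative of the form on $D_K$), so any anti-isometry $D_{F\oplus L}\to D_K$ gives directly an even unimodular overlattice $M$ of $F\oplus K\oplus L$. Equivariance is then a one-line check: $\mathrm{id}_F\oplus f$ acts on $D_F\oplus D_L$ as $\mathrm{id}\oplus(-\mathrm{id})$, while $-\mathrm{id}_K$ acts on $D_K$ the same way on its $2$- and $p$-parts respectively, so $\mathrm{id}_F\oplus(-\mathrm{id}_K)\oplus f$ extends to the desired $g\in O(M)$. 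Your two-step version reaches the same conclusion and is perfectly valid, but the single-step formulation makes unimodularity and the identification of the kernel sublattices immediate, removing precisely the obstacle you anticipated.
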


\begin{proof}
    Let $(M, g)$ as in the first part of the statement. 
    Then, according to \cite[Chapitre V, Section 1.5, \S 2, Th\'eor\`eme 2, Corollaire 1]{ser70} condition (2) holds. 
    Now, \cite[Chapter 15, Theorem 13]{splg} and \Cref{theo: prime order}, together with \Cref{rem: do not do twice prime} for the case where $p$ is odd, give us that conditions (3) and (4) hold too. 
    Conditions (1), (5) and (6) follow directly from \Cref{lem: 3 poly odd p} and the follow-up discussions.
    Now suppose that conditions (1)-(6) hold. 
    In particular, the genus $\II_{(l_+, l_-)}$ is nonempty, and there exists a $\Phi_{2p^k}$-lattice $(L, f)$ of absolute determinant $p$ and signatures $(s_+, s_-)$ such that $D_f = -\text{id}_{D_L}$.
    By condition (4) and (5), the torsion quadratic form on $D_L$ is $\scriptscriptstyle{\begin{pmatrix}-2/p\end{pmatrix}}$. 
    Moreover, by conditions (5) and (6), there exists an even  $\Z$-lattice $F$ with signatures $(l_+-s_+-1, l_--s_-)$ and absolute determinant 2, such that the torsion quadratic form on $D_F$ is $\scriptscriptstyle{\begin{pmatrix}-p/2\end{pmatrix}}$. 
    Hence, if we let $K:= \langle 2p\rangle$, since $D_{F\oplus L} = D_F\oplus D_L\simeq D_K(-1)$, we obtain that $F\oplus K\oplus L$ has an overlattice $M$ in $\II_{(l_+, l_-)}$. 
    Moreover, by \Cref{eq:egc}, since $\text{id}_F\oplus f$ and $-\text{id}_K$ agree along any glue map $D_{F\oplus L}\to D_K$, we have that $\text{id}_f\oplus (-\text{id}_K)\oplus f\in O(F\oplus K\oplus L)$ extends to an isometry $g\in O(M)$ with minimal polynomial $\Phi_1\Phi_2\Phi_m$, such that $M^{-g} = K$ and $(M_m, g_m) = (L, f)$.
\end{proof}

Let us conclude with the case of powers of 2.
Let $m = 2^k$ for some $k\geq 2$, let $(M, g)$ be an even unimodular $\Phi_1\Phi_2\Phi_{2^k}^\ast$-lattice such that $K:=M^{-g}\simeq  \langle 4\rangle$.
Let us denote moreover $(l_+, l_-)$ the signatures of $M$, and let $F := M^g$ of signatures $(i_+, i_-)$. 
The $\Phi_m$-kernel sublattice $(L, f)$ of $(M, g)$ has signatures $(s_+, s_-) := (l_+-i_+-1, l_--i_-)$. \smallskip

By \Cref{existence even power}, there exist three integers $n_0, n_1,n_2\geq 0$ with $n_0, n_1\in2\mathbb{Z}$ and $n_2\neq 0$ such that $\text{rank}_\mathbb{Z}(L) = \varphi(2^k)(n_0+n_1+n_2)$. 
According to \Cref{propo: case 2 minpoly}, we know moreover that $L$ is 2-elementary, the $\Z$-lattice $F$ is 4-elementary, and $F$ and $K$ glue along elementary abelian 2-groups $H_F\leq D_F$ and $H_K\leq D_K$. 
Remark that as abelian groups, $D_K\cong \mathbb{Z}/4\mathbb{Z}$ and it is equiped with the torsion quadratic form $\scriptscriptstyle{\begin{pmatrix} 1/4\end{pmatrix}}$. 
Hence, $H_K\cong \mathbb{Z}/2\mathbb{Z}$ and the torsion quadratic form on $H_K$ is $\scriptscriptstyle{\begin{pmatrix}1\end{pmatrix}}$. 
By \cite[Proposition 4.10]{bh23}, the glue map $H_F\to H_K$ maps isomorphically $2D_F$ to $2D_K=H_K$. 
Therefore, there exists $n\geq 0$ such that as abelian group $D_F\cong  D_{F, 4}\oplus D_{F, 2}$ where $D_{F, 4} \cong \mathbb{Z}/4\mathbb{Z}$ and $D_{F,2}\cong (\mathbb{Z}/2\mathbb{Z})^{\oplus n}$, and $H_F\leq D_{F, 4}$ is the subgroup generated by twice a generator. 
By the classification of torsion quadratic forms on the abelian group $\mathbb{Z}/4\mathbb{Z}$ (see \cite[Proposition 1.8.1]{nik79b}), there exists $\alpha\in\{1,3,5,7\}$ such that the torsion quadratic form on $D_{F, 4}\leq D_F$ is $\scriptscriptstyle{\begin{pmatrix}\alpha/4\end{pmatrix}}$.
Now, let us denote $N := L^\perp_M$ which is a primitive extension of $F\oplus K$ in $M$.

\begin{claim}\label{claim even form}
    $D_N\cong (\mathbb{Z}/2\mathbb{Z})^{\oplus 2}\oplus D_{F,2}$ where the torsion quadratic form on the first two summands is
    \begin{equation}\label{form gamma}
    \scriptstyle{\begin{pmatrix}
        (\alpha+1)/4&(\alpha+3)/4\\(\alpha+3)/4&(\alpha+1)/4
    \end{pmatrix}}.
    \end{equation}
    Moreover, $\textnormal{id}_F\oplus (-\textnormal{id}_K)$ extends to an isometry $h$ of $N$ such that $D_h$ acts by $\scriptscriptstyle{\begin{pmatrix}0&1\\1&0\end{pmatrix}}$ on the first two summands of $D_N$ and by the identity on $D_{F,2}$. 
    In particular, $(n_1, n_2) = (n, 1)$ and the torsion quadratic form on $D_{F,2}$ takes integer values.
\end{claim}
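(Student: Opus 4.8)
The plan is to realise $N$ explicitly as the overlattice of $F\oplus K$ determined by the equivariant glue map, to compute its discriminant form and isometry by hand, and then to transport everything through the unimodular gluing $M\supseteq N\oplus L$ in order to read off the hermitian invariants $n_1,n_2$. Concretely, I would fix generators $D_{F,4}=\langle x\rangle\cong\mathbb{Z}/4\mathbb{Z}$ with $q_F(x)=\alpha/4$ and $D_K=\langle y\rangle\cong\mathbb{Z}/4\mathbb{Z}$ with $q_K(y)=1/4$, so that $H_F=\langle 2x\rangle$, $H_K=\langle 2y\rangle$ and the glue map sends $2x\mapsto 2y$. Then $N$ is the overlattice of $F\oplus K$ whose glue subgroup is the graph $\Gamma=\langle(2x,2y)\rangle\cong\mathbb{Z}/2\mathbb{Z}$, and $D_N=\Gamma^{\perp}/\Gamma$ by \cite{nik79b}. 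A direct computation of $\Gamma^{\perp}$ inside $D_F\oplus D_K$, using that $D_{F,2}$ is orthogonal to $D_{F,4}$, shows that $|D_N|=2^{n+2}$, that the classes $u:=[(x,y)]$ and $u':=[(3x,y)]$ are order-$2$, independent, and orthogonal to $D_{F,2}$ (on which $q_N$ restricts to $q_F$), and that together with $D_{F,2}$ they generate $D_N$, giving $D_N\cong(\mathbb{Z}/2\mathbb{Z})^{\oplus 2}\oplus D_{F,2}$. Evaluating $q_N(u)=(\alpha+1)/4$, $q_N(u')=9\alpha/4+1/4\equiv(\alpha+1)/4$ (as $9\alpha\equiv\alpha\bmod 8$), and $b_N(u,u')=(\alpha+1)/4+1/2=(\alpha+3)/4\bmod\mathbb{Z}$ (using $u'=u+(2x,0)$ and $b_F(x,2x)=\alpha/2$) yields exactly the matrix \eqref{form gamma}. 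This proves the first assertion, and is the step I expect to be the most error-prone, since it requires reducing the half-integer cross term and $q_F(3x)$ modulo the correct lattices.

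For the extension, note that $\mathrm{id}_F\oplus(-\mathrm{id}_K)$ fixes both $2x$ and $2y$, hence preserves $\Gamma$ and descends to an isometry $h$ of $N$ (which is just $g_{\mid N}$). Its action on $D_N$ sends $[(a,b)]\mapsto[(a,-b)]$; I would check $u=[(x,y)]\mapsto[(x,-y)]=[(x,3y)]=u'$ and symmetrically $u'\mapsto u$, while $D_{F,2}$ is fixed, giving the action $\left(\begin{smallmatrix}0&1\\1&0\end{smallmatrix}\right)$ on $\langle u,u'\rangle$ and the identity on $D_{F,2}$.

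To obtain $(n_1,n_2)=(n,1)$ I would use that $M$ is unimodular with $N=L^{\perp}$, so that $(D_N,q_N,D_h)\cong(D_L(-1),q_L(-1),D_f)$ via the gluing. On the hermitian side, \Cref{existence even power} gives $\dim_{\mathbb{F}_2}D_L=n_1+2n_2$, while \Cref{rem induced herm action} shows $D_f$ acts trivially on the $(\mathbb{Z}/2\mathbb{Z})^{n_1}$ coming from the $N_1$-constituent and by the swap $\left(\begin{smallmatrix}0&1\\1&0\end{smallmatrix}\right)$ on each of the $n_2$ copies of $(\mathbb{Z}/2\mathbb{Z})^2$ coming from the $N_2$-constituent (the $N_0$-constituent contributing nothing to $D_L$). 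Comparing the two descriptions gives the two equations $n+2=\dim_{\mathbb{F}_2}D_N=n_1+2n_2$ and $n+1=\dim_{\mathbb{F}_2}D_N^{D_h}=n_1+n_2$; subtracting forces $n_2=1$ and then $n_1=n$.

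Finally, integrality of $q_N$ on $D_{F,2}$ is equivalent, under the same identification, to $q_L$ being integer-valued on the $N_1$-constituent. For $k\geq 3$ this is immediate from \Cref{coro taki}, which gives $\delta_L=0$. In general I would argue on the trace form: writing $a:=\mathrm{val}_{\mathfrak{P}}(\mathfrak{D}_{E/\mathbb{Q}})$ and $\beta:=\pi\iota(\pi)$, and using $\mathfrak{n}(N_1)=\mathfrak{p}^{1-a/2}$ together with $\mathfrak{D}_{K/\mathbb{Q}}=\mathfrak{p}^{a/2-1}$, every value $q_L(\pi^{-1}w)=2\,\mathrm{Tr}^K_{\mathbb{Q}}(\beta^{-1}h(w,w))$ lies in $2\,\mathrm{Tr}^K_{\mathbb{Q}}(\mathfrak{p}^{-a/2})$, and the required inclusion $\mathfrak{p}^{-a/2}\subseteq\tfrac12\mathfrak{D}_{K/\mathbb{Q}}^{-1}$ reduces exactly to $k\geq 2$, yielding integer values. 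The main obstacle throughout is not any single hard estimate but the careful bookkeeping of the two parallel descriptions of $(D_N,D_h)$, and matching the glued form \eqref{form gamma} with the hermitian Jordan data of $(L,f)$.
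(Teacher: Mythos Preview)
Your argument is correct and follows essentially the same route as the paper's. Your generators $u=[(x,y)]$ and $u'=[(3x,y)]$ coincide with the paper's $(1,1)+\Gamma$ and $(1,3)+\Gamma$ via $(3x,y)\equiv(x,3y)\bmod\Gamma$, so the form computation is identical; and your two equations $n_1+2n_2=n+2$, $n_1+n_2=n+1$ simply unpack what the paper states as ``by the equivariant gluing condition and \Cref{rem induced herm action}''. Your trace computation for integrality on the $N_1$-constituent is more explicit than the paper, which only cites ``the proofs of \Cref{lem even trace,lem delta 2-elem}''.

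One imprecision worth flagging: you claim that integrality of $q_N$ on $D_{F,2}$ is \emph{equivalent} to integrality of $q_L$ on the $N_1$-constituent $D_{L,N_1}$. The equivariant anti-isometry $\gamma\colon D_N\to D_L$ need not carry $D_{F,2}$ onto $D_{L,N_1}$; all that is forced is $\gamma(\langle u+u'\rangle)=\langle e_1+e_2\rangle$ (the images of $D_h-1$ and $D_f-1$). What you actually get is $\gamma(D_{F,2})\subseteq\ker(D_f-1)=D_{L,N_1}\oplus\langle e_1+e_2\rangle$. Since $q_L(e_1+e_2)=-q_N(u+u')\equiv 1\in\mathbb{Z}/2\mathbb{Z}$ and $D_{L,N_1}\perp\langle e_1+e_2\rangle$, your trace computation (integer values on $D_{L,N_1}$) still yields integer values on all of $\ker(D_f-1)$, hence on $\gamma(D_{F,2})$, hence on $D_{F,2}$. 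So the implication you need holds; only the word ``equivalent'' overstates it. The paper is equally terse at this step, so once you add this one line your argument is in fact the more complete of the two.
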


\begin{proof}[Proof of \Cref{claim even form}]
    Let $\gamma\colon H_F\to H_K$ be the glue map of the primitive extension $F\oplus K\hookrightarrow N$, and let $\Gamma\leq D_{F,4}\oplus D_K$ be the graph of $\gamma$.
    According to \cite[Proposition 1.5.1]{nik79b}, as abelian groups we have that $D_N\cong \Gamma^\perp/\Gamma\oplus D_{F,2}$.
    Now, by straightforward computations, one may show that $\Gamma^\perp/\Gamma\cong (\mathbb{Z}/2\mathbb{Z})^{\oplus 2}$ is generated by the classes $(1,1)+\Gamma$ and $(1,3)+\Gamma$. 
    Moreover, since the torsion quadratic form on $D_{F,4}\oplus D_K$ is $\scriptscriptstyle{\begin{pmatrix}\alpha/4&0\\0&1/4\end{pmatrix}}$, it follows that the torsion quadratic form on $\Gamma^\perp/\Gamma$ is as stated.
    We remark that 
    $\textnormal{id}_F\oplus (-\textnormal{id}_K)$ extends to an isometry $h\in O(N)$ such that $D_h$ acts by exchanging the two generators on $\Gamma^\perp/\Gamma$, and $D_h$ is trivial on $D_{F, 2}$. 
    For the final statements, since $N$ and $L$ glue equivarantly along their respective discriminant groups, we have that $n_1=n$ and $n_2 = 1$ by \Cref{eq:egc} and \Cref{rem induced herm action}. The fact that the torsion quadratic form on $D_{F,2}$ takes integer values follows from the proofs of \Cref{lem even trace,lem delta 2-elem}.
\end{proof}

Let $\epsilon_1,\epsilon_2\in \{\pm 1\}$ be such that $F\in \II_{(i_+, i_-)}2_{\II}^{\epsilon_1n_1}4^{\epsilon_2}_\alpha$. 
By \Cref{propo: existence 24 elem}, we know that $n_1$ must indeed be even, $\epsilon_2 = (\frac{\alpha}{2})$ and $\epsilon_1 = 1$ if $n_1 = 0$. 
Moreover, we have that $i_++i_-\leq n_1+1$ with equality only if $\epsilon_1 = \epsilon_2$, and finally, that 
\begin{equation}\label{eq: another useful eq}
    i_+-i_-\equiv \alpha+2-2\epsilon_1\mod 8.
\end{equation}
We aim to show that the genus of $F$ uniquely determines the one of $L$ (similarly to what has been done in \Cref{gen stat 3 odd}).
We explain our argument for the case $\alpha = 1$, the other cases follow similarly.

If $\alpha = 1$, then the torsion quadratic form \Cref{form gamma} is given by $\scriptscriptstyle{\begin{pmatrix}1/2&0\\0&1/2\end{pmatrix}}$. Hence, according to \cite[Chapter 15, Theorem 13]{splg} and \Cref{eq: another useful eq}, we have that
\(N\in\II_{(i_++1, i_-)}2^{\epsilon_1(n_1+2)}_2.\)
Since $N\oplus L$ admits an even unimodular primitive extension, we have that $L\in \II_{(s_+, s_-)}2^{\epsilon_L(n_1+2)}_6$ where $\epsilon_L\in \{\pm 1\}$ satisfies that $s_+-s_-\equiv -2\epsilon_L\mod 8$ and $\epsilon_L = 1$ for $s_++s_- = n_1+2$ (\cite[Chapter 15, Theorem 13]{splg}). Since $(s_+, s_-) = (l_+-i_+-1, l_--i_-)$, \Cref{eq: another useful eq} tells us moreover that $\epsilon_L = \epsilon_1$.  Analogously, one can check that for all $\alpha\in\{1,3,5,7\}$, the genus of $F$ determines the ones of $N$ and $L$, which are given in \Cref{fig:genera}.

{\setlength{\tabcolsep}{5pt}
\renewcommand\arraystretch{1.5}
\begin{table}[!ht]

    \caption{Genera of $F$, $N$ and $L$ depending on $\alpha\in\{1,3,5,7\}$}\label{fig:genera}
    \centering
    \begin{tabular}{ccccc}
         $\alpha$&\Cref{form gamma}& $g(F)$&$g(N)$ & $g(L)$ \\
          \hline
         \cellcolor{lightgray!40!white}1&  \cellcolor{lightgray!40!white}$\scriptscriptstyle{\begin{pmatrix}1/2&0\\0&1/2\end{pmatrix}}$&  \cellcolor{lightgray!40!white}$\II_{(i_+, i_-)}2_\II^{\epsilon_1n_1}4^1_1$& \cellcolor{lightgray!40!white}$\II_{(i_++1, i_-)}2^{\epsilon_1(n_1+2)}_2$& \cellcolor{lightgray!40!white} $\II_{(s_+, s_-)}2^{\epsilon_1(n_1+2)}_6$\\
         3& $\scriptscriptstyle{\begin{pmatrix}1&1/2\\1/2&1\end{pmatrix}}$& $\II_{(i_+, i_-)}2_\II^{\epsilon_1n_1}4^{-1}_3$&$\II_{(i_++1, i_-)}2^{-\epsilon_1(n_1+2)}_\II$& $\II_{(s_+, s_-)}2^{-\epsilon_1(n_1+2)}_\II$\\
         \cellcolor{lightgray!40!white}5& \cellcolor{lightgray!40!white}$\scriptscriptstyle{\begin{pmatrix}3/2&0\\0&3/2\end{pmatrix}}$&  \cellcolor{lightgray!40!white}$\II_{(i_+, i_-)}2_\II^{\epsilon_1n_1}4^{-1}_5$& \cellcolor{lightgray!40!white}$\II_{(i_++1, i_-)}2^{\epsilon_1(n_1+2)}_6$& \cellcolor{lightgray!40!white}$\II_{(s_+, s_-)}2^{\epsilon_1(n_1+2)}_2$\\
         7&$\scriptscriptstyle{\begin{pmatrix}0&1/2\\1/2&0\end{pmatrix}}$& $\II_{(i_+, i_-)}2_\II^{\epsilon_1n_1}4^1_7$&$\II_{(i_++1, i_-)}2^{\epsilon_1(n_1+2)}_\II$&$\II_{(s_+, s_-)}2^{\epsilon_1(n_1+2)}_\II$\\
         \hline
    \end{tabular}
\end{table}
}


Since the genus of $L$ determines the one of $N$, it is not hard to see that the genus of $L$ actually also determines the one of $F$, from the description given in \Cref{claim even form}.

\begin{rem}
    Note that if $i_++i_- = n_1+1$, then $\epsilon_1 = 1$ by \cite[Chapter 15, Theorem 13]{splg}. In this case, \Cref{propo: existence 24 elem} gives that $\epsilon_2 = \epsilon_1 = 1$. Since $\epsilon_2 = \left(\frac{\alpha}{2}\right)$, we therefore know that $i_++i_- = n_1+1$ only if $\alpha\in\{1,7\}$.
    Moreover, from the symbol of the genus of $L$, we know that $\delta_L=1$ for $\alpha=1,5$ and $\delta_L = 0$ otherwise. 
    Therefore, \Cref{existence even power} tells us that $\alpha \equiv 1\mod 4$ if and only if $m=4$.
\end{rem}

\begin{theo}\label{gen stat 3 even}
    Let $m = 2^k\geq 3$ for some nonnegative integer $k\geq 2$. 
    Let $l_+, l_-, s_+, s_-, n_1\geq 0$ be such that $l_+\geq 1$, $n_1\in 2\mathbb{Z}$ and $l_++l_-\geq s_++s_-+n_1+2\geq n_1+3$. There exists an even unimodular $\Phi_1\Phi_2\Phi_m^\ast$-lattice $(M, g)$ where $ M\in\II_{(l_+, l_-)}$ such that $M^{-g}\simeq \langle 4\rangle$ and $M^{\Phi_m(g)}$ is $2$-elementary of absolute determinant $2^{n_1+2}$ and signatures $(s_+, s_-)$ if and only if there exists an even integer $n_0\geq 0$, and two integers $\alpha\in \{1,3,5,7\}$ and $\epsilon\in\{\pm 1\}$ such that
    \begin{enumerate}
        \item $l_\pm \geq s_\pm$;
        \item $l_+-l_-\equiv 0\mod 8$;
        \item $s_\pm\equiv 0\mod 2$;
        \item $s_++s_- = \varphi(2^k)(n_0+n_1+1)$;
        \item $s_+-s_-\equiv 1-\alpha-2\epsilon\mod 8$;
        \item $\alpha\equiv 1\mod 4$ if and only if $m = 4$;
        \item if $(l_+-s_+)+(l_--s_-) = n_1+2$, then $\alpha\equiv \pm 1\mod 8$ and $\epsilon = +1$;
        \item if $s_++s_- = n_1+2$ then $\epsilon = +1$ and $\alpha\in \{1,5\}$.
    \end{enumerate}
\end{theo}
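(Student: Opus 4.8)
My plan is to mirror the proof of \Cref{gen stat 3 odd}, keeping the notation set up in the discussion preceding the statement: $K := M^{-g} \simeq \langle 4\rangle$, the invariant lattice $F := M^g$ of signature $(i_+, i_-)$, the $\Phi_m$-kernel sublattice $(L, f) := (M_m, g_m)$ of signature $(s_+, s_-) = (l_+-i_+-1, l_--i_-)$, the intermediate lattice $N := (L)^\perp_M$ (a primitive overlattice of $F\oplus K$), and the data $n_0, n_1$ (with $n_2 = 1$), $\alpha\in\{1,3,5,7\}$ and $\epsilon_1\in\{\pm1\}$ appearing in the symbol $F\in\II_{(i_+,i_-)}2^{\epsilon_1 n_1}_\II 4^{\epsilon_2}_\alpha$ with $\epsilon_2 = \left(\frac{\alpha}{2}\right)$. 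The single most useful preliminary observation is that the parameter $\epsilon$ of the statement should be taken to be $\epsilon_1$: substituting $i_+ = l_+-s_+-1$ and $i_- = l_--s_-$ into \eqref{eq: another useful eq} and using condition (2) turns that congruence into exactly condition (5), and one checks case by case on $\alpha$ that this is simultaneously the nonemptiness condition of \Cref{exist 2-elem} for the genus of $L$ recorded in \Cref{fig:genera}. Thus (5) is the common shadow of the existence of $F$ and of $L$.

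For necessity I would read off the conditions as follows. Condition (2) is \Cref{exist unimod even}. Since $K\simeq\langle4\rangle$ is positive definite of rank one and $M$ is a finite-index overlattice of $F\oplus K\oplus L$, the signatures add as $l_+ = i_++1+s_+$ and $l_- = i_-+s_-$, giving (1). Conditions (3) and (4) are \Cref{existence even power} applied to $(L,f)$ with $n_2 = 1$ (as justified in \Cref{claim even form}), and (5) is the genus constraint just discussed. Condition (6) is the first assertion of the remark preceding the statement, where the value of $\delta_L$ read off \Cref{fig:genera} together with \Cref{existence even power} forces $\alpha\equiv1\bmod4$ exactly when $m=4$. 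Condition (7) is the remaining assertion of that remark: the hypothesis $(l_+-s_+)+(l_--s_-) = n_1+2$ is equivalent to $i_++i_- = n_1+1$, i.e. to $F$ (equivalently $N$) having minimal rank, which via \Cref{propo: existence 24 elem} and \Cref{exist 2-elem} forces $\epsilon_1 = +1$ and $\alpha\in\{1,7\}$. Finally (8) is the dual boundary case: $s_++s_- = n_1+2$ forces $L$ to have minimal rank, and by (4) this can only happen for $m=4$, whence $\alpha\in\{1,5\}$ by (6), while \Cref{exist 2-elem} applied to the (full-rank) symbol of $L$ forces its sign, and therefore $\epsilon$, to be $+1$.

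For sufficiency I would reverse this. Granting (1)--(8), I first put $n_2 := 1$ and $\delta := \delta_L$ (so $\delta = 1$ iff $\alpha\in\{1,5\}$), split on $m=4$ versus $m\geq8$ using (6), and verify that (3), (4), (5) together with the boundary conditions (7), (8) are precisely the hypotheses (1)--(5) of \Cref{existence even power}; this yields an even $2$-elementary $\Phi_{2^k}$-lattice $(L,f)$ in the prescribed genus, with $D_f$ of order $2$ acting as in \Cref{rem induced herm action}. Next, using (1), (2), (5) and \eqref{eq: another useful eq}, the genus $\II_{(i_+,i_-)}2^{\epsilon n_1}_\II 4^{\epsilon_2}_\alpha$ satisfies \Cref{propo: existence 24 elem} (here (7) and (8) guarantee that the minimal-rank boundary cases of that proposition are respected), so I fix a representative $F$, glue $F\oplus K$ along the glue map of \Cref{claim even form} to get $N$, and extend $\text{id}_F\oplus(-\text{id}_K)$ to the isometry $h\in O(N)$ supplied there. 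Since $N$ and $L$ both lie in the genera of \Cref{fig:genera} and must glue to the unimodular $M$, we have $D_N\cong D_L(-1)$; I then take an anti-isometry matching the two order-two summands on which $D_h$ and $D_f$ both act by $\scriptscriptstyle{\begin{pmatrix}0&1\\1&0\end{pmatrix}}$, so that by \eqref{eq:egc} and \cite[Proposition 1.4.1]{nik79b} the isometry $h\oplus f$ extends to $g$ on an even unimodular $M\in\II_{(l_+,l_-)}$ with $M^{-g} = K$, $M^g = F$, $M^{\Phi_m(g)} = L$ and minimal polynomial $\Phi_1\Phi_2\Phi_{2^k}$.

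The step I expect to be the main obstacle is not the gluing but the \emph{rigidity}: showing that the single pair $(\alpha,\epsilon)$ controls everything, i.e. that the genus of $L$ forces the genus of $N$ (hence of $F$) and conversely. This is exactly the case analysis on $\alpha\in\{1,3,5,7\}$ condensed in \Cref{fig:genera}, and its delicate points are the two boundary cases: when $F$ or $L$ degenerates to minimal rank, \Cref{propo: existence 24 elem} and \Cref{exist 2-elem} impose extra sign and oddity constraints that would otherwise permit spurious, non-realisable genera -- these are precisely the constraints packaged into conditions (7) and (8), and getting their interplay with (6) right is where I would spend the most care.
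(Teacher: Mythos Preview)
Your proposal is correct and follows essentially the same approach as the paper: the paper's own proof is extremely terse (``similar to the proof of \Cref{gen stat 3 odd} by using \Cref{claim even form} and the follow-up discussions'', plus a one-line justification of condition (8)), and you have spelled out exactly those follow-up discussions --- the identification $\epsilon = \epsilon_1$, the use of \Cref{fig:genera}, the two-step equivariant gluing $F\oplus K \subset N$ then $N\oplus L \subset M$, and the boundary analysis yielding (7) and (8). Your derivation of (8) via \Cref{exist 2-elem} applied to $L$ at full rank is in fact cleaner than the paper's citation of \Cref{propo: existence 24 elem} (which works instead through the forced $n_1 = 0$, hence $\epsilon_1 = +1$, on the $F$ side); both are valid.
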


\begin{proof}
    The proof is similar to the proof of \Cref{gen stat 3 odd} by using \Cref{claim even form} and the follow-up discussions. 
    Note that $s_++s_- = n_1+2$ can only happen if and only if $k=2$ and $n_0=n_1 = 0$.
    In that situation, we know that we must have $\epsilon=+1$ by \Cref{propo: existence 24 elem} and $\alpha\in \{1,5\}$ by condition (6) of the statement.
    Hence condition (8) is necessary.
\end{proof}

\subsection{Classification up to conjugacy}
Let $m\geq 3$ and let $\zeta := \zeta_m$ be a primitive $m$th root of unity. 
We first prove the following proposition, which is a generalization of \cite[Proposition 2.9]{bc22}.
\begin{propo}\label{propo class gen herm}
    Let $E := \mathbb{Q}(\zeta)$ and $(L, h)$ be a hermitian $\mathcal{O}_E$-lattice. 
    Suppose that $(L, h)$ is indefinite or that $\textnormal{rank}_{\mathcal{O}_E}(L) = 1$. 
    Then the number of isometry classes in the genus of $(L, h)$ is the relative class number $h^{-}(E)$.
\end{propo}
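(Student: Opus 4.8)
The plan is to follow the adelic computation of the class number of a genus of hermitian lattices exactly as in the proof of \cite[Proposition 2.9]{bc22}, the only new feature being that the argument is insensitive to whether $m$ is prime, a prime power, or composite. Write $V := L\otimes_{\mathcal{O}_E}E$ for the ambient hermitian space, let $U(V)$ denote its unitary group, and let $\mathcal{U}:=\prod_{\mathfrak{q}}U(L_\mathfrak{q})$ be the product over all places $\mathfrak{q}$ of $K$ of the local stabilisers of $L$ (with the compact unitary groups at the infinite places). First I would invoke the standard bijection (see \cite[\S 3]{kir16}) between the isometry classes in the genus of $(L, h)$ and the double coset space $U(V)(E)\backslash U(V)(\mathbb{A})/\mathcal{U}$, so that counting classes becomes a computation with the adelic points of $U(V)$.

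Second, I would exploit the determinant. Let $E^1:=\{x\in E^\times:\; x\iota(x)=1\}$ be the norm-one torus of the CM extension $E/K$, and consider the exact sequence $1\to SU(V)\to U(V)\xrightarrow{\det}E^1\to 1$. Here the hypothesis enters: if $\textnormal{rank}_{\mathcal{O}_E}(L)=1$ then $SU(V)$ is trivial and $U(V)=E^1$ outright; if instead $(L,h)$ is indefinite, then $SU(V)$ is simply connected, semisimple and isotropic at some infinite place of $K$ (where the hermitian form has mixed signature), so that \emph{strong approximation} applies to it. In either case the determinant identifies the double coset above with $E^1\backslash E^1_{\mathbb{A}}/\det(\mathcal{U})$, so that only the torus $E^1$ remains.

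Third, I would pin down the local determinant images. At every finite place $\mathfrak{q}\neq\mathfrak{p}$ the lattice $L_\mathfrak{q}$ is unimodular and uniquely determined by its rank (\Cref{propo: decomp block val}), and at the infinite places $U(L_\mathfrak{q})$ is a compact unitary group; in all these cases $\det(U(L_\mathfrak{q}))$ is the full group of norm-one local units of $\mathcal{O}_{E_\mathfrak{q}}$ (respectively the unit circle $E^1_{\mathfrak{q}}$). The only place requiring care is the ramified prime $\mathfrak{p}$ above $p$, where one reads off from the explicit Jordan decomposition of \Cref{propo: orth decomp} that the unitary group of each modular constituent already surjects onto the norm-one units, whence $\det(U(L_\mathfrak{p}))=\mathcal{O}_{E_\mathfrak{p}}^1$ as well (the standard local surjectivity for hermitian lattices, \cite[\S 3]{kir16}). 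Consequently $\det(\mathcal{U})=\prod_v\mathcal{O}^1_{E_v}$ is the maximal compact subgroup of $E^1_{\mathbb{A}}$.

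Finally, I would identify the resulting quotient $E^1\backslash E^1_{\mathbb{A}}/\prod_v\mathcal{O}^1_{E_v}$, the class number of the norm-one torus, with the relative class number $h^-(E)=h(E)/h(K)$. Using Hilbert 90 in the form $E^1\cong E^\times/K^\times$ (via $x\mapsto x/\iota(x)$, whose kernel is $K^\times$) together with the fact that $\mathcal{O}_E$ and $\mathcal{O}_K$ are maximal orders, a class-field-theoretic diagram chase (equivalently Ono's formula for the norm-one torus) yields exactly $h^-(E)$. I expect the main obstacle to be precisely this last identification: one must control the unit index $[\mathcal{O}_K^\times:N_{E/K}(\mathcal{O}_E^\times)]$ and the kernel and cokernel of the induced map on class groups, so that no spurious factor of $2$ survives; for cyclotomic CM fields this is guaranteed by the known structure of their unit groups, and the surjectivity of $\det$ at the ramified prime $\mathfrak{p}$ is the secondary technical point.
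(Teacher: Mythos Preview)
Your approach is the same as the paper's: both defer to the adelic argument of \cite[Proposition~2.9]{bc22} (strong approximation for $SU$ in the indefinite case, direct computation in rank~$1$, then reduction to the norm-one torus via $\det$), and both note that the only delicate point is the unique ramified prime of $E/K$ when $m$ is (twice) a prime power. The paper's own proof is in fact terser than yours: it simply says to rerun \cite[Proposition~2.9]{bc22} verbatim, observing via \Cref{lem ramif} that $E/K$ has at most one ramified prime, and that the single modification needed is to replace the element $(-1\cdot\mathcal{E}_1^{\mathfrak{p}})$ appearing in that proof by $(-\zeta\cdot\mathcal{E}_1^{\mathfrak{p}})$. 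This is precisely the ``secondary technical point'' you flagged at the end --- for $m=p$ odd the global unit $-1$ suffices to witness surjectivity onto the relevant local quotient, but for general $m$ (in particular $2$-power $m$, where $-1\in\mathcal{F}_1(E_\mathfrak{p})$) one must use the root of unity $-\zeta$ instead.

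Two small corrections. First, your invocation of \Cref{propo: decomp block val} is misplaced: that proposition concerns the specific $p$-elementary $\Phi_{p^k}$-lattices arising from the transfer construction, whereas the present statement is about an \emph{arbitrary} hermitian $\mathcal{O}_E$-lattice, which need not be locally unimodular away from $\mathfrak{p}$. What you actually need (and what holds) is that $\det(U(L_\mathfrak{q}))=\mathcal{O}_{E_\mathfrak{q}}^1$ at every good prime regardless of the local Jordan type; this follows from scaling a single Jordan constituent by a norm-one unit (or, on a hyperbolic plane, from $(x,y)\mapsto(ax,\iota(a)^{-1}y)$). Second, you implicitly assume throughout that $m$ is a prime power (``the ramified prime $\mathfrak{p}$ above $p$''); for genuinely composite $m$ there is no ramified prime at all and the argument is strictly easier, while for $m=2p^k$ there is again exactly one. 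Neither point affects the validity of your outline.
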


\begin{proof}
    The case where $\text{rank}_{\mathcal{O}_E}(L) = 1$ has already been proven in more generality in \cite[Proposition 2.9]{bc22}. 
    The indefinite case has been proven for $m = p$ is an odd prime number in the same proposition. 
    To generalize to any $m\geq 3$, one can follow the proof of the aforementioned proposition and remark that $E/K$ has either zero or one ramified prime ideal, depending on whether $m$ is composite or (twice) a prime power respectively (\Cref{lem basic cyclo}).
    In the latter case, one needs to replace $(-1 \cdot \mathcal{E}_1^\mathfrak{p})$ by $(-\zeta\cdot \mathcal{E}_1^{\mathfrak{p}})$ in the proof of \cite[Proposition 2.9]{bc22} to conclude.\qedhere
\end{proof}

According to \cite[Tables \S3]{was97}, the relative class number in the cyclotomic case $E := \mathbb{Q}(\zeta_m)$ is 1 for all $3\leq m\leq 66$ with $\varphi(m)\leq 22$, except for $m = 23, 46$ where $h^{-}(E) = 3$. 
For our classification of nonsymplectic Hodge monodromies of IHS manifolds, the hermitian lattices we are interested in correspond to the hermitian structure on a $\Phi_m$-lattice which itself is the coinvariant sublattice of the aforementioned nonsymplectic monodromies.
In particular, such a $\mathbb{Z}$-lattice is indefinite or of rank $\varphi(m)$, implying that the associated hermitian structure is itself indefinite or of rank 1.
Let us prove the following results: we split the cases between even unimodular $\Phi_1\Phi_m$-lattices and even unimodular $\Phi_1\Phi_2\Phi_m$-lattices.

\begin{theo}\label{class 2 div}
    Let $M$ be an even indefinite unimodular lattice, let $g\in O(M)$ be an isometry of finite order $m\geq 3$ and let $E := \mathbb{Q}(\zeta_m)$. 
    Suppose that the minimal polynomial of $g$ divides $\Phi_1\Phi_m$ and that $M_g$ is indefinite or of rank $\varphi(m)$. Then
    \begin{enumerate}
        \item the isometry class of the invariant sublattice $M^g$; and
        \item the isometry class of the hermitian structure of $(M_g, g_m)$
    \end{enumerate}
    form a complete set of invariants for the isomorphism class of $(M, g)$. 
    In particular, the number of isomorphism classes in the type of $(M, g)$ is given by $c\cdot h^{-}(E)$ where $c$ is the number of isometry classes in the genus of $M^g$.
\end{theo}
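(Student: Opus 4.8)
The plan is to realise $(M,g)$ as an equivariant primitive extension and then reduce the classification to a gluing problem governed by discriminant forms, exactly as in the odd prime order case of \cite{bc22}. First I would invoke \Cref{propo: case 1 minpoly}: since the minimal polynomial of $g$ divides $\Phi_1\Phi_m$, the invariant and coinvariant sublattices $M^g=M_1$ and $M_g=M_m$ are orthogonal primitive sublattices with $M^g\oplus M_g\subseteq M$ of finite index, they are unimodular when $m$ is composite and $p$-elementary when $m=p^k$, and crucially the induced isometries $D_{g_1}$ and $D_{g_m}$ on the discriminant groups are trivial. Because $M$ is unimodular, the quotient $M/(M^g\oplus M_g)$ is the graph of a glue map $\gamma\colon D_{M^g}\to D_{M_g}$ which is an anti-isometry of the \emph{full} discriminant groups, and since $D_{g_m}$ is trivial the equivariant gluing condition \eqref{eq:egc} is vacuous: \emph{every} such glue map is $(\mathrm{id}_{M^g},g_m)$-equivariant. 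Thus $(M,g)$ is recovered from the triple consisting of $(M^g,\mathrm{id}_{M^g})$, of $(M_g,g_m)$, and of the chosen $\gamma$. By the trace construction of \Cref{subsec trace}, the isomorphism class of $(M_g,g_m)$ as a lattice with isometry is the same datum as the isometry class of its hermitian structure $(L,h)$ over $E=\mathbb{Q}(\zeta_m)$, which identifies invariant (2); invariant (1) is tautological.

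Next I would apply \Cref{rem: existence equiv gluing} (Nikulin's \cite[Corollary 1.5.2]{nik79b}) to parametrise the isomorphism classes of $(M,g)$ with prescribed pieces $(M^g,\mathrm{id})$ and $(M_g,g_m)$. Fixing a reference glue map $\gamma_0$ to identify the set of glue maps with $O(D_{M_g})$, two glue maps yield isomorphic extensions precisely when they differ by the natural left--right action of the images of $O(M_g,g_m)$ and of $O(M^g)$ in $O(D_{M_g})$ (the latter transported through $\gamma_0$). Hence the isomorphism classes of $(M,g)$ with fixed type and fixed pieces are in bijection with the double cosets
\[
\overline{O(M_g,g_m)}\,\backslash\, O(D_{M_g})\,/\,\overline{O(M^g)}.
\]
To prove that (1) and (2) form a complete set of invariants it therefore suffices to show that this double coset space is a singleton for every admissible pair. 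For the existence (surjectivity onto pairs of invariants), I would note that any $N$ in the genus of $M^g$ and any $(L,f)$ whose hermitian structure lies in the relevant genus satisfy $q_N\cong -q_L$ (the discriminant form is a genus invariant, and unimodularity of $M$ forces $q_{M^g}\cong -q_{M_g}$), so a glue map exists and produces an $(M,g)$ realising the pair.

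The heart of the argument, and the step I expect to be the main obstacle, is the surjectivity statement that collapses each double coset, namely that the image of $O(M_g,g_m)$ in $O(D_{M_g})$ is the full orthogonal group $O(D_{M_g})$; since $D_{g_m}$ is trivial one has $O(D_{M_g},D_{g_m})=O(D_{M_g})$, so no loss occurs in passing to the centraliser. Identifying $O(M_g,g_m)$ with the isometry group of the hermitian lattice $(L,h)$, this is a statement about unitary groups over the CM field $E$, and it is here that the hypothesis ``$M_g$ indefinite or of rank $\varphi(m)$'' enters, guaranteeing that $(L,h)$ is indefinite or of rank one: the required surjectivity then follows from strong approximation, the same mechanism that underlies \Cref{propo class gen herm}. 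I would carry this out by generalising \cite[Proposition 2.9]{bc22} from the prime to the prime-power and composite cases, using that $E/K$ has at most one ramified prime (\Cref{lem basic cyclo}) exactly as in the proof of \Cref{propo class gen herm}. Granting this, every double coset is a single class, so (1) and (2) are complete invariants, and the count follows: there are $c$ isometry classes of $M^g$ in its genus and, by \Cref{propo class gen herm}, exactly $h^{-}(E)$ isometry classes of hermitian structures in the genus of $(M_g,g_m)$, whence $c\times h^{-}(E)$ isomorphism classes in the type of $(M,g)$.
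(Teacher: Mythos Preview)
Your approach is essentially the same as the paper's: reduce to an equivariant gluing of $(M^g,\mathrm{id})$ and $(M_g,g_m)$, observe that by \Cref{propo: case 1 minpoly} the discriminant action $D_{g_m}$ is trivial, and then kill the gluing ambiguity by showing that $O(M_g,g_m)\to O(D_{M_g})$ is surjective. The paper's argument is phrased as constructing an obstruction element $\kappa\in O(D_{M_h})$ and lifting it, which is exactly your double coset picture.

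There is, however, a real confusion in how you propose to establish the key surjectivity. You write that it ``follows from strong approximation, the same mechanism that underlies \Cref{propo class gen herm}'' and that you would ``carry this out by generalising \cite[Proposition 2.9]{bc22}''. But \cite[Proposition 2.9]{bc22} (and its generalisation \Cref{propo class gen herm}) is about the number of isometry classes in a genus of hermitian lattices; it does not by itself yield surjectivity of the discriminant representation $U(L,h)\to O(D_L)$. These are two different applications of strong approximation for unitary groups, and the second requires additional local analysis. The paper invokes \Cref{strong approx th} for this step, which is proved in the appendix via the hermitian Miranda--Morrison theory of \cite{bh23}: one has to control the cokernel of $U(L)\to U(L_\mathfrak{p})/U^\#(L_\mathfrak{p})$ at the unique bad prime, and this comes down to showing that the local quotient $\mathcal{F}(E_\mathfrak{p})/\mathcal{F}(E)\cdot\mathcal{F}^\#(L_\mathfrak{p})$ is trivial---a computation with norm-one units in cyclotomic completions that is genuinely separate from the class number argument. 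In \cite{bc22} the corresponding input is Proposition~2.20, not Proposition~2.9. So your plan is correct but you have pointed to the wrong lemma for the hardest step; once you substitute \Cref{strong approx th} there, the proof goes through exactly as you outline.
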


\begin{proof}
    The proof is similar to the proof of \cite[Theorem 1.2]{bc22}.
    Let us assume that we are given two isometries $g,h\in O(M)$ such that $M^{g}\simeq M^{h}$, and $(M_g, g_m)$ and $(M_h, h_m)$ are isomorphic. 
    Now, $g$ and $h$ are conjugate in $O(M)$, if and only if $(M, g)$ and $(M, h)$ are isomorphic, if and only if the equivariant primitive extensions
    \begin{equation}\label{eq proof eq glue}(M^g, \text{id})\oplus (M_g, g_m)\leq (M,g)\quad \text{ and }\quad (M^h, \text{id})\oplus (M_h, h_m)\leq (M,h)\end{equation}
    are isomorphic.
    If we denote $\gamma_g:\; D_{M^g}\to D_{M_g}$ and $\gamma_h:\; D_{M^h}\to D_{M_h}$ the respective equivariant glue maps, \cite[Corollary 1.5.2]{nik79b} tells us that the primitive extensions in \Cref{eq proof eq glue} are isomorphic if and only if there exist an isometry $\psi_1:\; M^g\to M^h$ and an isomorphism $\psi_2:\; (M_g, g_m)\to (M_h, h_m)$ such that $\gamma_h\circ\overline{\psi_1} = \overline{\psi_2}\circ \gamma_g$. Let us fix an isometry $\psi_1:\; M^g\to M^h$ and an isomorphism $\psi_2:\; (M_g, g_m)\to (M_h, h_m)$, and  let $\kappa := \gamma_h\circ\overline{\psi_1}\circ\gamma_g^{-1}\circ\overline{\psi_2}^{-1}\in O(D_{M_h})$.
    If $m$ is not a prime power, then we know from \Cref{propo: case 1 minpoly} that $M_h$ is unimodular, meaning that $\kappa = \text{id}_{D_{M_h}}$ and $g,h$ are conjugate in $O(M)$.
    Otherwise, if $m = p^k$ is a prime power, the same proposition tells us that $M_h$ is an even $p$-elementary  $\Phi_{p^k}$-lattice with $D_h$ is trivial. 
    Therefore, by \Cref{strong approx th}, there exists $\psi\in O(M_h, h_m)$ such that $\kappa = D_\psi$. 
    In particular, up to replacing $\psi_2$ by $\psi\circ\psi_2:\; (M_g, g_m)\to (M_h, h_m)$, we have that the primitive extensions in \Cref{eq proof eq glue} are isomorphic, and hence $g,h\in O(M)$ are conjugate. 
    The last assertion of the statement follows from \Cref{propo class gen herm}.
\end{proof}

\begin{theo}\label{class 3 div}
    Let $M$ be an even indefinite unimodular lattice, let $g\in O(M)$ be an isometry of even order $m = 2p^k$ where $p$ is a prime number and $k\geq 1$, and let $E = \mathbb{Q}(\zeta_m)$.
    Suppose that the minimal polynomial of $g$ is $\Phi_1\Phi_2\Phi_m$, that the kernel sublattice $M^{-g}\simeq \langle 2p\rangle$, and that $M^{\Phi_m(g)}$ is indefinite or of rank $\varphi(m)$. Then
    \begin{enumerate}
        \item the isometry class of the invariant sublattice $M^g$; and
        \item the isometry class of the hermitian structure of $(M^{\Phi_m(g)}, g_m)$
    \end{enumerate}
    form a complete set of invariants for the isomorphism class of $(M, g)$.
    In particular, the number of isomorphism classes in the type of $(M, g)$ is given by $c\cdot h^{-}(E)$ where $c$ is the number of isometry classes in the genus of $M^g$.
\end{theo}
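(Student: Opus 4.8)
The plan is to mirror the proof of \Cref{class 2 div}, but to first split off the order-two part of $g$ before invoking the gluing formalism. Throughout, write $L_\bullet := M^{\Phi_m(\bullet)}$ for the $\Phi_m$-kernel sublattice and $N_\bullet := (L_\bullet)^\perp_M = M^{\bullet^2-1}$ for its orthogonal complement, on which $\bullet$ restricts to an involution with $(+1)$-eigenlattice $M^\bullet$ and $(-1)$-eigenlattice $M^{-\bullet}\simeq \langle 2p\rangle$. Since the invariants (1)--(2) are manifestly preserved under isomorphisms of lattices with isometry, it remains to prove completeness: given $g, h\in O(M)$ satisfying the hypotheses, with $M^g\simeq M^h$ and with the hermitian structures of $(L_g, g_m)$ and $(L_h, h_m)$ isometric, I want to conclude that $g$ and $h$ are conjugate in $O(M)$.

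First I would show that the involution parts $(N_g, g|_{N_g})$ and $(N_h, h|_{N_h})$ are isomorphic as lattices with isometry. Each is an equivariant primitive extension of $(M^\bullet, \mathrm{id})\oplus(\langle 2p\rangle, -\mathrm{id})$, and by \Cref{rem: existence equiv gluing} the equivariance conditions on the gluing isometries are vacuous, since $\bullet$ restricts to $\pm\mathrm{id}$ on each summand. The glue structure is moreover rigid: the analysis preceding \Cref{gen stat 3 odd} for odd $p$ (resp. \Cref{claim even form} for $p=2$) determines the glue domains in $D_{M^\bullet}$ and in $D_{\langle 2p\rangle}$ canonically as order-two subgroups, between which there is a unique, necessarily anti-isometric, isomorphism. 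Hence, fixing an isometry $M^g\to M^h$ and $\pm\mathrm{id}$ on $\langle 2p\rangle$, the glue square automatically commutes, yielding an isomorphism $\Psi_N\colon (N_g, g|_{N_g})\to (N_h, h|_{N_h})$.

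Next, by the transfer construction the isometry of hermitian structures yields an isomorphism $\Psi_L\colon (L_g, g_m)\to (L_h, h_m)$ of lattices with isometry, and I would apply \Cref{rem: existence equiv gluing} to the equivariant primitive extensions $(N_\bullet, \bullet|_{N_\bullet})\oplus (L_\bullet, \bullet_m)\subseteq (M, \bullet)$. Since $N_\bullet$ is the orthogonal complement of the primitive sublattice $L_\bullet$ and the two glue to the unimodular lattice $M$, this gluing is total, so $D_{N_\bullet}\simeq D_{L_\bullet}$ and the only obstruction to $(M,g)\simeq(M,h)$ is the discriminant isometry $\kappa := \gamma_{M_h}\circ\overline{\Psi_N}\circ\gamma_{M_g}^{-1}\circ\overline{\Psi_L}^{-1}$, which lies in the centralizer $O(D_{L_h}, D_{h_m})$ by equivariance of the glue maps. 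To kill it I would invoke strong approximation for the unitary group of the hermitian structure of $(L_h, h_m)$ (\Cref{strong approx th}): the natural map $O(L_h, h_m)\to O(D_{L_h}, D_{h_m})$ is surjective, so $\kappa = D_\psi$ for some $\psi\in O(L_h, h_m)$ (for odd $p$ one passes through \Cref{rem: do not do twice prime} and the identification $\mathbb{Q}(\zeta_{2p^k})=\mathbb{Q}(\zeta_{p^k})$). Replacing $\Psi_L$ by $\psi\circ\Psi_L$ makes the glue square commute, giving $(M,g)\simeq (M,h)$.

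Finally, the counting statement follows: by the above the isomorphism class of $(M,g)$ is determined by the isometry class of $M^g$, one of $c$ classes in its genus, together with the isometry class of the hermitian structure of $(L_g, g_m)$, which ranges over the $h^-(E)$ classes in its genus by \Cref{propo class gen herm}, since $L_g$ is indefinite or of rank $\varphi(m)$ and hence its hermitian structure is indefinite or of rank $1$. This yields $c\times h^-(E)$ isomorphism classes in the type. The main obstacle I anticipate is the surjectivity in the previous paragraph when $p=2$, where $D_{h_m}$ acts nontrivially on $D_{L_h}$, so that one must realise the centralizer $O(D_{L_h}, D_{h_m})$ rather than the full orthogonal group; confirming that strong approximation produces an isometry commuting with $h_m$, equivalently a unitary automorphism of the hermitian lattice, is the delicate point, alongside verifying the rigidity of the involution glue in that same case.
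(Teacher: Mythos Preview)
Your proposal is correct and follows essentially the same route as the paper's proof: first establish that the involution sublattice $(M^{g^2-1}, g|_{M^{g^2-1}})$ is determined up to isomorphism by $M^g$ alone (via the rigidity of the order-$2$ glue between $M^g$ and $M^{-g}\simeq\langle 2p\rangle$), then glue equivariantly with the $\Phi_m$-part and kill the discriminant obstruction $\kappa\in O(D_{L_h}, D_{h_m})$ via \Cref{strong approx th}. Your worry at the end is unfounded: \Cref{strong approx th} is stated precisely for $D_f$ of order at most $2$ and produces elements of the centralizer $O(L,f)$, i.e.\ unitary automorphisms of the hermitian structure, so the $p=2$ case goes through without further work; likewise your treatment of odd $p$ via \Cref{rem: do not do twice prime} is equivalent to the paper's observation that $-D_{g_m}$ is trivial and the centralizers of $g_m$ and $-g_m$ coincide.
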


\begin{proof}
This time, by \Cref{propo: case 2 minpoly}, we have that $M^g$ and $M^{-g}$ glue along abelian groups of order 2, which therefore have no nontrivial isomorphism.
Hence, for two isometries $g,h\in O(M)$ such that $M^g\simeq M^h$ and $M^{-g}\simeq M^{-h}\simeq \langle 2p\rangle$, we obtain that $(M^{g^2-1}, g_{\mid M^{g^2-1}})$ and $(M^{h^2-1}, h_{\mid M^{h^2-1}})$ are actually isomorphic too. 
If $p$ is odd, we have that $-D_{g_m} $ is the identity, the centralizers $O(M^{\Phi_m(g)}, g_m)$ and $O(M^{\Phi_m(g)}, -g_m)$ coincide, and $O(D_{M^{\Phi_m(g)}}, D_{g_m}) = O(D_{M^{\Phi_m(g)}})$. 
Otherwise, if $p = 2$, we know that $(M^{\Phi_m(g)}, g_m)$ is an even $2$-elementary $\Phi_{2^{k}}$-lattice with $D_{g_m}$ of order at most 2. 
Hence, in both cases, by \Cref{strong approx th}, we know that $O(M^{\Phi_m(g)}, g_m)\to O(D_{M^{\Phi_m(g)}}, D_{g_m})$ is surjective, and we conclude by applying similar arguments as in the proof of \Cref{class 2 div}.
Note that we have already seen that in such a situation, the genus of $M^{\Phi_m(g)}$ together with $M^{-g}\simeq  \langle 2p\rangle$ uniquely determines the genus of $M^g$, hence the last statement holds also.
\end{proof}

These theorems give us an algorithm to compute representatives of isomorphism classes of such isometries. 
Up to fixing the signatures, one starts by determining the possible genera for the hermitian structure of $(M^{\Phi_m(g)}, g_m)$ using  \Cref{existence unimod,theo: prime order,existence even power}. 
For each such genus, one constructs the trace lattice associated to each representative of an isometry class in this genus. 
Each of the previous genera actually determines the genus $G$ of the candidates for $M^g$. 
We then conclude using \Cref{class 2 div,class 3 div} that one has to enumerate $G$ and compute the corresponding equivariant primitive extensions. 
In particular, if $h^{-}(\mathbb{Q}(\zeta_m)) = 1$ and $G$ consists of a unique isometry class, one obtains a unique pair $(M, g)$ for a given set of signatures of $(M^{\Phi_m(g)}, g_m)$ (see \Cref{rem perm}).

\subsection{From isometries of even unimodular \texorpdfstring{$\Z$}{Z}-lattices to monodromies}
Let $\T$ be a known deformation type of IHS manifolds, and let $\Lambda, M, V, v$ and $K$ as given in \Cref{tab:extdata}. 
We recall that there is a group homomorphism $\gamma:\;\text{Mon}^2(\Lambda)\to S(M, V, v),\; h\mapsto \hat{\chi}(h)\oplus h$ whose image is $G := \ker(\vartheta\cdot\chi_V:\; S(M, V, v)\to \{\pm1\})$. 

\begin{rem}\label{rem signatures extension}
    Eventually, we will be interested in the case where $h\in \textnormal{Mon}^2(\Lambda)$ is effective and nonsymplectic. In particular $\Lambda_h$ has signatures $(2, \ast)$. Since we assume $m>2$, we have that $\Lambda_h$ is also the $\Phi_m$-kernel sublattice of $(M, \gamma(h))$. Moreover, $\Lambda_h$ is either indefinite or $m=3,4,6$ and $\text{rank}(\Lambda_h) = \varphi(m)=2$.
\end{rem}

For a given group $H$, and two elements $g,h\in H$, we denote by $\phantom{}^hg := hgh^{-1}$ the conjugation of $g$ by $h$, and by $\phantom{}^Hg$ denotes the conjugacy class of $g$ in $H$. 
We let $\textnormal{cl}(H)$ the set of conjugacy classes in $H$ (see \cite[\S 4.2]{bc22}). 

\begin{theo}\label{th: corr1}
    Let $\psi\colon \textnormal{cl}\left(G\right)\to \textnormal{cl}(S(M))$ be the natural map. 
    Let $g\in G$ be of order $m\geq 3$ where $m_g(X) = \Phi_1(X)\Phi_m(X)\in\mathbb{Q}[X]$, the coinvariant sublattice $M_g$ is of signatures $(2, \ast)$ and $V\subseteq M^g$. Then, the map
    \begin{align*}
         \phi:\;\psi^{-1}\left(\phantom{}^{S(M)}g\right) \ &\to\ S(M^g)\backslash \left\{ W\subseteq M^g \; primitive: \; W\simeq V\right\}\\
         \phantom{}^{G}h\ &\mapsto\ S(M^g)fV \; where \; g ={}^fh,\, f\in S(M) 
    \end{align*}
 is a bijection
\end{theo}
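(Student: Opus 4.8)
The plan is to follow the proof of \cite[Theorem 4.7]{bc22}, checking that the only special features of the prime case it uses are that $g$ has minimal polynomial $\Phi_1\Phi_m$ — which forces $V \subseteq M^g$ and makes $(M_g, g_m)$ a $\Phi_m$-lattice with trivial discriminant action by \Cref{propo: case 1 minpoly} — together with the surjectivity of $O(M_g, g_m) \to O(D_{M_g}, D_{g_m})$ supplied by \Cref{strong approx th}. Throughout, write $C := S(M, g)$ for the centralizer of $g$ in $S(M)$; since $g$ preserves both $M^g$ and $M_g$, every $c \in C$ restricts to isometries of both, giving a restriction map $C \to S(M^g)$.

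First I would check that $\phi$ is well defined. For ${}^G h \in \psi^{-1}({}^{S(M)}g)$ and $f \in S(M)$ with $g = {}^f h$, the element $h = f^{-1} g f$ lies in $G$ and is $S(M)$-conjugate to $g$; in particular it has minimal polynomial $\Phi_1\Phi_m$, so $h$ acts trivially on $V$ (by \Cref{rem: K compl} a nontrivial discriminant action would force a $\Phi_2$ factor coming from $h_V$ acting on $V$), whence $V \subseteq M^h$. Applying $f$ gives $fV \subseteq f(M^h) = M^g$, with $fV \simeq V$ primitive, so $fV$ represents a class in the target. Independence of the choices of $f$ and of the representative $h$ then follows from two facts: every $k \in G \leq S(M, V, v)$ preserves $V$ setwise and fixes $v$, and every $c \in C$ restricts to an element of $S(M^g)$. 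Indeed, if $g = {}^{f_1} h_1 = {}^{f_2} h_2$ with $h_2 = {}^k h_1$, $k \in G$, then $c := f_2 k f_1^{-1}$ centralizes $g$, so $c \in C$, and $f_2 V = c f_1 k^{-1} V = (c|_{M^g}) f_1 V$ because $k^{-1}V = V$; hence $S(M^g) f_2 V = S(M^g) f_1 V$.

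The heart of the argument is bijectivity, for which I would first record the extension lemma: the restriction $C \to S(M^g)$ is surjective. This is where \Cref{strong approx th} enters — given $\sigma \in S(M^g)$, its discriminant action transports through the gluing $D_{M^g} \cong D_{M_g}(-1)$ (valid since $M$ is unimodular) to an isometry of $D_{M_g}$ commuting with $D_{g_m}$, which by \Cref{strong approx th} is realised by some $\tau \in O(M_g, g_m)$; then $\sigma \oplus \tau$ glues to an element $c \in C$ restricting to $\sigma$. For \emph{injectivity}, if $S(M^g) f_1 V = S(M^g) f_2 V$, I would pick $\sigma \in S(M^g)$ with $f_2 V = \sigma f_1 V$, lift it to $c \in C$, and set $k := f_2^{-1} c f_1$; then $kV = V$ and ${}^k h_1 = h_2$, and after correcting $k$ so that it fixes $v$ and lies in $\ker(\vartheta\cdot\chi_V)$ one gets $h_1 \sim_G h_2$. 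For \emph{surjectivity}, given a primitive $W \subseteq M^g$ with $W \simeq V$, I would use \Cref{rem: existence equiv gluing} together with Nikulin's extension theorem to produce $\psi \in S(M)$ with $\psi V = W$; then $h := \psi^{-1} g \psi$ preserves $V$ and fixes $v$ and satisfies $\phi({}^G h) = S(M^g) W$, and it remains to adjust $\psi$ so that $h \in G$.

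The main obstacle, in both injectivity and surjectivity, is precisely this final adjustment ensuring that the constructed isometry lands in $G = \ker(\vartheta\cdot\chi_V)$ and fixes $v$, rather than merely preserving $V$: the characters $\vartheta$ (real spinor norm) and $\chi_V$ are \emph{not} class functions on $S(M)$, so conjugating $g$ by an element of $S(M)\setminus S(M,V,v)$ can change their values. As in \cite{bc22}, the remedy is to exploit the freedom in the choice of $\sigma$ (resp. $\psi$) within its orbit, together with the size of $G \cong \text{Mon}^2(\Lambda)$ under $\gamma$ — using reflections supported on $V$ and elements of $C$ lying in the kernel of the restriction to $M^g$ — to flip $\vartheta\cdot\chi_V$ and to move $k(v)$ back to $v$ without altering the relevant $S(M^g)$-orbit. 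Verifying that these corrections are always available, using \Cref{lem: character for K} to track $\chi_V$ through the position of $K$, is the only place where the signature hypothesis that $M_g$ has signature $(2,\ast)$ and the deformation-type data of \Cref{tab:extdata} are needed, and it is the step I expect to require the most care.
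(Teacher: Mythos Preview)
Your approach is correct and matches the paper's own treatment (which simply defers to \cite[Theorem 4.7]{bc22} and to the detailed proof of \Cref{th: corr2}), but two minor confusions should be cleaned up. First, $\vartheta$ \emph{is} a class function---it is a homomorphism to $\{\pm1\}$---and in the surjectivity step no adjustment of $\psi$ is needed: since $\psi V=W\subseteq M^g$ one has $h|_V=\psi^{-1}(g|_W)\psi=\mathrm{id}_V$, so $h\in S(M,V,v)$ with $\chi_V(h)=1$, while $\vartheta(h)=\vartheta(g)=1$; hence $h\in G$ automatically. Second, the injectivity correction does not rely on the signature $(2,\ast)$ hypothesis: as in the proof of \Cref{th: corr2} one composes $k$ with the element $\hat\chi(-\mathrm{id}_\Lambda)\oplus(-\mathrm{id}_\Lambda)$, which commutes with every element of $G$, fixes $v$, and satisfies $\vartheta\cdot\chi_V=-1$, to flip the character when necessary. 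The signature hypothesis is only there so that $M_g$ is indefinite (or of hermitian rank one), which is what makes \Cref{strong approx th} applicable.
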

\begin{proof}
    If $m$ is odd, then the proof is analogous to the proof of \cite[Theorem 4.7]{bc22}. By adapting the arguments carefully, one can show that the latter proof extends to even orders --- we refer to the proofs of \cite[Theorem 4.7]{bc22} and \Cref{th: corr2} for further details.\qedhere
\end{proof}

\begin{rem}
    For $g\in G$ as in the statement of \Cref{th: corr1}, since $V\subseteq M^g$, we have that $\chi_V(g) = 1$ (\Cref{lem: character for K}). 
    In particular, the fact that $g\in G$ is equivalent to $\vartheta(g) = 1$. 
    In the statement and the proof of \cite[Theorem 4.7]{bc22}, the authors dropped the condition for $g$ to be in $G$ since odd order isometries have positive real spinor norm.
\end{rem}

Note that the previous theorem is actually constructive: $g$ has a well defined restriction to $W^\perp_{M}$ for all $W\subseteq M^g$ isometric to $V$.
The corresponding sublattice with isometry is isomorphic to $(\Lambda, h)$ where $\gamma(h)$ and $g$ are $S(M)$-conjugate in $G$.\medskip

Now let $h\in \text{Mon}^2(\Lambda)$ be nonstable of even order $m\geq 3$ and such that $m_h = \Phi_1\Phi_m$. 
The minimal polynomial of $g:=\gamma(h)$ is $\Phi_1\Phi_2\Phi_m$ and according to \Cref{rem: K compl}, we have a succession of primitive sublattices $K= M^{-g}\leq V\leq M^{g^2-1}$. We can adapt the statement of \Cref{th: corr1} to this case.
\begin{theo}\label{th: corr2}
    Let $\psi\colon \textnormal{cl}\left(G\right)\to \textnormal{cl}(S(M))$ be the natural map. 
    Let $g\in G$ be of even order $m\geq 3$ where $m_g(X) = \Phi_1(X)\Phi_2(X)\Phi_m(X)\in\mathbb{Q}[X]$, the kernel sublattice $M^{\Phi_m(g)}$ is of signatures $(2, \ast)$ and $K = M^{-g}\leq V\leq M^{g^2-1}$.
    Then, the map
    \begin{align*}
        \phi \colon \psi^{-1}\left(^{S(M)}g\right) \ &\to \ S(M^{g^2-1}, M^{-g})\backslash \left\{ W\;:\; M^{-g}\leq W\leq M^{g^2-1} \; primitive, \; W\simeq V\right\}\\
         \phantom{}^{G}h\ &\mapsto\  S(M^{g^2-1}, M^{-g})fV \; where \; g ={}^fh,\, f\in S(M) 
    \end{align*}
 is a bijection.
\end{theo}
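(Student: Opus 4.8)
My plan is to mirror the proof of \Cref{th: corr1} (equivalently \cite[Theorem 4.7]{bc22}), the one genuinely new feature being that the distinguished sublattice $V$ is no longer contained in $M^g$ but is split between $M^g$ and the $(-1)$-sublattice $M^{-g}=K$; this is exactly why the target is an orbit set of copies of $V$ sitting inside $M^{g^2-1}$ and containing $M^{-g}$, under the joint stabiliser $S(M^{g^2-1},M^{-g})$ rather than under $S(M^g)$. Before anything else I would record the structural fact that makes the codomain sensible: every primitive $W$ with $M^{-g}\subseteq W\subseteq M^{g^2-1}$ and $W\simeq V$ is automatically $g$-stable. Indeed, by \Cref{propo: case 2 minpoly} the lattice $M^{g^2-1}$ is, up to glue, the orthogonal sum $M^g\oplus M^{-g}$, so that $(M^{-g})^\perp_{M^{g^2-1}}=M^g$; since $M^{-g}=K\subseteq W$, the complement $(M^{-g})^\perp_W$ lies in $M^g$ and is fixed by $g$, while $g$ preserves $K$. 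Hence $g$ preserves $W$ and $g_{\mid W}$ is conjugate to $h_V$.

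I would then check that $\phi$ is well defined. For $h\in G$ with $g={}^{f}h$ one has $h=\gamma(\tilde h)$ for a monodromy $\tilde h$ with nontrivial discriminant action, so $V$ is $h$-stable with $h_{\mid V}=h_V$ and $M^{-h}=K$ (\Cref{rem: K compl}); thus $V\subseteq M^{h^2-1}$ and $fV$ is a primitive copy of $V$ inside $M^{g^2-1}$ containing $fM^{-h}=M^{-g}$. Independence of the choice of $f$ follows because if $g={}^{f_1}h={}^{f_2}h$ then $c:=f_2^{-1}f_1$ centralises $h$, hence preserves $M^{h^2-1}$ and $M^{-h}$, and $t:=f_2cf_2^{-1}$ centralises $g$, restricts to an element of $S(M^{g^2-1},M^{-g})$, and satisfies $t(f_2V)=f_1V$. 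Independence of the representative of ${}^{G}h$ follows because any $k\in G$ preserves $V$: replacing $h$ by ${}^{k}h$ one may take $fk^{-1}$ as conjugator, and $fk^{-1}V=fV$.

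For injectivity I would suppose $\phi({}^{G}h_1)=\phi({}^{G}h_2)$, i.e. $sf_1V=f_2V$ for some $s\in S(M^{g^2-1},M^{-g})$, where $g={}^{f_i}h_i$. Using \Cref{strong approx th} to extend $s$ across the equivariant glue between $M^{g^2-1}$ and $M^{\Phi_m(g)}$, I would lift $s$ to $\tilde s\in S(M)$ commuting with $g$, acting as $s$ on $M^{g^2-1}$ and centralising $g_m$ on $M^{\Phi_m(g)}$. Then $u:=f_2^{-1}\tilde sf_1$ satisfies $uV=V$ and $uh_1u^{-1}=h_2$; once $u$ is shown to lie in $G$, this gives ${}^{G}h_1={}^{G}h_2$.

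For surjectivity, given such a $W$ I would set $\Lambda':=(W)^\perp_M$, a $g$-stable sublattice on which $g$ restricts with minimal polynomial $\Phi_1\Phi_m$, and choose $f\in O(M)$ realising an isomorphism of lattices with isometry $(V,h_V)\xrightarrow{\sim}(W,g_{\mid W})$ (the copies of $V$ in the unimodular $M$ forming a single orbit, with complements isometric to $\Lambda$); then $h:=f^{-1}gf$ stabilises $V$, acts as $h_V$ on it, restricts on $\Lambda$ to a monodromy of minimal polynomial $\Phi_1\Phi_m$, and $\phi({}^{G}h)=S(M^{g^2-1},M^{-g})W$. The crux, common to these last two steps, is to force the isometries built ($u$, resp.\ $f$) into $G=\ker(\vartheta\cdot\chi_V)$: unlike the odd-order situation of \Cref{th: corr1}, here $V$ meets both $M^g$ and $M^{-g}$, so $\chi_V(g)=-1$ (\Cref{lem: character for K}) and neither $\chi_V$ nor the real spinor norm $\vartheta$ trivialises automatically. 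I expect the main effort to be showing that the freedom to modify $\tilde s$ (resp.\ $f$) within the centraliser of $g$, combined with \Cref{strong approx th} for the kernel sublattices, suffices to achieve $\vartheta\cdot\chi_V=1$; once this spinor-norm bookkeeping is settled, the argument closes exactly as for \Cref{th: corr1}.
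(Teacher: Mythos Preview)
Your outline matches the paper's proof, which likewise proceeds by well-definedness, surjectivity, and injectivity, closely following \cite[Theorem 4.7]{bc22} with the joint stabiliser $S(M^{g^2-1},M^{-g})$ replacing $S(M^g)$. The step you flag as the ``main effort'' is in fact short: for surjectivity, $h\in G$ is automatic because conjugate isometries share real spinor norm and both $g$ and $h$ restrict to $h_V$ on $V$ (so $\chi_V(h)=\chi_V(g)=-1$); for injectivity, if $(\vartheta\cdot\chi_V)(u)\neq 1$ one simply composes $u$ with the central element $\hat\chi(-\mathrm{id}_\Lambda)\oplus(-\mathrm{id}_\Lambda)$, the only other subtlety being to first adjust $t$ within $S(M^{g^2-1},M^{-g},V)$ so that $tf_1v=+f_2v$ (not $-f_2v$), ensuring $u\in S(M,V,v)$.
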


\begin{proof}
    We follow the same steps as in the proof of \cite[Theorem 4.7]{bc22}. Let us denote $\Gamma := S(M^{g^2-1}, M^{-g})$.

    \begin{enumerate}
        \item \underline{We start by proving that $\phi$ is well defined.} 
    Let $h\in G$ and $f\in S(M)$ be such that $g = fhf^{-1}$. Since $g$ and $h$ are conjugate, their kernel sublattices are isometric via $f$ and their spinor norms are equal. The former implies that $M^{-h} = f^{-1}(M^{-g})$ has rank 1, and the latter implies that $\chi_V(h) = \chi_V(g)$ by the assumption $g,h\in G$. Since $K= M^{-g}$, \Cref{lem: character for K} tells us that $\chi_V(h) = \chi_V(g)=-1$, meaning that $M^{-g} = K\leq M^{-h}$ too. Therefore, since $\textnormal{rank}(M^{-g}) = \textnormal{rank}(M^{-h})$, we get that $M^{-h} = M^{-g} (= K)$. Now, we also have that $gfv = fhv = fv$ and $gfV = fhV = fV$ since $h\in G\leq S(M, V, v)$. In particular, $fv\in M^g$ and we have already seen that $fM^{-g}= M^{-g}$. Hence, using the fact that $V$ is an overlattice of $\mathbb{Z}v\oplus M^{-g}$ in $M^{g^2-1}$, we obtain $M^{-g}\leq fV\leq M^{g^2-1}$.
    
    Now let $h_1,h_2,s\in G$ be such that $h_2 =\phantom{}^sh_1$. 
    Let moreover $f_1,f_2\in S(M)$ be such that $g = \phantom{}^{f_1}h_1 = \phantom{}^{f_2}h_2$.
    We need to show that $\Gamma f_1V = \Gamma f_2V$.
    For that, let $t := f_2sf_1^{-1}\in S(M)$: by straightforward computations, we have that $gt=tg$ and in particular, $t$ preserves the kernel sublattices of $g$.
    According to \cite[Lemma 2.21]{bc22}, since $O(M_m, g_m) = SO(M_m, g_m)$ and $t_{\mid M_m}$ commutes with $g_m$, we have that $\text{det}(t) = \text{det}(t_{\mid M_m})\text{det}(t_{\mid M^{g^2-1}})=\text{det}(t_{\mid M^{g^2-1}})$ and therefore $t_{\mid M^{g^2-1}}\in \Gamma$.
    Moreover, $tf_1V = f_2sV = f_2V$ since $s$ preserves $V$. 
    Hence, $\Gamma f_1V = \Gamma f_2V$, and $\phi$ is well defined.

    \item \underline{We now want to prove the surjectivity of $\phi$.} Let $M^{-g}\leq W\leq M^{g^2-1}$ be a succession of primitive sublattices and suppose that $W\simeq V$. By \cite[Proposition 1.6.1]{nik79b} and \Cref{tab:extdata}, the primitive embedding of $K = M^{-g}$ into $V$ is unique up to isometry. Hence, one can extend the identity on $M^{-g}$ to an isometry $\overline{f}\colon V \xrightarrow{\simeq}W$. Similarly to \cite[Theorem 4.7]{bc22}, one can actually show that $\overline{f}$ extends to an isometry $f\in S(M)$ such that $W = fV$ and $f_{\mid M^{-g}} = \id_{ M^{-g}}$. Let $k\in M$ be such that $M^{-g} = \mathbb{Z}k$. Note that $f$ maps $v\in V\cap M^g$ to a generator $w$ of $(M^{-g})^\perp_W\leq M^g$. 
    If we let $h := f^{-1}gf$, we have that 
    \(hk = f^{-1}gfk = f^{-1}gk = -f^{-1}k = -k\) because $\chi_V(g) = -1$ and $f$ is the identity on $M^{-g}$. Moreover, $hv = v$. Hence $h$ restricts to $h_V\in O(V)$, meaning that $h\in S(M, V, v)$.
    Moreover, $(\vartheta\cdot \chi_V)(h) = (\vartheta\cdot \chi_V)(g) = 1$. 
    Therefore $h\in G$, and it satisfies $\phi(\phantom{}^Gh) = \Gamma fV = \Gamma W$.

    \item \underline{Finally, we prove the injectivity of $\phi$.}
    In order to do so, we let $h_1, h_2\in G$ and $f_1, f_2\in S(M)$ be such that $g = \phantom{}^{f_1}h_1 = \phantom{}^{f_2}h_2$, and we suppose that there exists $t\in \Gamma=S(M^{g^2-1}, M^{-g})$ such that $tf_1V = f_2V$. 
    We aim to show that $h_2\in\phantom{}^Gh_1$.

    Recall from the proof of well-definedness of $\phi$ that $g,h_1, h_2\in G$ with $g = \phantom{}^{f_1}h_1 = \phantom{}^{f_2}h_2$ implies that $f_1M^{-g} = M^{-g} = f_2M^{-g}$. In particular, since $tf_1V = f_2V$ and $tf_1M^{-g} = f_2M^{-g}$, we have that $tf_1v = \pm f_2v$.
    By similar argument as in the proof of \cite[Theorem 4.7]{bc22}, we can actually assume that $tf_1v = f_2v$, up to composing $t$ with an appropriate element of the joint stabilizer $S(M^{g^2-1}, M^{-g}, V)$. 
    According to \Cref{propo: case 2 minpoly}, we have that $M_m$ is unimodular, or $p$-elementary for some prime number $p$, and $D_{g_m}$ has order at most $2$. Hence, according to \Cref{rem signatures extension}, \Cref{strong approx th} and \cite[Lemma 2.21]{bc22}, the map $SO(M_m, g_m) = O(M_m, g_m) \to O(D_{M_m}, D_{g_m})$ is surjective.
    Since $M$ is unimodular, $M^{g^2-1}$ and $M_m$ glue along their respective discriminant groups, and for any given glue map $\eta:\;D_{M^{g^2-1}}\to D_{M_m}$ there exists $t'\in SO(M_m, g_m)$ such that $\eta\circ  D_t\circ \eta^{-1} = D_{t'}\in O(D_{M_m})$.
    Hence, by \Cref{eq:egc}, $t\oplus t'\in S(M^{g^2-1}\oplus M_m)$ extends to an isometry $\widetilde{t}\in S(M)$ commuting with $g$. 
    Therefore $u := f_2^{-1}\circ \widetilde{t}\circ f_1\in S(M, V, v)$ and $h_2 = \phantom{}^uh_1$. 
    Finally, we conclude by remarking that if $(\vartheta\cdot \chi_V)(u)\neq 1$, we can always compose $u$ by $\widehat{\chi}(-\text{id}_\Lambda)\oplus (-\text{id}_\Lambda)$ to ensure that $u\in G$, where $\widehat{\chi}(-\text{id}_\Lambda) = \text{id}_V$ for $\T = \textnormal{OG6}$ and $h_V$ otherwise. \qedhere
    \end{enumerate}
\end{proof}

For the rest of this section, let $g$ satisfy the assumptions of \Cref{th: corr2}: we would like to describe a constructive way to obtain representatives for the classes in $\psi^{-1}(\phantom{}^{S(M)}g)$ using $\phi$.

Let us first suppose that $\T = \textnormal{K3}^{[n]}, \textnormal{Kum}_n$: according to \Cref{tab:extdata}, we have that $V$ has rank 1. Thus, there exists a succession of primitive sublattices
\[M^{-g}\leq W\leq M^{g^2-1}\]
with $W\simeq V$ if and only if $W = M^{-g}$. In particular, the following holds.

\begin{coro}\label{rem after corr 2}
    Suppose that $\T = \textnormal{K3}^{[n]}, \textnormal{Kum}_n$, and let $m\geq 3$ even. The set of $\textnormal{Mon}^2(\Lambda)$-conjugacy classes of nonstable isometries $h\in \textnormal{Mon}^2(\Lambda)$ with minimal polynomial $\Phi_1\Phi_m$ such that $\Lambda_h$ has signatures $(2, \ast)$ is in bijection with the set of $S(M)$-conjugacy classes of isometries $g\in G$ of minimal polynomial $\Phi_1\Phi_2\Phi_m$ such that $M^{\Phi_m(g)}$ has signatures $(2, \ast)$ and $M^{-g}\simeq V$. For any such isometry $g\in G$ corresponds the restriction $h$ of $g$ to $(M^{-g})^\perp_M\simeq  \Lambda$.
\end{coro}

\begin{proof}
    For $\T = \textnormal{K3}^{[n]}, \textnormal{Kum}_n$, \Cref{tab:extdata} tells us that $v = 0$ and in particular $K= V$. Hence, for each $g\in G$ satisfying the assumptions of \Cref{th: corr2}, we have that $V \simeq M^{-g}$ and the codomain of $\phi$ has cardinality 1. 
\end{proof}

Now suppose that $\T = \textnormal{OG6}$. In this case, $M^{-g} \simeq \langle 4\rangle$ (\Cref{tab:extdata}). If we let $k\in M$ be such that $M^{-g} = \mathbb{Z}k$, we know that $V = \mathbb{Z}v+\mathbb{Z}\frac{v+k}{2}$ is a primitive sublattice of $M^{g^2-1}$. In fact, if $h\in\textnormal{Mon}^2(\Lambda)$ is so that $g = \gamma(h)$, by construction and the fact that $\Lambda_h = M^{\Phi_m(g)}$, we know that $M^{g^2-1}$ is a primitive extension of $\Lambda^h\oplus V$. But now, $\frac{v+k}{2}\notin M^g\oplus M^{-g}$ and $M^{g^2-1}/(M^g\oplus M^{-g})$ has order 2. In particular, since $D_{M^{-g}}\cong \Z/4\Z$ as abelian groups, we infer that
\begin{equation}\label{eq glue map 1}
    M^{g^2-1} = (M^{g}\oplus M^{-g}) + \mathbb{Z}\frac{v+k}{2}.
\end{equation}
Note that $v/2\in (M^g)^\vee$ and $v^2=4$, so we have that the divisibility $d$ of $v$ in $M^g$ is either 2 or 4.

Similarly, suppose that $\T = \textnormal{OG10}$. This time, $M^{-g} \simeq \langle 6\rangle$ (\Cref{tab:extdata}) and the proof of \Cref{gen stat 3 odd} tells us that $M^{g^2-1}/(M^g\oplus M^{-g})$ has order 2. Similarly as before, we deduce that
\begin{equation}\label{eq glue map 2}
    M^{g^2-1} = (M^{g}\oplus M^{-g}) + \mathbb{Z}\frac{v+k}{2}.
\end{equation}
This time, we observe that $v^2 = \text{div}_{M^g}(v) = 2$.

\begin{lem}\label{eq: og6}
    Let $L$ be an even lattice and let $v\in L$ be a primitive vector. We denote $I := v^\perp_{L}$, and we let $d$ be the divisibility of $v$ in $L$. Then, the index of $I\oplus \mathbb{Z}v$ in $L$ is $v^2/d$.
\end{lem}

\begin{proof}
    We have a succession of inclusions
    \[\Z v\oplus I\leq L\leq L^\vee\leq (\Z v)^\vee\oplus I^\vee.\]
    Let us denote by $\pi\colon L\to (\Z v)^\vee$ the first projection. We know that $[L\colon \Z v\oplus I]$ is equal to the order $h$ of the glue domains for the primitive extension $\Z v\oplus I$. Such finite abelian groups are isomorphic to $\pi(L)/\Z v = (\Z (v/h))/\Z v$. Moreover, we observe
    \[  d\Z = v.L = v.\pi(L) = v.(\Z (v/h)) = (v^2/h)\Z.\]
    Hence $h = v^2/d$.
\end{proof}

\begin{rem}\label{eq: og10}
    \hfill
\begin{enumerate}
    \item For $\T=\textnormal{OG6}$ we know, according to the discussion prior to \Cref{claim even form}, that $D_{M^g} = D_4\oplus D_2$ where, as abelian groups, $D_4\cong \Z/4\Z$ and $D_2\cong (\Z/2\Z)^{\oplus n}$ for some $n\geq 0$. 
    \begin{enumerate}
        \item If $v$ has divisibility 4 in $M^g$, \Cref{eq: og6} tells us that $M^g = v^\perp \oplus \mathbb{Z}v$ and $v^\perp$ is 2-elementary. Indeed, without loss of generality, we may assume that $v/4+M^g$ generates $D_4$, in which case $D_{v^\perp}\simeq D_2$;
        \item If $v$ has divisibility 2 in $M^g$, we know that $2M^g\leq v^\perp\oplus \Z v$ and, $\Z v$ and $v^\perp$ glue along elementary abelian 2-groups. Since $D_{\Z v}\cong \Z/4$ as abelian groups (\Cref{tab:extdata}), we know that $\Z v$ and $v^\perp$ glue along order 2 subgroups of their respective discriminant groups.
    \end{enumerate}
    \item For $\T=\textnormal{OG10}$ we know, according to \Cref{gen stat 3 odd}, that $D_{M^g} \cong \mathbb{Z}/2\mathbb{Z}$ as abelian groups. Since $v^2=2$ and $\textnormal{div}_{M^g}(v) = 2$, \Cref{eq: og6} tells us that $M^g = v^\perp \oplus \mathbb{Z}v$, and in particular, $v^\perp$ is even unimodular.
\end{enumerate}
\end{rem}

It is hard in general to compute representatives for the cosets in the codomain of $\phi$ (\Cref{th: corr2}). In the next proposition, we would like to use the description given in \Cref{eq: og10} in order to find an alternative way to describe such cosets, and thus use $\phi$ in a more explicit way.

\begin{propo}\label{cosets 1}
    Suppose that $\T = \textnormal{OG6},\,\textnormal{OG10}$ and let $g\in G$ satisfy the assumptions of \Cref{th: corr2}. We denote by $d\geq 2$ the divisibility of $v$ in $M^g$. Then the sets of cosets 
    \[C_1 := O(M^{g^2-1}, M^{-g})\backslash \left\{ W\;:\; M^{-g}\leq W\leq M^{g^2-1} \; primitive, \; W\simeq V\right\}\]
    and
    \[
    \resizebox{\textwidth}{!}{
    $C_2:=O\left(M^g, \frac{v}{2}+M^g\right)\backslash\left\{\Z w\;:\; w\in M^g\; primitive,\;w^2 = v^2,\; \textnormal{div}(w, M^g) = d,\; w+2M^g = v+2M^g \right\}$
    }
    \]
    are in bijection.
\end{propo}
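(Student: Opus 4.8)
The plan is to exhibit an explicit, group-equivariant bijection at the level of sets and then descend to orbits. Write $k$ for a generator of the rank-one lattice $M^{-g}$, so $k^2 = 2p$ and, by the discussion preceding the statement, $M^{g^2-1} = (M^g\oplus M^{-g}) + \mathbb{Z}\tfrac{v+k}{2}$ with $[M^{g^2-1}:M^g\oplus M^{-g}]=2$. To a primitive sublattice $M^{-g}\subseteq W\subseteq M^{g^2-1}$ with $W\simeq V$, I would associate $\beta(W) := (M^{-g})^\perp_W = W\cap M^g$, a rank-one sublattice $\mathbb{Z}w$. First I would check that $\mathbb{Z}w$ is primitive in $M^g$: indeed $W\cap M^g$ is the intersection of two primitive sublattices of $M^{g^2-1}$, hence primitive, and it equals $(M^{-g})^\perp_W$ because $M^g = (M^{-g})^\perp_{M^{g^2-1}}$.

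Next I would prove that $\beta$ lands in the prescribed set, i.e. that $w^2 = v^2$, $\textnormal{div}(w,M^g)=d$ and $w+2M^g = v+2M^g$. The equality $w^2 = v^2$ follows because an isometry $W\to V$ carries the primitive vector $k$ to a primitive vector of square $2p$ in $V$ and $\mathbb{Z}w$ to the orthogonal complement of that vector; a direct inspection of $V=\langle2\rangle^{\oplus2}$ (resp. $A_2(-1)$) shows every primitive vector of square $2p$ has orthogonal complement generated by a vector of square $v^2$. The congruence $w\equiv v \pmod{2M^g}$ is read off from the glue vector: $\tfrac{w+k}{2}\in W\subseteq M^{g^2-1}$ forces $\tfrac{w-v}{2} = \tfrac{w+k}{2}-\tfrac{v+k}{2}\in M^{g^2-1}\cap(M^g\otimes\mathbb{Q}) = M^g$, while $\textnormal{div}(w,M^g)=d$ follows from \Cref{eq: og6} (resp. \Cref{eq: og10}) applied to $w$ in place of $v$. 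Conversely, given such a $\mathbb{Z}w$, I would reconstruct $W := (\mathbb{Z}k\oplus\mathbb{Z}w)+\mathbb{Z}\tfrac{w+k}{2}$: the condition $w+2M^g=v+2M^g$ guarantees $\tfrac{w+k}{2}\in M^{g^2-1}$, the assignment $k\mapsto k$, $w\mapsto v$ is an isometry $W\to V$ (the norms $w^2=v^2$ and $(\tfrac{w+k}{2})^2 = \tfrac{v^2+2p}{4} = (\tfrac{v+k}{2})^2$ match), and a short saturation argument using the $\mathbb{Z}/2$-gluing shows $W$ is primitive in $M^{g^2-1}$. These two assignments are mutually inverse, giving a bijection of the underlying sets.

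Finally I would upgrade this to a bijection of coset spaces. Since $\T=\textnormal{OG6},\textnormal{OG10}$ we have $S(N)=O(N)$, so there is no determinant constraint to track. Restriction to $M^g$ defines a homomorphism $r\colon S(M^{g^2-1},M^{-g})\to O(M^g, \tfrac{v}{2}+M^g)$: any $t$ preserving $M^{-g}$ preserves $M^g=(M^{-g})^\perp$, and since $t$ acts trivially on the order-two group $M^{g^2-1}/(M^g\oplus M^{-g})$ it fixes the glue class $\tfrac{v}{2}+M^g$. This $r$ is surjective, since an arbitrary $\sigma\in O(M^g,\tfrac v2+M^g)$ extends by $\textnormal{id}_{M^{-g}}$ to an isometry of $M^{g^2-1}$: as $\tfrac{\sigma(v)+k}{2} = \tfrac{\sigma(v)-v}{2}+\tfrac{v+k}{2}$ and $\sigma$ fixes $\tfrac v2+M^g$, the map $\sigma\oplus\textnormal{id}$ preserves the glue vector. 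Equivariance $\beta(t\cdot W)=r(t)\cdot\beta(W)$ is immediate from $\beta(tW)=t(W\cap M^g)$. A surjective homomorphism together with an equivariant bijection of sets descends to a bijection of orbit spaces, yielding $C_1\to C_2$ bijective.

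I expect the main obstacle to be the set-level well-definedness and reconstruction, namely identifying the three conditions on $\mathbb{Z}w$ as simultaneously necessary and sufficient. The delicate points are the saturation argument proving that $(\mathbb{Z}k\oplus\mathbb{Z}w)+\mathbb{Z}\tfrac{w+k}{2}$ is primitive in $M^{g^2-1}$ (this is where the congruence $w\equiv v\pmod{2M^g}$ is genuinely used, through $\tfrac{w+k}{2}\in M^{g^2-1}$), together with the bookkeeping of the divisibility $d\in\{2,4\}$ via the index computation of \Cref{eq: og6}. By contrast, the equivariance and the descent to cosets are routine once $S(N)=O(N)$ removes all spinor-norm and determinant subtleties.
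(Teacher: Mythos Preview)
Your approach is essentially the paper's: both send $W$ to $(M^{-g})^\perp_W=\mathbb{Z}w$, reconstruct $W$ from $w$ as the index-two overlattice $(\mathbb{Z}k\oplus\mathbb{Z}w)+\mathbb{Z}\tfrac{w+k}{2}$, and descend to orbits via the restriction $S(M^{g^2-1},M^{-g})\to O(M^g,\tfrac v2+M^g)$, whose surjectivity you verify just as the paper does. One small omission: you assert $\tfrac{w+k}{2}\in W$ without argument; you need the observation that $[W:\mathbb{Z}w\oplus\mathbb{Z}k]=2$ (compare discriminants, using $W\simeq V$ and $w^2=v^2$), after which primitivity of $\mathbb{Z}w$ and $\mathbb{Z}k$ in $W$ forces the nontrivial coset to be represented by $\tfrac{w+k}{2}$.

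The substantive gap is the divisibility step. Your claim that $\textnormal{div}(w,M^g)=d$ ``follows from \Cref{eq: og6} (resp.\ \Cref{eq: og10}) applied to $w$ in place of $v$'' does not go through: those results express the index $[M^g:(w)^\perp_{M^g}\oplus\mathbb{Z}w]$ as $4/\textnormal{div}(w,M^g)$, but they do not determine $\textnormal{div}(w,M^g)$. The congruence $w\equiv v\pmod{2M^g}$ only yields $w/2+M^g=v/2+M^g$ in $D_{M^g}$, hence $2\mid\textnormal{div}(w,M^g)$; it does not by itself force $4\mid\textnormal{div}(w,M^g)$ in the $d=4$ case. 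The paper handles this point differently: rather than reading invariants of $w$ off directly, it first produces (as in the surjectivity proof of \Cref{th: corr2}) an isometry $\overline{f}\colon V\to W$ acting trivially on $M^{-g}$, and then uses the structural decompositions of \Cref{eq: og6,eq: og10} to extend $\overline{f}$ to an isometry $f\in O(M^{g^2-1})$ carrying $\mathbb{Z}v$ to $\mathbb{Z}w$. Since such an $f$ preserves $M^g=(M^{-g})^\perp$, the equality $\textnormal{div}(w,M^g)=\textnormal{div}(v,M^g)=d$ is then automatic. Your direct inspection of $V$ for $w^2=v^2$ is a nice shortcut, but for the divisibility you need to supply an isometry of $M^{g^2-1}$ (or of $M^g$) taking $\mathbb{Z}v$ to $\mathbb{Z}w$, not merely invoke the index formula.
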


\begin{proof}
    Let $I := v^\perp_{M^g}$, let again $k\in M$ be such that $M^{-g} = \mathbb{Z}k$, and let us define
    \[\kappa\colon C_1\to C_2,\; O(M^{g^2-1}, M^{-g})\cdot W\mapsto O\left(M^g, \frac{v}{2}+M^g\right)\cdot (M^{-g})^\perp_W.\]
    Let us first remark the following: since the glue map of $M^g\oplus M^{-g}\leq M^{g^2-1}$ is given by 
    \[ v/2\oplus M^g\mapsto k/2+M^{-g}\]
    (\Cref{eq glue map 1,eq glue map 2}), we see that any isometry in $O(M^{g^2-1}, M^{-g})$ restricts to an isometry of $M^g$ preserving $v/2+M^g$ (\Cref{eq:egc}), and vice-versa, any isometry in $O(M^g, v/2+M^g)$ can be extended to an isometry in $O(M^{g^2-1}, M^{-g})$. More precisely, restriction to $M^g$ induces a surjective group homomorphism
    \begin{equation}\label{equation of pi} \pi\colon O(M^{g^2-1}, M^{-g})\xrightarrow{f \mapsto f_{\mid M^g}} O(M^g, v/2+M^g)
    \end{equation}
    whose kernel is $\langle\id_{M^g}\oplus (-\id_{M^{-g}})\rangle\cong O(M^{-g})$.
    \begin{enumerate}
        \item \underline{Let us prove that $\kappa$ is well-defined.} Let $M^{-g}\leq W\leq M^{g^2-1}$ be a succession of primitive sublattices with $W\simeq V$. We let moreover $w:= (M^{-g})^\perp_W\leq M^g$ and $J:= w^\perp_{M^g}$. Following the proof of surjectivity in \Cref{th: corr2}, one can find an isometry $\overline{f}\colon V\to W$ preserving $M^{-g}$. In particular, $\overline{f}$ restricts to an isometry $\widetilde{f}\colon \Z v\to \Z w$, and 
        \[v^2 = w^2.\]
        According to \Cref{eq: og10}, we know that either $I$ and $\Z v$ are in orthogonal direct sum in $M^g$, or they glue along order 2 subgroups of their respective discriminant groups, which therefore have no nontrivial automorphisms. Similarly for $J$ and $\Z w$. In particular, in both cases, we know that we can extend $\widetilde{f}$ to an isometry $\widehat{f}\in O(M^g)$ such that $\widehat{f}(v) = w$ and $\widehat{f}(I) = J$. Since $M^g$ and $M^{-g}$ also glue along subgroups of order 2 (see prior discussions), we have moreover that $f:=\widehat{f}\oplus \overline{f}_{\mid {M^{-g}}}$ defines an isometry of $M^{g^2-1}$ and it is an extension of $\overline{f}$ to $M^{g^2-1}$ satisfying that $f(I) = J$. As a consequence, since $f$ preserves $M^g$ and $f(v) = w$, we obtain that 
        \[\textnormal{div}(w, M^g) = \textnormal{div}(v, M^g).\]
        Now recall that $M^{g^2-1}/(M^g\oplus M^{-g})$ has order 2, generated by $\frac{v+k}{2} + (M^g\oplus M^{-g})$. Since $f$ preserves $M^g\oplus M^{-g}$, we know that the latter quotient is also generated by 
    \[f\left(\frac{v+k}{2} + (M^g\oplus M^{-g})\right) = \frac{w+k}{2} + (M^g\oplus M^{-g}).\]
    This implies in particular that $\frac{v-w}{2}\in (M^g\oplus M^{-g})\cap (M^g)^\vee = M^g$, and 
    \[v-w\in 2M^g.\] 
    Hence $\kappa$ is well-defined.
    \item \underline{We show now that $\kappa$ is surjective. }Let $w\in M^g$ be a primitive vector so that $w^2 = v^2$, $\textnormal{div}(w, M^g) = d$ and $w+2M^g = v+2M^g$. We define $W := \mathbb{Z}w+\mathbb{Z}\frac{w+k}{2}\leq (M^g\oplus M^{-g})^\vee$: it is a primitive extension of $\Z w\oplus M^{-g}$, and we observe that $W$ is a primitive sublattice of $M^{g^2-1} = (M^g\oplus M^{-g})+ \mathbb{Z}\frac{w+k}{2}$. Moreover, we have that $W\simeq V$. Hence, $\kappa$ is surjective.
    \end{enumerate}
    The injectivity of $\kappa$ follows from the surjectivity of $\pi$ (\Cref{equation of pi}).
\end{proof}

Following \Cref{cosets 1} we see how to make \Cref{th: corr2} constructive even in the OG6 and OG10 cases.
Indeed, the problem reduces to finding orbits of primitive vectors with given norm and divisibility which generate the glue domain of $M^g$ for the primitive extension $M^g\oplus M^{-g}\leq M^{g^2-1}$. 
For each such orbit of vectors $O\left(M^g, \frac{v}{2}+M^g\right)\cdot\mathbb{Z}w$, one reconstructs a representative for the corresponding monodromy conjugacy class by restricting $g$ to the orthogonal complement of  $\mathbb{Z}w\oplus M^{-g}$ in $M$.

\section{Some classification results}\label{sec: geom}
In this section, we use the lattice-theoretic side of the paper to prove \Cref{mainth1,mainth2}. We make some comments about the geometric aspect of the classification, and give some explicit examples.
One can combine the results from this section with \Cref{main thhhh} to conclude about the geometry  of certain moduli spaces of symmetric IHS manifolds.
Eventually, we aim to show to the reader how to use in practice the methods described in this paper and in the reference paper \cite{bc22}.

\subsection{About induced actions}\label{subsec induced descr}
For every known deformation type $\T$, one can construct an example of IHS manifold $X\sim \T$ by considering certain moduli space of stable sheaves on some projective $K$-trivial surface (see \cite[Theorems 1.2 and 1.3]{pr13} and the reference therein). 
Through these constructions, there exists a natural way of defining \emph{induced actions}. 
Let us denote $M_{24}:= U^{\oplus 4}\oplus E_8^{\oplus 2}$ and $M_8:= U^{\oplus4}$.

\begin{defin}[{{{\cite[Definition 4.1]{mw15}}}}]\label{defin induced act}
    Let $n\geq 2$, let $X\sim \textnormal{K3}^{[n]}$ and let $G\leq \textnormal{Aut}(X)$. The group $G$ is said to be an \textbf{induced group of automorphisms} if there exists a projective K3 surface $S$ with $G\hookrightarrow \textnormal{Aut}(S)$, a $G$-invariant Mukai vector $v\in H^\ast(S,\mathbb{Z})\simeq M_{24}$ and a $v$-generic stability condition $\tau$ such that $X$ is isomorphic  to the moduli space $M_\tau(v)$ of $\tau$-stable sheaves on $S$, and the action of $G$ induced on $M_\tau(v)$ coincide with that of $G$ on $X$. A similar definition hold for $X\sim \textnormal{Kum}_n$ and $S$ an abelian surface (where this time $H^\ast(S,\mathbb{Z})\simeq M_8$).
\end{defin}

Note that given a projective K3 or abelian surface $S$, one can endow $H^\ast(S, \mathbb{Z})$ with a weight-2 Hodge structure such that there is an integral Hodge isometry
\[ H^2(M_\tau(v), \mathbb{Z})\to v^\perp\subseteq H^\ast(S, \mathbb{Z})\]
    with the notation of \Cref{defin induced act}. This Hodge isometry is equivariant with respect to induced actions, and in particular, any induced action must restrict to the identity on the discriminant group of $H^2(M_\tau(v), \mathbb{Z})$ since the chosen Mukai vector $v$ is invariant in $H^\ast(S, \mathbb{Z})$ (see \cite[\S2]{mw15}). Moreover:

\begin{propo}[{{{\cite[Theorems 4.4 \& 4.5]{mw15},\cite[Proposition 4.8]{bc22}}}}]\label{propo induced}
Let $n\geq 2$, let $\T := \textnormal{K3}^{[n]}$ or $\textnormal{Kum}_n$, and let $g\in O(M_\T)$ saytisfy the assumptions of \Cref{th: corr1}. Then the monodromy classes in the fiber $\psi^{-1}(\phantom{}^{S(M_\T)}g)$ (see notation \Cref{th: corr1}) admit a geometric realization as actions of induced automorphisms (in the sense of \Cref{defin induced act}) if and only if $M_\T^g$ contains a copy of $U$ as a direct summand. 
\end{propo}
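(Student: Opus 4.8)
The plan is to deduce this as a direct application of the Mongardi--Wandel criterion \cite[Theorems 4.4 \& 4.5]{mw15}, exactly as in the odd prime order case \cite[Proposition 4.8]{bc22}, after checking that the hypotheses of \Cref{th: corr1} translate correctly into their geometric setting and, crucially, that the relevant invariant lattice in the Mukai lattice is precisely $M_\T^g$. Throughout I write $M := M_\T$ and fix $h\in\textnormal{Mon}^2(\Lambda_\T)$ with $\gamma(h) = g$, so that $g = \textnormal{id}_V\oplus h$ on $V\oplus\Lambda_\T\subseteq M$ (trivial discriminant action, \Cref{monodrom to unimod}). Since all monodromy classes in $\psi^{-1}(\phantom{}^{S(M)}g)$ share the same invariant lattice $M^g$ up to isometry (by \Cref{th: corr1} they are parametrized by $S(M^g)$-orbits of copies of $V$ inside a fixed $M^g$), being induced is a property of the whole fiber, so it suffices to argue for a single representative.

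First I would treat necessity. If the class of $h$ is realized by an induced group $G = \langle f\rangle$, there is a projective K3 (resp. abelian) surface $S$ with $H^\ast(S,\mathbb{Z})\simeq M$, a $G$-invariant Mukai vector $v$ spanning $V$, and a Hodge isometry $H^2(M_\tau(v),\mathbb{Z})\to v^\perp_M\simeq\Lambda_\T$ equivariant with respect to the induced action (\Cref{defin induced act} and the discussion following it). The automorphism acts trivially on $H^0(S,\mathbb{Z})\oplus H^4(S,\mathbb{Z})$, which is a copy of $U$ inside $M$; hence this $U$ lies in the invariant lattice $(H^\ast(S,\mathbb{Z}))^f$, which under the marking is $S(M)$-conjugate to $M^g$. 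As $U$ is unimodular it is an orthogonal direct summand, so $M^g$ contains $U$ as a direct summand.

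For sufficiency, suppose $M^g = U\oplus R$ and identify this $U$ with $H^0\oplus H^4$. Since $U$ is unimodular we have $M = U\oplus U^\perp_M$, and $U^\perp_M$ is even unimodular of signature $(3,19)$ for $\T = \textnormal{K3}^{[n]}$ (resp. $(3,3)$ for $\textnormal{Kum}_n$); by uniqueness of indefinite even unimodular lattices it is isometric to $U^{\oplus 3}\oplus E_8^{\oplus 2}$ (resp. $U^{\oplus 3}$), which is exactly the K3 (resp. abelian-surface) lattice. Setting $\bar g := g_{\mid U^\perp_M}$, we have $g = \textnormal{id}_U\oplus\bar g$. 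The hypothesis of \Cref{th: corr1} that $M_g$ has signature $(2,\ast)$ guarantees that the $\zeta_m$-eigenspace of $\bar g_\mathbb{C}$ is nondegenerate of the correct positivity, so that a generic period $\omega$ in it defines a surface $S$ with transcendental lattice the coinvariant $M_g$ and with $\bar g$ preserving a Kähler chamber. By the Torelli theorem for K3 (resp. abelian) surfaces, $\bar g$ (staying inside $SO$ in the $\textnormal{Kum}_n$ case, as dictated by the convention $S(N) = SO(N)$) is realized by an automorphism $f$ of $S$. Taking the $G$-invariant vector $v$ and a $v$-generic stability condition yields $M_\tau(v)\sim\T$, and the equivariant Hodge isometry recovers $g_{\mid\Lambda} = h$ as the induced monodromy, as required.

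The main point to be checked is that the argument of \cite[Proposition 4.8]{bc22}, written there for odd prime order, goes through verbatim for arbitrary $m\geq 3$ with minimal polynomial $\Phi_1\Phi_m$. The only input that genuinely depends on $m$ is the existence of a generic invariant period, i.e. the positivity of the relevant eigenspace, and this is supplied uniformly by the signature condition $(2,\ast)$ on $M_g$, just as in the prime case; the Mongardi--Wandel criterion itself is insensitive to the order of the group. I expect the delicate bookkeeping to lie in the abelian-surface case, where one must track $SO$ versus $O$ and the role of translations, exactly as in the treatment of the $\textnormal{Kum}_n$-type in \cite{mw15,bc22}, and in verifying effectivity (preservation of the ample cone and genericity ruling out obstructing classes).
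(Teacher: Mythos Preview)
Your proposal is correct and matches the paper's approach: the paper gives no proof of its own for this proposition, simply citing \cite[Theorems 4.4 \& 4.5]{mw15} and \cite[Proposition 4.8]{bc22}, and your sketch is precisely a faithful unpacking of those references, observing (as the paper implicitly does throughout) that the odd-prime-order hypothesis in \cite{bc22} plays no role once the signature condition $(2,\ast)$ on $M_g$ and the trivial discriminant action are in place. One minor notational point: in the paper's conventions for $\T = \textnormal{K3}^{[n]},\,\textnormal{Kum}_n$ one has $v = 0$ and $K = V$ is rank $1$ (see \Cref{tab:extdata}), so your ``Mukai vector $v$ spanning $V$'' is the Mukai vector of Mongardi--Wandel, not the paper's distinguished $v$; this causes no mathematical issue but is worth disambiguating.
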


\begin{rem}
    There is a generalization of this criterion where one considers a primitive embedding of a twisted copy $U(k)$ of $U$ in $M^g_\T$ instead, for some $k\geq 2$. These correspond to induced actions on moduli spaces of $k$-twisted sheaves (see \cite[\S3]{ckkm19}).
\end{rem}

\begin{rem}
    One can extend the previous definition and proposition for detecting induced actions on IHS manifolds of type $\textnormal{OG6}$ and $\textnormal{OG10}$ (see \cite[\S3.2]{gro22b} for the OG6 case and \cite[\S5]{mw15} for the OG10 case). In both cases, the induced groups of Hodge monodromies have again trivial actions on the discriminant groups of the associated BBF forms.
\end{rem}

Besides the previous notion of induced automorphisms, there are other ways of constructing geometric examples of birational automorphisms of IHS manifolds via given constructions (see for instance \cite[\S6]{bea83b}, \cite[\S4]{og06}, \cite[\S4]{ow13},  \cite[\S4.1]{cc19}, \cite[\S3]{ikkr19}, \cite[\S4]{gro22b} or \cite[\S3.1]{sac23}).
For most of these actions though, we do not have numerical criteria to decide whether certain lattice data correspond to such actions: this highlights the fact that it is hard in general to translate geometric information to transcendental (lattice theoretic) data, and vice versa. 
Moreover, for most of the cases the induced action on the discriminant groups of the associated BBF forms is trivial.
Nonetheless, there are known geometric examples of birational automorphisms which act nontrivially on the discriminant groups and for which it is not known whether they can be realized as induced in any meaningful way (see for instance \cite[\S4]{fer12}, \cite[Corollary 5.11]{mw15} or \cite[Theorems 1.1 and 1.2]{ccl22}).\bigskip

In the next parts, we give complete lattice-theoretic classifications related to the results that were established in this paper. Whenever it makes sense, we refer whether the actions described are possibly induced, as explained in this section.

\subsection{Trivial discriminant action}\label{triv disc sec}
Using \Cref{necsuf composite}, \Cref{necsuf prime power}, \Cref{propo class gen herm} and \Cref{class 2 div},  we can establish a list of genera for the invariant and coinvariant sublattices associated to representatives of conjugacy classes of isometries $g$ of any finite order $m\geq 3$, with minimal polynomial $\Phi_1\Phi_m$ and coinvariant sublattice of signatures $(2, \ast)$, on the even unimodular $\Z$-lattices $M_\T$ for each known deformation type $\T$ of IHS manifolds. Before describing the tables of results, we remark the following.

\begin{lem}\label{bad order all actions}
    Let $\T$ be one of the known deformation types. There are no IHS manifolds $X\sim\T$ admitting a nonsymplectic birational automorphism whose action on $H^2(X, \mathbb{Z})$ has order $m\in\{15, 20, 24, 30, 40, 48, 60\}$, and with minimal polynomial $\Phi_1\Phi_m$.
\end{lem}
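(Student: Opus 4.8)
The plan is to argue by contradiction and to reduce the whole statement to a single congruence obstruction on the rank of an even unimodular lattice. Fix a known deformation type $\T$ and one of the orders $m\in\{15,20,24,30,40,48,60\}$, and suppose $X\sim\T$ admits an algebraically trivial non symplectic automorphism $f$ with $h:=\rho_X(f)$ of order $m$. As recalled in the introduction, algebraic triviality and non symplecticity force the characteristic polynomial of $h$ to be $\Phi_1^\rho\Phi_m^k$ with $\rho=\operatorname{rank}_\mathbb{Z}\NS(X)$ and $k\geq 1$; thus $h$ has minimal polynomial $\Phi_1\Phi_m$ and the coinvariant lattice $T_X=H^2(X,\mathbb{Z})_h$ has rank $k\varphi(m)$. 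Since $X$ is projective and $f$ is non symplectic, the real and imaginary parts of $\sigma_X$ span a positive-definite $2$-plane in $T_X\otimes\mathbb{R}$, so $T_X$ has signature $(2,\ast)$.

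My first step would be to prove that $T_X$ is even unimodular for all these $m$. I would begin by checking that $D_h$ is trivial: for $m=15$ this is automatic, since by \Cref{lem:disc action} the operator $D_h$ lies in a group of order at most $2$ while also having order dividing $m=15$, whence $D_h=\operatorname{id}$; for the even orders $20,24,30,40,48,60$ I would appeal to \Cref{nontriv: all}, observing that none of the corresponding pairs $(\T,m)$ occurs in \Cref{tab:nontrivact}. Passing to the Mukai lattice $M:=M_\T$ as in \Cref{monodrom to unimod} and writing $g:=\gamma(h)=\operatorname{id}_V\oplus h$, I would then note that $(M,g)$ is a unimodular $\Phi_1\Phi_m^\ast$-lattice whose $\Phi_m$-coinvariant sublattice $M_g$ coincides with $T_X$. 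Because each $m$ in the list is composite, \Cref{propo: case 1 minpoly}(2) applies and gives that $M_g=T_X$ is even unimodular.

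The contradiction is then immediate. By \Cref{exist unimod even}, an even unimodular lattice of signature $(2,s_-)$ can exist only if $2\equiv s_-\pmod 8$, whence
\[
\operatorname{rank}_\mathbb{Z}(T_X)=2+s_-\equiv 4\pmod 8 .
\]
On the other hand $\operatorname{rank}_\mathbb{Z}(T_X)=k\varphi(m)$, and one computes $\varphi(15)=\varphi(20)=\varphi(24)=\varphi(30)=8$ and $\varphi(40)=\varphi(48)=\varphi(60)=16$, so $8\mid\varphi(m)$ and therefore $k\varphi(m)\equiv 0\pmod 8$ for every admissible $k$. This contradicts the preceding congruence, so no such $f$ exists.

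I do not expect any serious obstacle here: the heart of the argument is simply that signature $(2,\ast)$ pins the rank of an even unimodular lattice to $4\bmod 8$, whereas $8\mid\varphi(m)$ forces that rank to be $0\bmod 8$. The only point that needs a little care is the case division on the discriminant action, which is why $m=15$ (odd) is handled by \Cref{lem:disc action} and the even orders by \Cref{nontriv: all}; alternatively one could treat a hypothetical nontrivial discriminant action uniformly through \Cref{propo: case 2 minpoly}, which would again yield that $T_X$ is unimodular, since none of these $m$ is a prime power, twice an odd prime power, or a power of $2$.
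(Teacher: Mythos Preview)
Your proof is correct and follows essentially the same approach as the paper: both reduce to the observation that the $\Phi_m$-kernel sublattice is even unimodular of signature $(2,\ast)$, forcing rank $\equiv 4\pmod 8$, while $8\mid\varphi(m)$ forces rank $\equiv 0\pmod 8$. You are more explicit than the paper about why the coinvariant lattice is unimodular (handling the discriminant action via \Cref{lem:disc action} for $m=15$ and \Cref{nontriv: all} for the even orders, and noting the alternative through \Cref{propo: case 2 minpoly}), whereas the paper simply asserts unimodularity without spelling out this case distinction.
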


\begin{proof}
    Let $X\sim\T$ and let us suppose there exists $f\in \textnormal{Bir}(X)$ nonsymplectic such that $h := \rho_X(f)$ has order $m\in\{15, 20, 24, 30, 40, 48, 60\}$, and with minimal polynomial $\Phi_1\Phi_m$. Since $f$ is nonsymplectic, we know that $H^2(X, \Z)^{\Phi_m(h)}$ has signatures $(2, k)$ for some $k\geq 0$ so that $\varphi(m)$ divides $k+2$. Let $g:=\gamma(h)\in O(M_\T)$ be defined as in \Cref{lem: well defined extension}, where we replace $\Lambda_\T$ by $H^2(X, \Z)\simeq \Lambda_\T$. In particular, $(M_\T, g)$ is either a $\Phi_1\Phi_m^\ast$-lattice or a $\Phi_1\Phi_2\Phi_m^\ast$-lattice, and $M_\T^{\Phi_m(g)} = H^2(X, \Z)^{\Phi_m(h)}$. According to \Cref{propo: case 1 minpoly} (3) and \Cref{propo: case 2 minpoly} (3), since $m$ is neither a prime power nor twice a prime power, we have that $M_\T^{\Phi_m(g)}$ is even unimodular, of signatures $(2, k)$. Hence, \cite[Chapitre V, Section 1.5, \S 2, Th\'eor\`eme 2, Corollaire 1]{ser70} tells us in particular that $2-k$ is divisible by 8, and so is $2+k$ since $\varphi(m)\equiv 0\mod8$ by assumption. But this is absurd, since the two latter conditions would imply that 4 is divisible by 8. Hence such a birational automorphism $f\in \textnormal{Bir}(X)$ cannot exist.
\end{proof}

\begin{lem}\label{bad order stable actions}
    Let $\T$ be one of the known deformation types. There are no IHS manifolds $X\sim\T$ admitting a nonsymplectic birational automorphism whose action on $H^2(X, \mathbb{Z})$ is stable of order $m\in\{10, 26, 34, 38, 46, 50,54\}$, and with minimal polynomial $\Phi_1\Phi_m$.
\end{lem}

\begin{proof}
    We suppose existence, and we follow the same notation as in the proof of \Cref{bad order all actions}. This time, the isometry $g\in O(M_\T)$ has minimal polynomial $\Phi_1\Phi_m$ by the stability assumption, its coinvariant sublattice $M_\T^{\Phi_m(g)}$ is still unimodular, of signatures $(2,k)$ for some $k\geq 0$. Now, note that for any known deformation type $\T$, the Mukai lattice $M_{\T}$ has negative signature strictly less than 22 (\Cref{tab:extdata}). Therefore, if $m = 26, 34, 38, 46, 50$ or $54$ is twice an order prime power with $\varphi(m)\geq 12$, we would need by \Cref{existence unimod} (3) that the rank of the previously mentioned coinvariant sublattice is a nontrivial multiple of $2\varphi(m)\geq 24$, meaning that $k\geq 22$, giving rise to a contradiction. Similarly, if $m=10$, we would have that $2+k$ is divisible by $2\varphi(10) = 8$: by similar arguments as in the proof of \Cref{bad order all actions}, we conclude that this is absurd.
\end{proof}

\begin{rem}\label{rem bad orders}
    Following the arguments of the proof of \Cref{bad order all actions,bad order stable actions}, it follows that if $X\sim \text{Kum}_n$ for some $n\geq 2$ or $X\sim \text{OG6}$, then $X$ does not admit any nonsymplectic birational automorphism $f$ such that $\rho_X(f)$ has order $m \in \{8, 14,16, 18\}$ and minimal polynomial $\Phi_1\Phi_m$. Moreover, as a consequence of \Cref{theo: prime order}, there does not exist any IHS manifold $X$ of known deformation type admitting a nonsymplectic birational automorphism $f$ such that $\rho_X(f)$ is stable of order 32 and minimal polynomial $\Phi_1\Phi_{32}$. 
\end{rem}

\begin{theo}\label{mainth1}
    Let $X$ be a projective IHS manifold of known deformation type $\T$, and let $f\in\textnormal{Aut}(X)\setminus\ker\rho_X$ be purely nonsymplectic and algebraically trivial. Let $M := M_\T$ be the corresponding Mukai lattice. Suppose $h := \rho_X(f)\in \textnormal{Mon}^2(X)$ is stable of order $m\geq 3$ nonprime, and let $g := h\oplus \textnormal{id}_{\Lambda^\perp_M}\in O(M)$. Then the $O(M)$-conjugacy class of $\langle g\rangle$ is uniquely determined by the order of $g$, the genus of $M^g$ and the genus $M_g$.
\end{theo}

\begin{proof}
    We use \Cref{bad order all actions,bad order stable actions}, as well as \Cref{rem bad orders} to restrict the values of $m$ depending on $\T$. We then apply \Cref{necsuf composite} and \Cref{necsuf prime power} to the remaining orders to determine the genera of $M^g$ and $M_g$. For each known deformation type $\T\neq \text{K3}$ and each possible order $m$, we record the previous pairs of genera in \Cref{tab u4,tab u5,tab k3n,tab og10}. Together with \Cref{rem perm}, \Cref{class 2 div} and \Cref{propo class gen herm} we are able to conclude that given $m$, the genus of $M^g$ and the genus of $M_g$, there exists a unique $O(M)$-conjugacy class of cyclic subgroup $\langle g'\rangle\leq O(M)$ such that $g'$ has minimal polynomial $\Phi_1\Phi_m$, and $M^{g'}$ and $M_{g'}$ are in the same genera as $M^g$ and $M_g$ respectively.
\end{proof}

For the case of K3 surfaces, we do not display our result in a proper table since their algebraically trivial automorphisms are known (see for instance \cite{ast11} and the reference therein for the case of prime order, and \cite{kon92,tak12} for the other orders).

\subsection{Nontrivial discriminant action}
We recall that according to \Cref{nontriv: all}, there are only finitely many deformation types $\T$ for which there exists an algebraically trivial nonsymplectic automorphism whose action on cohomology has finite order $m$ and nontrivial discriminant action. The possible pairs $(\T, m)$ were given in \Cref{tab:nontrivact}. Furthermore, as for the case of trivial discriminant actions, we can already discard some pairs $(\T, m)$.

\begin{propo}\label{propo: discard nontriv act}
    Let $(\T, m)$ be one of
    \[(\textnormal{OG10}, 54),\; (\textnormal{K3}^{[4]}, 54),\; (\textnormal{K3}^{[12]}, 22),\;(\textnormal{K3}^{[18]}, 34),\;(\textnormal{K3}^{[20]}, 38),\;(\textnormal{Kum}_2, 6),\; (\textnormal{Kum}_4, 10),\;\textnormal{Kum}_6, 14).\]
    There exists no IHS manifold $X\sim\T$ with a nonsymplectic birational automorphism whose action on $H^2(X, \mathbb{Z})$ is nonstable of finite order $m$, and with minimal polynomial $\Phi_1\Phi_m$.\end{propo}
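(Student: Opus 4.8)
The plan is to derive a contradiction from the signature of the transcendental lattice. Each of the five pairs survives the coarse rank-and-elementarity bounds of \Cref{nontriv: all}, so the extra ingredient that must eliminate them is the requirement, built into \Cref{main thhhh}, that an \emph{effective} monodromy have coinvariant lattice of signature $(2,\ast)$. I would argue by contradiction: suppose $X\sim\T$ carries such an automorphism $f$, with $h:=\rho_X(f)$ of order $m$, minimal polynomial $\Phi_1\Phi_m$ and $D_h\neq\textnormal{id}$. Setting $g:=\gamma(h)$ as in \Cref{monodrom to unimod}, the isometry $g$ has minimal polynomial $\Phi_1\Phi_2\Phi_m$, and by \Cref{rem: K compl} together with \Cref{tab:extdata} its $(-1)$-kernel sublattice is $M^{-g}=K\simeq\langle 2p\rangle$, where $p=3$ for $(\textnormal{OG10},54)$ and $(\textnormal{K3}^{[4]},54)$, and $p=n-1$ for the three remaining $\textnormal{K3}^{[n]}$ entries. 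The $\Phi_m$-kernel sublattice $L:=M^{\Phi_m(g)}$ is exactly the coinvariant lattice $\Lambda_h=\textnormal{T}_X$, so the holomorphic $2$-form forces $L$ to have signature $(s_+,s_-)$ with $s_+=2$.

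I would then feed this into \Cref{lem: 3 poly odd p}: since $m=2p^k$ with $p$ odd and $K\simeq\langle 2p\rangle$, the lattice $L$ lies in the genus $\II_{(s_+,s_-)}p^{\left(\frac{-2}{p}\right)}$, its rank equals $\varphi(p^k)(n_0+1)$ for some even integer $n_0\geq 0$ (the evenness being the content of \Cref{propo: info hermitian structure}), and the signatures satisfy
\[ s_+-s_-\equiv 2\left(\tfrac{-2}{p}\right)-1-p \pmod 8. \]
The crucial numerical input is that the ambient rank is small: $\textnormal{rank}\,L\leq \textnormal{rank}\,\Lambda_\T-1\leq 23$, because $\Lambda^h=\textnormal{NS}(X)$ has positive rank. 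In every listed case one has $3\varphi(p^k)\geq 30>23$, so $n_0+1$ cannot reach $3$; as $n_0$ is even this forces $n_0=0$, hence $\textnormal{rank}\,L=\varphi(p^k)$ and $s_-=\varphi(p^k)-2$.

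Finally I would carry out the arithmetic check. One verifies $p\equiv 1,3\pmod 8$ in all five cases, so $\left(\frac{-2}{p}\right)=1$, and the displayed congruence with $s_+=2$, $s_-=\varphi(p^k)-2$ collapses to
\[ \varphi(p^k)\equiv 3+p \pmod 8. \]
Running through the distinct pairs $(p,k)\in\{(3,3),(11,1),(17,1),(19,1)\}$ — with $(3,3)$ covering both order-$54$ rows — gives $\varphi(p^k)\bmod 8\in\{2,2,0,2\}$, to be compared with $3+p\bmod 8\in\{6,6,4,6\}$; the two never agree, a contradiction in each instance. Thus no lattice $L$ with the required invariants exists, and therefore no such $h$ exists. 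The only genuinely delicate point is the passage from geometry to the signature constraint $s_+=2$; once that is secured, the remainder is a short case-by-case congruence verification, so I anticipate no real obstacle beyond bookkeeping the Legendre symbols, totients, and the rank bound correctly.
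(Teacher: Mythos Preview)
Your proposal is correct and follows essentially the same route as the paper's own proof: both arguments reduce to the signature congruence of \Cref{lem: 3 poly odd p}/\Cref{gen stat 3 odd} and check that $s_+=2$ is incompatible with it. The paper phrases the conclusion as ``$s_+\equiv 0,1\pmod 4$'' without spelling out the arithmetic, whereas you make explicit the rank bound forcing $n_0=0$, the Legendre-symbol computation, and the case-by-case check of $\varphi(p^k)\not\equiv 3+p\pmod 8$; this is the same argument, just unpacked.
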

\begin{proof}
    Let $(\T, m)$ be one of the pairs given in the statement. Let $X\sim \T$ and suppose that there exists $f\in \textnormal{Bir}(X)$ nonsymplectic such that $h:=\rho_X(f)$ is nonstable of finite order $m$ and minimal polynomial $\Phi_1\Phi_m$. First note the following: the restriction of $h$ to $\Lambda_\T^{\Phi_m(h)}$ has determinant 1 according to \cite[Lemma 2.21]{bc22}, meaning that $\det(h) = +1$. In that case, we know that $\det(h)\cdot D_h = -\id_{D_{\Lambda_\T}}$: thus $h\notin \textnormal{Mon}^2(\Lambda_\T)$ for $\T = \textnormal{Kum}_2, \textnormal{Kum}_4, \textnormal{Kum}_6$ (\Cref{tab:extdata}). Hence, for the three last pairs $(\T, m)$ of the statement, such a pair $(X, f)$ cannot exist.
    
    Now, as in the proof of \Cref{bad order all actions}, we let $g := \gamma(h)\in O(M_\T)$: the pair $(M_\T, g)$ is a $\Phi_1\Phi_2\Phi_m^\ast$-lattice, and the $\Z$-lattice $M_\T^{\Phi_m(g)} = H^2(X, \Z)^{\Phi_m(h)}$ has signatures $(2, k)$ for some $k\geq 1$ (because $f$ is nonsymplectic). Now according to \Cref{gen stat 3 odd}, since $m=2p^l$ is twice an odd prime power, we have that there exists $n_0\geq 0$ such that
    \begin{equation}\label{tmporary equation}
    4\equiv \varphi(p^l)(n_0+1)+2\left(\frac{-2}{p}\right)-1-p\mod 8.
    \end{equation}
    Since moreover $\varphi(m) = \varphi(p^l)$ divides $2+k\leq 22$ (\Cref{tab:extdata}), we see that $n_0 = 0$ except when $(\T, m) = (\textnormal{K3}^{[12]}, 22)$ where $n_0\in \{0,1\}$. In particular, it follows that in all cases, the righthand side of \Cref{tmporary equation} is $(0,2 \mod 8)$, which is absurd. Hence such a pair $(X, f)$ cannot exist.  
\end{proof}

\begin{proof}[Proof of \Cref{mainth2}]
    According to \Cref{main thhhh} and \Cref{rem perm}, it is enough to classify $\text{Mon}^2(\Lambda_\T)$-conjugacy classes of nonstable isometries $h\in\text{Mon}^2(\Lambda_\T)$ of finite order $m$ and minimal polynomial $\Phi_1\Phi_m$ such that $\Lambda_\T^{\Phi_m(h)}$ has signatures $(2, \ast)$. For this, we proceed as in the proof of \Cref{mainth1}. We use first \Cref{nontriv: all,propo: discard nontriv act} to restrict to the pairs $(\T, m)$ for which such an isometry $h$ can exist, and let us fix such a pair $(\T, m)$ for the sake of the proof. Let $h\in \text{Mon}^2(\Lambda_\T)$ be a nonstable isometry of order $m$, and minimal polynomial $\Phi_1\Phi_m$. We let moreover $g:= \gamma(h)\in \text{Mon}^2(\Lambda_\T)$. Using \Cref{gen stat 3 odd,gen stat 3 even}, we determine the possible genera for the kernel sublattices $M_\T^g$ and $M_\T^{\Phi_m(g)}$, based on the fact that the isometry class of $M_\T^{-g}$ is fixed by $\T$. We observe in particular that in all cases, $M_\T^g$ is unique in its genus, up to isometry (see \Cref{disc non triv} for a description of the genus of $M_\T^g$ in all cases, by the means of symbols). Moreover, the hermitian structure of $M_\T^{\Phi_m(g)}$ is indefinite or of rank 1 by the assumption on its the real signatures of $M_\T^{\Phi_m(g)}$. Therefore, together with \Cref{class 3 div} and \Cref{propo class gen herm}, we infer that there is exactly one $O(M)$-conjugacy class of finite order isometries $g'\in O(M)$ such that $M_\T^g$ and $M_\T^{g'}$ are in the same genus, and such that the hermitian structures of $(M_\T^{\Phi_m(g)}, g_m)$ and $(M_\T^{\Phi_m(g')}, g_m')$ are in the same genus, except when $m=46$ where there are 3 such classes. Using the fact that $O(M_\T^{\Phi_m(g)}, g_m) = SO(M_\T^{\Phi_m(g)}, g_m)$ \cite[Lemma 2.21]{bc22}, one can actually show in the proof of \Cref{class 3 div} that the $O(M)$-conjugacy class and the $SO(M)$-conjugacy class of $g$ are the same. We conclude by applying \Cref{th: corr2} together with \Cref{rem after corr 2} and \Cref{cosets 1}. The results are presented in \Cref{disc non triv}.
\end{proof}

\begin{notat}
    The $\Z$-lattices described in \Cref{disc non triv} are given in terms of a representative of their isometry class. We try as much as possible to choose a representative which is a direct sum of (rescaled) ADE root lattices and (rescaled) hyperbolic plane lattices $U$. However, we sometimes have to resort to other well known $\Z$-lattices which do not fit in the previous list. In particular, we fix
    \[H_5 := {\scriptstyle{\begin{pmatrix}
        2&1\\1&-2
    \end{pmatrix}}}, \quad K_7 := {\scriptstyle{\begin{pmatrix}
        -4&1\\1&-2
    \end{pmatrix}}}, \quad K_{23} := {\scriptstyle{\begin{pmatrix}
        -12&1\\1&-2
    \end{pmatrix}}}.\]
    Moreover, we denote $L_8^5$ and $L_8^{13}$ negative definite rank 8 $\mathbb{Z}$-lattices of determinant 5 and 13 respectively.
\end{notat}

\begin{rem}
    The square of the isometries representing the entries of \Cref{disc non triv} are known already, as well as the involutions they induced. However, it is more difficult to construct geometric examples realizing any of these cases.
\end{rem}

\subsection{Examples}\label{subsec examples}
There are examples of IHS manifolds which are equipped with particular nonsymplectic involutions with nontrivial discriminant actions. Such involutions can be used to produce examples of nonsymplectic automorphisms of higher even order and with nontrivial discriminant actions. For instance, we show in the two first following examples that we can realize at least two of the cases in \Cref{disc non triv} starting from large families of cubic fourfolds.

\begin{ex}[$(\textnormal{OG10}, 6)$ with $\Lambda^g = U$]\label{first example}
    In \cite[Theorem 3.8]{gal21}, the authors show the existence of a 10-dimensional family $\mathcal{C}$ of smooth cubic fourfolds $V$, all equipped with an automorphism $g\in\text{Aut}(V)$ of order 3. For any such pair $(V, g)$, the induced action of $g$ on $H^4(V, \mathbb{Z})$ only fixes the square $h^2$ of the hyperplane class (see \cite[Example 6.4]{bcs16} or \cite[Table 4]{bg24}). For a very general $V\in\mathcal{C}$, the primitive algebraic lattice $A(V)_{prim} := (h^2)^\perp\leq H^4(V,\mathbb{Z})\cap H^{2,2}(V)$ of $V$ is therefore trivial. Consequently,  any \emph{LSV tenfold} $X_V$ associated to a very general $V$ in $\mathcal{C}$, of deformation type OG10, has Picard rank 2 with $\textnormal{NS}(X_V)\simeq U$ (see \cite{lsv17} for a definition and construction of $X_V$). According to \cite[\S3.1]{sac23}, the automorphism $g$ induces a nonsymplectic birational automorphism $\tilde{g}\in\text{Bir}(X_V)$ of order 3, and in our context its action on cohomology fixes $\textnormal{NS}(X_V)$. Therefore $\tilde{g}$ is algebraically trivial.

    Now by construction $X_V$ comes equipped with a Lagrangian fibration $\pi:\;X_V\to (\mathbb{P}^5)^\vee$ extending the Donagi--Markman fibration on the locus of smooth hyperplane sections of $V$. There exists therefore a birational involution $\tau$ on $X_V$ given by $-1$ on the smooth fibers of $\pi$ (see \cite{sac23}). This involution is known to be nonsymplectic, it is nonstable and it commutes with all induced birational automorphisms. Moreover, the asssociated invariant sublattice $H^2(X_V,\mathbb{Z})^{\tau^{\ast}}\leq \textnormal{NS}(X_V)$ contains a copy of $U$ (see \cite[Lemma 3.5]{sac23}). Hence, in our particular example, the map $\tau$ is algebraically trivial, and moreover $\tau\circ\tilde{g}$ is an algebraically trivial purely nonsymplectic automorphism of order 6, with nontrivial discriminant action and associated invariant sublattice isometric to $U$.
\end{ex}

\begin{ex}[$(\textnormal{K3}^{[4]}, 6)$ with $\Lambda^g \simeq \langle2\rangle$]
    Let $\mathcal{C}$ be the 10-dimensional family of smooth cubic fourfolds from \Cref{first example}, 
    and let $V\in\mathcal{C}$ be very general and not containing a plane. We denote again by $g\in\textnormal{Aut}(V)$ the automorphism of order 3 considered in \Cref{first example}. The authors in \cite[\S4.1]{cc19} show that the IHS eightfold $Z_V\sim \textnormal{K3}^{[4]}$ associated to $V$, known as \emph{LLSvS eightfold} (see \cite{llsvs17} for a definition and construction of $Z_V$) has an induced nonsymplectic automorphism $\tilde{g}\in\textnormal{Aut}(Z_V)$ whose action on $H^2(Z_V, \mathbb{Z})$ only fixes the polarization $\delta$ of the projective manifold $Z_V$. Their argument actually shows that $\textnormal{NS}(Z_V)\simeq H^2(Z_V, \mathbb{Z})^{\tilde{g}^{\ast}}\simeq \langle2\rangle = \mathbb{Z}\delta$. Note that the polarization $\delta\in 2H^2(Z_V,\mathbb{Z})^\vee$ and in particular $H^2(Z_V, \mathbb{Z}) = \textnormal{NS}(Z_V)\oplus T_{Z_V}$ where $T_{Z_V}\simeq U^{\oplus2}\oplus A_2\oplus E_8^{\oplus2}$. Following \cite[\S 5.2]{lpz23}, the reflection $\tau$ with center $\delta$ is a nonsymplectic Hodge monodromy and it clearly commutes with the induced action of $\tilde{g}$ on $H^2(Z_V, \mathbb{Z})$. By its description, we also have that $D_\tau$ is nontrivial. Hence, the composition $\tau\circ \tilde{g}^\ast$ is a Hodge monodromy and it is realized as an algebraically trivial purely nonsymplectic automorphism of order 6 on $Z_V\sim \textnormal{K3}^{[4]}$ with associated invariant lattice $\langle2\rangle$. This also shows that the involution of \cite[Lemma 3.7]{llms18} whose cohomological action coincides with $\tau$ actually commutes with the induced automorphism $\tilde{g}$.
\end{ex}

\begin{ex}[$(\textnormal{OG6}, 8)$ with $\Lambda^g \simeq U\oplus \langle-2\rangle^{\oplus2}$]\label{ex: anna}
In \cite[Theorem 1.4]{gro22b}, Grossi proves the existence of an OG6-type IHS manifold $X$ equipped with a nonsymplectic involution $\tau$ such that $H^2(X, \mathbb{Z})^{\tau^{\ast}}\simeq U\oplus \langle-2\rangle^{\oplus2}$ and $H^2(X, \mathbb{Z})_{\tau^{\ast}}\simeq U\oplus U(2)$.
The author also shows, using some numerical criteria, that $X$ is a \emph{numerical moduli space} (see \cite[Definition 3.2]{gro22b}) and the involution $\tau$ can be geometrically realized as induced from an associated abelian surface.
Moreover, the author shows that $X$ admits an \emph{MRS birational model} (see \cite{mrs18,gro22b}), which means that $X$ is birational to the quotient of a hyperk\"ahler manifold $Y\sim \textnormal{K3}^{[3]}$ by a birational symplectic involution $i$, after contracting the 256 $\mathbb{P}^3$'s along which $i$ is nonregular. 

It turns out that on this birational model, the involution $\tau$ can be realized as induced from a nonsymplectic involution on $Y$. 
Since the numerical criteria from \cite[Theorem 1.3]{gro22b} are independent of the order and the action on the discriminant group, we obtain that up to deformation $\tau$ is the fourfold iterate of an algebraically trivial nonsymplectic automorphim of order 8, with the same invariant and coinvariant sublattices. 
Note that in \Cref{disc non triv} there exists a pair $(\textnormal{K3}^{[3]}, 8)$ with the same coinvariant sublattice $U\oplus U(2)$; however there is no reason for this to be linked with the previous automorphism of order 8 on $X$. 
In fact, since we know the action on cohomology for the birational symplectic involution on $Y\sim \textnormal{K3}^{[3]}$ from the MRS model, one can actually show that $Y$ cannot admit an algebraically trivial nonsymplectic automorphism of order 8 with such a lattice action.
\end{ex}

\begin{ex}[$(\textnormal{K3}^{[3]}, 4)$ with $\Lambda^g \simeq \langle4\rangle$]
    In \cite[Theorem 5.2]{bmw24}, the authors show that the IHS sixfold $X\sim \textnormal{K3}^{[3]}$ constructed by Debarre and Mongardi in \cite[Proposition 3.2]{dm22} admits a purely nonsymplectic automorphism $\tilde{\delta}$ of order 4. Such manifold $X$ is constructed as an explicit example of \emph{double EPW-cubes} (see \cite{ikkr19} for a definition and construction of $X$), and it is by construction equipped with a nonsymplectic involution $\iota$. It turns out that $\tilde{\delta}^2 = \iota$. Such automorphism $\tilde{\delta}$ is nonstable, and algebraically trivial with invariant sublattice being isometric to $\langle4\rangle$.
\end{ex}

\begin{rem}
    From \Cref{disc non triv}, we observe that the case 
    \[(\T, m, \Lambda^g) = (\textnormal{K3}^{[24]}, 46, \langle2\rangle)\]
    could also deserve some attention. In fact, in such a case, there is a canonical invariant \emph{polarization} (i.e. class of positive square), and it is of small degree. We do not know a general description for IHS manifolds of deformation type $\textnormal{K3}^{[24]}$, and polarization of degree 2 and divisibility 2. Note that similarly to what was proved in \cite[\S6]{bcms16}, there exists a unique IHS manifold $X\sim\textnormal{K3}^{[24]}$ equipped with a nonstable purely nonsymplectic automorphism of order 46. It would interesting to know whether one could relate such a manifold and the one from \cite[\S6]{bcms16} via the \emph{strange duality} observed in \cite[Proposition 3.9]{bbfp24}.
\end{rem}

\appendix

\section{Cyclotomic fields}
For the purpose of this paper, we recall and prove some results which we need about cyclotomic fields. 
Some of these results might already be known to the experts, and we make them available for the readers convenience. One can find more results about cyclotomic fields in \cite{was97}.\bigskip

\subsection{General facts}
Let $m \geq 3$ be an integer, and let $E := \mathbb{Q}(\zeta_m)$ be the $m$th cyclotomic field. 
It contains a maximal subfield $K$ of index 2, which is generated by $\zeta_m+\zeta_m^{-1}$ over $\mathbb{Q}$. 
The rings $\mathcal{O}_E := \mathbb{Z}[\zeta_m]$ and $\mathcal{O}_K := \mathbb{Z}[\zeta_m+\zeta_m^{-1}]$ are maximal orders of $E$ and $K$ respectively. 
The extension $E/K$ is Galois, and $\text{Gal}(E/K)$ is generated by the complex conjugation mapping $\iota:\;\zeta_m\mapsto \iota(\zeta_m) := \zeta_m^{-1}$. 
Moreover, $E/K$ is CM and in particular all the infinite places of $K$ are real. 
For any place $\mathfrak{q}$ of $K$, we denote $K_{\mathfrak{q}}$ and $E_{\mathfrak{q}} = E\otimes_KK_{\mathfrak{q}}$ the respective $\mathfrak{q}$-adic completions of $K$ and $E$.
In order to study hermitian lattices over the extension $E/K$, we would like to know which prime ideals ramify in $E/\mathbb{Q}$ and $E/K$ respectively. 

\begin{lem}[{{{\cite[Propositions 2.3 and 2.15]{was97}}}}]\label{lem ramif}
    \hfill
    \begin{enumerate}
        \item A prime number $p\in\mathbb{Z}$ ramifies in $E/\mathbb{Q}$ if and only if $p$ divides $m$;
        \item A prime ideal $\mathfrak{p}$ of $\mathcal{O}_K$ ramifies in $E/K$ if and only if $m$ is a power (or twice a power) of a prime number $p$ and $\mathfrak{p}$ divides $p\mathcal{O}_K$.
    \end{enumerate}
\end{lem}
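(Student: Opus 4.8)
The plan is to reduce both statements to the description of the inertia groups of $E/\mathbb{Q}$, exploiting that $E/\mathbb{Q}$ is abelian with $\mathrm{Gal}(E/\mathbb{Q})\cong(\mathbb{Z}/m\mathbb{Z})^\times$, where $a\bmod m$ acts by $\zeta_m\mapsto\zeta_m^a$, and that $\mathrm{Gal}(E/K)=\langle\iota\rangle$ corresponds to the subgroup $\{\pm1\}$. First I would fix a prime number $p$, write $m=p^a m_0$ with $p\nmid m_0$, and recall the standard decomposition $E=\mathbb{Q}(\zeta_{p^a})\cdot\mathbb{Q}(\zeta_{m_0})$ into linearly disjoint subfields. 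The key input (which is the content of \cite[Proposition 2.3]{was97}) is that $p$ is totally ramified in $\mathbb{Q}(\zeta_{p^a})/\mathbb{Q}$ whenever $p^a>2$, with $p\mathcal{O}_{\mathbb{Q}(\zeta_{p^a})}=(1-\zeta_{p^a})^{\varphi(p^a)}$, while $p$ is unramified in $\mathbb{Q}(\zeta_{m_0})/\mathbb{Q}$. Under the isomorphism $(\mathbb{Z}/m\mathbb{Z})^\times\cong(\mathbb{Z}/p^a\mathbb{Z})^\times\times(\mathbb{Z}/m_0\mathbb{Z})^\times$, this identifies the inertia group $I_p\subseteq\mathrm{Gal}(E/\mathbb{Q})$ of any prime $\mathfrak{P}$ above $p$ with the $p$-primary factor $(\mathbb{Z}/p^a\mathbb{Z})^\times\times\{1\}$.

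Granting this, part (1) is immediate: $p$ ramifies in $E/\mathbb{Q}$ if and only if $I_p\neq\{1\}$, i.e.\ if and only if $(\mathbb{Z}/p^a\mathbb{Z})^\times$ is nontrivial, i.e.\ $p^a>2$. For $p$ odd this says $p\mid m$, and for $p=2$ it says $4\mid m$; the only discrepancy with the bare divisibility condition occurs when $m\equiv2\pmod{4}$, in which case $\mathbb{Q}(\zeta_m)=\mathbb{Q}(\zeta_{m/2})$ and one reduces to the odd conductor, so the stated equivalence ``$p\mid m$'' is understood with $m$ taken $\not\equiv2\pmod{4}$ (equivalently, $m$ its own conductor).

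For part (2) I would combine the above with the behaviour of inertia in the tower $\mathbb{Q}\subseteq K\subseteq E$. For a prime $\mathfrak{p}$ of $\mathcal{O}_K$ below $\mathfrak{P}$, the inertia group of $\mathfrak{P}$ in $E/K$ is $I_p\cap\mathrm{Gal}(E/K)=I_p\cap\langle\iota\rangle$. Since $\langle\iota\rangle$ has order $2$, this intersection is nontrivial --- that is, $\mathfrak{p}$ ramifies in $E/K$ --- if and only if $\iota\in I_p$. Writing $\iota=-1=(-1\bmod p^a,\,-1\bmod m_0)$ and using the identification $I_p=(\mathbb{Z}/p^a\mathbb{Z})^\times\times\{1\}$, this holds precisely when $-1\equiv1\pmod{m_0}$, i.e.\ $m_0\mid2$. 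The two solutions $m_0=1$ and $m_0=2$ give $m=p^a$ (a prime power) and $m=2p^a$ with $p$ odd (twice an odd prime power), respectively; moreover the same computation run at any prime $\ell\neq p$ shows $\iota\notin I_\ell$, so no prime of $K$ away from $p$ ramifies. This yields exactly the stated criterion, with the ramified prime $\mathfrak{p}$ dividing $p\mathcal{O}_K$.

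The routine parts are the two group-theoretic membership checks; the one step deserving care --- and the only genuine obstacle --- is pinning down the inertia group $I_p$ as the full $p$-primary factor of $(\mathbb{Z}/m\mathbb{Z})^\times$. This rests on the total ramification of $p$ in $\mathbb{Q}(\zeta_{p^a})$ together with linear disjointness of the prime-power cyclotomic subfields, both of which are classical and recorded in \cite{was97}; I would cite these rather than reprove them, since the lemma is stated precisely as a convenient repackaging of \cite[Propositions 2.3 and 2.15]{was97}.
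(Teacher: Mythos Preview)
The paper does not give its own proof of this lemma: it is stated with a citation to \cite[Propositions 2.3 and 2.15]{was97} and used as a black box. Your proposal is therefore not competing with a proof in the paper but rather supplying one, and the argument you outline is correct and is essentially the content behind Washington's Proposition~2.15: identify the inertia group $I_p\subseteq(\mathbb{Z}/m\mathbb{Z})^\times$ with the $p$-primary factor $(\mathbb{Z}/p^a\mathbb{Z})^\times\times\{1\}$, and then read off ramification in $E/K$ from the membership $\iota=-1\in I_p$. Your handling of the edge case $m\equiv 2\pmod 4$ in part~(1) is appropriate (the paper tacitly works with $m\geq 3$ and uses $\mathbb{Q}(\zeta_m)=\mathbb{Q}(\zeta_{m/2})$ when $m$ is twice an odd number, so the statement is to be read for $m$ equal to its conductor). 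The one step you flag as needing care---that $I_p$ is the full $p$-primary factor---is exactly what Washington's Proposition~2.3 supplies, so citing it there is the right move.
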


As we have seen in \Cref{prelim hermlat} about hermitian lattices, if a prime ideal does not ramify in $E/K$, then the local isometry class of a unimodular hermitian $\mathcal{O}_E$-lattice at the corresponding finite place is uniquely determined by the rank of the lattice.
Moreover, regarding \Cref{propo: case 1 minpoly,propo: case 2 minpoly}, the hermitian lattices we are interested in are locally unimodular at each place associated to prime $\mathcal{O}_K$-ideals which are not ramified. 
Therefore, the most interesting part for us is to study hermitian lattices over cyclotomic fields locally at a ramified finite place of $K$. 
By \Cref{lem ramif} such a prime ideal exists in $E/K$, and it is unique, if and only if $m$ is a prime power (or twice a prime power).

\begin{propo}\label{lem basic cyclo}
Suppose that $m = p^k\geq 3$ for some prime number $p$, and some positive integer $k$.
Let us denote $\zeta := \zeta_{p^k}$ and $\pi := 1-\zeta$. 
The following hold:
\begin{enumerate}
    \item $\mathfrak{P} := \pi\mathcal{O}_E$ is a prime ideal and $p\mathcal{O}_E = \mathfrak{P}^{\varphi(m)}$;
    \item $\mathfrak{p} := \mathfrak{P}\cap \mathcal{O}_K$ is generated by $\pi\iota(\pi)$. It is the only prime $\mathcal{O}_K$-ideal which ramifies in $E/K$;
    \item $E_{\mathfrak{p}}/K_{\mathfrak{p}}/\mathbb{Q}_p$ is a tower of totally ramified extensions. In particular, $\mathcal{O}_E/\mathfrak{P} \cong \mathcal{O}_K/\mathfrak{p}\cong \mathbb{F}_p$;
    \item the absolute different $\mathfrak{D}_{E/\mathbb{Q}}$ is principal generated by $\pi^{p^{k-1}(pk-k-1)}$ and the relative one $\mathfrak{D}_{E/K}$ by $\pi$ if $p$ is odd, $\pi^2$ otherwise.
\end{enumerate}
\end{propo}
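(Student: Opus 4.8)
The plan is to prove the four assertions in order, since each relies on the factorisation of $p$ established first.

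First I would establish (1). Writing $\Phi_{p^k}(X) = \prod_{\gcd(a,p)=1}(X-\zeta^a)$ and using $\Phi_{p^k}(X) = \Phi_p(X^{p^{k-1}})$ together with $\Phi_p(1)=p$, evaluation at $X=1$ gives $p = \prod_{\gcd(a,p)=1}(1-\zeta^a)$. Each factor differs from $\pi = 1-\zeta$ by the cyclotomic unit $(1-\zeta^a)/(1-\zeta) = 1+\zeta+\cdots+\zeta^{a-1}\in\mathcal{O}_E^\times$, so $p = u\,\pi^{\varphi(p^k)}$ for a unit $u$, whence $p\mathcal{O}_E = (\pi\mathcal{O}_E)^{\varphi(p^k)}$. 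Comparing with $[E:\mathbb{Q}] = \varphi(p^k)$ forces $\mathfrak{P} := \pi\mathcal{O}_E$ to be prime with residue degree $1$ and ramification index $\varphi(m)$ over $p$; in particular $p$ is totally ramified in $E$.

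For (2) and (3), I would note that $\mathfrak{P}$ is the unique prime of $\mathcal{O}_E$ above $p$, so $\iota(\mathfrak{P}) = \mathfrak{P}$ and $\pi\iota(\pi)\mathcal{O}_E = \mathfrak{P}\,\iota(\mathfrak{P}) = \mathfrak{P}^2$. The identity $e(\mathfrak{P}/p) = e(\mathfrak{P}/\mathfrak{p})\,e(\mathfrak{p}/p) = \varphi(m)$, combined with $e(\mathfrak{P}/\mathfrak{p})\le[E:K]=2$ and $e(\mathfrak{p}/p)\le[K:\mathbb{Q}]=\varphi(m)/2$, forces $e(\mathfrak{P}/\mathfrak{p}) = 2$ and $e(\mathfrak{p}/p) = \varphi(m)/2$, i.e. $\mathfrak{p}\mathcal{O}_E = \mathfrak{P}^2$. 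Since $\pi\iota(\pi) = 2-(\zeta+\zeta^{-1})\in\mathcal{O}_K$ and $\pi\iota(\pi)\mathcal{O}_E = \mathfrak{P}^2 = \mathfrak{p}\mathcal{O}_E$, contracting to $\mathcal{O}_K$ gives $\mathfrak{p} = \pi\iota(\pi)\mathcal{O}_K$; that $\mathfrak{p}$ is the only prime ramifying in $E/K$ is exactly \Cref{lem ramif}(2). All residue degrees being $1$, both $E_\mathfrak{p}/K_\mathfrak{p}$ and $K_\mathfrak{p}/\mathbb{Q}_p$ are totally ramified and $\mathcal{O}_E/\mathfrak{P}\cong\mathcal{O}_K/\mathfrak{p}\cong\mathbb{F}_p$, which is (3).

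The last assertion is the computational heart. Because $\mathcal{O}_E = \mathbb{Z}[\zeta] = \mathcal{O}_K[\zeta]$ is monogenic over both $\mathbb{Z}$ and $\mathcal{O}_K$, one has $\mathfrak{D}_{E/\mathbb{Q}} = (\Phi_{p^k}'(\zeta))$ and $\mathfrak{D}_{E/K} = (g'(\zeta))$ with $g(X) = X^2 - (\zeta+\zeta^{-1})X + 1$ the minimal polynomial of $\zeta$ over $K$. Differentiating $X^{p^k}-1 = (X^{p^{k-1}}-1)\Phi_{p^k}(X)$ and evaluating at $\zeta$ yields $\Phi_{p^k}'(\zeta) = p^k\zeta^{p^k-1}/(\zeta^{p^{k-1}}-1)$; then $v_\mathfrak{P}(p) = \varphi(m)$ and $v_\mathfrak{P}(\zeta^{p^{k-1}}-1) = p^{k-1}$ (as $\zeta^{p^{k-1}}$ is a primitive $p$-th root of unity) give $v_\mathfrak{P}(\Phi_{p^k}'(\zeta)) = k\varphi(m) - p^{k-1} = p^{k-1}(pk-k-1)$, so $\mathfrak{D}_{E/\mathbb{Q}} = (\pi^{p^{k-1}(pk-k-1)})$. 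For the relative different, $g'(\zeta) = \zeta-\zeta^{-1}$ is an associate of $(\zeta-1)(\zeta+1)$; since $v_\mathfrak{P}(\zeta-1) = 1$ and $N_{E/\mathbb{Q}}(\zeta+1) = \pm\Phi_{p^k}(-1)$ equals $\pm1$ for $p$ odd but $\pm2$ for $p=2$, the unique prime $\mathfrak{P}$ of residue degree $1$ gives $v_\mathfrak{P}(\zeta+1) = 0$ or $1$ respectively, hence $\mathfrak{D}_{E/K} = (\pi)$ for $p$ odd and $(\pi^2)$ for $p=2$. The main obstacle is the $\mathfrak{P}$-adic valuation bookkeeping in (4), in particular correctly isolating the $p=2$ behaviour of $\zeta+1$; the tower-and-norm identities make this routine once the monogenicity reductions are in place. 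As a cross-check, the absolute different can be matched against the classical discriminant $\pm p^{p^{k-1}(pk-k-1)}$ of $\mathbb{Q}(\zeta_{p^k})$ (\cite{was97}) via $N_{E/\mathbb{Q}}(\mathfrak{D}_{E/\mathbb{Q}}) = |d_E|$.
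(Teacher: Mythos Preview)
Your proof is correct and follows essentially the same route as the paper. The only notable variation is in (4): for the absolute different the paper quotes the discriminant formula $|d_E|=p^{p^{k-1}(pk-k-1)}$ from \cite{was97} and reads off $v_\mathfrak{P}(\mathfrak{D}_{E/\mathbb{Q}})$ via $N_{E/\mathbb{Q}}(\mathfrak{D}_{E/\mathbb{Q}})=|d_E|$, whereas you compute $v_\mathfrak{P}(\Phi_{p^k}'(\zeta))$ directly by differentiating $X^{p^k}-1=(X^{p^{k-1}}-1)\Phi_{p^k}(X)$ --- but this is just the standard proof of that discriminant formula, so the two arguments coincide. For the relative different both you and the paper use the monogenic generator $g'(\zeta)=\zeta-\zeta^{-1}$ and detect the extra factor of $\pi$ at $p=2$ via the residue of $1+\zeta^{\pm1}$; your norm computation $N_{E/\mathbb{Q}}(1+\zeta)=\Phi_{p^k}(-1)$ is simply a clean way to phrase the same check.
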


\begin{proof}
\hfill
    \begin{enumerate}
        \item This is a generalization of \cite[Lemma 1.4]{was97} from prime order to prime power order.
        \item We have that $\mathfrak{p} =  N^E_K(\mathfrak{P})$ and in particular, $\mathfrak{p}$ is generated by $N^E_K(\pi) = \pi\iota(\pi)$. 
        The ramification statement follows from \Cref{lem ramif}.
        \item We have that $[E:\mathbb{Q}] = \varphi(m)$, so $E_\mathfrak{p}/\mathbb{Q}_p$ is totally ramified according to (1). 
        A fortiori, so is the tower $E_\mathfrak{p}/K_\mathfrak{p}/\mathbb{Q}_p$. 
        In particular, we have that $\mathcal{O}_E/\mathfrak{P}\cong \mathcal{O}_K/\mathfrak{p}\cong \mathbb{F}_p$.
        \item Since $p\mathbb{Z}$ is the only prime ideal which ramifies in $E/\mathbb{Q}$, we get that $\mathfrak{P}$ is the only prime $\mathcal{O}_E$-ideal which divides $\mathfrak{D}_{E/\mathbb{Q}}$. 
        Now, the norm of the latter is equal to the absolute discriminant of $E$: by \cite[Proposition 2.1]{was97}, it is equal to $p^{p^{k-1}(pk-k-1)}$. 
        Let $\alpha := \text{val}_\mathfrak{P}(\mathfrak{D}_{E/\mathbb{Q}})$. Since the norm of $\mathfrak{P}$ over $\mathbb{Q}$ is exactly $p$, we have that $\alpha = p^{k-1}(pk-k-1)$ as expected.
        Now, the minimal polynomial of $\zeta$ over $K$ is $\mu(t) := t^2-(\zeta+\zeta^{-1})t+1\in K[t]$, so the relative different $\mathfrak{D}_{E/K}$ is generated by $\mu'(\zeta) = \zeta-\zeta^{-1} = \zeta\iota(\pi)(1+\zeta^{-1})$. 
        We conclude by remarking that $\zeta$ is a unit in $\mathcal{O}_E$, $\iota(\pi)\in\iota(\mathfrak{P}) = \mathfrak{P}$ and moreover, $(1+\zeta^{-1}) \in \mathfrak{P}$ if and only if $[2] = [0] \in\mathcal{O}_E/\mathfrak{P}\cong \mathbb{F}_p$.\qedhere
    \end{enumerate}
\end{proof}

\subsection{Local norms}Let $m = p^k\geq 3$ be a prime power, and let $\zeta := \zeta_m$. 
Since the prime ideal $\mathfrak{p}\subseteq \mathcal{O}_K$ above $p\mathbb{Z}$ ramifies in $E/K$, we have that $E_\mathfrak{p}/K_\mathfrak{p}$ is a totally ramified degree 2 extension of local fields over $\mathbb{Q}_p$. 
For the proofs of \Cref{theo: prime order,existence even power}, we need to decide whether $-1$ is in the image of the norm map
\[ N^{E_\mathfrak{p}}_{K_\mathfrak{p}}\colon E_\mathfrak{p}\to K_\mathfrak{p}.\]
We call an element of $K\cap \text{im}(N^{E_\mathfrak{p}}_{K_\mathfrak{p}})$ a \textbf{local norm at $\mathfrak{p}$}.
We aim to prove the following theorem.

\begin{theo}\label{th -1 loc norm}
    Let $m = p^k \geq 3$ be a prime power. Then $-1$ is a local norm at $\mathfrak{p}$ if and only if $\varphi(m)\equiv 0\mod 4$.
\end{theo}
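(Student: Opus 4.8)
The plan is to reformulate the condition as a quadratic Hilbert symbol over $K_\mathfrak{p}$ and then push the computation down to $\mathbb{Q}_p$, where it becomes completely explicit. Recall from \Cref{lem basic cyclo} that $\mathfrak{p}$ is the unique prime of $\mathcal{O}_K$ above $p$, that $E_\mathfrak{p}/K_\mathfrak{p}$ is totally ramified of degree $2$, and that the residue field is $\mathbb{F}_p$. Since $\zeta := \zeta_m$ satisfies $t^2 - (\zeta+\zeta^{-1})t + 1$ over $K$, whose discriminant is $d := (\zeta+\zeta^{-1})^2 - 4 = (\zeta - \zeta^{-1})^2$, we have $E_\mathfrak{p} = K_\mathfrak{p}(\sqrt{d})$. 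By the defining property of the quadratic Hilbert symbol, $-1$ is a local norm at $\mathfrak{p}$ if and only if $(-1, d)_{K_\mathfrak{p}} = 1$.

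Next I would invoke the standard projection (transfer) formula for the Hilbert symbol: for $a \in \mathbb{Q}_p^\times$ and $b \in K_\mathfrak{p}^\times$ one has $(a, b)_{K_\mathfrak{p}} = (a, N_{K_\mathfrak{p}/\mathbb{Q}_p}(b))_{\mathbb{Q}_p}$. Applied with $a = -1$ and $b = d$, this reduces the question to evaluating $(-1, N_{K_\mathfrak{p}/\mathbb{Q}_p}(d))_{\mathbb{Q}_p}$. Because $\mathfrak{p}$ is the only prime of $K$ above $p$ (\Cref{lem basic cyclo}) and $d$ is a global element of $K$, the local norm coincides with the global one: $N_{K_\mathfrak{p}/\mathbb{Q}_p}(d) = N_{K/\mathbb{Q}}(d)$ inside $\mathbb{Q}_p^\times$.

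The third step computes $N_{K/\mathbb{Q}}(d)$ explicitly. From $N_{E/K}(\zeta - \zeta^{-1}) = (\zeta - \zeta^{-1})(\zeta^{-1} - \zeta) = -d$ and transitivity of the norm one gets $N_{K/\mathbb{Q}}(d) = (-1)^{\varphi(m)/2}\, N_{E/\mathbb{Q}}(\zeta - \zeta^{-1})$. Writing $\zeta - \zeta^{-1} = \zeta^{-1}(\zeta^2 - 1)$ and letting $a$ run over $(\mathbb{Z}/m)^\times$, the factor $\prod_a \zeta^{-a}$ collapses to $1$ (the exponent $\sum_a a$ is divisible by $m$), while $\prod_a(\zeta^{2a} - 1)$ evaluates to $\Phi_{p^k}(1) = p$ for odd $p$ (as $2$ is invertible, $\zeta^{2a}$ ranges over all primitive $p^k$-th roots), and to $\big(\Phi_{2^{k-1}}(1)\big)^2 = 4$ for $p = 2$ (each primitive $2^{k-1}$-th root occurs twice). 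Hence $N_{K/\mathbb{Q}}(d) = (-1)^{\varphi(m)/2} p$ when $p$ is odd, and $N_{K/\mathbb{Q}}(d) = (-1)^{2^{k-2}}\cdot 4$ when $p = 2$.

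Finally I would feed these into the explicit description of the Hilbert symbol over $\mathbb{Q}_p$. For odd $p$, two units always pair trivially, so $(-1, \pm p)_{\mathbb{Q}_p} = \left(\frac{-1}{p}\right)$, which is $+1$ exactly when $p \equiv 1 \bmod 4$, i.e. when $4 \mid \varphi(p^k)$. For $p = 2$, the factor $4$ is a square, so the symbol reduces to $(-1, (-1)^{2^{k-2}})_{\mathbb{Q}_2}$, equal to $(-1,-1)_{\mathbb{Q}_2} = -1$ when $k = 2$ and to $(-1,1)_{\mathbb{Q}_2} = +1$ when $k \geq 3$; this is again $+1$ exactly when $4 \mid \varphi(2^k) = 2^{k-1}$. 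In both cases $-1$ is a local norm iff $\varphi(m) \equiv 0 \bmod 4$, as claimed. The main obstacle is the wildly ramified case $p = 2$: there the naive residue-field criterion (a unit is a norm iff its reduction is a square, which is valid in the tame odd case and yields an immediate alternative proof that $-1$ is a norm iff $-1 \in (\mathbb{F}_p^\times)^2$) breaks down, and it is precisely the transfer formula that rescues the argument by moving the computation to $\mathbb{Q}_2$, where the quadratic Hilbert symbol is fully explicit.
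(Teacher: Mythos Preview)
Your argument is correct and takes a genuinely different route from the paper. The paper treats the two parities of $p$ separately and stays entirely inside $K_\mathfrak{p}$: for odd $p$ it observes that $w=(\zeta-\zeta^{-1})^2$ is a uniformizer at $\mathfrak{p}$ and invokes the tame formula $(-1,w)_\mathfrak{p}=\bigl(\tfrac{-1}{\mathfrak{p}}\bigr)$, which reduces to whether $-1$ is a square in the residue field $\mathbb{F}_p$; for $p=2$ it rewrites $E_\mathfrak{p}=K_\mathfrak{p}(\sqrt{-1})$ and computes the quadratic defect $\mathfrak{d}(-1)$ in $K_\mathfrak{p}$ by an explicit manipulation of the unit $(1+\zeta)^2/(1-\zeta)^2$, then appeals to results of O'Meara and Kirschmer to read off $(-1,-1)_\mathfrak{p}$.

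Your approach instead pushes the Hilbert symbol down to $\mathbb{Q}_p$ via the projection formula $(a,b)_{K_\mathfrak{p}}=(a,N_{K_\mathfrak{p}/\mathbb{Q}_p}(b))_{\mathbb{Q}_p}$ for $a\in\mathbb{Q}_p^\times$, which is legitimate here since $\mu_2\subset\mathbb{Q}_p$. The required global norm $N_{K/\mathbb{Q}}(d)$ is then computed cleanly from $N_{E/\mathbb{Q}}(\zeta-\zeta^{-1})$ and the evaluation $\Phi_{p^k}(1)=p$ (resp.\ $\Phi_{2^{k-1}}(1)^2=4$), and the final Hilbert symbol over $\mathbb{Q}_p$ is read off from the classical tables. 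This yields a uniform and shorter treatment of both parities and, in particular, completely bypasses the quadratic defect machinery that the paper uses in the wildly ramified case $p=2$. The trade-off is that your proof imports the projection formula (a standard but nontrivial input from local class field theory), whereas the paper's argument is more self-contained but correspondingly more laborious at $p=2$.
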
 

\begin{rem}
    For $p$ odd, we have that $\varphi(p^k)\equiv 0\mod 4$ if and only if $p\equiv 1\mod 4$. 
    For nontrivial powers $2^k$ of 2 we have that $\varphi(2^k)\equiv0\mod 4$ if and only if $k\geq 3$.
\end{rem}

In order to prove \Cref{th -1 loc norm}, we separate the odd case and the case of powers of 2. 
First, let us remark the following: $E$ is generated over $K$ by $\zeta-\zeta^{-1}$ whose square is 
\[w := (\zeta-\zeta^{-1})^2 = \zeta^2+\zeta^{-2}-2 = (\zeta+\zeta^{-1})^2-4\in K.\]
In particular, $E = K(\sqrt{w})$. 
Hence, according to \cite[Example 63:10]{om71}, $-1$ is a local norm at $\mathfrak{p}$ if and only if the {\em local Hilbert symbol} $(-1, w)_\mathfrak{p}$ is $1$ (see \cite[\S 63B]{om71} for a definition). 
So our problem reduces to a computation of Hilbert symbols in local fields. 
In the case where $p$ is odd, we have the following.

\begin{lem}\label{lem -1 p odd}
    Suppose that $p$ is odd. 
    Then $-1$ is a local norm at $\mathfrak{p}$ if and only if $p\equiv 1\mod 4$.
\end{lem}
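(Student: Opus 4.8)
The plan is to use the reduction recorded just before the statement: for $p$ odd, $-1$ is a local norm at $\mathfrak{p}$ if and only if the local Hilbert symbol $(-1, w)_\mathfrak{p}$ equals $1$, where $w = (\zeta-\zeta^{-1})^2 = (\zeta+\zeta^{-1})^2-4$ and $E_\mathfrak{p} = K_\mathfrak{p}(\sqrt w)$. Since by \Cref{lem basic cyclo}(3) the extension $E_\mathfrak{p}/K_\mathfrak{p}$ is totally ramified with residue field $\mathbb{F}_p$ of odd characteristic, the symbol is tame, and I would compute it via the standard formula for the Hilbert symbol over a non-dyadic local field (\cite[\S 63B]{om71}). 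The one ingredient that formula needs is the parity of $\textnormal{val}_\mathfrak{p}(w)$.

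First I would pin down this valuation. By \Cref{lem basic cyclo}(4), for $p$ odd the relative different $\mathfrak{D}_{E/K}$ is generated by $\pi = 1-\zeta$; on the other hand, as computed in the proof of that lemma, $\mathfrak{D}_{E/K}$ is generated by $\mu'(\zeta) = \zeta-\zeta^{-1}$. Comparing the two generators gives $\textnormal{val}_{\mathfrak{P}}(\zeta-\zeta^{-1}) = \textnormal{val}_{\mathfrak{P}}(\pi) = 1$, hence $\textnormal{val}_{\mathfrak{P}}(w) = 2$. Because $E_\mathfrak{p}/K_\mathfrak{p}$ is ramified with $\mathfrak{p}\mathcal{O}_E = \mathfrak{P}^2$ (\Cref{lem basic cyclo}(1)--(2)), this yields $\textnormal{val}_\mathfrak{p}(w) = 1$; in particular $w$ is a uniformizer of $K_\mathfrak{p}$ up to a unit.

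It then remains to evaluate the symbol. Since $-1 \in \mathcal{O}_{K_\mathfrak{p}}^\times$ is a unit while $w$ has odd $\mathfrak{p}$-valuation, the tame formula over $K_\mathfrak{p}$ collapses to the quadratic residue symbol of the unit $-1$ in the residue field, which \Cref{lem basic cyclo}(3) identifies with $\mathbb{F}_p$; explicitly I expect $(-1, w)_\mathfrak{p} = \left(\frac{-1}{p}\right)$, the Legendre symbol. I would finish with the elementary fact that $\left(\frac{-1}{p}\right) = 1$ if and only if $p \equiv 1 \mod 4$, which is exactly the asserted equivalence. The computation is routine once the valuation of $w$ is known; the only step requiring genuine care is the correct invocation of the Hilbert symbol formula over $K_\mathfrak{p}$, where \Cref{lem basic cyclo}(3) is precisely what guarantees that the residue field is $\mathbb{F}_p$, so that the tame symbol is governed by quadratic reciprocity in $\mathbb{F}_p$ with no interference from a residue field extension.
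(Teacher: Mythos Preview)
Your proof is correct and follows essentially the same route as the paper: reduce to the Hilbert symbol $(-1,w)_\mathfrak{p}$, show that $\textnormal{val}_\mathfrak{p}(w)=1$, and then apply the tame formula over a non-dyadic local field with residue field $\mathbb{F}_p$ to obtain $(-1,w)_\mathfrak{p}=\left(\frac{-1}{p}\right)$. The only cosmetic difference is in how you obtain $\textnormal{val}_\mathfrak{p}(w)=1$: the paper factors $w=-\pi\iota(\pi)(2+\zeta+\zeta^{-1})$ directly and observes that the second factor is a $\mathfrak{p}$-unit, whereas you compare the two generators $\pi$ and $\zeta-\zeta^{-1}$ of $\mathfrak{D}_{E/K}$ to read off $\textnormal{val}_{\mathfrak{P}}(\zeta-\zeta^{-1})=1$ and then descend via the ramification index; both are equally short and valid.
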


\begin{proof}
    If $p$ is odd, then 
    \[w = (\zeta+\zeta^{-1})^2-4 = (\zeta+\zeta^{-1}-2)(\zeta+\zeta^{-1}+2) = -\pi\iota(\pi)(2+\zeta+\zeta^{-1})\in\mathfrak{p}\setminus \mathfrak{p}^2\]
    and $\text{val}_\mathfrak{p}(w) = 1$. 
    Hence by \cite[Corollary 5.6]{voi13}, we have that $(-1, w)_\mathfrak{p} = \left(\frac{-1}{\mathfrak{p}}\right)$ and therefore, $-1$ is a norm in $K_\mathfrak{p}$ if and only if $-1$ is a square in $\mathcal{O}_{K_\mathfrak{p}}/\mathfrak{p} \cong \mathbb{F}_p$. 
    The latter holds if and only if $p\equiv 1\mod 4$.
\end{proof}

Now suppose that $p=2$: for convenience we will write $\beta := \pi\iota(\pi)$ for a generator of $\mathfrak{p}$. 
Since we chose $m = 2^k\geq 3$, we have that $k\geq 2$ and $\zeta^{2^{k-2}}$ is a square root of $-1$.
But since $E/K$ is CM, the number field $K$ does not contain $\zeta^{2^{k-2}}$, and we can write $E = K(\zeta^{2^{k-2}})$.
In particular, from now on we consider $w = -1$ and $E = K(\sqrt{-1})$.
This time, to compute $(-1, -1)_\mathfrak{p}$, we follow an approach of Kirschmer using {\em quadratic defects} \cite[Algorithm 3.1.3]{kir16} --- alternatively one could use an algorithm of Voight \cite[\S 5]{voi13}.

\begin{defin}[{{{\cite[\S 63A]{om71},\cite[Definition 3.1.1]{kir16}}}}]
    Let $\mathbb{K}$ be a local field, and let $a\in \mathbb{K}$.
    The \textbf{quadratic defect} $\mathfrak{d}(a)$ of $a$ is the fractional $\mathcal{O}_\mathbb{K}$-ideal defined by
    \[\mathfrak{d}(a) := \bigcap_{b\in \mathbb{K}}(a-b^2)\mathcal{O}_\mathbb{K}.\]
\end{defin}

We recall the setup in which we use these quadratic defects: $m = 2^k\geq 3$ is a power of two, $\zeta$ is a primitive $m$th root of unity, $K = \mathbb{Q}(\zeta+\zeta^{-1})$ is totally real with $[K:\mathbb{Q}] = 2^{k-2}$ and $\mathfrak{p}$ is the unique prime $\mathcal{O}_K$-ideal dividing $2\mathcal{O}_K = \mathfrak{p}^{[K:\mathbb{Q}]}$. 
We set $\mathbb{K} = K_\mathfrak{p}$ and we consider $a = -1\in K_\mathfrak{p}$.
\begin{claim}\label{propo quad def}
    $\textnormal{val}_\mathfrak{p}(\mathfrak{d}(-1)) = 2^{k-1} - 1$.
\end{claim}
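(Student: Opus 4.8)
The plan is to read off $d := \textnormal{val}_\mathfrak{p}(\mathfrak{d}(-1))$ from the relative different $\mathfrak{D}_{E_\mathfrak{p}/K_\mathfrak{p}}$, which is already known to us. Write $e := \textnormal{val}_\mathfrak{p}(2)$; since $K_\mathfrak{p}/\mathbb{Q}_2$ is totally ramified of degree $[K:\mathbb{Q}] = 2^{k-2}$ by \Cref{lem basic cyclo}, we have $2\mathcal{O}_{K_\mathfrak{p}} = \mathfrak{p}^{2^{k-2}}$, so $e = 2^{k-2}$ and $2e-1 = 2^{k-1}-1$ is exactly the target value. The first thing I would record is the shape of the defect: $-1$ is a unit of $K_\mathfrak{p}$, and $E_\mathfrak{p} = K_\mathfrak{p}(\sqrt{-1}) = K_\mathfrak{p}(\zeta^{2^{k-2}})$ is the quadratic extension it defines, which is ramified by \Cref{lem ramif}; in particular $-1$ is a non-square. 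By O'Meara's classification of quadratic defects of units in a dyadic local field \cite[63:3]{om71}, the defect of a non-square unit defining a \emph{ramified} quadratic extension has the form $\mathfrak{d}(-1) = \mathfrak{p}^d$ with $d$ odd and $1 \le d \le 2e-1$.

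The heart of the argument is to pin down $d$ through the different, using the identity $\textnormal{val}_\mathfrak{P}(\mathfrak{D}_{E_\mathfrak{p}/K_\mathfrak{p}}) = 2e + 1 - d$ valid for a ramified quadratic extension obtained by adjoining the square root of a unit of defect $\mathfrak{p}^d$. I would establish this by a short explicit computation. Writing $-1 = c^2(1+\mu)$ with $c$ a unit and $\textnormal{val}_\mathfrak{p}(\mu) = d$, so that $E_\mathfrak{p} = K_\mathfrak{p}(\sqrt{1+\mu})$, the element $\nu := \sqrt{1+\mu}-1$ satisfies $\nu^2 + 2\nu = \mu$; comparing valuations (and using $d < 2e$) forces $\textnormal{val}_\mathfrak{P}(\nu) = d$. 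Since $\mathcal{O}_{K_\mathfrak{p}}[\sqrt{-1}]$ is not maximal in the wildly ramified case, I would not use $\sqrt{-1}$ itself as a generator; instead, rescaling to $\varpi := \nu\,\pi_K^{-(d-1)/2}$ (with $\pi_K$ a uniformizer of $K_\mathfrak{p}$) gives $\textnormal{val}_\mathfrak{P}(\varpi) = 1$, hence $\mathcal{O}_{E_\mathfrak{p}} = \mathcal{O}_{K_\mathfrak{p}}[\varpi]$, and $\varpi$ has monic minimal polynomial $t^2 + 2\pi_K^{-(d-1)/2}t - \mu\pi_K^{-(d-1)}$ with coefficients in $\mathcal{O}_{K_\mathfrak{p}}$ (here one checks integrality using $d \le 2e-1$). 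Its derivative $2\varpi + 2\pi_K^{-(d-1)/2}$ generates $\mathfrak{D}_{E_\mathfrak{p}/K_\mathfrak{p}}$, and has $\mathfrak{P}$-valuation $\min(2e+1,\, 2e+1-d) = 2e+1-d$.

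To conclude, \Cref{lem basic cyclo}(4) gives $\mathfrak{D}_{E/K} = (\pi^2)$ with $\pi = 1-\zeta$ a uniformizer of $E_\mathfrak{p}$, so $\textnormal{val}_\mathfrak{P}(\mathfrak{D}_{E_\mathfrak{p}/K_\mathfrak{p}}) = 2$. Combining with the identity above yields $2 = 2e+1-d$, i.e. $d = 2e-1 = 2^{k-1}-1$, as claimed.

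The main obstacle is the middle step: justifying the different–defect identity in this wildly ramified dyadic setting, where the naive generator fails and the rescaling $\nu \mapsto \varpi$ is precisely what repairs the computation. As an alternative — matching the narrative preceding the claim — one could instead run Kirschmer's quadratic-defect algorithm \cite[Algorithm 3.1.3]{kir16} directly on $-1 = 1-2$, the difficulty then being to control the valuation increments of the successive square-absorption steps until the valuation first becomes odd. Either way, the upper bound $d \le 2e-1$ can be seen cleanly and independently: since $1 + b^2 = N_{E_\mathfrak{p}/K_\mathfrak{p}}(\sqrt{-1}-b)$ and $\textnormal{val}_\mathfrak{p}\circ N_{E_\mathfrak{p}/K_\mathfrak{p}} = \textnormal{val}_\mathfrak{P}$, one has $\textnormal{val}_\mathfrak{p}(\mathfrak{d}(-1)) = \max_{b\in K_\mathfrak{p}}\textnormal{val}_\mathfrak{P}(\sqrt{-1}-b)$; for every $b$ this is $\le \textnormal{val}_\mathfrak{P}(2\sqrt{-1}) = 2e$, and the value $2e$ is impossible because it would make $(\sqrt{-1}-b)/2$ a unit differing from its $\iota$-conjugate by the unit $\sqrt{-1}$, contradicting that $\iota$ is trivial on the residue field $\mathbb{F}_2$.
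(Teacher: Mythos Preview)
Your proof is correct and takes a genuinely different route from the paper's. The paper argues directly: it exhibits an explicit square root of $-1$ modulo $4\mathfrak{p}^{-1}$, namely $-1 - \sum_{i=1}^{2^{k-2}-1}(\zeta^i + \zeta^{-i})$ (obtained by simplifying $\alpha - 1 = -(1+\zeta)^2/(1-\zeta)^2$ with $\alpha$ a generator of $4\mathfrak{p}^{-1}$), which shows $\mathfrak{d}(-1) \subseteq 4\mathfrak{p}^{-1}$; it then rules out $\mathfrak{d}(-1) = (0)$ because $-1$ is a non-square, and rules out $\mathfrak{d}(-1) = 4\mathcal{O}_{K_\mathfrak{p}}$ because that value of the defect would force $E_\mathfrak{p}/K_\mathfrak{p}$ to be unramified (\cite[Theorem 3.1.7]{kir16}), contrary to \Cref{lem basic cyclo}. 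Your approach instead reads the defect off the relative different via the identity $\textnormal{val}_\mathfrak{P}(\mathfrak{D}_{E_\mathfrak{p}/K_\mathfrak{p}}) = 2e + 1 - d$, which you derive through a clean uniformizer computation. The paper's method is more elementary and entirely self-contained, requiring only the explicit cyclotomic identity and the two exclusions; yours is more structural, explaining \emph{why} the value $2e-1$ is forced --- it is the only defect compatible with the already-computed different --- and would transfer immediately to any other quadratic extension whose different is known, without having to guess an explicit witness.
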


\begin{proof}[Proof of \Cref{propo quad def}]
    According to \cite[Lemma 3.1.2]{kir16}, we have that $\mathfrak{d}(-1)$ is the smallest among the following ideals
    \[ (0)\subsetneq 4\mathcal{O}_{K_\mathfrak{p}}\subsetneq 4\mathfrak{p}^{-1}\subsetneq 4\mathfrak{p}^{-3}\subsetneq \ldots\subsetneq \mathfrak{p}\]
    such that $-1$ is a square modulo $\mathfrak{d}(-1)$. 
    We aim to prove that $\mathfrak{d}(-1) = 4\mathfrak{p}^{-1}$ --- since $\text{val}_\mathfrak{p}(4) = 2\text{val}_\mathfrak{p}(2) = 2^{k-1}$, we can then conclude. 
    First of all, we recall that $\beta = 2-\zeta-\zeta^{-1}$ is a generator of $\mathfrak{p}\mathcal{O}_{K_\mathfrak{p}}$: in particular, $4\mathfrak{p}^{-1}$ is generated by $\alpha := \frac{4}{2-\zeta-\zeta^{-1}}$. 
    Now, using the fact that $\zeta^{2^{k-1}} = -1$, one can show that
    \[
        \alpha-1 = -\frac{(1+\zeta)^2}{(1-\zeta)^2} = -(\zeta+\ldots+\zeta^{2^{k-1}-1})^2 = \left(-1 - \displaystyle\sum_{i = 1}^{2^{k-2}-1}(\zeta^i+\zeta^{-i})\right)^2\in (K_\mathfrak{p}^\times)^2.
    \]
    Hence, $\alpha-1$ is a square in $K_\mathfrak{p}$ meaning that $\mathfrak{d}(-1)\subseteq 4\mathfrak{p}^{-1}$. 
    To conclude we remark the following:
    \begin{enumerate}
        \item since $E_\mathfrak{p} = K_{\mathfrak{p}}(\sqrt{-1})$, we see that $X^2+1 \in K_\mathfrak{p}[X]$ is irreducible and so $\mathfrak{d}(-1) \neq (0)$; and
        \item if $\mathfrak{d}(-1) = 4\mathcal{O}_{K_\mathfrak{p}}$, by \cite[Theorem 3.1.7-5]{kir16} we would have that $K_\mathfrak{p}(\sqrt{-1}) = E_\mathfrak{p}$ is unramified over $K_{\mathfrak{p}}$: this is absurd because $E_\mathfrak{p}/K_\mathfrak{p}$ is totally ramified.\qedhere
    \end{enumerate}
\end{proof}

We are now ready to prove the equivalent of \Cref{lem -1 p odd} in the case $p=2$:
\begin{lem}
    Suppose that $p=2$.
    Then $-1$ is a local norm at $\mathfrak{p}$ if and only if $m = 2^k \geq 8$.
\end{lem}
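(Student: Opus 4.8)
The plan is to reduce the statement, exactly as in the proof of \Cref{lem -1 p odd}, to the evaluation of the local Hilbert symbol $(-1,-1)_\mathfrak{p}$, and then to compute this symbol by base change from $\mathbb{Q}_2$ rather than by a direct quadratic-defect analysis.

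First I would recall that for $p=2$ we have $E=K(\sqrt{-1})$, so that by \cite[Example 63:10]{om71} the element $-1$ is a local norm at $\mathfrak{p}$ if and only if $(-1,-1)_\mathfrak{p}=1$, equivalently if and only if the quaternion algebra $\big(\tfrac{-1,-1}{K_\mathfrak{p}}\big)$ is split over $K_\mathfrak{p}$. The idea is then that this quaternion algebra is obtained from one over $\mathbb{Q}_2$ by scalar extension: namely $\big(\tfrac{-1,-1}{K_\mathfrak{p}}\big)\cong \big(\tfrac{-1,-1}{\mathbb{Q}_2}\big)\otimes_{\mathbb{Q}_2}K_\mathfrak{p}$, and over $\mathbb{Q}_2$ one has the classical value $(-1,-1)_{\mathbb{Q}_2}=-1$, so that $\big(\tfrac{-1,-1}{\mathbb{Q}_2}\big)$ is the unique division quaternion algebra over $\mathbb{Q}_2$.

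The key computation is the degree $[K_\mathfrak{p}:\mathbb{Q}_2]$. By \Cref{lem basic cyclo}, the extension $K/\mathbb{Q}$ is totally ramified at $2$ with $2\mathcal{O}_K=\mathfrak{p}^{[K:\mathbb{Q}]}$, whence $[K_\mathfrak{p}:\mathbb{Q}_2]=[K:\mathbb{Q}]=\varphi(m)/2=2^{k-2}$. Now a division quaternion algebra over a local field is split by a finite extension precisely when that extension has even degree, because under scalar extension the Hasse invariant is multiplied by the degree. Therefore $\big(\tfrac{-1,-1}{K_\mathfrak{p}}\big)$ splits if and only if $2^{k-2}$ is even, i.e. if and only if $k\geq 3$, i.e. if and only if $m=2^k\geq 8$. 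For $k=2$ we have $K_\mathfrak{p}=\mathbb{Q}_2$ and $(-1,-1)_{\mathbb{Q}_2}=-1\neq 1$, so $-1$ is not a local norm, which matches $m=4<8$.

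The main obstacle I anticipate is purely expository: to make the base-change step rigorous one must invoke the functoriality of the Hasse invariant, $\mathrm{inv}_{K_\mathfrak{p}}\circ\mathrm{res}_{K_\mathfrak{p}/\mathbb{Q}_2}=[K_\mathfrak{p}:\mathbb{Q}_2]\cdot\mathrm{inv}_{\mathbb{Q}_2}$, from local class field theory; once this is granted the argument is immediate. Alternatively, to stay closer to the tools already developed, I could instead feed the quadratic defect $\textnormal{val}_\mathfrak{p}(\mathfrak{d}(-1))=2^{k-1}-1=2e-1$ computed in \Cref{propo quad def} (with $e:=\textnormal{val}_\mathfrak{p}(2)=2^{k-2}$) into Kirschmer's Hilbert-symbol algorithm \cite[Algorithm 3.1.3]{kir16}, which evaluates $(-1,-1)_\mathfrak{p}$ directly from the defect; this route avoids class field theory but requires tracking the elementary case distinctions in that algorithm.
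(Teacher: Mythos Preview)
Your argument is correct and takes a genuinely different, more conceptual route than the paper. Both proofs begin from the same reduction to the Hilbert symbol $(-1,-1)_\mathfrak{p}$, but the paper then works entirely inside $K_\mathfrak{p}$: it uses the quadratic defect computation of \Cref{propo quad def} to produce a unit $u\in -(\mathcal{O}_{K_\mathfrak{p}}^\times)^2$ with $\textnormal{val}_\mathfrak{p}(1-u)=2^{k-1}-1$, manipulates Hilbert symbols to rewrite $(-1,-1)_\mathfrak{p}$ as $(u(2-u),c)_\mathfrak{p}$ for a suitable uniformizer $c$, and then separates the cases $k=2$ and $k\geq 3$ by comparing $\textnormal{val}_\mathfrak{p}(u(2-u)-1)=2^k-2$ against $\textnormal{val}_\mathfrak{p}(4)=2^{k-1}$ to decide whether $u(2-u)$ is a square. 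Your base-change argument via $\mathrm{inv}_{K_\mathfrak{p}}\circ\mathrm{res}=[K_\mathfrak{p}:\mathbb{Q}_2]\cdot\mathrm{inv}_{\mathbb{Q}_2}$ is cleaner and explains transparently \emph{why} the threshold is $k=3$: it is exactly when the totally ramified degree $[K_\mathfrak{p}:\mathbb{Q}_2]=2^{k-2}$ becomes even. The cost is importing a statement from local class field theory that the paper otherwise avoids, whereas the paper's computation, though longer, stays within the quadratic-form toolkit of \cite{om71,kir16} already in use and makes essential use of \Cref{propo quad def}, which in your approach becomes superfluous.
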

\begin{proof}
Since $2^{k-1}-1$ is odd for all $k\geq 2$, \Cref{propo quad def} together with \cite[Lemma 3.1.2-4]{kir16} tells us that there exists a unit $u\in -(\mathcal{O}_{K_\mathfrak{p}}^\times)^2$ such that $\mathfrak{d}(u) = \mathfrak{d}(-1) = 4\mathfrak{p}^{-1}$ and $\text{val}_\mathfrak{p}(1-u) = 2^{k-1}-1$. 
Since $-u\in (\mathcal{O}_{K_\mathfrak{p}}^\times)^2$ is a square, we get that $(-u, -1)_\mathfrak{p} = (1, -1)_\mathfrak{p} = 1$ and by multiplicativity of Hilbert symbols we obtain 
\begin{equation}\label{step 1}
    (u, -1)_\mathfrak{p} = (-1, -1)_\mathfrak{p}.
\end{equation}
By definition of the Hilbert symbols, we know that $(u, 1-u)_\mathfrak{p} = 1$. 
Together with the fact that $2^{k-1}-2$ is even for $k\geq 2$, we have that $\beta^{2^{k-1}-2}\in (\mathcal{O}_{K_\mathfrak{p}}^\times)^2$ and we deduce that
\begin{equation}\label{step 2}
    \left(u, \frac{u-1}{\beta^{2^{k-1}-2}}\right)_\mathfrak{p} = \left(u, u-1\right)_\mathfrak{p} = \left(u, -1\right)_\mathfrak{p}\left(u, 1-u\right)_\mathfrak{p}=(u, -1)_\mathfrak{p}
\end{equation}
with $\text{val}_\mathfrak{p}\left(\frac{u-1}{\beta^{2^{k-1}-2}}\right) = \text{val}_\mathfrak{p}(u-1)-2^{k-1}+2 = 1$. 
We denote $c := \frac{u-1}{\beta^{2^{k-1}-2}}$. 
By \Cref{step 1,step 2}, the scalar $c$ satisfies $(u, c)_\mathfrak{p} = (u, -1)_\mathfrak{p} = (-1, -1)_\mathfrak{p}.$
Moreover, one can compute
\begin{align*}
    \left(u(1-\beta^{2^{k-1}-2}c),\, c\right)_\mathfrak{p} &= (u,\,c)_\mathfrak{p}\left(1-\beta^{2^{k-1}-2}c,\,c\right)_\mathfrak{p} = (u,c)_\mathfrak{p}\left(1-\beta^{2^{k-1}-2}c,\, \beta^{2^{k-1}-2}c\right)_\mathfrak{p} = (u, c)_\mathfrak{p}
\end{align*}
and $u(1-\beta^{2^{k-1}-2}c) - 1 = u(2-u) - 1 = -(u-1)^2$ has $\mathfrak{p}$-valuation $2^k-2$. Hence there are two cases.

If $2^k = 4$, then $\text{val}_\mathfrak{p}(u(2-u)-1) = 2 = \text{val}_\mathfrak{p}(4)$. 
So there exists a unit $\delta\in \mathcal{O}_{K_\mathfrak{p}}^\times$ such that $u(2-u) = 1-4\delta$. 
In particular $u(2-u)$ is a square modulo $4\mathcal{O}_{K_\mathfrak{p}}$, and either $\mathfrak{d}(u(2-u)) = (0)$ or $\mathfrak{d}(u(2-u)) = 4\mathcal{O}_{K_\mathfrak{p}}$. 
The former is possible, by Hensel's Lemma, if and only if $1-4\delta\equiv (1+2\alpha)^2\mod 4\mathfrak{p}$ for some $\alpha\in \mathcal{O}_{K_\mathfrak{p}}$, or equivalently $\delta\equiv \alpha^2+\alpha\mod\mathfrak{p}$. 
However, the previous congruence system has a solution in $\mathcal{O}_{K_\mathfrak{p}}$ only if $X^2+X+1$ has a solution in $\mathcal{O}_{K_\mathfrak{p}}/\mathfrak{p}\cong \mathbb{F}_2$, which is not true (see also \cite[Algorithm 3.1.3]{kir16} for a general argument).
Hence $\mathfrak{d}(u(2-u)) = 4\mathcal{O}_{K_\mathfrak{p}}$, and since $c$ has $\mathfrak{p}$-valuation 1, \cite[63:11a]{om71} tells us that $(u(2-u), c)_\mathfrak{p} = -1$. 
We therefore conclude that $(-1, -1)_\mathfrak{p} = (u(2-u), c)_\mathfrak{p} = -1$ and that $-1$ is not a local norm at $\mathfrak{p}$.

Otherwise, if $2^k \geq 8$, we have that $\text{val}_\mathfrak{p}(u(2-u)-1) =2^k-2 > 2^{k-1} = \text{val}_\mathfrak{p}(4)$. 
By \cite[63:2]{om71}, this implies that $\mathfrak{d}(u(2-u)) = (0)$ and $u(2-u)$ is therefore a square in $K_\mathfrak{p}$.
In that case $(-1, -1)_\mathfrak{p} = (u(2-u), c)_\mathfrak{p} = 1$.
\end{proof}

\subsection{Congruence classes of units}Let again $m = p^k$ be a prime power, $\zeta := \zeta_m$ and $E := \mathbb{Q}(\zeta)$. For $\pi := 1-\zeta$, we recall that $\mathfrak{P} = \pi\mathcal{O}$ is the unique prime $\mathcal{O}_E$-ideal dividing $p\mathcal{O}_E = \mathfrak{P}^{\varphi(p^k)}$. 
To end this section on results related to cyclotomic fields, we study the congruence classes of some units in $E$ modulo ideals of the form $1+\mathfrak{P}^i$ for $i\geq 1$. 
Indeed, in order to prove \Cref{lem intermediaire}, or more precisely \Cref{strong approx th}, we need to count the number of classes of units of norm 1 in $E$ modulo such ideals.
Let $\iota\in\text{Gal}(E/K)$ be the generator and let $\mathcal{F}(E) := \{e\in \mathcal{O}_E^\times: e\iota(e) = 1\}$ be the group of units of norm 1 in $E$. 
By \cite[Theorem 4.12, Corollary 4.13]{was97}, the set $\mathcal{F}(E)$ coincides with the group $\mu(E)$ of roots of unity in $E$. 
In particular 
\(\mathcal{F}(E) = \{\pm \zeta^a: 0\leq a\leq p^k-1\}\)
and it has order $\textnormal{lcm}(2, p^k)$. 
For all $1\leq j\leq p^k$, we denote $\mathcal{F}_j(E) := \text{ker}(\mathcal{F}(E)\to \mathcal{O}^\times_E/(1+\mathfrak{P}^j))$.

\begin{lem}\label{ramif tower}
    Let $m = p^k\geq 3$ be a prime power. For all $1\leq a\leq p^k-1$, we have that 
    \[\textnormal{val}_{\mathfrak{P}}(1-\zeta^a) = p^{\textnormal{val}_p(a)}.\]
\end{lem}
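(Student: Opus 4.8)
The plan is to reduce the computation to the case where the exponent is a power of $p$, and then evaluate the remaining valuation by a norm (product) argument, so that only \Cref{lem basic cyclo}(1) and one classical cyclotomic unit fact are needed.

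First I would write $a = p^j b$ with $j := \textnormal{val}_p(a)$ and $\gcd(b, p) = 1$; since $1 \leq a \leq p^k - 1$ we have $0 \leq j \leq k-1$. Observe that $\xi := \zeta^{p^j}$ is a primitive $p^{k-j}$-th root of unity and that $\zeta^a = \xi^b$ with $\gcd(b, p^{k-j}) = 1$. I would then invoke the classical fact that for a primitive $N$-th root of unity $\xi$ and any $c$ coprime to $N$, the quotient $\tfrac{1 - \xi^c}{1 - \xi} = 1 + \xi + \cdots + \xi^{c-1}$ lies in $\mathbb{Z}[\xi]$, and so does its inverse $\tfrac{1 - \xi}{1 - \xi^c}$ (writing $\xi = (\xi^c)^{c'}$ for $cc' \equiv 1 \bmod N$, one gets $\tfrac{1 - \xi}{1 - \xi^c} = 1 + \xi^c + \cdots + \xi^{c(c'-1)} \in \mathbb{Z}[\xi]$), whence $\tfrac{1-\xi^c}{1-\xi} \in \mathbb{Z}[\xi]^\times$. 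Applying this with $\xi = \zeta^{p^j}$ and $c = b$ gives $\textnormal{val}_\mathfrak{P}(1 - \zeta^a) = \textnormal{val}_\mathfrak{P}(1 - \zeta^{p^j})$, reducing the statement to the prime-power exponent $p^j$.

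Next I would compute $\textnormal{val}_\mathfrak{P}(1 - \zeta^{p^j}) = p^j$ using the standard evaluation $\Phi_{p^{k-j}}(1) = p$. Indeed, since $\xi = \zeta^{p^j}$ is a primitive $p^{k-j}$-th root of unity, $\prod_{c \in (\mathbb{Z}/p^{k-j}\mathbb{Z})^\times}(1 - \xi^c) = \Phi_{p^{k-j}}(1) = p$. Taking $\mathfrak{P}$-adic valuations and using the unit fact from the previous paragraph (each factor $1 - \xi^c$ with $c$ coprime to $p$ has the same valuation as $1 - \xi$), I obtain $\varphi(p^{k-j}) \cdot \textnormal{val}_\mathfrak{P}(1 - \zeta^{p^j}) = \textnormal{val}_\mathfrak{P}(p)$. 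By \Cref{lem basic cyclo}(1) we have $p\mathcal{O}_E = \mathfrak{P}^{\varphi(p^k)}$, so $\textnormal{val}_\mathfrak{P}(p) = \varphi(p^k)$, and therefore $\textnormal{val}_\mathfrak{P}(1 - \zeta^{p^j}) = \varphi(p^k)/\varphi(p^{k-j}) = p^j$. Combining with the reduction yields $\textnormal{val}_\mathfrak{P}(1 - \zeta^a) = p^j = p^{\textnormal{val}_p(a)}$, as claimed.

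This argument is essentially routine once organized in this order; there is no serious obstacle, only a couple of degenerate cases to keep in mind. When $j = 0$ the product formula gives valuation $1 = p^0$ (and indeed $1 - \zeta^a$ is then a generator of $\mathfrak{P}$), and when $p = 2$ with $k - j = 1$ one has $\xi = -1$ and $\Phi_2(1) = 2$; both are uniformly subsumed by the formula $\varphi(p^{k-j})\,\textnormal{val}_\mathfrak{P}(1-\zeta^{p^j}) = \varphi(p^k)$, so no separate treatment is required.
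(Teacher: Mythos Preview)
Your proof is correct and follows essentially the same route as the paper: both reduce to the subfield generated by a primitive $p^{k-j}$-th root of unity and exploit that $(1-\xi)$ is a uniformizer there. The paper packages this via ramification in the tower $E/\mathbb{Q}(\xi)$ (so $(1-\zeta^a)\mathcal{O}_E = \mathfrak{P}^{[E:\mathbb{Q}(\xi)]} = \mathfrak{P}^{p^j}$ in one stroke), whereas you first pass from $\zeta^a$ to $\zeta^{p^j}$ via cyclotomic units and then use the product formula $\Phi_{p^{k-j}}(1)=p$; the paper's version is marginally shorter because it absorbs your unit reduction into the observation that $\zeta^a$ and $\zeta^{p^j}$ generate the same subfield, but the content is the same.
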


\begin{proof}
    Let us write $a = p^lb$ where $\gcd(p,b) = 1$, and let $\xi := \zeta^a$. The algebraic number $\xi$ is a primitive $p^{k-l}$th root of unity, and the extension $E/\mathbb{Q}(\xi)$ has degree 
    \[ \frac{[E:\mathbb{Q}]}{[\mathbb{Q}(\xi):\mathbb{Q}]} = \frac{p^{k-1}(p-1)}{p^{k-l-1}(p-1)} = p^l.\]
    Note that $(1-\xi)\mathcal{O}_{\mathbb{Q}(\xi)}$ is the unique prime ideal of $\mathbb{Q}(\xi)$ lying over $p\mathbb{Z}$ (\Cref{lem basic cyclo}). This implies that $(1-\xi)\mathcal{O}_{\mathbb{Q}(\xi)}$ totally ramifies in $E$ and thus $(1-\xi)\mathcal{O}_E = \mathfrak{P}^{[E:\mathbb{Q}(\xi)]} = \mathfrak{P}^{p^l}$. We can conclude by remarking that $l = \text{val}_p(a)$.
\end{proof}

\begin{coro}\label{lem cyclo unit}
  Let $m = p^k\geq 3$ be a prime power. 
  For all $1\leq i\leq k$ and for all $p^{i-1}< j\leq p^{i}$,
   \(\#\mathcal{F}_j(E) = p^{k-i}\)
   and $\#\mathcal{F}_1(E) = p^k$.
\end{coro}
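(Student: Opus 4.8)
The plan is to reduce the computation to an elementary divisibility count governed by \Cref{ramif tower}. Recall that $\mathcal{F}(E) = \mu(E)$ is the group of roots of unity in $E$, so its elements are exactly $\pm\zeta^a$ for $0 \le a \le p^k - 1$ (with $-1 = \zeta^{2^{k-1}}$ already among the powers of $\zeta$ when $p = 2$). An element $e \in \mathcal{F}(E)$ lies in $\mathcal{F}_j(E)$ precisely when $e \equiv 1 \pmod{\mathfrak{P}^j}$, that is, when $\mathrm{val}_{\mathfrak{P}}(e - 1) \ge j$. Thus the whole problem is to determine the $\mathfrak{P}$-adic valuation of $e - 1$ for each root of unity $e$ and to count how many meet the threshold $j$.

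First I would dispatch the elements $-\zeta^a$ in the case $p$ odd. Writing $-\zeta^a - 1 = -(\zeta^a + 1)$ and using $\zeta \equiv 1 \pmod{\mathfrak{P}}$ gives $\zeta^a + 1 \equiv 2 \pmod{\mathfrak{P}}$; since $2$ is a unit in $\mathcal{O}_E$ for $p$ odd (as $\mathcal{O}_E/\mathfrak{P} \cong \mathbb{F}_p$ by \Cref{lem basic cyclo}), this yields $\mathrm{val}_{\mathfrak{P}}(-\zeta^a - 1) = 0$, so no element $-\zeta^a$ contributes to $\mathcal{F}_j(E)$ for $j \ge 1$. Consequently, in every case the only potential contributors are the powers $\zeta^a$, and I reduce to counting $a \in \{0, \dots, p^k - 1\}$ with $\mathrm{val}_{\mathfrak{P}}(\zeta^a - 1) \ge j$.

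The heart of the argument is then \Cref{ramif tower}, which gives $\mathrm{val}_{\mathfrak{P}}(1 - \zeta^a) = p^{\mathrm{val}_p(a)}$ for $1 \le a \le p^k - 1$ (while $a = 0$ gives $\zeta^0 = 1 \in \mathcal{F}_j(E)$ for all $j$). For $j = 1$ the condition $p^{\mathrm{val}_p(a)} \ge 1$ holds for every $a$, giving $\#\mathcal{F}_1(E) = p^k$. For $p^{i-1} < j \le p^i$ I would observe that $p^{\mathrm{val}_p(a)} \ge j$ is equivalent to $\mathrm{val}_p(a) \ge i$: if $\mathrm{val}_p(a) \ge i$ then $p^{\mathrm{val}_p(a)} \ge p^i \ge j$, whereas if $\mathrm{val}_p(a) \le i - 1$ then $p^{\mathrm{val}_p(a)} \le p^{i-1} < j$. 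Counting the integers $a \in \{0, \dots, p^k - 1\}$ with $\mathrm{val}_p(a) \ge i$, namely the multiples of $p^i$ in that range together with $a = 0$, produces exactly $p^{k-i}$ values, hence $\#\mathcal{F}_j(E) = p^{k-i}$.

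The only point requiring care is the split between $p$ odd and $p = 2$, and this is the main (mild) obstacle. The sole genuine difference is that for $p = 2$ one has $-1 = \zeta^{2^{k-1}}$ already among the powers of $\zeta$, so no separate treatment of the ``$-\zeta^a$'' elements is needed, while for $p$ odd the unit-ness of $2$ is precisely what excludes them; in both regimes the valuation formula of \Cref{ramif tower} drives the count, so the two cases collapse to the same answer once this dichotomy is accounted for.
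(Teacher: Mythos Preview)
Your proof is correct and follows essentially the same approach as the paper's: both use \Cref{ramif tower} to compute $\mathrm{val}_{\mathfrak{P}}(1-\zeta^a)$, dispose of the elements $-\zeta^a$ for $p$ odd by observing that $2$ is a unit modulo $\mathfrak{P}$, and then reduce the count to the number of $a$ with $\mathrm{val}_p(a)\ge i$. Your exposition is in fact slightly more explicit than the paper's in justifying the equivalence $p^{\mathrm{val}_p(a)}\ge j \iff \mathrm{val}_p(a)\ge i$ on the interval $p^{i-1}<j\le p^i$.
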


\begin{proof}
First of all, we remark that for all $1\leq a\leq p^k-1$, the residue class $\overline{1-\zeta^a}\in \mathcal{O}_E/\mathfrak{P}$ is trivial, since $\mathfrak{P} = (1-\zeta)\mathcal{O}_E$. 
For a similar reason, the residue class $\overline{1+\zeta^a}\in \mathcal{O}_E/\mathfrak{P}$ is trivial if and only if $p = 2$. 
This already tells us that
\( \#\mathcal{F}_1(E) = p^k.\)

We conclude the rest of the proof by invoking \Cref{ramif tower} which tells us that for all $1\leq i\leq k$, and for all $p^{i-1}< j \leq p^i$
  \[ \#\mathcal{F}_j(E) = \#\mathcal{F}_{p^{i}}(E) = \#\{1\leq a\leq p^k-1: \text{val}_p(a) \geq i\} + 1= p^{k-i}.\qedhere\]
\end{proof}

\section{Cyclotomic hermitian Miranda--Morrison theory}
In \cite[Proposition 2.20]{bc22} the authors determine whenever two isometries $f,g\in O(M)$ of an even unimodular $\mathbb{Z}$-lattice $M$, with minimal polynomial $\Phi_1\Phi_p$ for some odd prime number $p$, are conjugate. 
They show, thanks to strong approximation, that for an indefinite $p$-elementary $\Phi_p$-lattice $(L, f)$ with $D_f$ trivial, the discriminant representation $O(L, f)\to O(D_L)$ is surjective. 
In this section, we show that their result holds in greater generality using a refinement of their argument which one can find in \cite[\S6]{bh23}, based on a hermitian analogue of Miranda--Morrison theory \cite{mm09}. We actually aim to prove the following theorem.

\begin{theo}\label{strong approx th}
    Let $m = p^k\geq 3$ for some prime number $p$ and let $(L, f)$ be an even $p$-elementary $\Phi_{p^k}$-lattice. 
    Assume that $D_f$ has order at most 2.
    If $L$ has rank $\varphi(p^k)$ or is indefinite then
    \[\pi_{L,f}:\;O(L, f) \to O(D_L, D_f)\]
    is surjective.
\end{theo}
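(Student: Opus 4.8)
The plan is to translate the statement into the language of hermitian unitary groups via the transfer construction of \Cref{subsec trace}, and then to invoke the hermitian analogue of Miranda--Morrison theory developed in \cite[\S6]{bh23}, which refines the strong approximation argument of \cite[Proposition 2.20]{bc22}. Write $E := \mathbb{Q}(\zeta_{p^k})$ and $K := \mathbb{Q}(\zeta_{p^k}+\zeta_{p^k}^{-1})$, and let $(L, h)$ be the hermitian $\mathcal{O}_E$-structure of $(L, b, f)$. Since an isometry of $(L, b)$ commutes with $f$ if and only if it is $\mathcal{O}_E$-linear, and such maps are precisely the isometries of $h$, we have a canonical identification $O(L, f) \cong U(L, h)$. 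Likewise the torsion quadratic form $(D_L, q_L)$ inherits an $\mathcal{O}_E$-module structure from $D_f$ (multiplication by $\zeta_{p^k}$), and the centralizer $O(D_L, D_f)$ is identified with the corresponding group $U(D_L)$ of unitary automorphisms of this torsion hermitian module; note that the image of $\pi_{L,f}$ indeed lands in this centralizer because multiplication by $\zeta_{p^k}$ is central in $U(L,h)$. Under these identifications, $\pi_{L,f}$ becomes the hermitian discriminant representation $U(L,h)\to U(D_L)$, and it suffices to show this is surjective.

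Following \cite[\S6]{bh23}, the cokernel of the hermitian discriminant representation is governed by a local--global obstruction: there is a finite abelian group assembled from local determinant (spinor-type) data $\prod_{\mathfrak{q}} \det U(L_\mathfrak{q})$ modulo the image of the global norm-one units, and the image of $\pi_{L,f}$ is exactly the kernel of the induced map $U(D_L)\to \Sigma$. The first simplification is that, by \Cref{lem basic cyclo}, the extension $E/K$ ramifies at the unique prime $\mathfrak{p}$ above $p$ and is unramified everywhere else; combined with \Cref{propo: decomp block val}, which shows $L_\mathfrak{q}$ is unimodular (hence has full local determinant image) at every good finite place $\mathfrak{q}\neq\mathfrak{p}$, this reduces the obstruction to contributions from $\mathfrak{p}$ and the archimedean places. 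The determinant of a unitary isometry is a norm-one element, so the global contribution is the set $\mathcal{F}(E)$ of norm-one units, which by \cite[Theorem 4.12, Corollary 4.13]{was97} coincides with the roots of unity $\mu(E)$.

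In the indefinite case, the hermitian space $L\otimes_{\mathcal{O}_E}E$ is isotropic at an archimedean place, so $SU(L\otimes E)$ is simply connected and isotropic and thus satisfies strong approximation. This kills the special-unitary part of the obstruction and leaves only the determinant obstruction at $\mathfrak{p}$: one must check that the roots of unity $\mathcal{F}(E)$ surject onto the relevant local quotient of $\det U(D_{L_\mathfrak{p}})$. This is precisely where the congruence-class count of \Cref{lem cyclo unit} (together with \Cref{ramif tower}) enters, since $D_{L}$ is built from the $\mathfrak{P}^{i}$-modular Jordan constituents of \Cref{propo: orth decomp}, so $U(D_{L_\mathfrak{p}})$ is controlled by residues modulo powers of $\mathfrak{P}$, and \Cref{th -1 loc norm} handles the parity/sign subtleties in the order-two case $D_f\neq\mathrm{id}$. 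Matching $\#\mathcal{F}_j(E)$ against the order of this local quotient forces the obstruction to vanish.

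The rank $\varphi(p^k)$ case (i.e. $\mathrm{rank}_{\mathcal{O}_E}L = 1$) must be treated separately, since there $SU$ is trivial and strong approximation gives nothing. Here I would argue directly: $U(L,h)$ equals the norm-one units $\mathcal{F}(E)=\mu(E)$ acting by multiplication, $D_L$ is a cyclic $\mathcal{O}_E/\mathfrak{P}^{j}$-module for a suitable $j$ determined by its Jordan type, and \Cref{lem cyclo unit} computes $\#\mathcal{F}_j(E)$ to be exactly the order of $U(D_L)$, whence surjectivity. The main obstacle I anticipate is the determinant obstruction at the bad prime $\mathfrak{p}$ in the indefinite case, and specifically the situation $p=2$ with $D_f$ of order two: there the swap action described in \Cref{rem induced herm action} makes the local discriminant unitary group more delicate, and verifying that the global roots of unity exhaust its determinant quotient is exactly what the $\pm 1$-local-norm computation of \Cref{th -1 loc norm} and the fine Jordan analysis of \Cref{propo: orth decomp} are designed to supply.
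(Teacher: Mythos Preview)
Your overall strategy is correct and matches the paper's: translate to the hermitian setting, invoke the exact sequence of \cite[Theorem 6.15]{bh23}, reduce the obstruction to the single bad prime $\mathfrak{p}$, and kill it by matching the image of the global roots of unity $\mathcal{F}(E)=\mu(E)$ against the relevant local quotient using the congruence counts of \Cref{lem cyclo unit}. Two points deserve adjustment.

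First, the rank-$\varphi(p^k)$ case is handled much more cheaply in the paper than in your sketch. Rather than computing $|U(D_L)|$ via $\mathcal{F}_j(E)$ (and note your sentence has the roles inverted: $\mathcal{F}_j(E)$ is the \emph{kernel}, so you would need $|\mathcal{F}(E)|/|\mathcal{F}_j(E)|=|U(D_L)|$, not $|\mathcal{F}_j(E)|=|U(D_L)|$), one simply observes from \Cref{propo: info hermitian structure} that $D_L$ is either $\mathbb{Z}/p\mathbb{Z}$ with $D_f$ trivial, or $(\mathbb{Z}/2\mathbb{Z})^{\oplus 2}$ with $D_f$ the swap, so $O(D_L,D_f)$ is generated by $\pi_{L,f}(-\mathrm{id}_L)$ or $\pi_{L,f}(f)$ respectively.

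Second, \Cref{th -1 loc norm} plays no role in this proof; it is used for the \emph{existence} results \Cref{theo: prime order,existence even power}, not here. What you are missing in the indefinite case is the identification of the obstruction group: by \cite[Theorem 6.25]{bh23} one has $\mathcal{F}^\#(L_\mathfrak{p})=\mathcal{F}_{j_0}(E_\mathfrak{p})$ with $j_0 = 2\,\mathrm{val}_\mathfrak{p}(\mathfrak{n}(L,h))+\mathrm{val}_\mathfrak{P}(\mathfrak{D}_{E/\mathbb{Q}})$, and the hypothesis $\mathrm{ord}(D_f)\le 2$ forces $j_0\le 2$ for $p$ odd and $j_0\le 4$ for $p=2$. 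The remaining work is then to check, for each such $j_0$, that the sequence $\mathcal{F}(E)\to \mathcal{O}_{E_\mathfrak{p}}^\times/(1+\mathfrak{P}^{j_0})\to \mathcal{O}_{K_\mathfrak{p}}^\times/(1+\mathfrak{p}^{\phi(j_0)})$ is exact; this is done by combining the image sizes from \Cref{lem cyclo unit} with the kernel sizes of the induced norm maps on the successive filtration quotients, which are read off from \cite[Chapter V, \S3]{ser04}.
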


Before going through the proof of this theorem, let us remark the following easy cases.

\begin{lem}
    Let $m, L, f$ be as in the statement of \Cref{strong approx th}. 
    If $L$ is unimodular or has rank $\varphi(m)$, then $\pi_{L,f}:\;O(L,f)\to O(D_L, D_f)$ is surjective.
\end{lem}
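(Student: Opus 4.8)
The plan is to treat the two cases separately; the first is immediate and the second reduces, via the hermitian structure, to a counting of roots of unity modulo a power of $\mathfrak{P}$. If $L$ is unimodular then $D_L=0$, so $O(D_L,D_f)$ is the trivial group and $\pi_{L,f}$ is surjective for free, the image of $\mathrm{id}_L$ already exhausting the codomain.

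For the case $\mathrm{rank}_{\mathbb Z}(L)=\varphi(m)$ I would first pass to the hermitian structure $(L,h)$ of $(L,b,f)$, which by \eqref{ranks trace constr} has $\mathcal{O}_E$-rank $1$. Every $\psi\in O(L,f)$ commutes with $f$, hence is $\mathcal{O}_E$-linear, hence — the module being of rank one — is multiplication by a scalar $u\in\mathcal{O}_E^\times$; the isometry condition forces $u\iota(u)=1$. Since the norm-one units of $E$ coincide with the roots of unity $\mathcal{F}(E)=\mu(E)$ (\cite{was97}, recalled in the previous section), one gets $O(L,f)=\mathcal{F}(E)=\langle -\mathrm{id}_L,f\rangle$. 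On the target side, $(L,h)$ is locally unimodular away from $\mathfrak{P}$ (\Cref{propo: decomp block val}), so by \eqref{dual trace constr} the quotient $D_L=L^\vee/L$ is a cyclic $\mathcal{O}_E$-module, say $D_L\cong\mathcal{O}_E/\mathfrak{P}^{\,j}$ for some $1\le j\le\varphi(m)$, on which $D_f$ acts as multiplication by $\zeta$. Because $\mathcal{O}_E=\mathbb Z[\zeta]$ and $D_L$ is cyclic, the centralizer of $D_f$ in $\mathrm{Aut}(D_L)$ is exactly the group of scalar multiplications by $(\mathcal{O}_E/\mathfrak{P}^{\,j})^\times$, and the orthogonality constraint cuts this down to its norm-one subgroup. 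Thus both source and target are groups of norm-one units, and $\pi_{L,f}$ is simply reduction modulo $\mathfrak{P}^{\,j}$.

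It then remains to show that reduction modulo $\mathfrak{P}^{\,j}$ maps $\mathcal{F}(E)$ onto all norm-one units of $\mathcal{O}_E/\mathfrak{P}^{\,j}$, which is the arithmetic heart of the argument. The kernel of this reduction on $\mathcal{F}(E)$ is precisely $\mathcal{F}_j(E)$, whose order is given by \Cref{lem cyclo unit}, so the image has order $\#\mathcal{F}(E)/\#\mathcal{F}_j(E)$; I would then count the norm-one units of $\mathcal{O}_E/\mathfrak{P}^{\,j}$ directly, using that $\iota$ acts trivially on the residue field since $E_\mathfrak{p}/K_\mathfrak{p}$ is totally ramified (\Cref{lem basic cyclo}), and verify that the two orders agree. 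The main obstacle, and the only place requiring genuine care, is this order-matching: in particular the case $p=2$ with $\#D_f=2$, where the factor $\mathrm{lcm}(2,p^{k})$ in $\#\mathcal{F}(E)$ and the behaviour of $-1$ as a local norm (\Cref{th -1 loc norm}) must both be tracked. Once the two counts coincide the reduction is surjective, and the lemma follows.
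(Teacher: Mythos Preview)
Your treatment of the unimodular case is fine and matches the paper. For rank $\varphi(m)$, however, you are working much harder than necessary and you stop before the finish line: the counting of norm-one units in $\mathcal{O}_E/\mathfrak{P}^{j}$ is precisely the point, and you leave it undone. The hypothesis you never exploit is $\#D_f\le 2$: since $D_f$ is multiplication by $\zeta$ on $\mathcal{O}_E/\mathfrak{P}^{j}$, its order is the least $p^l$ with $p^l\ge j$ (by \Cref{ramif tower}), so $\#D_f\le 2$ forces $j=1$ for $p$ odd and $j\in\{1,2\}$ for $p=2$. With $j$ so constrained the count is a two-line check; without it your ``main obstacle'' is genuinely open.

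The paper takes a shorter route and does not pass through the hermitian picture at all. From \Cref{propo: info hermitian structure} applied with $\sum n_i=1$ one reads off directly that either $D_f$ is trivial and $D_L\cong\mathbb{Z}/p\mathbb{Z}$, or $p=2$, $D_f$ has order two, and $D_L\cong(\mathbb{Z}/2\mathbb{Z})^{\oplus 2}$. In either case $O(D_L)$ itself has order two, generated respectively by $-\mathrm{id}_{D_L}=\pi_{L,f}(-\mathrm{id}_L)$ and by $D_f=\pi_{L,f}(f)$ (the swap of the two factors, cf.\ \Cref{rem induced herm action}), so $O(D_L,D_f)=O(D_L)$ lies visibly in the image. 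Your approach, once the restriction on $j$ is made and the count carried out, would reprove this by matching two cardinalities each equal to two; what it buys is a closer parallel with the hermitian Miranda--Morrison argument used afterwards for the indefinite case, at the cost of obscuring that the target group is already generated by explicit images.
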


\begin{proof}
    If $L$ is unimodular, it is necessarily true. 
    Now let us assume that $L$ is not unimodular. 
    By \Cref{propo: info hermitian structure}, either $D_f$ is trivial and $D_L\cong \mathbb{Z}/p\mathbb{Z}$ as abelian groups, or $p=2$ and $D_L\cong (\mathbb{Z}/2\mathbb{Z})^{\oplus2}$. 
    In the former case, $O(D_L, D_f) = O(D_L)$ is generated by $-\text{id}_{D_L} = \pi_{L, f}(-\text{id}_L)$. 
    In the latter case, when $p = 2$ and $D_f$ has order 2, the isometry $D_f$ generates $O(D_L)$ and it acts by exchanging the generators of both copies of $\mathbb{Z}/2\mathbb{Z}$ in $D_L$ (\Cref{rem induced herm action}).
    Again, we obtain that $O(D_L, D_f) = O(D_L)$ and it is generated by $\pi_{L, f}(f)$. In both cases $\pi_{L, f}$ is surjective.
\end{proof}

Now let $m = p^k$ and let $(L, b, f)$ be an even $p$-elementary $\Phi_m$-lattice as in the statement of \Cref{strong approx th}. 
Let us assume now that $(L, b)$ is indefinite, not unimodular and of rank larger than $\varphi(p^k)$. 
We let $(L, h)$ be the associated hermitian structure and we denote again $\zeta := \zeta_m$, $E := \mathbb{Q}(\zeta)$, $K := \mathbb{Q}(\zeta+\zeta^{-1})$ and $\pi := 1-\zeta$. 
Moreover, we let $\mathfrak{P}$ and $\mathfrak{p}$ the unique prime ideals of $\mathcal{O}_E$ and $\mathcal{O}_K$ respectively lying above $p\mathbb{Z}$. 
We recall that $\mathcal{O}_E := \mathbb{Z}[\zeta]$ and $\mathcal{O}_K := \mathbb{Z}[\zeta+\zeta^{-1}]$ are respective maximal orders of $E$ and $K$. 
We denote again $\iota\in \text{Gal}(E/K)\cong \text{Gal}(E_\mathfrak{p}/K_\mathfrak{p})$ a generator. 
For the reader's convenience, in order to be introduced to the hermitian Miranda--Morrison theory as described in \cite[\S6]{bh23}, we fix some further notations (see \cite[\S 6.5]{bh23} for more details).
We denote by $\mathcal{O}_{\mathbb{A}_K}$ the ring of integral finite adeles of $K$. We let $U^\#(L, h)$ be the kernel of the map $U(L, h) = O(L, b, f)\to O(D_L, D_f)$, and we define $U^\#(L_\mathfrak{q})$ similarly for each finite place $\mathfrak{q}$ of $K$.
For $i\geq 1$, we define the following groups:
\[\begin{array}{ll}
    \bullet\,\;\mathcal{F}(E) := \{e\in \mathcal{O}_E^\times:\; e\iota(e) = 1\}; & \bullet\,\;\mathcal{F}_i(E) := \{e\in \mathcal{F}(E):\; e\equiv 1\mod \mathfrak{P}^i\};\\
    
    \bullet\,\;\mathcal{F}(E_\mathfrak{p}):= \{e\in\mathcal{O}^\times_{E_\mathfrak{p}}:\; e\iota(e) = 1\};&  \bullet\,\;\mathcal{F}_i(E_\mathfrak{p}) := \{ e\in \mathcal{F}(E_\mathfrak{p}):\; e\equiv 1\mod\mathfrak{P}^i\};\\
    
    \bullet\,\;\mathcal{F}(L, h) := \text{det}(U((L, h)\otimes_{\mathcal{O}_K} \mathcal{O}_{\mathbb{A}_K})); & \bullet\,\;\mathcal{F}(L_\mathfrak{p}) := \text{det}(U(L_\mathfrak{p}))\subseteq \mathcal{F}(E_\mathfrak{p});\\
    
    \bullet\,\;\mathcal{F}^\#(L, h) := \text{det}(U^\#((L, h)\otimes_{\mathcal{O}_K} \mathcal{O}_{\mathbb{A}_K}));&\bullet\,\; \mathcal{F}^\#(L_\mathfrak{p}) := \text{det}(U^\#(L_\mathfrak{p})).
\end{array}\]
In what follows, by abuse of notation, we sometimes identify $\mathcal{F}(E)$ with its image along the map
\(E\to \prod_{\mathfrak{q}\;\text{finite}}E_\mathfrak{q},\; e\mapsto (e)_\mathfrak{q}.\)
In \cite[Theorem 6.15]{bh23}, the authors prove that the following sequence is exact 
\begin{equation}\label{exact hermmirmor}
O(L,f)\xrightarrow{\pi_{L, f}} O(D_L, D_f)\xrightarrow{\delta} \mathcal{F}(L,h)/(\mathcal{F}(E)\cap\mathcal{F}(L,h))\cdot \mathcal{F}^{\#}(L,h)\to 1,
\end{equation}
where $\delta$ is induced by the determinant morphism $\det\colon U((L, h)\otimes_{\mathcal{O}_K} \mathcal{O}_{\mathbb{A}_K})\to \prod_{\mathfrak{q}\;\text{finite}}E_\mathfrak{q}$.
In order to show that $O(L, f)\to O(D_L, D_f)$ is surjective, it is therefore equivalent to show that $\delta$ is trivial. 
Since by assumption $(L, b)$ is $p$-elementary, and $\mathfrak{p}$ is the unique prime $\mathcal{O}_K$-ideal lying above $p\mathbb{Z}$, \cite[Remark 6.16]{bh23} gives us the following isomorphism of groups
\[\mathcal{F}(L,h)/(\mathcal{F}(E)\cap \mathcal{F}(L,h))\cdot \mathcal{F}^\#(L,h) \cong \mathcal{F}(L_{\mathfrak{p}})/(\mathcal{F}(E)\cap \mathcal{F}(L_{\mathfrak{p}}))\cdot \mathcal{F}^\#(L_{\mathfrak{p}}) =: I_{\mathfrak{p}}.\]
Hence, the surjectivity of $\pi_{L, f}$ is equivalent to $I_{\mathfrak{p}}$ being trivial, by exactness of \Cref{exact hermmirmor}. 
Following the idea of \cite[\S 6.8]{bh23}, we actually show in what follows that the quotient 
\[Q_\mathfrak{p} := \mathcal{F}(E_\mathfrak{p})/\mathcal{F}(E)\cdot \mathcal{F}^\#(L_\mathfrak{p})\]
is trivial (as long as $(L, h)$ satisfies the assumptions of \Cref{strong approx th}). Since $I_\mathfrak{p}$ can be identified with a subgroup of $Q_\mathfrak{p}$, we then conclude.

According to \cite[Theorem 6.25]{bh23}, there exists an integer $j_0\geq 0$ such that $\mathcal{F}^\#(L_{\mathfrak{p}}) = \mathcal{F}_{j_0}(E_{\mathfrak{p}})$.
Note that for a fixed $j\geq 0$, the quotient $\mathcal{F}(E_{\mathfrak{p}})/\mathcal{F}_{j}(E_{\mathfrak{p}})$ is naturally isomorphic \cite[\S 6.8]{bh23} to the kernel of the norm map 
\begin{equation}\label{norm equa}
    N^{E_\mathfrak{p}}_{K_\mathfrak{p}}:\;\mathcal{O}^\times_{E_\mathfrak{p}}/(1+\mathfrak{P}^{j}) \to \mathcal{O}^\times_{{K_\mathfrak{p}}}/(1+\mathfrak{p}^{\phi_p(j)}),
\end{equation}
where we define $\phi_p(j) := \frac{j+1}{2}$ if $j$ is odd, and $\phi_p(j) := \frac{j+(2\text{val}_{\mathfrak{P}}(\mathfrak{D}_{E/K})-2)}{2}$ if $j$ is even. 
We recall that by \Cref{lem basic cyclo}, we know that $e := \text{val}_{\mathfrak{P}}(\mathfrak{D}_{E/K}) = \text{gcd}(2,p)$.

\begin{lem}[{{{\cite[Chapter V, \S 3, Corollaries 1, 2 \& 5]{ser04}}}}]\label{serre corollary}
        For all $j\geq 0$, the norm map in \Cref{norm equa} induces
        \[N_j\colon(1+\mathfrak{P}^{j})/(1+\mathfrak{P}^{j+1}) \to (1+\mathfrak{p}^{\phi_p(j)})/(1+\mathfrak{p}^{\phi_p(j)+1})\]
        which is bijective except if $j = e-1$
            in which case there is an exact sequence
            \[1 \to C_2\to (1+\mathfrak{P}^{e-1})/(1+\mathfrak{P}^{e}) \to (1+\mathfrak{p}^{e-1})/(1+\mathfrak{p}^{e})\to C_2\to 1.\]
        Note that we define $1+\mathfrak{P}^0 := \mathcal{O}_{E_\mathfrak{p}}^\times$, and similarly $1+\mathfrak{p}^0 := \mathcal{O}_{K_\mathfrak{p}}^\times$.
    \end{lem}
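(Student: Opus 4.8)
The plan is to reduce the statement directly to Serre's analysis of the norm map for a cyclic, totally ramified extension of prime degree, which is exactly the content of §3 of Chapter V of \cite{ser04}. First I would record the relevant local data supplied by \Cref{lem basic cyclo}: the extension $E_\mathfrak{p}/K_\mathfrak{p}$ is totally ramified of degree $[E_\mathfrak{p}:K_\mathfrak{p}] = 2$, it is Galois (hence cyclic of prime degree), its two residue fields coincide with $\mathcal{O}_E/\mathfrak{P} \cong \mathcal{O}_K/\mathfrak{p} \cong \mathbb{F}_p$, and its relative different satisfies $\mathrm{val}_{\mathfrak{P}}(\mathfrak{D}_{E_\mathfrak{p}/K_\mathfrak{p}}) = e = \gcd(2,p)$. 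This places us precisely in the setting in which Serre computes the image of the unit filtration under $N^{E_\mathfrak{p}}_{K_\mathfrak{p}}$.

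The key step is to locate the unique lower-numbering ramification break $t$ and to match it with the exceptional index $e-1$. For a totally ramified Galois extension of prime degree $\ell$ with a single break at $t$, the different exponent equals $d = (t+1)(\ell-1)$; for $\ell = 2$ this reads $d = t+1$, whence $t = e-1$. I would then invoke Serre's corollaries \cite[Ch.\ V, §3, Cor.\ 1, 2, 5]{ser04}, which assert that the map induced on successive quotients by $N^{E_\mathfrak{p}}_{K_\mathfrak{p}}$ is an isomorphism of one-dimensional $\mathbb{F}_p$-spaces away from the break, while at $j = t = e-1$ it degenerates, producing a kernel and a cokernel each of order equal to the degree, i.e. isomorphic to $C_2$, and hence the displayed four-term exact sequence.

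To pin down the index $\phi(j)$ on the $K_\mathfrak{p}$-side, I would identify $\phi$ with the Herbrand transition function governing $N^{E_\mathfrak{p}}_{K_\mathfrak{p}}(1+\mathfrak{P}^j) \subseteq 1+\mathfrak{p}^{\phi(j)}$ in \eqref{norm equa}: the parity split in the definition of $\phi$, together with the shift by $2\,\mathrm{val}_{\mathfrak{P}}(\mathfrak{D}_{E/K}) - 2$ on even arguments, is exactly the bookkeeping that encodes the jump of this function at $t = e-1$ forced by the different. Concretely, for $p$ odd ($e=1$) the break sits at the residue level $j = 0$, where $N_0$ is the squaring map $\mathbb{F}_p^\times \to \mathbb{F}_p^\times$ with kernel $\{\pm 1\}$ and cokernel $\mathbb{F}_p^\times/(\mathbb{F}_p^\times)^2$, both $C_2$; for $p = 2$ ($e = 2$) the multiplicative level $\mathbb{F}_2^\times$ is trivial and the genuine break occurs at the first wild level $j = 1$, where Serre's Corollary~5 supplies the same $C_2$-kernel and $C_2$-cokernel.

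The step I expect to be the main obstacle is the purely notational reconciliation of the explicit piecewise formula for $\phi$ with Serre's intrinsic Herbrand function, and in particular the need to treat the multiplicative residue level separately from the additive levels $j \geq 1$ (where each graded quotient is $\mathbb{F}_p^+$). Once the correspondence $t = e-1$ is established and the value of $\phi$ is matched level by level, the conclusion is a direct transcription of the cited corollaries; no further input beyond \Cref{lem basic cyclo} is required.
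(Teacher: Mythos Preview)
Your proposal is correct and is exactly in line with the paper's treatment: the paper offers no proof beyond the citation to \cite[Chapter V, \S 3, Corollaries 1, 2 \& 5]{ser04}, and what you have written is precisely the unpacking of that citation in the present cyclotomic context. Your identification of the ramification break $t=e-1$ via the different formula $d=(\ell-1)(t+1)$ with $\ell=2$, and the resulting case split ($p$ odd giving the break at $j=0$ on the multiplicative level, $p=2$ giving it at $j=1$ on the first wild level), is the correct way to specialise Serre's general statements to the situation provided by \Cref{lem basic cyclo}.
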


\begin{rem}[{{{\cite[Chapter IV, \S 2, Proposition 6]{ser04}}}}]\label{rem serre}
    As abelian groups, $\mathcal{O}^\times_{E_\mathfrak{p}}/(1+\mathfrak{P})\cong \mathbb{F}_p^\times$
    and that for all $j\geq 1$,
    $(1+\mathfrak{P}^j)/(1+\mathfrak{P}^{j+1})\cong \mathbb{F}_p.$
\end{rem}

The upshot now is that $Q_\mathfrak{p}$ is trivial if and only if the sequence of abelian groups
\begin{equation}\label{eq8}
    \mathcal{F}(E) \to \mathcal{O}^\times_{E_\mathfrak{p}}/(1+\mathfrak{P}^{j}) \to \mathcal{O}^\times_{{K_\mathfrak{p}}}/(1+\mathfrak{p}^{\phi_p(j )})
\end{equation}
is exact for $j=j_0$. 
Therefore if we can show that \Cref{eq8} is exact for given values of $m = p^k$ and $j=j_0$, then we would have proven \Cref{strong approx th}.
    
Since we work with finite groups, our plan is to show that \Cref{eq8} is exact for given $m$ and $j$ by showing that the image of the first map and the kernel of the second map have the same order. 
Note that for all values of $m$ and $j$, the kernel $\mathcal{F}_{j}(E)$ of the first map is known thanks to \Cref{lem cyclo unit}, and so we know the size of its image. 
In order to pursue now, we need to determine from the assumptions of \Cref{strong approx th} which values for $j_0$ we have to consider.

According to \cite[Theorem 6.25]{bh23}, the value of $j_0\geq 0$ for which $\mathcal{F}^\#(L_{\mathfrak{p}}) = \mathcal{F}_{j_0}(E_{\mathfrak{p}})$ is given by $j_0 = 2n+\alpha$ where $\alpha := \text{val}_{\mathfrak{P}}(\mathfrak{D}_{E/\mathbb{Q}}) = p^{k-1}(pk-k-1)$ and $n := \text{val}_{\mathfrak{p}}(\mathfrak{n}(L, h))$. 
Since by assumption the trace form $(L, b)$ of $(L, h)$ is even, we have that $i \leq 2n\leq i+e$ where $i := \text{val}_{\mathfrak{P}}(\mathfrak{s}(L))$ and $e = \text{val}_{\mathfrak{P}}(\mathfrak{D}_{E/K})$ \cite[\S 6.6]{bh23}.
According to \Cref{propo: decomp block val}, since we assumed that $D_f$ has order at most 2, we have that $-\alpha\leq i\leq \gcd(2,p)-\alpha$. 
In particular, in the setting of \Cref{strong approx th}, we have either that $p$ is odd and $0\leq j_0\leq 2$ or $p=2$ and $0\leq j_0\leq 4$.

\begin{lem}\label{lem intermediaire}
    Recall that $m = p^k$ is a prime power and let $j\geq 0$ be such that
    \begin{enumerate}
        \item $0\leq j\leq 2$ if $p$ is odd;
        \item $0\leq j\leq 4$ if $p = 2$.
    \end{enumerate}
    Then the sequence in \Cref{eq8} is exact.
\end{lem}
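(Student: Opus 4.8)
The plan is to establish exactness of \Cref{eq8} \emph{at the middle term} by the counting argument announced in the text, carried out over the finitely many admissible pairs $(p,j)$: namely $j\in\{0,1,2\}$ when $p$ is odd and $j\in\{0,1,2,3,4\}$ when $p=2$. First I would record that the inclusion $\operatorname{im}(\text{first map})\subseteq\ker(N_j)$ holds for free, where $N_j$ is the norm map \Cref{norm equa}: any $e\in\mathcal{F}(E)$ satisfies $N^{E_\mathfrak{p}}_{K_\mathfrak{p}}(e)=e\iota(e)=1$ by definition of $\mathcal{F}(E)$, so its image in $\mathcal{O}^\times_{K_\mathfrak{p}}/(1+\mathfrak{p}^{\phi(j)})$ is trivial. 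All groups in sight being finite, it then suffices to prove the equality of orders $\#\operatorname{im}(\text{first map})=\#\ker(N_j)$ for each of these pairs.

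For the left-hand side, the kernel of $\mathcal{F}(E)\to\mathcal{O}^\times_{E_\mathfrak{p}}/(1+\mathfrak{P}^j)$ is exactly $\mathcal{F}_j(E)$: since $\mathfrak{P}$ is the unique prime of $\mathcal{O}_E$ above $p$ (\Cref{lem basic cyclo}), one has $\mathfrak{P}^j\mathcal{O}_{E_\mathfrak{p}}\cap\mathcal{O}_E=\mathfrak{P}^j$, so congruence modulo $\mathfrak{P}^j$ is detected identically in $\mathcal{O}_E$ and in $\mathcal{O}_{E_\mathfrak{p}}$. Hence $\#\operatorname{im}(\text{first map})=\#\mathcal{F}(E)/\#\mathcal{F}_j(E)$, and using $\#\mathcal{F}(E)=\textnormal{lcm}(2,p^k)$ together with the values of $\#\mathcal{F}_j(E)$ furnished by \Cref{lem cyclo unit}, this gives image orders $1,2,2p$ for $p$ odd and $j=0,1,2$, and $1,1,2,4,4$ for $p=2$ and $j=0,1,2,3,4$.

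For the right-hand side I would compute $\#\ker(N_j)$ by dévissage along the filtration $\mathcal{O}^\times_{E_\mathfrak{p}}\supseteq 1+\mathfrak{P}\supseteq 1+\mathfrak{P}^2\supseteq\cdots$, whose successive quotients have orders $p-1$ (top) and $p$ by \Cref{rem serre}. The norm sends $1+\mathfrak{P}^i$ into $1+\mathfrak{p}^{\phi(i)}$, and the induced maps on graded pieces are the $N_i$ of \Cref{serre corollary}, bijective except at the single index $i=e-1$ (where $e=\gcd(2,p)$ by \Cref{lem basic cyclo}), at which both kernel and cokernel are $C_2$. Two mechanisms then control the count. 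The exceptional graded piece at $i=e-1$ contributes a factor $2$ to the kernel; for $p$ odd this is concretely the squaring map $\mathbb{F}_p^\times\to\mathbb{F}_p^\times$ on the residue field, with kernel $\{\pm1\}$ matching the sign units in $\mathcal{F}(E)$. Separately, every plateau of $\phi$, i.e.\ each index $i$ with $\phi(i)=\phi(i+1)$, forces the whole graded piece $(1+\mathfrak{P}^i)/(1+\mathfrak{P}^{i+1})$ to map into $1+\mathfrak{p}^{\phi(j)}$ and hence to lie in $\ker(N_j)$, contributing a factor $p$. Using the explicit formulas $\phi(j)=\lceil j/2\rceil$ for $p$ odd and $\phi(j)=\lfloor j/2\rfloor+1$ for $p=2$, a short computation in each case yields kernel orders $1,2,2p$ ($p$ odd) and $1,1,2,4,4$ ($p=2$), coinciding with the image orders above.

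The main obstacle is precisely this last bookkeeping: one must correctly interleave the exceptional $C_2$ at $i=e-1$ with the $\phi$-plateaus, and in the case $p=2$ verify that the residue-field squaring is replaced by the order-two jump of \Cref{serre corollary}, while keeping track of where the target filtration truncates. Because $\phi$ is explicit and only finitely many $(p,j)$ occur, each case reduces to comparing two integers, and the equality $\#\operatorname{im}(\text{first map})=\#\ker(N_j)$ combined with the automatic inclusion $\operatorname{im}\subseteq\ker$ gives exactness of \Cref{eq8}, completing the proof of \Cref{strong approx th}.
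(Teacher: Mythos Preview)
Your approach is exactly the one taken in the paper: establish the automatic inclusion $\operatorname{im}\subseteq\ker$, compute $\#\operatorname{im}$ via \Cref{lem cyclo unit}, compute $\#\ker$ by filtering via \Cref{serre corollary} and \Cref{rem serre}, and match the two finite numbers case by case. Your image orders $(1,2,2p)$ for $p$ odd and $(1,1,2,4,4)$ for $p=2$ agree with the paper's, and so do your kernel orders.

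One imprecision worth flagging: your ``plateau'' mechanism as stated --- that $\phi(i)=\phi(i+1)$ forces $(1+\mathfrak{P}^i)/(1+\mathfrak{P}^{i+1})$ to land inside $1+\mathfrak{p}^{\phi(j)}$ --- is not literally correct. For $p=2$, $j=4$, the index $i=2$ is a plateau ($\phi(2)=\phi(3)=2$), yet $N_2$ maps that graded piece \emph{bijectively} onto $(1+\mathfrak{p}^2)/(1+\mathfrak{p}^3)$, which is nontrivial in $\mathcal{O}^\times_{K_\mathfrak{p}}/(1+\mathfrak{p}^3)$; so that piece does \emph{not} lie in the kernel. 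What actually produces the extra factor of $2$ in this case is a genuine extension phenomenon between filtration steps, which the paper handles by an explicit snake-lemma argument on the diagram
\[
\begin{tikzcd}
1\arrow[r]&(1+\mathfrak{P}^3)/(1+\mathfrak{P}^4)\arrow[r]\arrow[d,"N_3","\cong"']&\mathcal{O}^\times_{E_\mathfrak{p}}/(1+\mathfrak{P}^4)\arrow[r]\arrow[d]&\mathcal{O}^\times_{E_\mathfrak{p}}/(1+\mathfrak{P}^3)\arrow[r]\arrow[d]&1\\
1\arrow[r]&(1+\mathfrak{p}^2)/(1+\mathfrak{p}^3)\arrow[r]&\mathcal{O}^\times_{K_\mathfrak{p}}/(1+\mathfrak{p}^3)\arrow[r]&\mathcal{O}^\times_{K_\mathfrak{p}}/(1+\mathfrak{p}^2)\arrow[r]&1
\end{tikzcd}
\]
concluding that the middle kernel is isomorphic to the right-hand one (of order $4$). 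Since you ultimately defer to ``a short computation in each case'' and report the correct numbers, this is a fixable expository slip rather than a gap in the strategy; just be aware that the d\'evissage is not purely a graded-piece count and that the paper's case $j=4$ genuinely needs the snake lemma (or an equivalent order computation via source/target sizes and the index-$2$ cokernel of the full norm).
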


\begin{proof}
   First of all, note that if $j=0$, then $\mathcal{O}^\times_{E_\mathfrak{p}}/(1+\mathfrak{P}^j)$ is trivial so there is nothing to prove.
    \begin{enumerate}
        \item  Now suppose that $p$ is odd and let $1\leq j\leq 2$. By \Cref{lem cyclo unit}, we know that the image of $\mathcal{F}(E) \to \mathcal{O}^\times_{E_\mathfrak{p}}/(1+\mathfrak{P}^j)$ has order $2$ for $j=1$ and $2p$ for $j=2$. 
        \begin{enumerate}
            \item For $j=1$, we have $\phi_p(1) = 1$ and according to \Cref{serre corollary}, the kernel of 
    \[ N_0:\;\mathcal{O}^\times_{E_\mathfrak{p}}/(1+\mathfrak{P}) \to \mathcal{O}^\times_{{K_\mathfrak{p}}}/(1+\mathfrak{p})\] has order 2 and thus \Cref{eq8} is exact. 
    \item For $j=2$, remark that $\phi_p(2) = 1$ too, and that the norm map 
    \(\mathcal{O}^\times_{E_\mathfrak{p}}/(1+\mathfrak{P}^2) \to \mathcal{O}^\times_{{K_\mathfrak{p}}}/(1+\mathfrak{p})\)
    factors as
    \[\mathcal{O}^\times_{E_\mathfrak{p}}/(1+\mathfrak{P}^2) \to \mathcal{O}^\times_{E_\mathfrak{p}}/(1+\mathfrak{P}) \xrightarrow{N_0} \mathcal{O}^\times_{{K_\mathfrak{p}}}/(1+\mathfrak{p})\]
    where the first arrow is surjective and with kernel $(1+\mathfrak{P})/(1+\mathfrak{P}^2)$. 
    Hence, by \Cref{serre corollary} and \Cref{rem serre}, we have that the kernel of  $\mathcal{O}^\times_{E_\mathfrak{p}}/(1+\mathfrak{P}^2) \to \mathcal{O}^\times_{{K_\mathfrak{p}}}/(1+\mathfrak{p})$
    has order $ 2p$.
        \end{enumerate}
    
    \item Let us assume now that $p=2$ and let $1\leq j\leq 4$.
    Again, by \Cref{lem cyclo unit}, we know that the image of $\mathcal{F}(E) \to \mathcal{O}^\times_{E_\mathfrak{p}}/(1+\mathfrak{P}^j)$ is trivial for $j=1$, it has order $2$ for $j=2$ and it has order $4$ for $j=3,4$. 
    
    \begin{enumerate}
        \item For $j=1$, we have that $\phi_2(1) = 1$ and by \Cref{serre corollary}, when $p=2$, the map $N_0:\;\mathcal{O}^\times_{E_\mathfrak{p}}/(1+\mathfrak{P}) \to \mathcal{O}^\times_{{K_\mathfrak{p}}}/(1+\mathfrak{p})$ is an isomorphism. 
    
        \item For $j=2$, we have that $\phi_2(2) = 2$ and there is a commutative diagram with exact rows

    \[\begin{tikzcd}
          1\arrow[r]&(1+\mathfrak{P})/(1+\mathfrak{P}^2)\arrow[d, "N_1"]\arrow[r]&\mathcal{O}^\times_{E_\mathfrak{p}}/(1+\mathfrak{P}^2)\arrow[r]\arrow[d]&  \mathcal{O}^\times_{E_\mathfrak{p}}/(1+\mathfrak{P})\arrow[d, "N_0", "\cong"']\arrow[r]&1\\
         1\arrow[r]&(1+\mathfrak{p})/(1+\mathfrak{p}^2)\arrow[r]&
        \mathcal{O}^\times_{{K_\mathfrak{p}}}/(1+\mathfrak{p}^2)\arrow[r,]&
         \mathcal{O}^\times_{{K_\mathfrak{p}}}/(1+\mathfrak{p})\arrow[r]&1
    \end{tikzcd}.\]
    According to \Cref{serre corollary}, the vertical map $N_0$ is an isomorphism and $\ker N_1$ has order 2. 
    Therefore, the kernel of the middle vertical map is isomorphic to $\ker N_1$ and it has order $2$. 
    \item For the case $j=3$, we have again that $\phi_2(3) = 2$ and following the idea for the odd prime order case (1)(a), we have that the norm map
    \(\mathcal{O}^\times_{E_\mathfrak{p}}/(1+\mathfrak{P}^3) \to \mathcal{O}^\times_{{K_\mathfrak{p}}}/(1+\mathfrak{p}^2)\)
    factors as
    \[\mathcal{O}^\times_{E_\mathfrak{p}}/(1+\mathfrak{P}^3) \to \mathcal{O}^\times_{E_\mathfrak{p}}/(1+\mathfrak{P}^2) \to \mathcal{O}^\times_{{K_\mathfrak{p}}}/(1+\mathfrak{p}^2).\]
    By similar arguments, we deduce that the kernel of $\mathcal{O}^\times_{E_\mathfrak{p}}/(1+\mathfrak{P}^3) \to \mathcal{O}^\times_{{K_\mathfrak{p}}}/(1+\mathfrak{p}^2)$ has order $4$. 
    \item Finally, when $j=4$ we have that $\phi_2(4) = 3$, and there is a commutative diagram with exact rows
    \[\begin{tikzcd}
          1\arrow[r]&(1+\mathfrak{P}^3)/(1+\mathfrak{P}^4)\arrow[r]\arrow[d, "N_3", "\cong"']&\mathcal{O}^\times_{E_\mathfrak{p}}/(1+\mathfrak{P}^4)\arrow[r]\arrow[d]&  \mathcal{O}^\times_{E_\mathfrak{p}}/(1+\mathfrak{P}^3)\arrow[r]\arrow[d]&1\\
         1\arrow[r]&(1+\mathfrak{p}^2)/(1+\mathfrak{p}^3)\arrow[r]&\mathcal{O}^\times_{{K_\mathfrak{p}}}/(1+\mathfrak{p}^3)\arrow[r]&
        \mathcal{O}^\times_{{K_\mathfrak{p}}}/(1+\mathfrak{p}^2)\arrow[r]&1
    \end{tikzcd}.\]
    According to \Cref{serre corollary}, the vertical map $N_3$ is an isomorphism and by the Snake lemma, we have that the kernels of the two other vertical maps are isomorphic.
    Hence, the kernel of $\mathcal{O}^\times_{E_\mathfrak{p}}/(1+\mathfrak{P}^4) \to \mathcal{O}^\times_{{K_\mathfrak{p}}}/(1+\mathfrak{p}^3)$ has order 4. 
    \end{enumerate}
    Therefore, for $p=2$ and $1\leq j\leq 4$, the sequence in \Cref{eq8} is exact, and that concludes the proof.\qedhere
    \end{enumerate}
\end{proof}
Hence, \Cref{lem intermediaire} together with the prior discussions proves \Cref{strong approx th}.

\section{Tables of results}

\subsection{Trivial discriminant action}
For the Mukai lattice $M=U^{\oplus 4}$ (\Cref{tab u4}), we specify the smallest integer $k\geq 1$ for which the action can be realized as induced on a moduli space of $k$-twisted sheaves on an abelian surface (\Cref{subsec induced descr}). If $k=1$, then the action can also be realized as induced on an OG6-type IHS manifold, and we set $k=0$ if the action cannot be induced, in the sense of \Cref{subsec induced descr}. Similarly for $M=U^{\oplus 4}\oplus E_8^{\oplus 2}$ (\Cref{tab k3n}). Note that for these two tables, whenever $k=1$, then the action can also be realized by \emph{natural automorphisms} (see \cite[D\'efinition 1]{boissiere1} and \cite[\S 3.1]{bnws11}). Finally, in the cases of order 4 for which $k=2$ in \Cref{tab k3n}, the first in the list is realizable for $\textnormal{K3}^{[n]}$-type IHS manifolds if and only if $n$ is odd, while the other one is realizable for all $n\geq 2$.

\renewcommand{\arraystretch}{1}
\begin{table}[!ht]
\caption{Trivial discriminant action --- nonprime orders}
\begin{subtable}[t]{0.5\textwidth} 

    \caption{$\T = \text{Kum}_n\,(n\geq 2);\; M = U^{\oplus4}$}\label{tab u4}
\centering
\resizebox{\textwidth}{!}{
    \begin{tabular}{cccc}
        $m$&$M^g$&$M_g$&Induced\\
        \hline
        \rowcolor{lightgray!40!white}$4,6,12$& $\II_{(2,2)}$& $\II_{(2,2)}$&$1$\\
        4& $\II_{(2,2)}2^2$& $\II_{(2,2)}2^2$&$1$\\
        \rowcolor{lightgray!40!white}9& $\II_{(2,0)}3^{-1}$ & $\II_{(2,4)} 3^1$&0\\
        \hline
    \end{tabular}}
\end{subtable}
\hfill
\begin{subtable}[t]{0.4\textwidth} 
    \caption{$\T = \textnormal{OG6};\;M = U^{\oplus5}$}\label{tab u5}
\centering
\resizebox{\textwidth}{!}{
    \begin{tabular}{ccc}
        $m$&$M^g$&$M_g$\\
        \hline
        \rowcolor{lightgray!40!white}$4,6,12$& $\II_{(3,3)}$& $\II_{(2,2)}$\\
        4& $\II_{(3,3)}2^2$& $\II_{(2,2)}2^2$\\
        \rowcolor{lightgray!40!white}9& $\II_{(3,1)}3^{-1}$ & $\II_{(2,4)} 3^1$\\
        \hline
    \end{tabular}}
    \vspace*{1cm}
\end{subtable}
\bigskip   
\renewcommand{\arraystretch}{1.3}
\begin{subtable}[t]
{0.5\textwidth} 

    \caption{$\T = \text{K3}^{[n]}\,(n\geq 2);\;M=U^{\oplus4}\oplus E_8^{\oplus 2}$}\label{tab k3n}
\centering
    \resizebox{\textwidth}{!}{
    \begin{tabular}{cccc}
    
        $m$&$M^g$&$M_g$&Induced\\
        \hline
        \rowcolor{lightgray!40!white}$\substack{4,6,12,22\\33,44,66}$& $\II_{(2,2)}$& $\II_{(2,18)}$&$1$\\
        4& $\II_{(2,2)}2^2$& $\II_{(2,18)}2^2$&$1$\\
        \rowcolor{lightgray!40!white}4& $\II_{(2,2)}2^4$& $\II_{(2,18)}2^4$&$2$\\
        $4, 8, 16$& $\II_{(2,6)}2^{-2}$& $\II_{(2,14)}2^{-2}$&$1$\\
        \rowcolor{lightgray!40!white}$4, 8$& $\II_{(2,6)}2^{-4}$& $\II_{(2,14)}2^{-4}$&$1$\\
        4& $\II_{(2,6)}2^{-6}$& $\II_{(2,14)}2^{-6}$&$2$\\
        \rowcolor{lightgray!40!white}$\substack{4,6,9,12\\14,18,21\\28,36,42}$& $\II_{(2,10)}$& $\II_{(2,10)}$&$1$\\
        4& $\II_{(2,10)}2^2$& $\II_{(2,10)}2^2$&$1$\\
        \rowcolor{lightgray!40!white}4& $\II_{(2,10)}2^4$& $\II_{(2,10)}2^4$&$1$\\
        4& $\II_{(2,10)}2^6$& $\II_{(2,10)}2^6$&$1$\\
        \rowcolor{lightgray!40!white}$4, 8$& $\II_{(2,14)}2^{-2}$& $\II_{(2,6)}2^{-2}$&$1$\\
        4& $\II_{(2,14)}2^{-4}$& $\II_{(2,6)}2^{-4}$&$1$\\
        \rowcolor{lightgray!40!white}$4,6,12$& $\II_{(2,18)}$& $\II_{(2,2)}$&$1$\\
        4& $\II_{(2,18)}2^{2}$& $\II_{(2,2)}2^{2}$&$1$\\
        \rowcolor{lightgray!40!white}$9, 27$& $\II_{(2,4)}3^{1}$ & $\II_{(2,16)} 3^{-1}$&1\\
        9& $\II_{(2,4)}3^{-3}$ & $\II_{(2,16)} 3^3$&1\\
        \rowcolor{lightgray!40!white}9& $\II_{(2,10)}3^{-2}$ & $\II_{(2,10)} 3^{-2}$&1\\
        9& $\II_{(2,16)}3^{-1}$ & $\II_{(2,4)} 3^1$&1\\
        \rowcolor{lightgray!40!white}25& $\II_{(2,2)}5^{-1}$& $\II_{(2,18)}5^{-1}$&$1$\\
        \hline
    \end{tabular}}
    
\end{subtable}
\hfill
\begin{subtable}[t]{0.4\textwidth} 

    \caption{$\T = \textnormal{OG10};\;M=U^{\oplus5}\oplus E_8^{\oplus 2}$}\label{tab og10}
\centering
\resizebox{\textwidth}{!}{
  \begin{tabular}{ccc}
        $m$&$M^g$&$M_g$\\
        \hline
        \rowcolor{lightgray!40!white}$\substack{4,6,12,22\\33,44,66}$& $\II_{(3,3)}$& $\II_{(2,18)}$\\
        4& $\II_{(3,3)}2^2$& $\II_{(2,18)}2^2$\\
        \rowcolor{lightgray!40!white}4& $\II_{(3,3)}2^4$& $\II_{(2,18)}2^4$\\
        4& $\II_{(3,3)}2^6$& $\II_{(2,18)}2^6$\\
        \rowcolor{lightgray!40!white}$4,8, 16$& $\II_{(3,7)}2^{-2}$& $\II_{(2,14)}2^{-2}$\\
        $4,8$& $\II_{(3,7)}2^{-4}$& $\II_{(2,14)}2^{-4}$\\
        \rowcolor{lightgray!40!white}4& $\II_{(3,7)}2^{-6}$& $\II_{(2,14)}2^{-6}$\\
        4& $\II_{(3,7)}2^{-8}$& $\II_{(2,14)}2^{-8}$\\
        \rowcolor{lightgray!40!white}$\substack{4,6,9,12,14,18\\21,28,36,42}$& $\II_{(3,11)}$& $\II_{(2,10)}$\\
        4& $\II_{(3,11)}2^2$& $\II_{(2,10)}2^2$\\
        \rowcolor{lightgray!40!white}4& $\II_{(3,11)}2^4$& $\II_{(2,10)}2^4$\\
        4& $\II_{(3,11)}2^6$& $\II_{(2,10)}2^6$\\
        \rowcolor{lightgray!40!white}$4,8$& $\II_{(3,15)}2^{-2}$& $\II_{(2,6)}2^{-2}$\\
        4& $\II_{(3,15)}2^{-4}$& $\II_{(2,6)}2^{-4}$\\
        \rowcolor{lightgray!40!white}$4,6,12$& $\II_{(3,19)}$& $\II_{(2,2)}$\\
        4& $\II_{(3,19)}2^{2}$& $\II_{(2,2)}2^{2}$\\
        \rowcolor{lightgray!40!white}$9,27$& $\II_{(3,5)}3^{1}$ & $\II_{(2,16)} 3^{-1}$\\
        9& $\II_{(3,5)}3^{-3}$ & $\II_{(2,16)} 3^3$\\
        \rowcolor{lightgray!40!white}9& $\II_{(3,11)}3^{-2}$ & $\II_{(2,10)} 3^{-2}$\\
        9& $\II_{(3,17)}3^{-1}$ & $\II_{(2,4)} 3^1$\\
        \rowcolor{lightgray!40!white}25& $\II_{(3,3)}5^{-1}$& $\II_{(2,18)}5^{-1}$\\
        \hline
    \end{tabular}}
\end{subtable}
\end{table}

\subsection{Nontrivial discriminant action}

For each entry in \Cref{disc non triv}, there is a unique $\text{Mon}^2(\Lambda)$-conjugacy class of subgroups $\langle h\rangle$ with the given lattice data except in the case $(\textnormal{K3}^{[24]}, 46)$ where there are 3 classes (\Cref{class 3 div} and \Cref{propo class gen herm}).

\renewcommand{\arraystretch}{1.3}
\begin{table}[!ht]    
    \caption{Nontrivial discriminant action --- twice prime powers}\label{disc non triv}\vspace*{0.3cm}
\centering
\rowcolors{1}{white}{lightgray!40!white}
    \begin{tabular}{ccccc}
        \rowcolor{white}$\T$& $m$&$M^g$&$\Lambda^h$&$\Lambda_h$\\
        \hline
        \cellcolor{white}&4&$\II_{(3,1)}2^{-2}_6$&$U$&$U^{\oplus2}\oplus \langle -2\rangle^{\oplus2}$\\
        \cellcolor{white}&4&$\II_{(3,1)}2^{-2}_6$&$U(2)$&$U^{\oplus2}\oplus \langle -2\rangle^{\oplus2}$\\
        \cellcolor{white}&4&$\II_{(3,1)}2^{4}_2$&$U(2)$&$U\oplus U(2)\oplus \langle-2\rangle^{\oplus 2}$\\
        \cellcolor{white}&4&$\II_{(3,5)}2^2_6$&$U\oplus D_4$&$\langle2\rangle^{\oplus 2}$\\
        \cellcolor{white}\multirow{-5}{*}{OG6}&8&$\II_{(3,3)}2^{2}$&$U\oplus \langle-2\rangle^{\oplus2}$&$U\oplus U(2)$\\
        \hline
        &4&$\II_{(2,0)}2^2_2$&$\langle4\rangle$&$U^{\oplus2}\oplus E_8^{\oplus2}\oplus \langle-2\rangle^{\oplus2}$\\
        \cellcolor{white}&4&$\II_{(2,4)}2^2_6$&$U\oplus A_3$&$U^{\oplus2}\oplus E_7^{\oplus2}$\\
        &4&$\II_{(2,4)}2^{-4}_2$&$U(2)\oplus A_3$&$U\oplus U(2)\oplus E_7^{\oplus2}$\\
        \cellcolor{white}&4&$\II_{(2,8)}2^2_2$&$U\oplus D_7$&$U^{\oplus2}\oplus E_8\oplus \langle-2\rangle^{\oplus2}$\\
        &4&$\II_{(2,8)}2^4_2$&$U(2)\oplus D_7$&$U^{\oplus2}\oplus D_8\oplus \langle-2\rangle^{\oplus2}$\\
        \cellcolor{white}&4&$\II_{(2,8)}2^6_2$&$U(2)\oplus A_3\oplus D_4$&$U^{\oplus2}\oplus D_4^{\oplus2}\oplus \langle-2\rangle^{\oplus2}$\\
        &4&$\II_{(2,8)}2^{-8}_6$&$U\oplus E_7(2)$&$U\oplus U(2)\oplus D_4^{\oplus2}\oplus \langle-2\rangle^{\oplus2}$\\
        \cellcolor{white}&4&$\II_{(2,12)}2^2_6$&$U\oplus A_3\oplus E_8$&$U^{\oplus2}\oplus D_6$\\
        &4&$\II_{(2,12)}2^4_6$&$U\oplus A_3\oplus D_8$&$U^{\oplus2}\oplus D_4\oplus \langle-2\rangle^{\oplus2}$\\
        \cellcolor{white}&4&$\II_{(2,12)}2^6_6$&$U(2)\oplus A_3\oplus D_8$&$U^{\oplus 2}\oplus \langle-2\rangle^{\oplus6}$\\
        &4&$\II_{(2,16)}2^2_2$&$U\oplus D_7\oplus E_8$&$U^{\oplus2}\oplus \langle-2\rangle^{\oplus2}$\\
        \cellcolor{white}&4&$\II_{(2,16)}2^4_2$&$U\oplus A_3\oplus D_4\oplus E_8$&$U\oplus U(2)\oplus \langle-2\rangle^{\oplus2}$\\
        &4&$\II_{(2,20)}2^2_6$&$U\oplus A_3\oplus E_8^{\oplus2}$&$\langle2\rangle^{\oplus2}$\\
        \cellcolor{white}&8&$\II_{(2,2)}2^2$&$U\oplus \langle-4\rangle$&$U^{\oplus2}\oplus D_8\oplus E_8$\\
        &8&$\II_{(2,2)}2^4$&$U(2)\oplus \langle-4\rangle$&$U^{\oplus2}\oplus D_4^{\oplus2}\oplus E_8$\\
        \cellcolor{white}&8&$\II_{(2,10)}2^2$&$U\oplus E_8\oplus \langle-4\rangle$&$U^{\oplus2}\oplus D_8$\\
        &8&$\II_{(2,10)}2^4$&$U\oplus D_8\oplus \langle-4\rangle$&$U^{\oplus2}\oplus D_4^{\oplus2}$\\
        \cellcolor{white}&8&$\II_{(2,18)}2^2$&$U\oplus E_8^{\oplus2}\oplus\langle-4\rangle$&$U\oplus U(2)$\\
        &16&$\II_{(2,14)}2^{-2}$&$U\oplus D_5\oplus E_8$&$U^{\oplus2}\oplus D_4$\\
        \cellcolor{white}\multirow{-20}{*}{$\textnormal{K3}^{[3]}$}&32&$\II_{(2,6)}2^{-2}$&$U\oplus D_5$&$U^{\oplus2}\oplus D_4\oplus E_8$\\
        \hline
        \cellcolor{white}&6&$\II_{(3,1)}3^{-1}$&$U$&$U^{\oplus2}\oplus A_2\oplus E_8^{\oplus2}$\\
        
        \cellcolor{white}&6&$\II_{(3,9)}3^{-1}$&$U\oplus E_8$&$U^{\oplus2}\oplus A_2\oplus E_8$\\
        
        \multirow{-3}{*}{OG10}&$6,18$&$\II_{(3,17)}3^{-1}$&$U\oplus E_8^{\oplus 2}$&$U^{\oplus 2}\oplus A_2$\\
        \hline
        \cellcolor{white}&6&$\II_{(2,0)}3^{-1}$&
        $\langle2\rangle$&$U^{\oplus2}\oplus A_2\oplus E_8^{\oplus2}$\\
        \cellcolor{white}&6&$\II_{(2,8)}3^{-1}$&$E_8\oplus\langle2\rangle$&$U^{\oplus2}\oplus A_2\oplus E_8$\\
        \cellcolor{white}\multirow{-3}{*}{$\textnormal{K3}^{[4]}$}&$6, 18$&$\II_{(2,16)}3^{-1}$&$E_8^{\oplus2}\oplus\langle2\rangle$&$U^{\oplus2}\oplus A_2$\\
        \hline
        &$10,50$&$\II_{(2,2)}5^{-1}$&$U\oplus \langle-2\rangle$&$U^{\oplus2}\oplus E_8\oplus L_8^5$\\
        \cellcolor{white}&10&$\II_{(2,10)}5^{-1}$&$U\oplus E_8\oplus \langle-2\rangle$&$U^{\oplus2}\oplus L_8^5$\\
        \multirow{-3}{*}{$\textnormal{K3}^{[6]}$}&10&$\II_{(2,18)}5^{-1}$&$U\oplus E_8^{\oplus2}\oplus\langle-2\rangle$&$U\oplus H_5$\\
        \hline
        \cellcolor{white}$\textnormal{K3}^{[8]}$&14&$\II_{(2,16)}7^1$&$U\oplus E_7\oplus E_8$&$U^{\oplus2}\oplus K_7$\\
        \hline
        $\textnormal{K3}^{[14]}$&26&$\II_{(2,10)}13^{-1}$&$U\oplus E_8\oplus \langle-2\rangle$&$U^{\oplus2}\oplus L_8^{13}$\\
        \hline
        \cellcolor{white}$\textnormal{K3}^{[24]}$&46&$\II_{(2,0)}23^1$&$\langle2\rangle$&$U^{\oplus2}\oplus E_8^{\oplus2}\oplus K_{23}$\\
        \hline
    \end{tabular}
\end{table}


\begin{thebibliography}{CQDM23}
\bibitem[ACV20]{acv20}
Michela Artebani, Paola Comparin and Mar\'ia~E. Vald\'es.
\newblock Order 9 automorphisms of {K3} surfaces.
\newblock {\em Comm. Algebra}, 48(9):3661--3672, 2020.

\bibitem[ACV22]{acv22}
Michela Artebani, Paola Comparin and Mar\'ia~E. Vald\'es.
\newblock Non-symplectic automorphisms of $K3$ surfaces with one-dimensional moduli space.
\newblock {\em Rev. Math. Iberoam.}, 38(4):1161--1198, 2022.

\bibitem[AGS21]{ags21}
Dima Al Tabbaa, Annalisa Grossi and Alessandra Sarti.
\newblock Symmetries of order eight on {K3} surfaces without high genus curves in the fixed locus.
\newblock In {\em Geometry at the frontier---symmetries and moduli spaces of algebraic varieties}, volume 666 of {\em Contemp. Math.}, pages 17--30. Amer. Math. Soc., Providence (RI), 2021.


\bibitem[AS15]{as15}
Michela Artebani and Alessandra Sarti.
\newblock Symmetries of order four on {K3} surfaces.
\newblock {\em J. Math. Soc. Japan}, 67(2):503--533, 2015.

\bibitem[AS18]{as18}
Dima Al Tabbaa and Alessandra Sarti.
\newblock Order eight non-symplectic automorphisms on elliptic {K3} surfaces.
\newblock In {\em Phenomenological approach to algebraic geometry}, volume 116 of {\em Banach Center Publ.}, pages 11--24. Polish Acad. Sci. Inst. Math., Warsaw, 2018.

\bibitem[AST11]{ast11}
Michela Artebani, Alessandra Sarti and Shingo Taki.
\newblock {K3} surfaces with non-symplectic automorphisms of prime order.
\newblock {\em Math. Z.}, 268:507--533, 2011.

\bibitem[AST16]{ast16}
Dima Al~Tabbaa, Alessandra Sarti and Shingo Taki.
\newblock Classification of order sixteen non-symplectic automorphisms on {K3} surfaces.
\newblock {\em J. Korean Math. Soc.}, 53(6):1237--1260, 2016.

\bibitem[AV15]{av15}
Ekaterina Amerik and Misha Verbistky.
\newblock Rational curves on hyperk{\"a}hler manifolds.
\newblock {\em Int. Math. Res. Not. IMRN}, (23):13009--13045, 2015.

\bibitem[BBFP24]{bbfp24}
Ignacio Barros, Emma Brakkee, Laure Flapan and Pietro Beri.
\newblock Kodaira dimension of moduli spaces of hyperk{ä}hler varieties, to appear in {\em Alg. Geom.}. \href{https://arxiv.org/abs/2212.12586}{https://arxiv.org/abs/2212.12586}, 2024.

\bibitem[BC22]{bc22}
Simon Brandhorst and Alberto Cattaneo.
\newblock Prime order isometries of unimodular lattices and automorphisms of
  {IHS} manifolds.
\newblock {\em Int. Math. Res. Not. IMRN}, (18):15584--15638 2022.

\bibitem[BCL+24]{bcl24}
Renee Bell, Paola Comparin, Jennifer Li, Alejandra Rinc\'on-Hidalgo, Alessandra Sarti and Aline Zanardini.
\newblock Non-symplectic automorphisms of order multiple of seven on {K3} surfaces.
\newblock {\em J. Pure Appl. Algebra}, 228(4), paper no. 107517, pp. 39, 2024.

\bibitem[BCMS16]{bcms16}
Samuel Boissi\`ere, Chiara Camere, Giovanni Mongardi and Alessandra Sarti.
\newblock Isometries of ideal lattices and hyperk\"ahler manifolds.
\newblock {\em Int. Math. Res. Not. IMRN}, (4):963--977, 2016.

\bibitem[BCS16]{bcs16}
Samuel Boissi\`ere, Chiara Camere and Alessandra Sarti.
\newblock Classification of automorphisms on a deformation family of hyperk\"ahler fourfolds by $p$-elementary lattices.
\newblock {\em Kyoto J. Math.}, 56(3):465--499, 2016.

\bibitem[BCS19]{bcs19}
Samuel Boissi\`ere, Chiara Camere and Alessandra Sarti.
\newblock Complex ball quotients from manifolds of {$\textnormal{K3}^{[n]}$-}type.
\newblock {\em J. Pure Appl. Algebra}, 223(3):1123--1138, 2019.

\bibitem[Bea83a]{bea83a}
Arnaud Beauville.
\newblock Vari\'et\'es k\"ahleriennes dont la premi\`ere classe de Chern est nulle.
\newblock {\em J. Differ. Geom.}, 18:755--782, 1983.

\bibitem[Bea83b]{bea83b}
Arnaud Beauville.
\newblock Some remarks on K\"ahler manifolds with $c_1 = 0$.
\newblock In {\em Classification of algebraic and analytic manifolds (Katata, 1982)}, volume 39 of {\em Progr. Math.}, pages 1--26.
\newblock Birkh\"auser Boston, Boston, MA, 1983.

\bibitem[Bea11]{bea11}
Arnaud Beauville.
\newblock Antisymplectic involutions on holomorphic symplectic manifolds.
\newblock {\em J. Topol.}, 4(2):300--304, 2011.

\bibitem[BF84]{bay84}
Eva Bayer-Fluckiger.
\newblock Definite unimodular lattices having an automorphism of given characteristic polynomial.
\newblock {\em Comment. Math. Helv.}, 59:509--538, 1984.


\bibitem[BF24a]{bay24a}
Eva Bayer-Fluckiger.
\newblock Automorphisms of {K3} surfaces, signatures, and isometries of lattices, to appear in {\em J. Eur. Math. Soc.}.
\newblock {\href{https://arxiv.org/abs/2209.06698v4}{https://arxiv.org/abs/2209.06698v4}}, 2024.

\bibitem[BF24b]{bay24b}
Eva Bayer-Fluckiger.
\newblock {K3} surfaces, cyclotomic polynomials and orthogonal groups.
\newblock {\em Eur. J. Math.}, 10(2), no. 36, 2024.

\bibitem[BFT20]{bt20}
Eva Bayer-Fluckiger and Lenny Taelman.
\newblock Automorphisms of even unimodular lattices and equivariant Witt groups.
\newblock {\em J. Eur. Math. Soc. JEMS}, 22(11):3467--3490, 2020.

\bibitem[BG24]{bg24}
Simone Billi and Annalisa Grossi.
\newblock Non-symplectic automorphisms of prime order of {O'G}rady's tenfolds and cubic fourfolds.
\newblock \href{https://arxiv.org/abs/2405.05932}{https://arxiv.org/abs/2405.05932}, 2024.

\bibitem[BH23]{bh23}
Simon Brandhorst and Tommy Hofmann.
\newblock Finite subgroups of automorphisms of {$K3$} surfaces.
\newblock {\em Forum Math. Sigma}, 11, E54, 2023.

\bibitem[BMW24]{bmw24}
Simone Billi, Stevell Muller and Tomasz Wawak.
\newblock On birational automorphisms of double {EPW}-cubes. \href{https://arxiv.org/abs/2405.15510}{https://arxiv.org/abs/2405.15510}, 2024.

\bibitem[BNWS11]{bnws11}
Samuel Boissi\`ere, Marc Nieper-Wisskirchen and Alessandra Sarti.
\newblock Higher dimensional Enriques varieties and automorphisms of generalized Kummer varieties.
\newblock {\em J. Math. Pures Appl.}, 95(5):553--563, 2011.

\bibitem[Boi12]{boissiere1}
Samuel Boissi\`ere.
\newblock {Automorphismes naturels de l'espace de Douady de points sur une surface}.
\newblock {\em Can. J. Math.}, \textbf{64}(1):3--23, 2012.

\bibitem[Bra19]{bra19}
Simon Brandhorst.
\newblock The classification of purely non-symplectic automorphisms of high order on {K3} surfaces.
\newblock {\em J. Algebra}, 533:229--265, 2019.

\bibitem[Cam16]{cam16}
Chiara Camere.
\newblock Lattice polarized irreducible holomorphic symplectic manifolds.
\newblock {\em Ann. Inst. Fourier (Grenoble)}, 66(2):687--709, 2016.

\bibitem[CC19]{cc19}
Chiara Camere and Alberto Cattaneo.
\newblock Non-symplectic automorphisms of odd prime order on manifolds of $\textnormal{K3}^{[n]}$-type.
\newblock {\em Manuscripta Math.}, 2019.

\bibitem[CCL22]{ccl22}
Chiara Camere, Alberto Cattaneo and Robert Laterveer.
\newblock On the {C}how ring of certain {L}ehn-{L}ehn-{S}orger-van~{S}traten eightfolds.
\newblock {\em Glasg. Math. J.}, 64(2):253--276, 2022.

\bibitem[CCC20]{ccc20}
Chiara Camere, Alberto Cattaneo and Andrea Cattaneo.
\newblock Non-symplectic involutions of manifolds of $\textnormal{K3}^{[n]}-type$.
\newblock {\em Nagoya Math. J.}, 243:278--302, 2021.

\bibitem[CKKM19]{ckkm19}
Chiara Camere, Grzegorz Kapustka, Micha{\l} Kapustka and Giovanni Mongardi.
\newblock Verra four{-}folds, twisted sheaves, and the last involution.
\newblock {\em Int. Math. Res. Not. IMRN}, (21):6661-6710, 2019.

\bibitem[CS99]{splg}
John~H. Conway and Neil~J.~A. Sloane.
\newblock {\em Sphere packings, lattices and groups}, volume 290 of {\em Grundlehren der Mathematischen Wissenschaften}.
\newblock Springer-Verlag, New York, third edition, 1999.
\newblock With additional contributions by E. Bannai, R. E. Borcherds, J. Leech, S. P. Norton, A. M. Odlyzko, R. A. Parker, L. Queen and B. B. Venkov.

\bibitem[Deb22]{deb22}
Olivier Debarre.
\newblock Hyper-{K}\"ahler manifolds.
\newblock {\em Milan J. Math.}, 90(2):305--387, 2022.

\bibitem[Dil12]{dillies}
Jimmy Dillies.
\newblock On some order 6 non-symplectic automorphisms of elliptic $K3$ surfaces.
\newblock {\em Alb. J. Math.}, 6(2):103--114, 2012.


\bibitem[DM22]{dm22}
Olivier Debarre and Giovanni Mongardi.
\newblock Gushel–-{M}ukai varieties with many symmetries and an
explicit irrational {G}ushel–-{M}ukai threefold.
\newblock {\em Boll. Unione Mat. Ital.}, 15(1-2):133–-161,
2022


\bibitem[Fer12]{fer12}
Andrea Ferretti.
\newblock Special subvarieties of {EPW} sextics.
\newblock {\em Math. Z.}, 272(3-4):1137--1164, 2012.

\bibitem[Fil02]{fil02}
Michael Filaseta.
\newblock Coverings of the integers associated with an irreducibility theorem of A. Schinzel.
\newblock In {\em Number Theory for the Millenium II (Urbana, IL, 2000)}, pp. 24. A K Peters, Ltd., Natick, MA, 2002.

\bibitem[GAL21]{gal21}
V\'ictor Gonz\'alez-Aguilera and Alvaro Liendo.
\newblock Automorphisms of prime order of smooth cubic {$n$-}folds.
\newblock {\em Arch. Math.}, 97(1):25--37, 2011.

\bibitem[Gro22a]{gro22}
Annalisa Grossi.
\newblock Nonsymplectic automorphisms of prime order on O'Grady's sixfolds.
\newblock {\em Rev. Math. Iberoam.}, 38(4):1199--1218, 2022.

\bibitem[Gro22b]{gro22b}
Annalisa Grossi.
\newblock Induced birational automorphisms on {O'G}rady's sixfolds.
\newblock {\em J. Lond. Math. Soc. (2)}, 105(1):665--690, 2022.


\bibitem[Huy99]{huy99}
Daniel Huybrechts.
\newblock Compact hyper-{K\"a}hler manifolds: basic results.
\newblock {\em Invent. Math.}, 135(1):63--113, 1999.

\bibitem[IKKR19]{ikkr19}
Atanas Iliev and Grzegorz Kapustka and Micha{\l} Kapustka and Kristian Ranestad.
\newblock {EPW} cubes
\newblock {\em J. Reine Angew. Math.}, 748:241--268, 2019.

\bibitem[Jac62]{jac62}
Ronald Jacobowitz.
\newblock Hermitian forms over local fields.
\newblock {\em Amer. J. Math.}, 84:441--465, 1962.

\bibitem[Joh68]{joh68}
Arnold~A. Johnson.
\newblock Integral representations of hermitian forms over local fields.
\newblock {\em J. Reine Angew. Math.}, 229:57--80, 1968.

\bibitem[Jou16]{jou16}
Malek Joumaah.
\newblock Non-symplectic involutions of irreducible symplectic manifolds of $\textnormal{K3}^{[n]}$-type.
\newblock {\em Math. Z.}, 283(3-4):761--790, 2016.

\bibitem[Kir16]{kir16}
Markus Kirschmer.
\newblock Definite quadratic and hermitian forms with small class number.
\newblock {\em Habilitationsschrift, RWTH Aachen University}, 2016.
\newblock \href{http://www.math.rwth-aachen.de/~Markus.Kirschmer/papers/herm.pdf}{http://www.math.rwth-aachen.de/~Markus.Kirschmer/papers/herm.pdf}


\bibitem[Kon92]{kon92}
Shigeyuki Kond\=o.
\newblock Automorphisms of algebraic {K3} surfaces which act trivially on {P}icard groups.
\newblock {\em J. Math. Soc. Japan}, 44(1):75--98, 1992.

\bibitem[LLMS18]{llms18}
Mart\'{\i} Lahoz, Manfred Lehn, Emanuele Macr\`i and Paolo Stellari.
\newblock Generalized twisted cubics on a cubic fourfold as a moduli space of stable objects.
\newblock {\em J. Math. Pure Appl. (9)}, 114:86--117, 2018.

\bibitem[LLSvS17]{llsvs17}
Christian Lehn, Manfred Lehn, Christoph Sorger and Duco van~Straten.
\newblock Twisted cubics on cubic fourfolds.
\newblock {\em J. Reine Angew. Math.}, 731:87--128, 2017.

\bibitem[LSV17]{lsv17}
Radu Laza, Giulia Sacc\`a, and Claire Voisin.
\newblock A hyper-{K}\"{a}hler compactification of the intermediate {J}acobian
  fibration associated with a cubic 4-fold.
\newblock {\em Acta Math.}, 218(1):55--135, 2017.

\bibitem[LPZ23]{lpz23}
Chunyi Li, Laura Pertusi and Xiaolei Zhao.
\newblock Twisted cubics on cubic fourfolds and stability conditions.
\newblock {\em Algebr. Geom.}, 10(5):620--642, 2023.

\bibitem[Mar10]{mar10}
Eyal Markman.
\newblock Integral constraints on the monodromy group of the hyperK\"ahler resolution of a symmetric product of a $K3$ surface.
\newblock {\em Int. J. Math.}, 21(2):169--223, 2010.

\bibitem[Mar11]{mar11}
Eyal Markman.
\newblock A survey of {T}orelli and monodromy results for holomorphic-symplectic varieties.
\newblock In {\em Complex and differential geometry}, volume~8 of {\em Springer Proc. Math.}, pages 257--322. Springer, Heidelberg, 2011.

\bibitem[McM15]{mc15}
Curtis~T. McMullen.
\newblock Automorphisms of {P}rojective {K3} {S}urfaces with {M}inimum {E}ntropy.
\newblock {\em Invent. Math.}, 203(1):179--215, 2015.

\bibitem[MM09]{mm09}
Rick Miranda and David~R. Morrison.
\newblock Embeddings of integral quadratic forms.
\newblock Online notes, 2009.
\newblock {\href{https://web.math.ucsb.edu/~drm/manuscripts/eiqf.pdf}{https://web.math.ucsb.edu/~drm/manuscripts/eiqf.pdf}}.



\bibitem[Mon16]{mon16}
Giovanni Mongardi.
\newblock On the monodromy of irreducible symplectic manifolds.
\newblock {\em Algebr. Geom.}, 3(3):385--391, 2016.

\bibitem[MO98]{mo98}
Natsumi Machida and Keiji Oguiso.
\newblock On {K3} surfaces admitting finite non-symplectic group actions.
\newblock {\em J. Math. Sci. Univ. Tokyo}, 5(2):273--297, 1998.


\bibitem[MR21]{mr21}
Giovanni Mongardi and Antonio Rapagnetta.
\newblock Monodromy and birational geometry of O'Grady's sixfolds.
\newblock {\em J. Math. Pures Appl.}, 146:31--68, 2021.

\bibitem[MRS18]{mrs18}
Giovanni Mongardi, Antonio Rapagnetta and Giulia Sacc\`a.
\newblock The {H}odge diamond of {O'G}rady's six-dimensional examples.
\newblock {\em Compos. Math.}, 154(5):984--1013, 2018.

\bibitem[MS17]{ms17}
Eyal Markman and Sukhendu Mehrotra.
\newblock Hilbert schemes of {K3} surfaces are dense in moduli.
\newblock {\em Math. Nach.}, 290(5-6):876--884, 2017.

\bibitem[MTW18]{mtw18}
Giovanni Mongardi, K\'evin Tari and Malte Wandel.
\newblock Prime order automorphisms of generalised Kummer fourfolds.
\newblock {\em Manuscripta Math.}, 155(3-4):449--469, 2018.

\bibitem[MW15]{mw15}
Giovanni Mongardi and Malte Wandel.
\newblock Induced automorphisms on irreducible symplectic manifolds.
\newblock {\em J. Lond. Math. Soc. (2)}, 92(1):123--143, 2015.

\bibitem[MW17]{mw17}
Giovanni Mongardi and Malte Wandel.
\newblock Automorphisms of O'Grady's manifolds acting trivially on cohomology.
\newblock {\em Algebr. Geom.}, 4(1):104--119, 2017.


\bibitem[Nik80]{nik79b}
Viacheslav~V. Nikulin.
\newblock Integer symmetric bilinear forms and some of their geometric applications.
\newblock {\em Izv. Akad. Nauk SSSR Ser. Mat.}, 43(1):111--177, 238, 1980.

\bibitem[Nik83]{nik83}
Viacheslav~V. Nikulin.
\newblock Factor groups of groups of automorphisms of hyperbolic forms with respect to subgroups generated by 2-reflections. Algebrogeometric applications.
\newblock {\em J. Soviet. Math.}, 22:1401--1475, 1983.

\bibitem[O'G99]{og99}
Kieran~G. O'Grady.
\newblock Desingularized moduli spaces of sheaves on a {$K3$}.
\newblock {\em J. Reine Angew. Math.}, 512:49--117, 1999.

\bibitem[O'G03]{og03}
Kieran~G. O'Grady.
\newblock A new six-dimensional irreducible symplectic variety.
\newblock {\em J. Alg. Geom.}, 12(3):435--505, 2033.

\bibitem[O'G06]{og06}
Kieran~G. O'Grady.
\newblock Irreducible symplectic {4-}folds and {E}isenbud-{P}opescu-{W}alter sextics.
\newblock {\em Duke Math. J.}, 134:99-137, 2006.

\bibitem[O'M71]{om71}
Onorato T. O'Meara.
\newblock {\em Introduction to quadratic forms},
\newblock volume 117 of {\em Grundlehren der Mathematischen Weissenschaften}. Springer-Verlag, New York-Heidelberg, 1971.

\bibitem[Ono22]{ono22}
Claudio Onorati.
\newblock On the monodromy group of desingularised moduli spaces of sheaves on {$K3$} surfaces.
\newblock {\em J. Alg. Geom.}, 31:425--465, 2022.

\bibitem[OW13]{ow13}
Hisanori Ohashi and Malte Wandel.
\newblock Non-natural non-symplectic involutions on symplectic manifolds of $\textnormal{K3}^{[2]}$-type.
\newblock {\href{https://arxiv.org/abs/1305.6353v2}{https://arxiv.org/abs/1305.6353v2}}, 2013

\bibitem[PR13]{pr13}
Arvid Perego and Antonio Rapagnetta.
\newblock Deformation of the {O'G}rady moduli spaces.
\newblock {\em J. Reine Angew. Math.}, 678:1--34, 2013.

\bibitem[Rap07]{rap07}
Antonio Rapagnetta.
\newblock Topological invariants of O'Grady's six dimensional irreducible symplectic variety.
\newblock {\em Math. Z.}, 256(1):1--34, 2007.

\bibitem[Rap08]{rap08}
Antonio Rapagnetta.
\newblock On the {B}eauville form of the known irreducible symplectic varieties.
\newblock {\em Math. Ann.}, 340(1):77--95, 2008.

\bibitem[Sac23]{sac23}
Giulia Sacc{\`a}.
\newblock Birational geometry of the intermediate {J}acobian fibration of a cubic fourfold.
\newblock {\em Geom. Topol.}, 27(4):1479--1538, 2023.

\bibitem[Sch10]{sch10}
Matthias Sch\"utt.
\newblock $K3$ surfaces with non-symplectic automorphisms of $2$-power order.
\newblock {\em J. Algebra}, 323(1):206--223, 2010.

\bibitem[Ser04]{ser04}
Jean-Pierre Serre.
\newblock {\em Corps locaux}.
\newblock Paris: Hermann, \'Editeurs des Sciences et des Arts, fourth corrected edition, 2004.

\bibitem[Ser70]{ser70}
Jean-Pierre Serre.
\newblock {\em Cours d'arithm\'etique}.
\newblock Presses Univ. France, Paris, 1970.


\bibitem[Tak10]{tak10}
Shingo Taki.
\newblock Non-symplectic automorphisms of {3-}power order on {K3} surfaces.
\newblock {\em Proc. Japan Acad. Ser. A Math. Sci.}, 86(8):125--130, 2010.

\bibitem[Tar15]{tar15}
Kevin Tari.
\newblock Automorphismes des vari\'et\'es de {K}ummer g\'en\'eralis\'ees.
\newblock PhD thesis, Universit\'e de Poitiers, 2015.
\newblock \href{https://theses.fr/2015POIT2301}{https://theses.fr/2015POIT2301}

\bibitem[Tak12]{tak12}
Shingo Taki.
\newblock Classification of non-symplectic automorphisms on {$K3$} surfaces which act trivially on the {N}\'eron-{S}everi lattice.
\newblock {\em J. algebra}, 356:16--26, 2012.

\bibitem[Ver13]{ver13}
Misha Verbitsky.
\newblock Mapping class group and a global Torelli theorem for hyperk\"ahler manifolds.
\newblock {\em Duke Math. J.}, 162(15):2929--2986, 2013.

\bibitem[Voi12]{voi13}
John Voight.
\newblock Identifying the matrix ring: algorithms for quaternion algebras and quadratic forms.
\newblock In {\em Quadratic and higher degree forms}, volume 31 of {\em Dev. Math.}, pages 255--298.
\newblock Springer, New York, 2012.

\bibitem[Was97]{was97}
Lawrence~C. Washington.
\newblock {\em Introduction to cyclotomic fields}, volume 83 of {\em Graduate Texts in Mathematics}.
\newblock Springer-Verlag, New York, second edition, 1997.

\end{thebibliography}
\end{document}